\documentclass[12pt,a4paper]{amsart}

\usepackage[
url=false,
giveninits=true,
isbn=false,
doi=false,
maxbibnames=99,
style=numeric,
sorting=nyt,
maxnames=4,
maxalphanames=4,
sortcites=none
]{biblatex}
\usepackage{amssymb}
\usepackage{tikz}
\usepackage{enumerate}
\usepackage{mathtools}
\usepackage{xcolor}
\usepackage{textcomp}
\usepackage{amsthm}
\usepackage{thmtools} 

\usepackage[
colorlinks=true,
linkcolor=red!50!black,
citecolor=green!50!black,
urlcolor=blue!50!black,
hypertexnames=false
]{hyperref}
\usepackage[capitalise]{cleveref} 

\declaretheoremstyle[headformat=\NUMBER,headpunct=,]{claim}

\declaretheorem[name=Theorem,numberwithin=section,
refname={Theorem,Theorems}]{theorem}

 \newtheorem*{thmA}{Theorem A}
  \newtheorem*{thmC}{Theorem C}
 \newtheorem*{thmB}{Theorem B}

\declaretheorem[name=Lemma,sibling=theorem,refname={Lemma,Lemmas}]{lemma}
\declaretheorem[name=Corollary,sibling=theorem,refname=
{Corollary,Corollaries}]{corollary}

\declaretheorem[name=Proposition,sibling=theorem,refname=
{Proposition,Propositions}]{proposition}

\declaretheorem[name=Definition,sibling=theorem,refname=
{Definition,Definitions}]{definition}

\declaretheorem[name=Example,sibling=theorem,refname={Example,Examples}]{example}
\declaretheorem
[name=Remark,sibling=theorem,refname={Remark,Remarks}]{remark}

\declaretheorem[style=claim,numberwithin=theorem,refname={,}]{claim}

\newcommand{\wt}{\widetilde}

\DeclareMathOperator{\Aut}{Aut}

\DeclareMathOperator{\Inn}{Inn}
\DeclareMathOperator{\Inndiag}{Inndiag}
\DeclareMathOperator{\Out}{Out}
\DeclareMathOperator{\Sol}{Sol}

\DeclareMathOperator{\Syl}{Syl}
\DeclareMathOperator{\syl}{Syl}

\DeclareMathOperator{\M}{M}
\DeclareMathOperator{\HN}{HN}
\DeclareMathOperator{\HS}{HS}
\DeclareMathOperator{\Fi}{Fi}
\DeclareMathOperator{\He}{He}

\DeclareMathOperator{\Suz}{Suz}

\DeclareMathOperator{\Co}{Co}

\DeclareMathOperator{\B}{B}
\DeclareMathOperator{\D}{D}
\DeclareMathOperator{\E}{E}
\DeclareMathOperator{\F}{F}
\DeclareMathOperator{\G}{G}
\DeclareMathOperator{\GL}{GL}

\DeclareMathOperator{\PGL}{PGL}

\DeclareMathOperator{\PSL}{PSL}

\DeclareMathOperator{\PSp}{PSp}
\DeclareMathOperator{\PSU}{PSU}
\DeclareMathOperator{\POmega}{P \Omega}
\DeclareMathOperator{\SL}{SL}

\DeclareMathOperator{\Sp}{Sp}
\DeclareMathOperator{\SU}{SU}

\DeclareMathOperator{\diag}{diag}

\DeclareMathOperator{\Alt}{Alt}
\DeclareMathOperator{\Dih}{Dih}
\DeclareMathOperator{\Sym}{Sym}

\DeclareMathOperator{\GF}{GF}

\DeclareMathOperator{\Soc}{Soc}

\renewcommand{\epsilon}{\varepsilon}

\newcommand{\ov}{\overline}
\newcommand{\wh}{\widehat}

\newcommand{\eps}{\epsilon}

\DeclarePairedDelimiter{\gen}{\langle}{\rangle}

\renewbibmacro{in:}{}
\DeclareFieldFormat[article]{title}{#1}

\title[Invariably generated groups]{The structure of finite groups invariably generated by two elements of prime order}
\author{Inna   Capdeboscq}\author{Chris Parker}

\address{Inna   Capdeboscq\\
Mathematics Institute\\
Zeeman Building\\
University of Warwick\\
Coventry CV4 7AL\\United Kingdom}
\email{I.Capdeboscq@warwick.ac.uk}
\address{Chris Parker\\
School of Mathematics\\
University of Birmingham\\
Edgbaston\\
Birmingham B15 2TT\\
United Kingdom}
\email{c.w.parker@bham.ac.uk}

\date{\today}
\begin{document}
\maketitle

\begin{abstract}
A group $G$ is invariably generated by   two elements $a$ and $b$ if $G=\langle a^g,b^h\rangle $ for all $g,h\in G$. This paper  provides a  structural description of finite groups invariably generated by two elements of distinct prime orders $s$ and $t$.
We   illustrate the use of our main result in a case study
with $s=2$ and $t=3$. In particular we prove that an almost simple  group is invariably  generated by an element of order $2$ and an element of order $3$  if and only if it is isomorphic to $\PGL_2(3^{2^b})$ for some $b \ge 1$.   
\end{abstract}

\section{Introduction}

In finite group theory, especially in the study of finite simple groups, generation results have been a focus of attention since the discovery  that every finite simple group is generated by two elements   \cite{aschbacher1984some,steinberg1962generators}.

\begin{definition}\label{def1}
 Let $n \ge 1$ be a natural number and $\Lambda=  (\lambda_1,\dots,\lambda_n )$ be a sequence    of positive integers. A group $G$ is \emph{invariably $\Lambda$-generated} if and only if  there exists   $(x_1, \dots ,x_n) \in G^n$ with $x_i$ of order dividing $\lambda_i$ such that $$G=\langle x_1^{g_1},\dots,x_n^{g_n}\rangle$$ for all $g_1,\dots, g_n \in G$. We call $(x_1, \dots, x_n)$ an \emph{invariable  $\Lambda$-sequence} for $G$.
\end{definition}

Notice that, if $\Lambda$  and $\Lambda^*$ are as in \cref{def1} and $\Lambda^*$  can be reordered to $\Lambda$, then   $G$ is invariably $\Lambda$-generated if and only if $G$ is  invariably $\Lambda^*$-generated. Thus we typically choose  $\lambda_1 \le \dots\le\lambda_n$.

For a   sequence $\Lambda$ as in \cref{def1}, define $$\pi(\Lambda)=\bigcup_{i=1}^n \pi(\lambda_i)$$ where $\pi(\lambda)$ is the set of primes dividing the natural number $\lambda$.

The interest in invariable generation has its origins in computational Galois theory  and has attracted the attention of numerous researchers starting with Dixon in  \cite{MR1180190}.
The work of Kantor, Lubotzky and Shalev \cite[Theorems 1.1 and  1.3]{MR2852243},  motivated by both applications in Galois theory and  computational group theory,  shows that every finite group $G$ can be invariably generated by at most $\log_2|G|$ elements and every non-abelian finite simple group is invariably generated by two elements.

  Detomi, Lucchini and   Roney-Dougal in \cite{MR3320229} introduced and studied \emph{coprime invariable generation}  where the elements of $\Lambda$ are pairwise coprime.  Tightening the requirements, Detomi and  Lucchini \cite{DETOMI2015683}
introduced \emph{prime-power coprime invariable generation}, where the elements of $\Lambda$ are additionally required to be prime powers. These developments suggest studying invariable generation under increasingly restrictive conditions on the members of $\Lambda$. Indeed in
  \cite[Section 6, Conjecture]{MR2901064}
Dolfi, Guralnick,  Herzog  and Praeger conjecture that, with finitely many exceptions, for a finite simple group of Lie type $G$ there  exist   primes $s<t$ such that $G$ is invariably $(s,t)$-generated. This remains unresolved.
Guralnick, Shareshian and Woodroofe \cite{MR4779375} have   shown that, with four exceptions, a finite simple group of Lie type in characteristic $p$ is invariably generated by a Sylow $p$-subgroup   and an element of prime order.

In this paper, we view  invariable generation from a different perspective.
We first derive a  theorem that describes the structure of a group which is invariably $(s,t)$-generated  with $s<t$   primes. As a case study we then investigate what can be said about an invariably $(2,3)$-generated group.

Recall that for a group $X$, the \emph{socle} of $X$, $\Soc(X)$, is the subgroup of $X$ generated by all the minimal normal subgroups of $X$. We say that $X$ is \emph{almost simple} if $\Soc(X)$ is a non-abelian simple group. Define a group  $X$ to be \emph{almost semisimple}  if and only if  every minimal normal subgroup of $X$ is a non-abelian simple group.  Notice that these definitions imply that the trivial group is almost semisimple whereas an almost simple group is non-trivial.

\begin{definition}\label{def:AG}
For a group $G$, define   $\mathcal{A}(G)$ to be the set of normal subgroups $K$ of $G$ such that $G/K$ is almost semisimple. Then define the \emph{almost semisimple   residual} of $G$ to be
$$ A(G)=  \bigcap_{K\in \mathcal A(G)} K.$$
\end{definition}
As the name suggests, $A(G) \in \mathcal A(G)$ and so the quotient   $G/A(G)$ is almost semisimple (see \cref{lem:almost residual}). We write $$\mathcal A^*(G)=\{K \unlhd G\mid G/K \text { is almost simple}\}.$$
In \cref{lem:AG,prop: G/A(G)}, we show that $A(G) = \bigcap_{K\in \mathcal A^*(G)\cup\{G\}}K$ and that $\Soc(G/A(G))$ is the direct product of the simple groups that appear as the minimal normal subgroups of $G/K$ for $K\in \mathcal A^*(G)$.  This claim requires that for $J,K \in \mathcal A^*(G)$, we cannot have $J<K$ and this follows from the Schreier property of finite simple groups.

Our main structural result about invariably $(s,t)$-generated groups.

\begin{thmA}\label{thm:thm1} Suppose that $s$ and $t$ are prime numbers with $s< t$ and that $G$ is a finite, invariably $(s,t)$-generated group. Then   $G$ is either a soluble $\{s,t\}$-group or $G$  has a normal series $$G\ge K_1> K_2= A(G)\ge K_3\ge 1$$ such that \begin{enumerate} \item $G/K_1$ is a soluble  $\{s,t\}$-group,
\item $G/K_2$ is almost semisimple, $$K_1/K_2 =\Soc(G/K_2)\cong\prod_{L\in \mathcal A^*(G)}\Soc(G/L),$$ and each minimal normal subgroup of $G/K_2$ is a non-abelian simple group of order divisible by $st$,
\item $K_2/K_3$ is a nilpotent group of order coprime to $st$ with $\pi(|K_2/K_3|) \subseteq  \pi(|K_1/K_2|)$, and \item $K_3$ is  a Hall $\{s,t\}$-subgroup of $K_2$.
\end{enumerate}
In particular, $A(G)$ is soluble and, for $L \in\mathcal A^*(G)$, $G/L$ is an invariably $(s,t)$-generated almost simple group.
\end{thmA}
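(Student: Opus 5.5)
The plan rests on one observation: invariable $(s,t)$-generation passes to quotients. If $(x,y)$ is an invariable $(s,t)$-sequence for $G$ and $N\unlhd G$, then $(xN,yN)$ is one for $G/N$. Equivalently, no proper subgroup $H<G$ meets both the class $x^G$ and the class $y^G$. I would use this second form throughout, as a ``Frattini obstruction'': if a subgroup $H$ contains a Sylow $s$-subgroup and a Sylow $t$-subgroup of $G$, then $H$ contains conjugates of $x$ and $y$, so $H=G$.

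\emph{Soluble case.} If $G$ is soluble, Hall $\{s,t\}$-subgroups exist and are all conjugate. So $x$ and $y$ have conjugates in one Hall subgroup $H$, hence $G=H$ is an $\{s,t\}$-group. This gives the first alternative. It also gives item (1), once I show $G/K_1$ is soluble, where $K_2=A(G)$ and $K_1/K_2=\Soc(G/K_2)$. Since $G/K_2$ is almost semisimple, each minimal normal subgroup $T_i$ of $G/K_2$ is simple and normal. Hence $C_{G/K_2}(K_1/K_2)=1$, and $G/K_1$ embeds in $\prod_i \Out(T_i)$, which is soluble by the Schreier property. The description of $K_1/K_2$ as $\prod_{L\in\mathcal A^*(G)}\Soc(G/L)$ is exactly \cref{lem:AG,prop: G/A(G)}.

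\emph{Each $T_i$ has order divisible by $st$.} Let $L\in\mathcal A^*(G)$ with $T=\Soc(G/L)$, and work in $\bar G=G/L$. Suppose $s\nmid |T|$, and let $R$ be a Sylow $t$-subgroup of $T$; it is nontrivial if $t\mid |T|$. By the Frattini argument $\bar G=TN_{\bar G}(R)$. Since $|\bar G:N_{\bar G}(R)|=|T:N_T(R)|$ is coprime to $s$, $N_{\bar G}(R)$ contains a Sylow $s$-subgroup of $\bar G$. It also contains a Sylow $t$-subgroup of $\bar G$, because a Sylow $t$-subgroup of $\bar G$ meets $T$ in a Sylow subgroup of $T$, which it normalizes. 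The Frattini obstruction then forces $N_{\bar G}(R)=\bar G$, so $R\unlhd T$ is trivial, i.e.\ $t\nmid|T|$. In that case I repeat the argument with a Sylow $r$-subgroup for any prime $r\mid |T|$: its normaliser has index dividing $|T|$, which is now coprime to $st$, giving the same contradiction. The symmetric argument handles $t\nmid|T|$.

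\emph{Structure of $A(G)$.} I would argue by induction on $|G|$ that every chief factor of $G$ inside $A(G)$ is abelian. Let $N=T^k$ be a non-abelian chief factor below $A(G)$. If $k=1$, then $G/C_G(N)$ is almost simple, so $C_G(N)\in\mathcal A^*(G)$ and $A(G)\le C_G(N)$. This forces $N$ to be central in $A(G)/(\text{below})$, a contradiction. If $k>1$, I would try to produce a proper subgroup containing Sylow $s$- and $t$-subgroups of $G$: for instance $N_G(R)$ with $R$ a $\gen{x}$-invariant Sylow subgroup of $N$ for a suitable prime, or the stabiliser of a component. This is the step I expect to be the main obstacle; the two-element transitive permutation action on the $k$ components needs care.

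Once $A(G)$ is soluble, take $K_3$ to be the subgroup generated by the $\{s,t\}$-elements of $K_2$, and show that it is a normal Hall subgroup with $K_2/K_3$ nilpotent. Let $V$ be an abelian chief factor of $G$ that is an $r$-group with $r\notin\{s,t\}$. Applying the soluble Hall argument to suitable sections, the Frattini obstruction should show that $G/C_G(V)$ is not soluble. Hence $V$ is centralised by $A(G)$ and is acted on faithfully by a section involving some $T_i$ with $r\mid|T_i|$. Together with coprime action, this gives nilpotence and $\pi(K_2/K_3)\subseteq\pi(K_1/K_2)$.
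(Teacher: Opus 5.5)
Your arguments for the solubility of $G/K_1$, for $st$ dividing $|T_i|$, and for a non-abelian chief factor $T^k$ with $k=1$ are correct and essentially the paper's. There is, however, a genuine gap exactly where you expected one. The case $k>1$ is the heart of Theorem~A, and you leave it open.

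The missing idea is the dichotomy in \cref{lem:normalizes sylow} and \cref{lem:structure}. Pass to $\bar G=G/C_G(U/V)$, whose unique minimal normal subgroup is $N=T_1\times\dots\times T_k$. Your Frattini argument, run with $N$ in place of $T$, gives $st\mid |N|$. Because the images $\bar x,\bar y$ of the generators have \emph{prime} order, each one either normalizes some $T_i$ or generates a group acting semiregularly on $\{T_1,\dots,T_k\}$. This is precisely where primality of $s,t$ enters; the invariably $(3,25)$-generated group $\Alt(5)\wr\Alt(5)$ shows the conclusion fails for prime powers.

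Suppose $\gen{\bar x}$ is semiregular. Choose one $T_i$ in each $\gen{\bar x}$-orbit and $R_i\in\Syl_t(T_i)$. The product of all the $R_i^{\bar x^j}$ is an $\bar x$-invariant Sylow $t$-subgroup of $N$. Hence some $N$-conjugate of $\bar x$ normalizes $Q\cap N$, where $Q\in\Syl_t(\bar G)$ contains $\bar y$. That conjugate and $\bar y$ both lie in $N_{\bar G}(Q\cap N)$. This subgroup is proper, because $1\ne Q\cap N$ cannot be normal in $\bar G$ when $N$ is a non-abelian minimal normal subgroup. The same argument with $s$ in place of $t$ handles $\bar y$. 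So both generators fix components. Since $\bar G$ is transitive on components, they can be conjugated independently into $N_{\bar G}(T_1)$, which forces $T_1\unlhd\bar G$ and $k=1$. Your $k=1$ argument then finishes.

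Your last paragraph does not establish the structure of $K_2/K_3$ and $K_3$, and its key claim is false. An $r$-chief factor $V$ with $r\notin\{s,t\}$ can have $G/C_G(V)$ soluble. No maximal subgroup of $\PSL_3(4)$ has order divisible by $35$, so $\PSL_3(4)$ is invariably $(5,7)$-generated. Hence so is $\SL_3(4)$: a proper subgroup $Y$ with $YZ(\SL_3(4))=\SL_3(4)$ would be a normal subgroup of index $3$ in a perfect group. Yet $V=Z(\SL_3(4))$ is a chief factor of order $3$ with $G/C_G(V)=1$. Independently of this, insolubility of $G/C_G(V)$ would not give $A(G)\le C_G(V)$. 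Nor does a faithful action of some $T_i$ on an $r$-group force $r\mid|T_i|$.

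The paper instead applies your Frattini obstruction directly to Hall subgroups of the soluble group $A(G)$. Let $H$ be a Hall $\{s,t\}$-subgroup of $A(G)$. Then $G=N_G(H)A(G)$, and $|G:N_G(H)|$ divides $|A(G):H|$, which is coprime to $st$, so $N_G(H)=G$. The same argument for Hall $\{s,t,r\}$-subgroups makes every Sylow subgroup of $A(G)/H$ normal, so $A(G)/H$ is nilpotent. Finally, suppose $r$ divides $|A(G)/H|$ but not $|G/A(G)|$. In $G/O^r(A(G))$, the Schur--Zassenhaus theorem gives a proper complement to the nontrivial $r$-group $A(G)/O^r(A(G))$. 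This complement contains Sylow $s$- and $t$-subgroups, a contradiction.
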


  By \cref{lem:quotients}, quotients of invariably $(s,t)$-generated groups are invariably $(s,t)$-generated and so $G/K_1$, $G/K_2$ and $G/K_3$ are all invariably $(s,t)$-generated.

A noteworthy feature of the proof of Theorem A is that apart from  using the Schreier property, it is  independent of the classification of the finite simple groups. In particular, the Schreier property  yields the quotient $G/K_1$ is a soluble group and so an $\{s,t\}$-group by \cref{lem:Sylow/Hall}.

That Theorem A genuinely  requires $s$ and $t$ to be primes  is demonstrated by part (i) of the following example. Part  (ii) of the example  shows that when $G$ is invariably generated by three elements of distinct prime orders, $A(G)$ need not be soluble. These examples were verified using {\sc magma} \cite{Magma}.

\begin{example}
\begin{enumerate}
\item The wreath product $W=\Alt(5) \wr \Alt(5)$ is invariably $(3,25)$-generated while $A(W)\cong \Alt(5)^5$, the base group of the wreath product,   is not soluble. In particular, by Theorem A, $W$ is not $(3,5)$-invariably generated.
    \item The wreath product $W=\Alt(5) \wr \Sym(2)$ is invariably $(2,3,5)$-generated while $W=A(W)$ is not soluble.
    \end{enumerate} \end{example}

Our interest in proving a result like Theorem A  originally arose from a question of  Pavel Zalesskii   \cite[Question 21.142]{khukhro2026unsolvedproblemsgrouptheory}: ``Let $p \ne q$ be fixed primes. Does   every finite
group embed into a finite group invariably generated by an element of order $p$ and an
element of order $q$?''

The following striking result, which follows easily from Theorem A, sheds considerable light on the answer to Zalesskii's question (where we have replaced $p$ and $q$ by $s$ and $t$).

\begin{thmB}\label{thm:cor1}
 Let $s<t$ be prime numbers.  Suppose that $T$ is a non-abelian simple group and that $T$ is isomorphic to a subgroup of an invariably $(s,t)$-generated group. Then $T$ is isomorphic to a subgroup of an almost simple invariably $(s,t)$-generated group.
 \end{thmB}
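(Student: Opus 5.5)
Let $G$ be an invariably $(s,t)$-generated group and let $T\le G$ with $T$ non-abelian simple. The plan is to use the normal series of Theorem A to push $T$ into one almost simple quotient $G/L$ with $L\in\mathcal A^*(G)$. That quotient is invariably $(s,t)$-generated by the final sentence of Theorem A, so this gives the claim.

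First, $G$ cannot be a soluble $\{s,t\}$-group, because it contains the non-soluble subgroup $T$. Hence $G$ has the series $G\ge K_1>K_2=A(G)\ge K_3\ge 1$ of Theorem A, and $A(G)=K_2$ is soluble. Since $T$ is non-abelian simple, $T\cap K_2$ is a soluble normal subgroup of $T$, so it is trivial. Therefore $T$ embeds in $G/K_2$.

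Next we locate $T$ inside a single almost simple quotient. By Theorem A and \cref{lem:AG}, $K_2=\bigcap_{L\in\mathcal A^*(G)\cup\{G\}}L$. Hence $G/K_2$ embeds in $\prod_{L\in\mathcal A^*(G)} G/L$ via the diagonal map; the factor $G/G$ is trivial and can be dropped. Consider the image $TK_2/K_2\cong T$. Its projection to $G/L$ is $TL/L$, which is a quotient of $T$, so it is either trivial or isomorphic to $T$. If every projection were trivial, then $T$ would lie in $\bigcap_{L\in\mathcal A^*(G)} L=K_2$. That contradicts $T\cap K_2=1$ and $T\ne 1$. So some $L\in\mathcal A^*(G)$ has $T\cap L=1$, and then $T\cong TL/L\le G/L$.

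Finally, $G/L$ is almost simple by the definition of $\mathcal A^*(G)$, and it is invariably $(s,t)$-generated by the last sentence of Theorem A, or directly by \cref{lem:quotients}. This completes the argument.

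The argument has no serious obstacle. The only point needing care is the intersection description of $A(G)$, and that is supplied by \cref{lem:AG}. If $\mathcal A^*(G)$ were empty, then $K_2=G$ and $G$ would be soluble, which is excluded because $G$ contains $T$.
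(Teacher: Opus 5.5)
Your proof is correct and is essentially the paper's argument. Solubility of $A(G)$ forces $T\cap A(G)=1$. The description $A(G)=\bigcap_{K\in\mathcal A^*(G)\cup\{G\}}K$ from \cref{lem:AG} then gives some $J\in\mathcal A^*(G)$ with $T\not\le J$, so $T\cap J=1$ and $T\cong TJ/J\le G/J$, which is invariably $(s,t)$-generated by \cref{lem:quotients}. Your detour through the dichotomy of Theorem A and the diagonal embedding into $\prod_L G/L$ is harmless but unnecessary, since the paper simply uses that $A(G)$ is soluble (\cref{lem:AGsol}).
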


Given Theorem A, it is natural to ask what can be said if we fix $(s,t)$. We answer this when $(s,t)=(2,3)$, and here we obtain a complete description.

\begin{thmC}\label{thm:main theorem}
Suppose that $G$ is an invariably $(2,3)$-generated group.
Then either $G=A(G)$ is a soluble $\{2,3\}$-group, or every almost simple quotient of $G$ is isomorphic to $\PGL_2(3^{2^a})$ for some $a\ge 1$.
In addition, if $G$ is not a soluble $\{2,3\}$-group, then there is a normal series
$$G\ge K_1> K_2= A(G)\ge K_3\ge 1$$ where
\begin{enumerate}
\item $|G/K_1|=2$,
\item $G/K_2$ is almost semisimple, and $$K_1/K_2 \cong \prod_{i=1}^n \PSL_2(3^{2^{a_i}})$$ with $1\le a_1< a_2<\dots <a_n$,
\item $K_2/K_3$ is a nilpotent group of odd order  with \\ $\pi(|K_2/K_3|) \subseteq \pi( 3^{2^{a_n}}-1)\cup \pi(3^{2^{a_n}}+1)$, and
\item $K_3$ is a Hall $\{2,3\}$-subgroup of $K_2$.
\end{enumerate}
Furthermore, the almost simple groups $\PGL_2(3^{2^a})$, $a\ge 1$, are invariably $(2,3)$-generated.
\end{thmC}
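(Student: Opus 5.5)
Theorem C will follow from Theorem A once we know which almost simple groups are invariably $(2,3)$-generated. So the plan has two halves: an almost simple classification, which uses the classification of finite simple groups and is the hard part, and a formal deduction of the normal series from Theorem A.

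\emph{Almost simple case.} Let $X$ be almost simple and invariably $(2,3)$-generated by $(x,y)$, with $|x|=2$ and $|y|=3$, and put $T=\Soc(X)$.

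Since $X=\langle x,y\rangle$ and $X/T$ is soluble, Theorem A (applied to $X$) gives that $X/T$ is a $\{2,3\}$-group generated by the images of $x$ and $y$. The image $X/T$ is a quotient of $\mathbb Z_2*\mathbb Z_3$.

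The key tool is the standard observation that invariable generation fails whenever some maximal subgroup $M$ of $X$ meets both the conjugacy class $x^X$ and the class $y^X$. I would therefore go through the families of simple groups and, in each case, exhibit one subgroup meeting both classes:
\begin{itemize}
\item In $\Alt(n)$ and $\Sym(n)$, take point or set stabilisers. Elements of order $2$ and $3$ always fix a point or preserve a small set unless $n$ is tiny, and those tiny cases are handled by hand.
\item For groups of Lie type in characteristic $p\neq 3$, take a parabolic or Borel subgroup. Every element of order $2$ or $3$ lies in some proper subgroup of this kind, or in the normaliser of a torus. Both $x$ and $y$ are semisimple or unipotent, so I would choose a common subgroup, for instance a Levi subgroup $\GL_2$-type block containing representatives of every class of elements of order $2$ and $3$.
\item In characteristic $3$, elements of order $3$ are unipotent and so lie in every Borel subgroup. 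Elements of order $2$ normalise some maximal torus, hence usually lie in a Borel subgroup or in a maximal-rank subgroup that also contains unipotent elements of every class. This should kill everything except rank one.
\item Sporadic groups are checked from the ATLAS by the same criterion.
\end{itemize}

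The rank one, characteristic $3$ analysis is where the field must be pinned down to $3^{2^a}$. For $T=\PSL_2(q)$ with $q=3^f$:
\begin{itemize}
\item Unipotent elements of order $3$ lie in a Borel subgroup. If $x$ is an involution of $\PSL_2(q)$, it lies in the torus of a Borel subgroup when $q\equiv1\pmod 4$. Hence $x$ must be an outer involution.
\item If $f$ has an odd prime factor $r$, then $\PGL_2(3^{f/r})$ or a subfield subgroup contains both classes. An outer field involution also centralises a subfield group containing elements of order $3$. Together these force $f=2^a$ and $x\in\PGL_2(q)\setminus\PSL_2(q)$.
\item $a=0$ must be excluded: $\PGL_2(3)\cong \Sym(4)$ is soluble.
\end{itemize}

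For the converse I would show that $\PGL_2(3^{2^a})$ is invariably generated by an involution $x$ of $\PGL_2\setminus\PSL_2$ and a unipotent $y$. Using Dickson's list of maximal subgroups, a subgroup containing a unipotent element and an outer involution is one of the following:
\begin{itemize}
\item a Borel subgroup, which is excluded because an outer involution has eigenvalues in $\GF(q^2)\setminus\GF(q)$ up to scalars, as $q\equiv1\pmod4$;
\item a subfield group $\PGL_2(3^{2^{a-1}})$ extended appropriately, which is excluded by determinant and square-class considerations;
\item $\Alt(4)$-, $\Sym(4)$- or $\Alt(5)$-type subgroups, which are excluded because they would have to contain conjugates generating a proper subgroup.
\end{itemize}
Verifying this carefully, in particular the subfield case, is the main obstacle.

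\emph{Deduction of the series.} Take $K_1,K_2,K_3$ as in Theorem A. By Theorem A, each $G/L$ with $L\in\mathcal A^*(G)$ is an invariably $(2,3)$-generated almost simple group, so it is $\PGL_2(3^{2^{a}})$ and its socle is $\PSL_2(3^{2^a})$. This gives the product decomposition in (2).

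To see that the $a_i$ are distinct, suppose two factors were isomorphic. Then $G/K_2$ would have a quotient of the form $(T\times T).\text{(2-group)}$ whose projections are both $\PGL_2(q)$. Conjugating $x$ or $y$ independently in the two coordinates would land in a diagonal-type proper subgroup, contradicting invariable generation.

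For (1), the quotient $G/K_1$ embeds in $\prod \Out$ acting on each factor as the diagonal automorphism. It is therefore an elementary abelian $2$-group generated by the images of $x$ and $y$, and the image of $y$ is trivial, so $|G/K_1|\le 2$. It is nonzero because the quotients are $\PGL_2$ rather than $\PSL_2$.

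Finally, (3) and (4) are (3) and (4) of Theorem A with $\{s,t\}=\{2,3\}$. Here $\pi(K_1/K_2)\setminus\{2,3\}\subseteq\pi(q_n^2-1)$, since $3^{2^{a_i}}\pm1$ divides $3^{2^{a_n}}\pm1$ up to the factorisation $q^2-1$ across the tower.
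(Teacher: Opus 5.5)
Your reduction matches the paper's Section~8. Once every almost simple quotient is known to be $\PGL_2(3^{2^a})$, the argument goes as follows:
\begin{enumerate}
\item The series comes from Theorem A and \cref{prop: G/A(G)}.
\item The $a_i$ are distinct by the diagonal-subgroup argument of \cref{lem:no repeated factors}. The point to verify there is that the diagonal elements of order $3$ form exactly two $G$-classes, represented by $(d,d)$ and $(d,d^z)$, and that both diagonals $\{(h,h)\}$ and $\{(h,h^z)\}$ contain $(z,z)$.
\item $G/K_1$ is an elementary abelian $2$-group generated by the image of one involution.
\item Parts (3) and (4) follow from Theorem A, since $q_i^2-1$ divides $q_n^2-1$.
\end{enumerate}
Your converse for $\PGL_2(3^{2^a})$ via maximal subgroups is a legitimate and shorter alternative to the paper's $\langle d,d^z\rangle$ argument, provided you use the correct list. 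For $q=3^{2^a}$ the maximal subgroups of $\PGL_2(q)$ are $\PSL_2(q)$, the Borel subgroups and $\Dih(2(q\pm1))$. The reason is that $\PGL_2(3^{2^{a-1}})$ already lies in $\PSL_2(q)$, because every element of $\GF(3^{2^{a-1}})$ is a square in $\GF(q)$; so does every $\Alt(4)$, $\Sym(4)$ and $\Alt(5)$. Your dismissal of the $\Alt(4)$, $\Sym(4)$, $\Alt(5)$ cases is not an argument as written.

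The genuine gap is the ``only if'' half of the almost simple classification (\cref{thm:Lietype23as}) for groups of Lie type, and this carries all the weight of Theorem C. You never exhibit, for a given $K$ and given classes of $x$ and $y$, a proper subgroup meeting both classes. The specific candidate you propose, a Levi subgroup containing representatives of every class of elements of orders $2$ and $3$, does not exist in general: already in $\PSL_2(q)$ with $3\mid q+1$, no Borel or proper Levi subgroup contains an element of order $3$.

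More seriously, you do not treat the cases where $x$ or $y$ is outer, and these are precisely the hard ones:
\begin{itemize}
\item graph involutions of $\PSL_n^\epsilon(q)$, $\POmega_{2n}^+(q)$ and $\E_6(q)$;
\item diagonal automorphisms of order $3$ when $3\mid(n,q-\epsilon)$;
\item triality automorphisms of $\POmega_8^+(q)$. These are not elements of $K$, let alone unipotent ones, even when $r=3$, so your ``should kill everything except rank one'' does not reach them.
\end{itemize}

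The ideas you are missing are these.
\begin{itemize}
\item The common overgroup is not chosen in advance. It is $C_G(u)$ for an involution $u\in C_K(d)$, obtained from a broad subgroup (\cref{GR}) once $|C_K(d)|$ is shown to be even via Gerhardt's classification (\cref{lem:Ckd even}); this forces $z\notin K$.
\item Dually, $3$-broad subgroups together with \cref{lem:cop3} force $d\notin K$ (\cref{lem no 3-broad}).
\item In characteristics $2$ and $3$, an automorphism normalising a non-trivial $r$-subgroup normalises a parabolic subgroup (\cref{lem:normalise a parabolic}).
\item The surviving outer cases need bespoke subgroups (\cref{lem:NGD struct,lem:not E6,GLGone}).
\end{itemize}

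Your $\PSL_2(3^f)$ analysis also has two slips. First, $C_K(\phi)\cong\PGL_2(3^{f/2})$ meets only one of the two $K$-classes of elements of order $3$; argue instead that a field automorphism normalises a Borel subgroup, which contains both classes. Second, you must first exclude $y$ being a field automorphism of order $3$: it centralises an involution of $K$, as $x$ does, and $K$ has a single class of involutions.
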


Using \cref{lem:dif socles} and Theorem C, we  have the following remark.

\begin{remark} For $1\le a_1< \dots< a_n$, there exists an invariably $(2,3)$-generated almost semisimple group with socle $\prod_{i=1}^n \PSL_2(3^{2^{a_i}})$.
\end{remark}

The following example explicitly shows that the section $K_2/K_3$ in Theorem A (and Theorem C) can be non-trivial. We comment that we used {\sc magma} \cite{Magma} to find a $\GF(5)\PGL_2(9)$-module with non-trivial $2$-cohomology and to construct the extension.

\begin{example} In the non-split extension $X=(5^8)^.\PGL_2(9)$, the  invariable $(2,3)$-sequence for the quotient group $X/O_5(X)\cong \PGL_2(9)$ lifts to an invariable $(2,3)$-sequence for $X$.
\end{example}

An immediate consequence of Theorems~B  and C and the subgroup structure of $\PSL_2(3^{2^a})$ is the following statement.

\begin{corollary}\label{cor:Zalesskii Q}
  Suppose that $T$ is a simple group and that $T$ is isomorphic to a subgroup of an invariably $(2,3)$-generated group. Then $T$ is isomorphic  to $\PSL_2(3^{2^a})$ for some $a \ge 1$ or $\Alt(5)$.
 \end{corollary}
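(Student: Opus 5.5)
By Theorem B applied with $(s,t)=(2,3)$, any non-abelian simple $T$ embedded in an invariably $(2,3)$-generated group embeds in an almost simple invariably $(2,3)$-generated group $X$. By Theorem C, $X\cong \PGL_2(3^{2^a})$ for some $a\ge 1$. The corollary says ``simple group'', so abelian $T$ should be read as excluded or as trivially handled; the content is the non-abelian case. So the task is to find the non-abelian simple subgroups of $X$.

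Since $T$ is perfect, $T\le X'=\PSL_2(q)$ with $q=3^{2^a}$. We then use Dickson's classification of subgroups of $\PSL_2(q)$ for $q$ odd. A subgroup is one of the following: a subgroup of a Borel subgroup $E_q\rtimes C_{(q-1)/2}$, a dihedral group, $\Alt(4)$, $\Sym(4)$, $\Alt(5)$, $\PSL_2(q_0)$, or $\PGL_2(q_0)$ with $q_0^r=q$. Borel subgroups and dihedral groups are soluble, as are $\Alt(4)$ and $\Sym(4)$. The group $\PGL_2(q_0)$ is not simple for $q_0\ge 3$ odd. So a non-abelian simple $T$ is either $\Alt(5)$ or $\PSL_2(q_0)$ with $q_0=3^m$ and $m\mid 2^a$.

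Now $m=2^c$ with $0\le c\le a$. If $c=0$, then $\PSL_2(3)\cong\Alt(4)$ is not simple, so we discard it. If $c\ge1$, then $T\cong \PSL_2(3^{2^c})$ with $c\ge 1$, which is of the required form. One should also note that $\PSL_2(9)\cong\Alt(6)$ contains $\Alt(5)$, so the $\Alt(5)$ case does occur. That is consistent with the statement but not needed.

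The main step needing care is quoting Dickson's theorem precisely: the subfield subgroups are exactly $\PSL_2(q_0)$ and $\PGL_2(q_0)$ for subfields $\GF(q_0)$ of $\GF(q)$, and the exceptional subgroups are only $\Alt(4)$, $\Sym(4)$ and $\Alt(5)$. Everything else follows immediately from Theorems B and C.
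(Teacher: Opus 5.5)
Your proposal is correct and follows exactly the route the paper indicates: Theorem B reduces to an almost simple invariably $(2,3)$-generated group, Theorem C (equivalently \cref{thm:Lietype23as}) identifies that group as $\PGL_2(3^{2^a})$, and Dickson's classification of the subgroups of $\PSL_2(3^{2^a})$ leaves only $\Alt(5)$ and $\PSL_2(3^{2^c})$ with $1\le c\le a$. Your reading of ``simple'' as non-abelian, as in Theorem B, is appropriate. The small imprecision in your $\PGL_2(q_0)$ entry of Dickson's list (it actually requires $q_0^2\mid q$) is harmless, since those groups are not simple.
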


In particular, this result provides a negative answer to the question of Zalesskii. Indeed  $\PSL_3(2)$ is the smallest simple group which does not embed into an invariably $(2,3)$-generated group. \footnote{We have learnt that an answer to the question of Zalesskii has also just  been provided in \cite{YY}.}
We also note that   Theorem C makes a small contribution to the investigation of the  Dolfi, Guralnick,  Herzog  and Praeger conjecture mentioned above.

In \cref{sec:2}, we prove both Theorems A and B as well as the already mentioned results regarding the almost semisimple residual.  Our approach to the proof of Theorem C involves $3$-broad subgroups (see \cref{def:pbroad}), a notion which extends that of broad subgroups introduced by Guralnick and Robinson in \cite{GuralnickRobinson1}. Our investigation of broad and $p$-broad subgroups is in \cref{sec:3} and may contain results of independent interest. Our proof of Theorem C commences in \cref{sec:5} where we start the investigation of almost simple groups $G$ which are invariably $(2,3)$-generated. The theorem to be proved over the subsequent three sections is \cref{thm:Lietype23as}  which says that $G$ is invariably $(2,3)$-generated if and only if $G\cong \PGL_2(3^{2^b})$ for some $b \ge 1$. Let $K=\Soc(G)$. The remainder of \cref{sec:5} shows that $K$ cannot be a sporadic simple group, or, unless it has degree $6$, an alternating group. \cref{sec:6} considers the case in which $K$ is $\PSL_2(r^a)$ and here the examples $\PGL_2(3^{2^b})$ of invariably $(2,3)$-generated groups appear and are shown to be examples. \cref{sec:7} considers groups of Lie type with socle not isomorphic to $\PSL_2(r^a)$, the objective being to prove that they are not invariably $(2,3)$-generated. This section is where, after the small rank groups have been eliminated as candidates for $K$, broad and $3$-broad subgroups play a decisive role. Especially in Subsection \ref{subsec:gen lemmas} their presence is used to  show that if $(z,d)$ is an invariable $(2,3)$-sequence for $G$, then $z\not \in K$ and then, if $K$ has a $3$-broad subgroup, that $d\not \in K$. \cref{thm:Lietype23as} is finally proved at the end of \cref{sec:7} and \cref{sec:8} then draws everything together to prove Theorem C.

Throughout the paper all groups are finite. Our group theoretic notation is either standard or explained in the text. Our notation for the simple groups is either self-explanatory or follows \cite{GLS3}.  For an almost simple group $G$, we let $K=\Soc(G)$ and identify $K=\Inn(K) \le G\le \Aut(K)$.  Thus elements of almost simple groups are automorphisms of its socle.
For groups of Lie type we   follow the conventions from \cite{GLS3}.

\section{The structure of invariably $(s,t)$-generated groups: The proof of Theorems A and B}\label{sec:2}

 The objective of the section is to prove Theorems A and B.
 We start with two  result which were mentioned in the introduction.

\begin{lemma}\label{lem:Sylow/Hall}
Suppose that $G$ is invariably $\Lambda$-generated.
\begin{enumerate}
\item If $p$ is a prime and $\pi(\Lambda)=\{p\}$, then $G$ is a $p$-group.
\item If $G$ is soluble, then $G$ is a $\pi(\Lambda)$-group.
\end{enumerate}
\end{lemma}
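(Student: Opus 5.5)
The idea is to use a Sylow or Hall subgroup that meets every conjugacy class of the generators, so that suitable conjugates of the invariable sequence all lie in that subgroup; invariable generation then forces the subgroup to be all of $G$.

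For (i), let $(x_1,\dots,x_n)$ be an invariable $\Lambda$-sequence for $G$ and let $P\in\Syl_p(G)$. Since $\pi(\Lambda)=\{p\}$, each $\lambda_i$ is a power of $p$, so each $x_i$ is a $p$-element. By Sylow's theorem each $\langle x_i\rangle$ is contained in some Sylow $p$-subgroup. Hence there is $g_i\in G$ with $x_i^{g_i}\in P$. Invariable generation then gives
$$G=\langle x_1^{g_1},\dots,x_n^{g_n}\rangle\le P,$$
so $G=P$ is a $p$-group.

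For (ii), the same argument works with Hall subgroups in place of Sylow subgroups. Put $\pi=\pi(\Lambda)$. Since $G$ is soluble, Hall's theorem gives a Hall $\pi$-subgroup $H$ of $G$, and every $\pi$-subgroup of $G$ is contained in a conjugate of $H$. Each $x_i$ has order dividing $\lambda_i$, so $\langle x_i\rangle$ is a $\pi$-subgroup. Hence there is $g_i\in G$ with $x_i^{g_i}\in H$. As before, $G=\langle x_1^{g_1},\dots,x_n^{g_n}\rangle\le H$, so $G=H$ is a $\pi$-group.

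No step here is a real obstacle. The only point needing care is that (ii) requires the full Hall theorem for soluble groups, namely that every $\pi$-subgroup lies in a conjugate of a Hall $\pi$-subgroup, and not merely the existence of Hall subgroups. This is exactly where solubility is used.
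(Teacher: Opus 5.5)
Your proposal is correct and matches the paper's proof essentially step for step: in (i) you conjugate each $x_i$ into a fixed Sylow $p$-subgroup, and in (ii) into a fixed Hall $\pi(\Lambda)$-subgroup, and invariable generation then forces that subgroup to be all of $G$. You are also right that (ii) uses the conjugacy/containment part of Hall's theorem for soluble groups, which is exactly what the paper invokes.
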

\begin{proof}
 (i) Let $(x_1, \dots, x_n)$ be an invariable  $\Lambda$-sequence and $S \in \Syl_p(G)$. Then, as each $x_i$ is a $p$-element, $x_i$ is in some Sylow $p$-subgroup of $G$. Thus Sylow's theorem implies that there exists $g_i\in G$ such that $x_i^{g_i}\in S$. But then $$G= \langle x_1^{g_1},\dots,x_n^{g_n}\rangle\le S\le G$$ and so $G=S$ is a $p$-group.

(ii) In this case $G$   is a soluble group which is invariably $\Lambda$-generated. Let $(x_1, \dots, x_n)$ be an invariable  $\Lambda$-sequence. Then using Hall's theorem, let $H$ be a Hall $\pi(\Lambda)$-subgroup. Then each $x_i$ is conjugate to an element of $H$ and so $G=H$ as claimed.
 \end{proof}

\begin{lemma}\label{lem:quotients} Suppose that $G$ is invariably $\Lambda$-generated. Then every quotient of $G$ is invariably $\Lambda$-generated.
\end{lemma}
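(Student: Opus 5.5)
The natural approach is to push an invariable $\Lambda$-sequence of $G$ down to the quotient and check both defining conditions of \cref{def1} there. So let $N \unlhd G$, write $\ov{G}=G/N$ and $\ov{x}=xN$ for $x\in G$, and let $(x_1,\dots,x_n)$ be an invariable $\Lambda$-sequence for $G$. The candidate invariable $\Lambda$-sequence for $\ov{G}$ is $(\ov{x_1},\dots,\ov{x_n})$.

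The order condition is immediate. Each $x_i$ has order dividing $\lambda_i$, so $x_i^{\lambda_i}=1$, hence $\ov{x_i}^{\lambda_i}=\ov{1}$ and the order of $\ov{x_i}$ divides $\lambda_i$.

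For generation, let $\ov{g_1},\dots,\ov{g_n}\in\ov{G}$ be arbitrary. Since the natural map $G\to\ov{G}$ is surjective, we can pick preimages $g_1,\dots,g_n\in G$. Because $(x_1,\dots,x_n)$ is an invariable $\Lambda$-sequence, $G=\langle x_1^{g_1},\dots,x_n^{g_n}\rangle$. A surjective homomorphism maps a generating set onto a generating set and respects conjugation, so
$$\ov{G}=\langle \ov{x_1^{g_1}},\dots,\ov{x_n^{g_n}}\rangle=\langle \ov{x_1}^{\,\ov{g_1}},\dots,\ov{x_n}^{\,\ov{g_n}}\rangle.$$
Since the $\ov{g_i}$ were arbitrary, $(\ov{x_1},\dots,\ov{x_n})$ is an invariable $\Lambda$-sequence for $\ov{G}$, and $\ov{G}$ is invariably $\Lambda$-generated.

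The only point that needs any care is the generation step: every conjugating element of $\ov{G}$ must lift to $G$. This is exactly the surjectivity of $G\to G/N$, so there is no genuine obstacle.
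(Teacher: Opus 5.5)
Your proposal is correct and follows essentially the same route as the paper: project the invariable $\Lambda$-sequence to $G/N$, observe that the orders of the images still divide the $\lambda_i$, and use surjectivity of $G \to G/N$ to get generation. You are in fact slightly more explicit than the paper, which only records that the images $x_iK$ generate $G/K$ and leaves implicit the point you spell out, namely that arbitrary conjugating elements of the quotient lift to $G$.
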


\begin{proof} Let $K$ be a normal subgroup of $G$. Suppose that $(x_1, \dots, x_n)$ is an invariable $\Lambda$-sequence for $G$. Then, as $x_iK$ has order dividing the order of $x_i$ and the order of $x_i$ divides $\lambda_i$, the order of $x_iK$ divides $\lambda_i$. Hence, as $\gen{x_iK\mid 1 \le i \le n}=\gen{x_i\mid 1 \le i \le n}/K=G/K$, $(x_1K, \dots, x_nK)$ is an invariable $\Lambda$-sequence for $G/K$.
\end{proof}

 We now prove the results about the almost simple residual. We begin with the following observation.

\begin{lemma}\label{lem:almost residual} The quotient $G/A(G)$ is almost semisimple and, in particular,
$A(G) \in \mathcal A(G)$.
\end{lemma}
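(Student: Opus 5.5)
The plan is to reduce to finitely many members of $\mathcal A(G)$ and then study minimal normal subgroups of $G/A(G)$ through its embedding into a product of almost semisimple groups. Since $G$ is finite, the intersection defining $A(G)$ equals $K_1\cap \dots \cap K_m$ for finitely many $K_i\in \mathcal A(G)$. The set $\mathcal A(G)$ is nonempty because $G\in\mathcal A(G)$: the trivial group is almost semisimple. Write $A=A(G)$ and $\ov G=G/A$. The natural map $\ov G\to \prod_{i=1}^m G/K_i$ is injective, so $\ov G$ is isomorphic to a subdirect product of almost semisimple groups. The task is to show that every minimal normal subgroup $\ov N$ of $\ov G$ is a non-abelian simple group.

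Fix a minimal normal subgroup $\ov N=N/A$ with $A<N\unlhd G$. Since $N\not\le A$, some $K_i$ satisfies $N\not\le K_i$. For such an $i$, the image $NK_i/K_i$ is a nontrivial normal subgroup of the almost semisimple group $G/K_i$. Moreover $NK_i/K_i\cong N/(N\cap K_i)$, and $N\cap K_i\ge A$ is normal in $G$. By minimality of $\ov N$, either $N\cap K_i=A$ or $N\cap K_i=N$. The second option is excluded because $N\not\le K_i$. Hence $N\cap K_i=A$, and $\ov N\cong NK_i/K_i$.

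Next we show that $NK_i/K_i$ is a minimal normal subgroup of $G/K_i$. Suppose $K_i\le M\le NK_i$ with $M\unlhd G$. Then $M\cap N$ is normal in $G$ and lies between $A$ and $N$, so $M\cap N$ is either $A$ or $N$. By Dedekind's law, $M=(M\cap N)K_i$, which is therefore either $AK_i=K_i$ or $NK_i$. So $NK_i/K_i$ is minimal normal in $G/K_i$. Since $G/K_i$ is almost semisimple, $NK_i/K_i$ is non-abelian simple, and hence so is $\ov N$. This proves that $\ov G$ is almost semisimple, and therefore $A(G)\in\mathcal A(G)$.

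The main point needing care is the step $N\cap K_i=A$, which relies on using minimality of $\ov N$ inside $\ov G$ rather than inside $G/K_i$. The rest is routine. The degenerate case in which $\ov G$ is trivial needs no separate treatment, since the trivial group is almost semisimple.
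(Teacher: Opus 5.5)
Your proof is correct and follows essentially the same route as the paper. In both, you choose a member of $\mathcal A(G)$ not containing the minimal normal subgroup, use minimality to force a trivial intersection (modulo $A(G)$), and transfer minimal normality to the almost semisimple quotient. The only difference is organisational: the paper reduces to showing that $\mathcal A(G)$ is closed under pairwise intersections (passing to $G/(X\cap Y)$), whereas you treat the finite intersection $K_1\cap\dots\cap K_m$ in one step and check minimality explicitly via Dedekind's law.
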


\begin{proof} It suffices to show that for $X,Y \in \mathcal A(G)$, $X \cap Y \in \mathcal A(G)$. For the calculation, we may suppose that $X\cap Y=1$. Let $N$ be a minimal normal subgroup of $G$. Then $N$ is non-trivial and so without loss we may suppose that $N \not \le X$. The minimality of $N$ shows that $N\cap X=1$ and so $N \cong NX/X$ as $G$-groups. Since $N$ is a minimal normal subgroup of $G$, $NX/X$ is a minimal normal subgroup of $G/X$. As $G/X$ is almost semisimple $N$ is a non-abelian simple group. Hence $G$ is almost semisimple.
\end{proof}

\begin{lemma}\label{lem:AG} We have  $A(G)= \bigcap_{K\in \mathcal A^*(G)\cup \{G\}} K$.
\end{lemma}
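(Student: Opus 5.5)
We prove the two inclusions between $A(G)$ and $B:=\bigcap_{K\in \mathcal A^*(G)\cup\{G\}} K$; the second inclusion carries the content.

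For $A(G)\le B$: every $K\in\mathcal A^*(G)$ lies in $\mathcal A(G)$, since if $G/K$ is almost simple then its only minimal normal subgroup is its socle, which is non-abelian simple. Also $G\in\mathcal A(G)$, because the trivial group is almost semisimple. So $A(G)$, being the intersection over all of $\mathcal A(G)$, is contained in each member of $\mathcal A^*(G)\cup\{G\}$, and hence in $B$.

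For $B\le A(G)$: by \cref{lem:almost residual} we may pass to $\ov G=G/A(G)$, which is almost semisimple. Every $K\in\mathcal A^*(G)$ contains $A(G)$, since $K\in\mathcal A(G)$. The correspondence theorem therefore identifies $\mathcal A^*(G)$ with $\mathcal A^*(\ov G)$. It thus suffices to show that for an almost semisimple group $X$, the intersection of $X$ with all $K\unlhd X$ such that $X/K$ is almost simple is trivial.

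If $X=1$ this intersection is $X=1$. Otherwise let $N_1,\dots,N_m$ be the minimal normal subgroups of $X$, each non-abelian simple. Distinct ones intersect trivially and so commute, giving $\Soc(X)=N_1\times\dots\times N_m$. For each $i$ set $C_i=C_X(N_i)\unlhd X$. Then $C_i\cap N_i=Z(N_i)=1$, so $N_iC_i/C_i\cong N_i$ embeds in $X/C_i\le \Aut(N_i)$ and contains $\Inn(N_i)$. A subgroup of $\Aut(N_i)$ containing $\Inn(N_i)$ is almost simple, because any non-trivial normal subgroup meets $\Inn(N_i)$ non-trivially (its centraliser in $\Aut(N_i)$ is trivial). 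Hence $C_i\in\mathcal A^*(X)$.

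Finally $\bigcap_i C_i = C_X(\Soc(X))$. If this were non-trivial it would contain a minimal normal subgroup $N_j$ of $X$, which would then centralise itself; this is impossible since $N_j$ is non-abelian. So the intersection is trivial, and the result follows.

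The main point to watch is the identification $\Soc(X)=\prod N_i$ together with the fact that a non-trivial normal subgroup contains a minimal normal one. Both are standard, and no Schreier property is needed for this lemma.
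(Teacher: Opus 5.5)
Your proof is correct and follows essentially the same route as the paper. Both arguments reduce to the case $A(G)=1$, note that $C_G(N)\in\mathcal A^*(G)$ for each minimal normal subgroup $N$, and observe that the intersection of these centralizers centralizes $\Soc(G)$ and is therefore trivial. You also spell out the easy inclusion $A(G)\le\bigcap K$ and the correspondence $\mathcal A^*(G)\leftrightarrow\mathcal A^*(G/A(G))$, which the paper leaves implicit.
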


\begin{proof} We may assume that $A(G)=1$ and that $G\ne 1$. Thus $G$ is almost semisimple by \cref{lem:almost residual}.  Let $N$ be a minimal normal subgroup of $G$. Then $N$ is a non-abelian simple group and $G/C_G(N)$ is almost simple with socle $N C_G(N)/C_G(N)$. Hence $C_G(N)\in \mathcal A^*(G)$. Therefore $$L =\bigcap_{ N \text { minimal normal  in }G}C_G(N)$$ is normal in $G$ and centralizes $\Soc(G)$. Hence $L=1$. This proves the claim.
\end{proof}

The next lemma illustrates how we use the Schreier property.

\begin{lemma}\label{lem:schreier app}
Suppose that $L \in \mathcal A^*(G)$. Let $K>L$ be such that $K$ is normal in $G$. Then $G/K$ is soluble. In particular, if $K,L\in\mathcal A^*(G)$ and $K\le L$, then $K=L$.
\end{lemma}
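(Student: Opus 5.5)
Pass to the quotient $\ov G = G/L$. Since $L\in\mathcal A^*(G)$, $\ov G$ is almost simple; put $T=\Soc(\ov G)$, a non-abelian simple group. Then $\ov K = K/L$ is a non-trivial normal subgroup of $\ov G$, and $G/K\cong \ov G/\ov K$. It therefore suffices to show that $\ov G/\ov K$ is soluble.

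\textbf{Step 1.} Every non-trivial normal subgroup of an almost simple group contains the socle. Indeed, $T$ is the unique minimal normal subgroup of $\ov G$, because $C_{\ov G}(T)=1$. Hence $T\le \ov K$, and $\ov G/\ov K$ is a quotient of $\ov G/T$.

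\textbf{Step 2.} Conjugation gives an injection $\ov G\hookrightarrow \Aut(T)$, since $C_{\ov G}(T)=1$. This identifies $\ov G/T$ with a subgroup of $\Out(T)$. By the Schreier property (Schreier's conjecture), $\Out(T)$ is soluble. So $\ov G/T$ is soluble, and hence so is its quotient $\ov G/\ov K\cong G/K$. This proves the first assertion.

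\textbf{Step 3.} For the ``in particular'' part, suppose $K,L\in\mathcal A^*(G)$ with $K\le L$ and $K\ne L$, so $K<L$. Apply the first part with the roles played by $K$ (the member of $\mathcal A^*(G)$) and $L$ (the larger normal subgroup). This gives that $G/L$ is soluble. But $G/L$ is almost simple, so it contains the non-abelian simple group $\Soc(G/L)$ and cannot be soluble. This contradiction forces $K=L$.

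The only non-elementary input is the Schreier property, which we assume as stated in the introduction. Apart from that, the one point needing care is the fact $C_{\ov G}(T)=1$ in Step 1. It holds because $C_{\ov G}(T)$ is a normal subgroup of $\ov G$. If it were non-trivial it would contain a minimal normal subgroup, which lies in $\Soc(\ov G)=T$. Since $C_{\ov G}(T)\cap T=Z(T)=1$, this is impossible.
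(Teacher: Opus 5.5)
Your proof is correct and is essentially the paper's argument: the non-trivial normal subgroup $K/L$ of the almost simple group $G/L$ contains $\Soc(G/L)$, so $G/K$ is a quotient of $(G/L)/\Soc(G/L)$, which is soluble by the Schreier property, and the second assertion follows by swapping the roles of $K$ and $L$. Your check that $C_{G/L}(\Soc(G/L))=1$ just makes explicit a step the paper takes for granted.
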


\begin{proof} Since $G/L$ is almost simple, the Schreier property yields $G/S$ is soluble for $S\ge L$ with $S/L=\Soc(G/L)$. Since $K>L$, we have $K\ge S$ and therefore $G/K$ is soluble. Now suppose that $K,L\in\mathcal A^*(G)$ and $K< L$. Then $G/L$ is soluble; however it is almost simple.
\end{proof}

\begin{lemma}\label{prop: G/A(G)}   For $J \in \mathcal A^*(G)$, define $I_J= \bigcap_{K\in \mathcal A^*(G)\setminus \{J\}}K$. Then $I_J/A(G)$ is isomorphic to  a non-trivial normal subgroup of the almost simple group $G/J$ and $$\Soc(G/A(G))= \prod_{J\in \mathcal A^*(G)} \Soc(I_J/A(G)) \cong \prod_{J\in \mathcal A^*(G)}\Soc (G/J).$$
\end{lemma}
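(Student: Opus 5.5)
We reduce to $A(G)=1$ by passing to $G/A(G)$. Normal subgroups of $G$ containing $A(G)$ correspond to normal subgroups of $G/A(G)$, almost simple quotients are preserved, and $\mathcal A^*(G/A(G))=\{K/A(G)\mid K\in\mathcal A^*(G)\}$ because every $K\in\mathcal A^*(G)$ contains $A(G)$. So assume $A(G)=1$ and $G\ne1$. Then $G$ is almost semisimple by \cref{lem:almost residual}, and \cref{lem:AG} gives $\bigcap_{K\in\mathcal A^*(G)}K=1$.

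The first step is to describe $\mathcal A^*(G)$ through the minimal normal subgroups. As in the proof of \cref{lem:AG}, each minimal normal subgroup $N$ gives $C_G(N)\in\mathcal A^*(G)$. Conversely, take $J\in\mathcal A^*(G)$. Since $\bigcap K=1$, the group $J$ does not contain every minimal normal subgroup, so some $N\not\le J$. Then $NJ/J\cong N$ is a nontrivial normal subgroup of $G/J$, hence contains $\Soc(G/J)$. As $N$ is simple and $\Soc(G/J)$ is non-abelian, $NJ/J=\Soc(G/J)$. Now $[C_G(N),N]=1$, so $C_G(N)J/J$ centralizes $\Soc(G/J)$, which forces $C_G(N)\le J$. \cref{lem:schreier app} then gives $J=C_G(N)$. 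Next I check that $N\mapsto C_G(N)$ is injective. Distinct minimal normal subgroups commute, so $N'\le C_G(N)$ for $N'\ne N$, while $N\not\le C_G(N)$ since $N$ is non-abelian. Hence $\mathcal A^*(G)$ is in bijection with the set of minimal normal subgroups $N_1,\dots,N_m$, via $J_i=C_G(N_i)$, and $\Soc(G/J_i)=N_iJ_i/J_i\cong N_i$.

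The second step computes $I_{J_i}=\bigcap_{j\ne i}C_G(N_j)$. This group is normal and contains $N_i$, so it is nontrivial. Its intersection with $J_i$ is $\bigcap_j J_j=1$, so it embeds into $G/J_i$ as the normal subgroup $I_{J_i}J_i/J_i$, which gives the first claim. I then need $\Soc(I_{J_i})=N_i$. A minimal normal subgroup $M$ of $I_{J_i}$ lies in some minimal normal subgroup of $G$, because $I_{J_i}\cap \Soc(G)$ is a nontrivial normal subgroup of $G$ (it contains $N_i$), and its minimal normal subgroups are among the $N_j$. The clean way to see this is that $I_{J_i}\cap\Soc(G)=N_i$, since $C_{\Soc(G)}(N_j)=\prod_{k\ne j}N_k$. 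Moreover, $C_{I_{J_i}}(N_i)\le\bigcap_j J_j=1$. So any nontrivial normal subgroup $M$ of $I_{J_i}$ satisfies $[M,N_i]\ne1$, hence $1\ne[M,N_i]\le M\cap N_i$. Thus $M\cap N_i$ is a nontrivial subgroup of $N_i$ normalized by $N_i$, and simplicity of $N_i$ gives $N_i\le M$. Therefore $N_i$ is the unique minimal normal subgroup of $I_{J_i}$, so $\Soc(I_{J_i})=N_i$.

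Combining the two steps, $\Soc(G)=\prod_i N_i=\prod_i\Soc(I_{J_i})\cong\prod_i\Soc(G/J_i)$, which is the displayed statement. I expect the main obstacle to be the first step: showing that every $J\in\mathcal A^*(G)$ equals some $C_G(N)$. That is precisely where the Schreier-based \cref{lem:schreier app} is needed to rule out proper containment $C_G(N)<J$. The rest is routine manipulation of centralizers of distinct simple minimal normal subgroups.
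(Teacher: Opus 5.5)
Most of your argument is correct: $NJ/J=\Soc(G/J)$, then $C_G(N)\le J$, then $J=C_G(N)$ via \cref{lem:schreier app}, the injectivity of $N\mapsto C_G(N)$, and the computation $\Soc(I_{J_i})=N_i$ all check out. The gap is the sentence ``Since $\bigcap K=1$, the group $J$ does not contain every minimal normal subgroup.'' This is a non sequitur. The equality $\bigcap_{K\in\mathcal A^*(G)}K=1$ is a statement about the whole family and says nothing about a single member $J$. What you must exclude is some $J\in\mathcal A^*(G)$ with $\Soc(G)\le J$, that is, $J$ the preimage of a normal subgroup of $G/\Soc(G)$ with almost simple quotient. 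Nothing you have used rules this out. Without the Schreier property it would genuinely happen: if some $\Out(N)$ contained a non-abelian simple $T$, the preimage $G$ of $T$ in $\Aut(N)$ would have $\mathcal A^*(G)=\{1,N\}$ and $\bigcap K=1$. Yet $J=N$ contains the only minimal normal subgroup, and the lemma itself would fail, since $\Soc(G)\cong N$ while $\prod_J\Soc(G/J)\cong N\times T$. So this step is a second, independent use of Schreier, not a consequence of \cref{lem:AG}. It is also the real obstacle, rather than the one you flagged: in the reduced setting it is equivalent to $I_J\not\le J$, which is exactly what the paper's minimal-subfamily commutator argument establishes.

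The repair is short. Since $C_G(\Soc(G))=1$ and each $N_i\unlhd G$, conjugation embeds $G$ in $\prod_i\Aut(N_i)$, with $\Soc(G)$ mapping onto $\prod_i\Inn(N_i)$. Hence $G/\Soc(G)$ embeds in $\prod_i\Out(N_i)$ and is soluble, so $\Soc(G)\le J$ would make the almost simple group $G/J$ soluble. Equivalently, apply \cref{lem:schreier app} to each $C_G(N_i)$ to get $G/N_iC_G(N_i)$ soluble, and check that $\bigcap_i N_iC_G(N_i)=\Soc(G)$. With this inserted, your bijection $N\leftrightarrow C_G(N)$ gives a complete proof. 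It is organised differently from the paper's: it identifies $\mathcal A^*(G)$ explicitly instead of working with the abstract intersections $I_J$, which makes the final identification of $\Soc(G)$ more transparent.
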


\begin{proof} Plainly $I_J\ge A(G)$ and so, for this proof, we may assume $A(G)=1$.

Let us first demonstrate that $I_J$ is not contained in $J$. Suppose that this is false. Then $I_J \le J$. Choose $\mathcal B \subseteq\mathcal A^*(G)\setminus\{J\}$ minimal such that $ \bigcap_{K\in \mathcal B}K\le J$. If $|\mathcal B| =1$, then $\mathcal B=\{K\}$ and $K<J$ as $J \ne K$, contrary to \cref{lem:schreier app}. Hence $|\mathcal B| \ge 2$. Pick $\mathcal B_1$ and $\mathcal B_2$ distinct maximal subsets of $\mathcal B$. Then, for $i=1, 2$, set $U_i=  \bigcap_{K\in \mathcal B_i}K$. Of course, $U_1$ and $U_2$ are normal subgroups of $G$ with  $U_1\not \le J$ and $U_2\not \le J$ by the minimal choice of $\mathcal B$. Define $S>J$ to be the preimage of $\Soc(G/J)$. Then $S/J$ is a non-abelian simple group and  $S/J$ is a subgroup of  $U_1J/J$ and $U_2J/J$.  Now, using $S/J$ is a non-abelian simple group and $U_1\cap U_2=\bigcap_{K\in \mathcal B}K\le J$, we take commutators  \begin{eqnarray*}J&<&S =J[S,S]\le J[U_1J,U_2J]\le  J[J,U_1][J,U_2][U_1,U_2]\\&=& J[U_1,U_2] \le J(U_1\cap U_2)\le J,\end{eqnarray*} a contradiction. Hence $I_J \not \le J$. In particular, as $I_J \cap J=1$, $I_J\cong I_JJ/J$ which is a non-trivial normal subgroup of $G/J$.

Take $J\in \mathcal A^*(G)$. Since $\gen{I_L\mid L\in \mathcal A^*(G)\setminus\{J\}} \le J$ and $I_J \cap J=  1$,
we have $$\gen{I_J\mid J\in \mathcal A^*(G)} =\prod_{J\in \mathcal A^*(G)}I_J.$$

As   $I_J$ is isomorphic to a non-trivial normal subgroup of $G/J$ and $\Soc(G/J)$ is the unique minimal normal subgroup of $G/J$, we have $\Soc(I_J) \cong \Soc(I_JJ/J)= \Soc(G/J)$. Therefore $\Soc(I_J) \cong \Soc(G/J)$ and  $$\Soc(\prod_{J\in \mathcal A^*(G)}I_J) =\prod_{J\in \mathcal A^*(G)}\Soc(I_J)\cong \prod_{J\in \mathcal A ^*(G)}\Soc(G/J).$$

Assume that $N$ is a minimal normal subgroup of $G$. Then there exists $J\in \mathcal A^*(G)$ such that $N \not \le J$. Since $N$ is a minimal normal subgroup, $N\cap J=1$ and so $N$ centralizes $J$ and $N \cong NJ/J= \Soc(G/J)$ is non-abelian. Suppose that $K\in \mathcal A^*(G)\setminus \{J\}$ is such that $N \not \le K$. Then $N$ and $K$ commute. Thus $N\cong NJ/J$ commutes with $JK/J$. Therefore $K \le J$, and so $K=J$ by \cref{lem:schreier app}. Hence $N \le K$ and consequently $N\le I_J $. Therefore $N \le \Soc(\prod_{J\in \mathcal A^*(G)}I_J)$ and we conclude that $\Soc(G)=\Soc(\prod_{J\in \mathcal A^*(G)}I_J)$. This completes the proof.
\end{proof}

\begin{lemma}\label{lem:normalizes sylow}
Suppose that $n \ge 2$ is a natural number, and that $N= K_1\times\dots \times K_n$ is a non-abelian minimal normal subgroup of the group $G$. Assume that $\gen{x}\le G$ acts semiregularly on $\{K_1,\dots, K_n\}$ by conjugation. Then for all $r \in \pi(N)$, there exists $R \in \Syl_r(N)$ with $x \in N_G(R)$.
\end{lemma}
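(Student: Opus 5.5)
The plan is to build $R$ orbit by orbit, using the semiregularity of $\gen{x}$ on the factors. Let $m$ be the order of the permutation induced by $x$ on $\{K_1,\dots,K_n\}$. Semiregularity means every $\gen{x}$-orbit on the factors has length exactly $m$. Choose one representative $K_{i_1},\dots,K_{i_k}$ from each orbit, so $n=km$. The factors in the orbit of $K_{i_j}$ are then $K_{i_j}^{x^\ell}$ for $0\le \ell<m$, and these are pairwise distinct.

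Fix $r\in\pi(N)$. The $K_i$ are isomorphic because $G$ permutes them transitively, as $N$ is minimal normal; hence $r$ divides $|K_i|$ for every $i$. For each $j$, the element $x^m$ normalizes $K_{i_j}$, so $\gen{x^m}$ acts on $K_{i_j}$ by conjugation. The main point is to find $R_j\in\Syl_r(K_{i_j})$ normalized by $x^m$. To do this, work in the group $K_{i_j}\gen{x^m}$, which contains $K_{i_j}$ as a normal subgroup. By the Frattini argument,
$$K_{i_j}\gen{x^m}=K_{i_j}N_{K_{i_j}\gen{x^m}}(R_0)$$
for any $R_0\in\Syl_r(K_{i_j})$. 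Hence there is $y\in K_{i_j}$ with $x^my\in N(R_0)$.

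This only gives a normalizing element in the coset $x^mK_{i_j}$, not $x^m$ itself. This is the main obstacle, and the fix is to absorb the correction $y$ into the choice of Sylow subgroups along the orbit. Set
$$R=\prod_{j=1}^k\prod_{\ell=0}^{m-1} R_j^{x^\ell}.$$
It is a Sylow $r$-subgroup of $N$ as long as each factor is a Sylow $r$-subgroup of its $K$. Conjugating by $x$ permutes the factors within each orbit, except at the last step: $R_j^{x^{m-1}}$ is sent to $R_j^{x^m}$. So we need $R_j^{x^m}=R_j$, that is, we genuinely need $x^m\in N(R_j)$.

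To get this, let $X=\gen{x^m}$ act by conjugation on the set $\Syl_r(K_{i_j})$. This set has size $|K_{i_j}:N_{K_{i_j}}(R_0)|$, and I would try to show that $X$ fixes a point. One route is coprimality: if $\gcd(|X|,|\Syl_r(K_{i_j})|)=1$, a fixed point exists. That fails in general, so instead use the Frattini conclusion above. The set $\Syl_r(K_{i_j})$ is a transitive $K_{i_j}$-set, and $x^m$ acts as $y^{-1}$ times an element fixing $R_0$. Consequently $x^m$ acts on $\Syl_r(K_{i_j})$ as an element of the image of $K_{i_j}\gen{x^m}$, which gives no fixed point directly.

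The better choice is to avoid fixing $R_j$ under $x^m$ and instead twist the conjugates. Define the factors of $R$ as $R_j, R_j^{x},\dots,R_j^{x^{m-1}}$, but replace $R_j$ by a Sylow subgroup $R_j'$ chosen so that $(R_j')^{x^m}=R_j'$. Such a subgroup exists exactly when the permutation of $\Syl_r(K_{i_j})$ induced by $x^m$ has a fixed point. Since $K_{i_j}$ acts transitively and $x^m$ normalizes $K_{i_j}$, Glauberman's lemma style arguments apply when orders are coprime, but here I expect to need the following. Replace $x$ by an element $xz$ with $z\in N$ so that $(xz)^m$ normalizes $R_0$, which is possible by Frattini, and construct $R$ for $xz$. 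Then argue that the statement only requires some Sylow subgroup normalized by $x$, and relate the two via conjugation by an element of $N$ using the semiregular structure: distribute $y$ across the $m$ coordinates of the orbit to conjugate $xz$ back to $x$. This coordinate-wise conjugation, a standard wreath-product computation showing that $x$ and $xz$ are $N$-conjugate modulo the relevant orbit twisting, is the step I expect to be hardest to make rigorous.
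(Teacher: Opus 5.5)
Your orbit-by-orbit product $R=\prod_j\prod_{\ell=0}^{m-1}R_j^{x^\ell}$ is the right object. However, there is a genuine gap: you never establish $R_j^{x^m}=R_j$, and the route you sketch for it cannot work.

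You read ``semiregular'' as a condition only on the permutation induced by $x$, which allows $x^m\ne 1$. Under that reading the lemma is false. In $G=\Alt(5)\wr\Sym(2)$, take $x=(c,1)\sigma$ with $c$ of order $3$ and $\sigma$ the swap. Then $x^2=(c,c)$. Every Sylow $5$-subgroup of $N$ has the form $P_1\times P_2$ with $P_i\in\Syl_5(K_i)$, so an $x$-invariant one would force $c$ to normalize a Sylow $5$-subgroup of $\Alt(5)$. It cannot, since that normalizer is $\Dih(10)$.

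Consequently your final step, replacing $x$ by $xz$ and then conjugating $xz$ back to $x$ by an element of $N$, cannot be made rigorous in that generality. If $x=(xz)^g$ with $g\in N$ and $xz$ normalizes $R\in\Syl_r(N)$, then $x$ normalizes $R^g\in\Syl_r(N)$, so such a step would prove the false statement. Concretely, the $N$-class of an element of the coset $xN$ remembers the $\Inn(K_{i_j})$-class of the automorphism its $m$-th power induces on $K_{i_j}$, and the Frattini correction changes exactly that. The coprimality and Glauberman-style detours are also unnecessary.

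The missing idea is to use the hypothesis as stated: $\gen{x}$ itself acts semiregularly, so the stabilizer of $K_{i_j}$ in $\gen{x}$ is trivial. Since $x^m$ lies in that stabilizer, $x^m=1$. Then any choice of $R_j\in\Syl_r(K_{i_j})$ works:
\begin{itemize}
\item $R_j^{x^\ell}\in\Syl_r(K_{i_j}^{x^\ell})$ for each $\ell$;
\item the product $R$ is a Sylow $r$-subgroup of $N=K_1\times\dots\times K_n$;
\item conjugation by $x$ permutes the factors of $R$ because $R_j^{x^m}=R_j$.
\end{itemize}
This is essentially the paper's argument. The paper phrases it via a minimal counterexample: it first reduces to $G=N\gen{x}$, where $\gen{x}$ is regular on the factors, and then takes $R=\gen{R_1^{\gen{x}}}$.
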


\begin{proof} Suppose that $G$ is a minimal counter example to the claim and let $r \in \pi(N)$.
Set  $H= N\gen{x}$. Then $N$ decomposes as a direct product $N_1\times \dots \times N_k$ of minimal normal subgroup of $H$.   Since all the components of $N$ are isomorphic, $r \in \pi(N_i)$ for all $1\le i \le k$.

If $H<G$, then by induction there exists $R_i\in \Syl_r(N_i)$ with $x \in N_H(R_i)$ for $1\le i \le k$. But then $x \in N_G(\prod_{i=1}^kR_i )$ and $\prod_{i=1}^kR_i \in \Syl_r(N)$, which is a contradiction.  Hence $G=H$ and, as $N$ is a minimal normal subgroup of $G$,  $\gen{x}$ acts regularly  on $\{K_1,\dots, K_n\}$ by conjugation and so we can choose notation so that $K_{i+1}=K_1^{x^i}$, $0\le i\le n-1$.
Let $R_1\in \Syl_r(K_1)$. Then $R_{i+1}= R_1^{x^i}\in \syl_r(K_1^{x^i})=\Syl_r(K_{i+1})$. Now, as $\gen{x}$ acts regularly on $\{K_1,\dots, K_n\}$,  $\gen{R_1^{\gen{x}}}= R_1  \dots  R_n\in \Syl_r(N)$ and   $x \in N_G(R)$ where $R=R_1  \dots  R_n$, a contradiction.
\end{proof}

\begin{lemma}\label{lem:structure}
Suppose that $s$ and $t $ are distinct primes with and $G$ is a   group with exactly one minimal normal subgroup $N$. If $G$ is   invariably $(s,t)$-generated, then either $N$ is abelian or $G$ is almost simple with $\{s,t\}\subseteq \pi(|N|)$.
\end{lemma}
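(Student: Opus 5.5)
Assume $N$ is non-abelian, so $N=K_1\times\dots\times K_n$ with the $K_i$ isomorphic non-abelian simple groups permuted transitively by $G$ under conjugation. Let $(x,y)$ be an invariable $(s,t)$-sequence, with $x$ of order dividing $s$ and $y$ of order dividing $t$. The plan has three steps: first show $n=1$, so $G$ is almost simple; then show $s,t\in\pi(|N|)$; the second uses the same Sylow-normalizer idea as the first.

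For $n=1$, let $\rho\colon G\to\Sym(n)$ be the permutation action on $\{K_1,\dots,K_n\}$. Since $G=\gen{x^g,y^h}$ for all $g,h$, the image $\rho(G)$ is invariably generated by $\rho(x)$ and $\rho(y)$, and it is transitive. Suppose $n\ge 2$. I would find a conjugate $y^h$ such that $\gen{y^h}$ acts semiregularly on $\{K_1,\dots,K_n\}$, or else fixes every $K_i$. This holds automatically: $y$ has prime order $t$ (or is trivial), so every orbit of $\gen{y}$ has length $1$ or $t$. If $\rho(y)$ is fixed-point-free, then $\gen{y}$ is semiregular. Otherwise I would argue differently, so the real plan is to treat the two elements separately. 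For any prime $r\in\pi(N)$, let $\Omega_r$ be the Sylow $r$-subgroups of $N$; the Frattini argument gives $G=NN_G(R)$. By \cref{lem:normalizes sylow}, if $\gen{y}$ acts semiregularly, then $y$ normalizes some $R\in\Syl_r(N)$. If instead $y$ fixes every $K_i$, then by Sylow/Frattini we can still choose conjugates so that $y$ normalizes some Sylow subgroup, since $\gen{y}N$ acts on $\Syl_r(N)$ with $N$ transitive and $|\gen{y}|$ prime; an orbit-counting argument on $\Syl_r(N)$ gives a fixed point when $t\nmid |\Syl_r(N)|$. The cleaner route is this. Choose $r\in\pi(N)$ with $r\notin\{s,t\}$ if possible. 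Then $|\Syl_r(N)|=|N:N_N(R)|$, and $N_G(R)$ acts on it. Because $\gen{x}N$ and $\gen{y}N$ each act on $\Syl_r(N)$ with the $p$-part of the orbit structure controlled, we get conjugates $x^g,y^h\in N_G(R)$ for some single $R$, since $N_G(R)$ meets every $G$-class that meets a coset subgroup of prime order. Then $G=\gen{x^g,y^h}\le N_G(R)$, so $R\unlhd G$, and minimality forces $R=1$ or $R=N$, a contradiction.

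The step I expect to be the crux is producing conjugates of both $x$ and $y$ inside one $N_G(R)$. For this I would use that $\gen{x}$ is an $s$-group acting on the set $\Syl_r(N)$, of size $|G:N_G(R)|$ by Frattini. Pick $r$ so that $s\nmid |G:N_G(R)|$: taking $r=s$ works, since $N_G(S)$ for $S\in\Syl_s(N)$ contains a Sylow $s$-subgroup of $G$ by the Frattini argument. So $x$ has a conjugate in $N_G(S)$ for $S\in \Syl_s(N)$, and likewise $y$ has a conjugate in $N_G(T)$ for $T\in\Syl_t(N)$. To combine the two, pass to $\ov G=G/N$-level information together with $N_G(S)$. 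Since $G=NN_G(S)$, a conjugate of $y$ lies in $N_G(S)$ exactly when $yN$ meets $N_G(S)N/N=\ov G$ in a way liftable inside $N_G(S)$. Here \cref{lem:normalizes sylow} is essential: in the semiregular case ($n\ge 2$ and $y\notin N_G(K_i)$ for all $i$), $y$ itself normalizes some $S\in\Syl_s(N)$. So $G=\gen{x^g,y}\le N_G(S)$, contradiction. The remaining configuration, where $y$ normalizes some $K_i$, would be handled symmetrically using $x$ and a Sylow $t$-subgroup, with the transitivity of $G=\gen{x^g,y^h}$ on the $K_i$ (and primality of the orders) forcing at least one of $x,y$ to act semiregularly for suitable conjugates when $n\ge2$. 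So $n=1$, and $G$ is almost simple.

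Finally, suppose $s\notin\pi(|N|)$. Then a Sylow $s$-subgroup of $G$ lies outside $N$, and $x$ normalizes some $T\in\Syl_t(N)$ (take $T$ with $N_G(T)$ containing a Sylow $s$-subgroup, by Frattini). Also $y^h\in N_G(T)$ by Frattini, since $N_G(T)$ contains a Sylow $t$-subgroup of $G$. Then $G=N_G(T)$, which forces $T=1$, so $t\notin\pi(N)$ too. Hence some prime $r\in\pi(N)\setminus\{s,t\}$ exists, and $x^g,y^h$ both normalize Sylow $r$-subgroups. Coprime action (Sylow in $N\gen{x}$) together with a Frattini choice of a common one then gives $G=N_G(R)$, a contradiction. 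The analogous argument handles $t\notin\pi(|N|)$.
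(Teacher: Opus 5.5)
Once the false starts are removed, your argument is the paper's proof. A semiregular element normalizes a Sylow subgroup of $N$ by \cref{lem:normalizes sylow}, so it and a Frattini-conjugate of the other generator lie in a proper normalizer. If neither $x$ nor $y$ is semiregular, each fixes some $K_i$, and transitivity lets you conjugate them independently into $N_G(K_1)$, which forces $n=1$. The Frattini argument gives $\{s,t\}\subseteq\pi(|N|)$.

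Two things must be fixed.

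First, delete the ``cleaner route'' in your second paragraph. For $r\in\pi(|N|)\setminus\{s,t\}$ the index $|G:N_G(R)|=|N:N_N(R)|$ is in general divisible by $s$ and $t$, so nothing puts conjugates of $x$ and $y$ into $N_G(R)$. That argument never uses $n\ge 2$, so it would show that no almost simple group is invariably $(s,t)$-generated. The group $G=\PGL_2(9)$ with $r=5$ refutes it: there $N_G(R)$ is dihedral of order $20$ and contains no element of order $3$. The Frattini step is legitimate only in your final paragraph, where $s,t\notin\pi(|N|)$ makes the index coprime to $st$; that index computation alone suffices there, and no coprime action is needed. Similarly, ``semiregular or fixes every $K_i$'' is not a valid dichotomy, since an element of prime order can have both fixed points and regular orbits. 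The correct dichotomy is semiregular versus fixing at least one $K_i$, and that is what your transitivity step actually uses.

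Second, as you have ordered the steps, the $n=1$ step depends on the last one. In the semiregular case, the contradiction $G\le N_G(S)<G$ requires $S\ne 1$, that is, $s\in\pi(|N|)$; otherwise $N_G(S)=G$. This is also a hypothesis of \cref{lem:normalizes sylow}. Symmetrically, you need $t\in\pi(|N|)$ when $x$ is the semiregular element. Your final paragraph proves $\{s,t\}\subseteq\pi(|N|)$ without using $n=1$, so move it to the front, which is exactly the order of the paper's proof. With these corrections the proof is complete.
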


\begin{proof} In this proof we do not assume $s<t$.
Suppose that $N$ is not abelian. Then, with $n \ge 1$, $$N=K_1\times \dots \times K_n$$  with each $K_i$ a non-abelian simple group. Set $\mathcal K=\{K_1,\dots, K_n\}$.  Let $(\sigma, \tau)$ be an invariable $(s,t)$-sequence for $G$. Assume that $|N|$ is coprime to $st$. Let $P \in \syl_p(N)$ for some $p \in \pi(|N|)$.  Then $G= N_G(P)N$ by the Frattini argument. Since $|N|$ is coprime to $st$, we have $N_G(P)$ contains both a Sylow $s$-subgroup and a Sylow $t$-subgroup of $G$. Hence there exist $g,h \in G$ such that $G>N_G(P) \ge \gen{\sigma^g,\tau^h}= G$, a contradiction. Therefore $|N|$ and $st$ are not coprime. Without loss of generality, we may suppose that $s \in \pi(|N|)$. Choose $S \in \Syl_s(G)$ with $\sigma \in S$.
 Assume that $t \not \in \pi(|N|)$. Then $N_G(S\cap N)$ contains a Sylow $t$-subgroup of $G$ by the Frattini argument. Hence $\tau$ can be replaced by a conjugate in $N_G(S\cap N)$. But then $\gen{\sigma,\tau} \le N_G(S\cap N)<G$, a contradiction.
 Thus $t \in \pi(|N|)$ and we conclude  that $\{s,t\} \subseteq \pi(|N|)$.

If $\sigma$ acts semiregularly by conjugation on $\mathcal K$, then $\sigma$ normalizes a Sylow $t$-subgroup of $N$ by \cref{lem:normalizes sylow}. Therefore we may replace $\sigma$ by a suitable conjugate  so that $\sigma$   normalizes $T\cap N$ where $T$ is a Sylow $t$-subgroup which contains $\tau$. But then $G =\gen{\sigma,\tau} \le N_G(T\cap N)$, a contradiction.  Similarly, $\tau$ does not act semiregularly on $\mathcal K$. As $\sigma$ and $\tau$ have prime order, $\sigma$ and $\tau$ both normalize members of $\mathcal K$. Since $G$ acts transitively on $\mathcal K$ by conjugation, we may   conjugate $\sigma$ and $\tau$ independently by $g$ and $h\in G$ so that the conjugates normalize $K_1$. But then $K_1 $ is normalized by $\gen{\sigma^g,\tau^h}=G$ and therefore $N=K_1$.  As $C_G(N)=1$, we obtain $G$ is almost simple.
\end{proof}

 In the next lemma, for $X$ a group, $\mathrm{Sol}(X)$ denotes the largest normal soluble subgroup of $X$. We have $\mathrm{Sol}(X)$ is the product of all the soluble normal subgroups of $X$. We also use the fact that for $L$ normal in $X$   with  $L \le A(X)$, we have $A(X/L)= A(X)/L$.

\begin{lemma}\label{lem:AGsol}
Suppose that $s$ and $t $ are distinct primes. If $G$ is   invariably $(s,t)$-generated, then  $A(G)$ is soluble.
\end{lemma}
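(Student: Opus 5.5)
Argue by minimal counterexample. Let $G$ be invariably $(s,t)$-generated with $A(G)$ non-soluble and $|G|$ minimal. Put $R=\mathrm{Sol}(A(G))$, which is characteristic in $A(G)$ and hence normal in $G$.

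\textbf{Step 1: reduce to $R=1$.} Suppose $R\ne 1$. Then $G/R$ is invariably $(s,t)$-generated by \cref{lem:quotients}. Since $R\le A(G)$, the fact recalled before the lemma gives $A(G/R)=A(G)/R$. This group is non-soluble because $R$ is soluble. So $G/R$ is a smaller counterexample, which is a contradiction. Hence $R=1$, and $A(G)$ has no non-trivial soluble normal subgroup. As $A(G)\neq 1$, every minimal normal subgroup of $G$ contained in $A(G)$ is non-abelian.

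\textbf{Step 2: quotient by a maximal normal subgroup $M$ of $G$ avoiding a minimal normal subgroup $N\le A(G)$.} Fix such an $N$ and choose $M\unlhd G$ maximal subject to $N\not\le M$. Then $NM/M$ is the unique minimal normal subgroup of $G/M$: any other minimal normal subgroup $X/M$ would have $N\le X$, so $NM\le X$ and $X/M$ would not be minimal. Also $NM/M\cong N$ is non-abelian. Since $G/M$ is invariably $(s,t)$-generated, \cref{lem:structure} shows that $G/M$ is almost simple. Thus $M\in\mathcal A^*(G)$, so $A(G)\le M$ by \cref{lem:AG}. But then $N\le A(G)\le M$, contradicting the choice of $M$.

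It follows that $A(G)$ contains no minimal normal subgroup of $G$. Since $A(G)\ne1$, it does contain one, so we have a contradiction. Therefore $A(G)$ is soluble.

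The main point to check is the uniqueness of the minimal normal subgroup of $G/M$ in Step 2, which is what allows \cref{lem:structure} to apply. This is the argument given above. The only other subtlety is the identity $A(G/R)=A(G)/R$ in Step 1, which the paper states just before the lemma. Note that the case $G=A(G)$ needs no separate treatment: Step 2 shows that $G$ has a proper almost simple quotient, and $A(G)$ lies inside its kernel.
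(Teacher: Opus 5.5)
Your proof is correct and follows the same route as the paper. You reduce to $\Sol(A(G))=1$ by minimality, take a non-abelian minimal normal subgroup $N\le A(G)$, pass to a quotient of $G$ whose unique minimal normal subgroup is the image of $N$, and apply \cref{lem:structure} to get a kernel in $\mathcal A^*(G)$ that must contain $A(G)\ge N$.

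The one difference is in how the quotient is chosen. You take $M$ maximal normal with $N\not\le M$ and get uniqueness of the minimal normal subgroup directly from maximality. The paper takes $L=C_G(N)$ and proves uniqueness with the Three Subgroups lemma. The two kernels coincide: $M\cap N=1$ forces $M\le C_G(N)$, and $N\not\le C_G(N)$, so $M=C_G(N)$.
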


\begin{proof}
Suppose that $G$ is a minimal counterexample to the claim. Then $A(G)$ is not soluble. If $\Sol(A(G)) \ne 1$, then $A(G)/\Sol(A(G)) =A(G/\Sol(A(G)))$ is not soluble and $G/\Sol(A(G)) $ is $(s,t)$-invariably generated by \cref{lem:quotients}. The minimality of $G$ yields $\Sol(A(G))=1$. Let $K$ be a minimal normal subgroup of $G$ in  $A(G)$ and set $L= C_{G}(K)$. Then, as $\Sol(A(G))=1$, $K$ is non-abelian and $K \cap L=1$. In particular, $KL/L\cong K$ is a minimal normal subgroup of $G/L$. Assume $L_1$ is the preimage in $G$ of $C_{G/L}(KL/L)$. Then $[L_1,K,K]\le [L,K]=1$ and the fact that $K$ is perfect combined with the Three Subgroups lemma yields $L_1=L$. Hence $KL/L$ is the unique minimal normal subgroup of $G/L$.
Now \cref{lem:structure} yields $G/L$ is almost simple. Hence $L \in  \mathcal A^*(G)$. Therefore $L \ge A(G)\ge K$ and $L$ centralizes $K$, a contradiction as $K$ is non-abelian. We conclude that $A(G)$ is soluble.
\end{proof}

By \cref{lem:AGsol}, we know $A(G)$ has   Hall  subgroups.

\begin{lemma}\label{lem:A(G) structure}
Suppose that $s$ and $t$ are distinct primes  and $G$ is an invariably  $(s,t)$-generated group. Let $H$ be a Hall $\{s,t\}$-subgroup of $A(G)$.  Then $H$ is normal in $A(G)$, $A(G)/H$ is nilpotent and $\pi(|A(G)/H|)\subseteq \pi(|G/A(G)|).$
\end{lemma}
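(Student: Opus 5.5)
We use one principle throughout. If $(\sigma,\tau)$ is an invariable $(s,t)$-sequence for $G$ and $X\le G$ has $|G:X|$ coprime to $st$, then $X$ contains a Sylow $s$-subgroup and a Sylow $t$-subgroup of $G$. Hence conjugates of $\sigma$ and $\tau$ lie in $X$, and so $X=G$. This is the argument already used in \cref{lem:structure}. By \cref{lem:AGsol}, $A(G)$ is soluble, so Hall subgroups exist and are conjugate in $A(G)$.

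\textbf{Normality of $H$.} We argue by induction on $|G|$. Assume $A(G)\neq 1$ and let $N$ be a minimal normal subgroup of $G$ with $N\le A(G)$. Then $N$ is an elementary abelian $r$-group for some prime $r$. By \cref{lem:quotients}, $G/N$ is invariably $(s,t)$-generated, and $A(G/N)=A(G)/N$. Since $HN/N$ is a Hall $\{s,t\}$-subgroup of $A(G)/N$, induction gives $HN\unlhd G$.
\begin{itemize}
\item[(a)] If $r\in\{s,t\}$, then $N\le H$, so $H=HN$ is normal.
\item[(b)] If $r\notin\{s,t\}$, then $H$ is a Hall subgroup of $HN$. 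The Frattini argument gives $G=HN\,N_G(H)=N\,N_G(H)$. Thus $|G:N_G(H)|$ is a power of $r$, hence coprime to $st$, and the principle gives $N_G(H)=G$.
\end{itemize}
So $H\unlhd A(G)$; in fact $H$ is characteristic in $A(G)$ and so $H\unlhd G$.

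\textbf{Nilpotency and the prime condition.} Pass to $\bar G=G/H$, which is invariably $(s,t)$-generated by \cref{lem:quotients}. Then $\bar A=A(\bar G)=A(G)/H$ is a soluble $\{s,t\}'$-group; to check $A(G/H)=A(G)/H$ we use $H\le A(G)$.
\begin{itemize}
\item[(c)] Let $R\in\Syl_r(\bar A)$ for some $r\in\pi(|\bar A|)$. The Frattini argument gives $\bar G=\bar A\,N_{\bar G}(R)$. Hence $|\bar G:N_{\bar G}(R)|$ divides $|\bar A|$, which is coprime to $st$. The principle forces $R\unlhd\bar G$, so every Sylow subgroup of $\bar A$ is normal and $\bar A$ is nilpotent.
\item[(d)] Suppose instead that $r$ does not divide $|\bar G/\bar A|=|G/A(G)|$ and $R\ne1$. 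Then $R$ is a normal Sylow $r$-subgroup of $\bar G$. Schur--Zassenhaus gives a complement $C$ to $R$ in $\bar G$, with $|\bar G:C|=|R|$ coprime to $st$. The principle gives $C=\bar G$, contradicting $R\neq1$. Hence $\pi(|A(G)/H|)\subseteq\pi(|G/A(G)|)$.
\end{itemize}

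The only delicate point is step (b), the case $r\notin\{s,t\}$ in the induction. There one must check that the Frattini argument produces a subgroup of $r$-power index, which is what lets the invariable generation hypothesis apply.
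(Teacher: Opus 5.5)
Your proof is correct and uses the same ingredients as the paper: the Frattini argument, the fact that a subgroup of index coprime to $st$ contains conjugates of $\sigma$ and $\tau$ and so equals $G$, and Schur--Zassenhaus to produce a complement of index coprime to $st$ for the prime condition. The one difference is that your induction on a minimal normal subgroup is unnecessary, so step (b) is not actually delicate: since Hall $\{s,t\}$-subgroups of the soluble group $A(G)$ are $A(G)$-conjugate (as you note yourself), the paper applies the Frattini argument directly to get $G=N_G(H)A(G)$ with $|G:N_G(H)|=|A(G):N_{A(G)}(H)|$ coprime to $st$.
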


\begin{proof} Let $(\sigma, \tau)$ be an invariable $(s,t)$-sequence for $G$. We may as well suppose that  $G \ne A(G)$ for otherwise $H=G$ and the statement holds.

By the Frattini argument $G= N_G(H)A(G)$. Hence $|G:N_G(H)|=|A(G):N_{A(G)}(H)|$ is coprime to $st$. In particular, $N_G(H)$ contains both a Sylow $s$-subgroup and a Sylow $t$-subgroup of $G$. Therefore $\sigma$ and $\tau$ are conjugate   into $N_G(H)$ and consequently $G= N_G(H)$. That is $H$ is normal in $G$.

Assume that $r  \in \pi(|A(G)|)\setminus \{s,t\}$ and let $R$ be a Hall $\{s,t,r\}$-subgroup of $A(G)$. Then again, the Frattini argument implies that $R$ is normal in $G$. Hence $R/H= O_r(A(G)/H)$. We conclude that $A(G)/H$ is nilpotent. Assume that $R> H$. We claim that $r \in \pi(|G/A(G)|)$. Suppose that this is false and set $L= O^r(A(G))$. Then $L< A(G)$, and so $G/L$ is invariably $\{s,t\}$-generated by \cref{lem:quotients}. To simplify notation we assume that $L=1$. Then $|A(G)|$  is coprime to $|G/A(G)|$. Therefore, the Schur-Zassenhaus theorem implies that $G$ has a complement $M$ to $A(G)$. Clearly, $M$ contains both a Sylow $s$-subgroup and a Sylow $t$-subgroup of $G$ and so $G$ is not invariably $\{s,t\}$-generated, a contradiction. Hence $r \in \pi(G/A(G))$.
\end{proof}

\begin{proof}[The proof of Theorem A] Suppose that $G$ is as in the hypothesis of Theorem A.

If $G=A(G)$, then $G$ is an $\{s,t\}$-group by \cref{lem:Sylow/Hall} (ii). Suppose that $G$ is not soluble. Then $ K_2=A(G)$, is soluble because of  \cref{lem:AGsol}. Therefore   $G/K_2$ is not soluble and $K_2<G$. We have that $G/K_2$ is almost semisimple by \cref{prop: G/A(G)}.
Define $K_1$ to be the preimage of $\Soc(G/K_2)$. Let $J\in \mathcal A^*(G)$. Then $G/J$ is invariably $(s,t)$-generated  by \cref{lem:quotients} and $G/J$ has a unique minimal normal subgroup $\Soc(G/J)$ and so \cref{lem:structure} implies $\{s,t\} \subseteq \pi(|\Soc(G/J))|$. This proves Theorem A (ii).
Since $C_{G/K_2}(\Soc(G/K_2))=1$, $G/K_1$ is a soluble group by the Schreier property of finite simple groups. As $G/K_1$ is invariably $(s,t)$-generated, we deduce that $G/K_1$ is an $\{s,t\}$-group. This is Theorem A (i).  The structure of $K_2$ as given in Theorem A (iii) and (iv) is   described by \cref{lem:A(G) structure} once we add the additional observation that $G/K_1$ is an $\{s,t\}$-group and $\pi(|K_2/O_{s,t}(K_2)|)\subseteq \pi(|G/K_2|)$ with $\{s,t\} \cap \pi(|A(G)/O_{s,t}(A(G))|)=\emptyset$.
\end{proof}

We now prove Theorem B.

\begin{proof}[Proof of Theorem B] Suppose that $T \le G$ where $G$ is invariably $(s,t)$-generated and $T$ is a non-abelian simple group. Then, as $A(G)$ is soluble, $T\not \le A(G)$.  Hence, by \cref{lem:AG}, there exists $J \in \mathcal A^*(G)$ such that $T \not \le J$.  Since $J$ is normal in $G$ and $T$ is a simple group, we have $J \cap T=1$. Thus $T \cong TJ/J \le G/J$ and $G/J$ is an almost simple group which is invariably $(s,t)$-generated. This proves the claim.
 \end{proof}

\begin{proposition}\label{lem:dif socles} Suppose that $\Lambda$ is a  sequence of positive integers, $A_1, \dots, A_k$, $k\ge 1$, are   groups which are invariably $\Lambda$-generated. Set $H=A_1\times \dots\times A_k$.
\begin{enumerate}
\item $H$ has  a subgroup $G$ which is invariably $\Lambda$-generated and projects onto every $A_i$, $1\le i \le k$.
\item If, in addition, each $A_i$ is almost simple, and, for $1\le i<j\le k$, $ \Soc(A_i) \not \cong \Soc(A_j)$, then $G$ in part (i) is almost semisimple and  $\Soc(G)=\Soc(H)$.
\end{enumerate}
\end{proposition}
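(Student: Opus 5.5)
For each $i$, let $(x^{(i)}_1,\dots,x^{(i)}_n)$ be an invariable $\Lambda$-sequence for $A_i$. The plan is to take the diagonal elements $y_j=(x^{(1)}_j,\dots,x^{(k)}_j)\in H$ and set $G=\gen{y_1,\dots,y_n}$. Each $y_j$ has order equal to the least common multiple of the orders of its coordinates, and each coordinate order divides $\lambda_j$, so $y_j$ has order dividing $\lambda_j$. Let $\rho_i\colon H\to A_i$ be the coordinate projection. Then $\rho_i(G)=\gen{x^{(i)}_1,\dots,x^{(i)}_n}=A_i$, so $G$ projects onto every $A_i$.

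Next we check that $(y_1,\dots,y_n)$ is an invariable $\Lambda$-sequence for $G$. Take $g_1,\dots,g_n\in G$ and put $M=\gen{y_1^{g_1},\dots,y_n^{g_n}}\le G$. Applying $\rho_i$ gives $\rho_i(M)=\gen{(x^{(i)}_1)^{\rho_i(g_1)},\dots,(x^{(i)}_n)^{\rho_i(g_n)}}$, and since $\rho_i(g_j)\in A_i$ this equals $A_i$ by invariable generation of $A_i$. So $M$ also projects onto every $A_i$. Here is the main obstacle: a subgroup of $G$ that projects onto each factor need not equal $G$. The fix is to choose the $A_i$-sequences coherently, or to shrink $G$. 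I would instead choose $G$ to be a subgroup of $H$ that is \emph{minimal} subject to projecting onto every $A_i$ and containing diagonal elements $y_j$ of order dividing $\lambda_j$ that generate it. Such a subgroup exists, since $H$ itself qualifies via the diagonal elements above and $H$ is finite.

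Now fix $g_j\in G$. By the argument above, $M$ projects onto every $A_i$, and $M$ is generated by $y_1^{g_1},\dots,y_n^{g_n}$, which are elements of order dividing $\lambda_j$. Hence $M$ qualifies, and minimality of $G$ gives $M=G$. Minimality is applied over the set of all subgroups of $H$ generated by $n$ elements of orders dividing $\lambda_1,\dots,\lambda_n$ and projecting onto each $A_i$, with $G$ chosen minimal under inclusion in that set. The key step is therefore to phrase the minimality so that it applies to $M$; once that is set up, part (i) follows.

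For (ii), let $T_i=\Soc(A_i)$, which is simple and non-abelian, with the $T_i$ pairwise non-isomorphic. Put $N_i=G\cap A_i$, where $A_i$ is identified with its coordinate subgroup of $H$. Then $N_i\unlhd G$, and $\rho_i(N_i)$ is normal in $\rho_i(G)=A_i$. I would show $N_i\ge T_i$ as follows. Consider $G\cap\prod_{j\ne i}A_j=\ker\rho_i|_G$ and the image of $G$ in $\prod_{j\neq i}A_j$, and argue by Goursat's lemma. The subdirect product $G\le A_i\times \prod_{j\ne i}A_j$ induces an isomorphism $A_i/\rho_i(N_i)\cong X/Y$ for some section $X/Y$ of $\prod_{j\ne i}A_j$. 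If $\rho_i(N_i)\not\ge T_i$, then $\rho_i(N_i)=1$, since $T_i$ is the unique minimal normal subgroup of $A_i$. In that case $A_i$ is isomorphic to a section of $\prod_{j\ne i}A_j$. Then $T_i$ is a composition factor of some $A_j$ with $j\ne i$, and by the Schreier property it must be $T_j$. This contradicts $T_i\not\cong T_j$. Hence $T_i\le N_i$, so $\Soc(H)=\prod T_i\le G$. Any minimal normal subgroup of $G$ meets $\Soc(H)$ nontrivially, since $C_H(\Soc(H))=1$. Therefore every minimal normal subgroup of $G$ is one of the $T_i$, $G$ is almost semisimple, and $\Soc(G)=\Soc(H)$.
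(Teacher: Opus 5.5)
\textbf{Gap in part (i): the class you minimise over is too large.} You take $G$ minimal among all subgroups of $H$ that are generated by $n$ elements of orders dividing $\lambda_1,\dots,\lambda_n$ and project onto every $A_i$. You then claim, ``by the argument above'', that $M=\gen{y_1^{g_1},\dots,y_n^{g_n}}$ projects onto each $A_i$.

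That earlier argument needed each coordinate tuple $(\rho_i(y_1),\dots,\rho_i(y_n))$ to consist of $A_i$-conjugates of the invariable sequence $(x^{(i)}_1,\dots,x^{(i)}_n)$. For an arbitrary member of your class, the coordinates of the chosen generators only \emph{generate} $A_i$. A generating tuple of the right orders need not generate invariably, so $M$ need not lie in the class, and minimality cannot be applied.

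This already fails for $k=1$. Take $A_1=\Sym(4)$ and $\Lambda=(4,3)$; $\Sym(4)$ is invariably generated by a $4$-cycle and a $3$-cycle. Your class is then $\{\Sym(4)\}$, and the generating pair $((1\,4),(1\,2\,3))$ is admissible for it. Conjugating the first element by $(2\,4)$ gives $\gen{(1\,2),(1\,2\,3)}\cong\Sym(3)$, so your chosen tuple is not invariable. A minor point: $H$ itself need not be generated by the diagonal $y_j$. It is $\gen{y_1,\dots,y_n}$ that shows your class is non-empty.

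\textbf{The fix, which is what the paper does.} Minimise only over subgroups generated by tuples $(y_1^{h_1},\dots,y_n^{h_n})$ with $h_j\in H$, where the $y_j$ are your diagonal elements. Each coordinate of such a tuple is an $A_i$-conjugate of $x^{(i)}_j$. Hence every such subgroup automatically projects onto each $A_i$, by invariable generation of $A_i$. The family is also closed under replacing the generators by $G$-conjugates, since $G\le H$. A minimal member is therefore invariably $\Lambda$-generated by its own defining tuple.

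\textbf{Part (ii) is correct and takes a different route from the paper.} The paper sets $P_i=\ker\pi_i$ and notes $P_i\in\mathcal A^*(G)$ with $\bigcap P_i=1$. It concludes that $A(G)=1$, so $G$ is almost semisimple. It then invokes its lemma describing $\Soc(G/A(G))$, together with a perfectness and order-counting argument, to get $\Soc(G)=\Soc(H)$.

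You instead use Goursat's lemma, Jordan--H\"older and the Schreier property to show directly that each $\Soc(A_i)$ lies in $G$. You then finish with $C_H(\Soc(H))=1$. This is more elementary and self-contained, avoiding the machinery of $\mathcal A^*(G)$ and the subgroups $I_J$.
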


\begin{proof} Let $\Lambda=(\lambda_1, \dots,\lambda_n)$.  Assume that, for $1\le i\le k$, $A_i$ is invariably $\Lambda$-generated by the invariable $\Lambda$-sequence $(s_{i,1}, \dots, s_{i,n})$.

For $1\le j \le n$, define $$\sigma_j=(s_{1,j}, \dots, s_{k,j}) \in H.$$ Then, as $s_{i,j}$ has order dividing $\lambda_j$,   $\sigma_j$ has order dividing $\lambda_j$.

Let $$ \Sigma = \{(\sigma_1^{x_1}, \dots,\sigma_n ^{x_n})\mid x_i=(x_{1,i}, \dots, x_{k,i}) \in H, 1\le i \le n\}\subset H^n.$$

Pick $\theta =(\theta_1, \dots, \theta_n)\in \Sigma$ such that $G=\gen{\theta_1,\dots,\theta_n}$ has minimal order and write $$\theta_j=  (s_{1,j}, \dots, s_{k,j})^{x_j}=(s_{1,j}^{x_{1,j}},\dots,s_{k,j}^{x_{k,j}})$$ for $1\le j \le n$.

Suppose that $g_1,\dots,g_n \in G$. Then $(\theta_1^{g_1},\dots, \theta_n^{g_n})\in \Sigma$. The minimal choice of $G$ yields $G=\gen{\theta_1^{g_1},\dots, \theta_n^{g_n}}$ and thus $G$ is invariably $\Lambda$-generated.
 Denote by $\pi_i$ the projection of $G$ onto the $i$th factor of $H=A_1 \times \dots \times A_k$. As    $(s_{i,1}, \dots, s_{i,n})$  is an invariable $\Lambda$-sequence for $A_i$,    $$\pi_i(G)=\gen{\pi_i(\theta_1), \dots, \pi_i(\theta_n)} =\gen{s_{i,1}^{x_{i,1}},\dots, s_{i,n}^{x_{i,n}}}=A_i.$$  This completes the proof of (i).

 We now suppose that the hypothesis of (ii) holds and show that $G$ is almost semisimple   and $\Soc(G)=\Soc(H)$.
Let $P_i=\ker \pi_i$. Then, as $\pi_i(G)=A_i$ by (i), $G/P_i\cong A_i$. Hence $G/P_i$ is almost simple and $P_i\in \mathcal A^*(G)$. Since $\bigcap_{i=1}^kP_i=\bigcap_{i=1}^k\ker\pi_i=1$, we have $A(G)=1$ and so $G$ is almost semisimple by \cref{lem:almost residual}.
 Suppose that $1\le i <j \le k$ and $P_i=P_j$.  Then $$\Soc(A_i)\cong \Soc(G/P_i)= \Soc(G/P_j)\cong \Soc(A_j),$$ which contradicts our assumption that $\Soc(A_i)\not\cong \Soc(A_j)$. Hence $P_i \ne P_j$.
Invoking \cref{prop: G/A(G)} shows that $\Soc(G)$ has a subgroup $X$ with $$X\cong \prod_{i=1}^k \Soc(G/P_i)\cong \prod_{i=1}^k \Soc(A_i).$$ Since, by the Schreier property, $H/\Soc(H)$ is soluble and $X$ is perfect, $X\le \Soc(H)$. Now $|X| = |\Soc(H)|$ yields $\Soc(H)=X$. As $G$ is almost semisimple, every minimal normal subgroup of $G$ is a non-abelian simple group, and as $H/\Soc(H)$ is soluble, we have $\Soc(H)=X \le \Soc(G) \le \Soc(H)$ and so $\Soc(H)=\Soc(G)$. This proves (ii).
\end{proof}

We close this section with two general results which do  not fit elsewhere.

\begin{lemma}\label{lem:contain r and s subgroups} Suppose that $r$ and $s$ are distinct  prime numbers and that $F$ is a group with a normal subgroup $K$ such that $F/K$ is soluble. Let $G$ be such that $K \le G \le F$. Assume that $H\le F$  contains a Sylow $r$-subgroup and a Sylow $s $-subgroup of $F$ and that $ K   \not \le H$. Then $G$ is not  $(r^{|G|},s^{|G|})$-invariably generated.
\end{lemma}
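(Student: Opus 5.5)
Suppose, for a contradiction, that $G$ is invariably $(r^{|G|},s^{|G|})$-generated, and let $(x,y)$ be an invariable sequence. Then $x$ is an $r$-element and $y$ is an $s$-element of $G\le F$. The plan is to produce conjugates of $x$ and $y$ lying in a proper subgroup of $G$ that contains $K$. The natural candidate is $M=(H\cap G)K$. Since $K\unlhd F$ and $K\le G$, this is a subgroup of $G$ containing $K$.

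First I show that $M$ contains a Sylow $r$-subgroup and a Sylow $s$-subgroup of $G$. Let $R\in\Syl_r(F)$ with $R\le H$. Then $R\cap G\in\Syl_r(G)$ because $G$ is a subgroup of $F$ containing the normal subgroup $K$... this needs care. More directly, $RK/K$ is a Sylow $r$-subgroup of $F/K$. Hence $(RK\cap G)/K$ is a Sylow $r$-subgroup of $G/K$, because $G/K$ is a subgroup of $F/K$ and a Sylow subgroup intersects a \emph{normal} subgroup in a Sylow subgroup. But $G/K$ need not be normal in $F/K$, so this step fails as stated.

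I will avoid this by using solubility of $F/K$ and Hall's theorem. Let $\overline{F}=F/K$. The image $\overline{H}$ contains a Sylow $r$- and a Sylow $s$-subgroup of $\overline F$. Take a Hall $\{r,s\}$-subgroup $\overline X$ of $\overline G$. It lies in some Hall $\{r,s\}$-subgroup $\overline Y$ of $\overline F$. A Hall $\{r,s\}$-subgroup of a soluble group is a product $\overline R\,\overline S$ of a Sylow $r$- and a Sylow $s$-subgroup from a Sylow system. Hence the conjugates of $\overline x$ and $\overline y$ lie in $\overline Y$. I would therefore rather argue element-wise. $\overline x\in\overline G$ is an $r$-element of $\overline F$, so $\overline x^{f}\in\overline H$ for some $f\in F$; the trouble is that $f$ need not lie in $G$.

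The cleanest route I expect to work is to intersect with a Hall system of $\overline G$. Choose a Sylow basis of $\overline F$ with $\overline R\le\overline H$. (Here $\overline R$ and $\overline S$ must be chosen so that both lie in $\overline H$; since $\overline H$ contains Sylow subgroups for both primes, pick $\overline R,\overline S\le \overline H$.) The standard fact I would invoke is: for soluble $\overline F$ and any subgroup $\overline G$, there is a Sylow basis of $\overline F$ that \emph{reduces into} $\overline G$ (Hall). That is, its intersections with $\overline G$ form a Sylow basis of $\overline G$. The obstacle is reconciling this with the choice $\overline R,\overline S\le\overline H$, since Sylow bases are conjugate but the conjugating element must be chosen compatibly.

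To resolve this I conjugate $\overline H$ rather than the basis. Replacing $H$ by a conjugate $H^f$ ($f\in F$) preserves the hypotheses, because $K\not\le H^f$ as $K\unlhd F$. So I may assume the reducing Sylow basis has its $r$- and $s$-members inside $\overline H$. Then $\overline H\cap\overline G$ contains a Sylow $r$- and a Sylow $s$-subgroup of $\overline G$. Hence $M=(H\cap G)K$ contains a Sylow $r$- and a Sylow $s$-subgroup of $G$, because $K\le M$ and the order of $M/K$ accounts for the full $r$- and $s$-parts of $|G/K|$.

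There is still a subtlety: the subgroups $\overline R,\overline S\le\overline H$ that I pick might not extend to a single Sylow basis. If the pair does not extend, I would instead fall back on Hall $\{r,s\}$-subgroups of $\overline H$: take a Hall $\{r,s\}$-subgroup of $\overline F$, which is contained in $\overline H$ up to conjugacy when $\overline H$ is soluble. This is the step I expect to need the most care.

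Finally, $M$ would contain conjugates $x^g,y^h$ with $g,h\in G$, so $G=\langle x^g,y^h\rangle\le M$ and hence $G=(H\cap G)K$. Since $K\unlhd F$, this gives $G/K\cong (H\cap G)/(H\cap K)$. I then apply the same argument inside $K$, whose $r$- and $s$-elements are what remains, using that $H\cap G$ contains Sylow $r$- and $s$-subgroups of $G$. This would give $K\le H$, a contradiction. I am unsure of this last deduction. If it fails, I would choose $M$ as $N_G(H\cap K)$-type subgroups via a Frattini argument.
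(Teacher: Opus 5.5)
Your reduction to $G=(G\cap H)K$ can be made to work, but as written it is a chain of abandoned attempts. The Sylow-basis detour is unnecessary, and your worry there is justified, since $\overline R\,\overline S$ need not be a subgroup. The clean version is the fallback you mention:
\begin{itemize}
\item $HK/K$ is soluble and contains the full $r$- and $s$-parts of $|F/K|$, so it contains a Hall $\{r,s\}$-subgroup of $F/K$.
\item Any $\{r,s\}$-subgroup of $F/K$, for instance a Hall $\{r,s\}$-subgroup of $G/K$, lies in an $F$-conjugate of it.
\item After replacing $H$ by that conjugate, $(G\cap H)K$ contains a Sylow $r$- and a Sylow $s$-subgroup of $G$, so invariable generation forces $G=(G\cap H)K$.
\end{itemize}
The paper shortcuts this by noting that $G/K$ is itself an $\{r,s\}$-group by \cref{lem:Sylow/Hall}(ii). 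Hence all of $G/K$ can be conjugated into $HK/K$, giving $G\le HK$ directly.

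The genuine gap is the final step, which is exactly where the hypothesis $K\not\le H$ is used, and you leave it unproved (``I am unsure of this last deduction''). ``Applying the same argument inside $K$'' is not what is needed, and the Frattini fallback is not an argument. You do assert that $G\cap H$ contains Sylow $r$- and $s$-subgroups of $G$, but you give no reason. The missing observation is that $H\cap K$ already contains a Sylow $r$-subgroup and a Sylow $s$-subgroup of $K$. Indeed, if $R\in\Syl_r(F)$ with $R\le H$, then $R\cap K\in\Syl_r(K)$ because $K\unlhd F$, and likewise for $s$. From $G=(G\cap H)K$ and $G\cap H\cap K=H\cap K$ you get $|G\cap H|=|G:K|\,|H\cap K|$. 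Hence $|G\cap H|_p=|G:K|_p\,|K|_p=|G|_p$ for $p\in\{r,s\}$. A second application of invariable generation then gives $g,h\in G$ with $G=\gen{x^g,y^h}\le G\cap H\le H$, so $K\le H$, a contradiction. Without this step your proposal does not reach a contradiction.
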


\begin{proof} Suppose that $(\ell,m)$ is an invariable $(r^{|G|},s^{|G|})$-sequence for $G$.

  Then $G/K$ is $(r^{|G|},s^{|G|})$-invariably generated by \cref{lem:quotients}. Since $F/K$ is soluble, $G/K$ is  an $\{r,s\}$-group by \cref{lem:Sylow/Hall}. Since $HK/K$ contains a Sylow $r$-subgroup and a Sylow $s$-subgroup of $F/K$ which is soluble, $HK/K$ contains a Hall $\{r,s\}$-subgroup of $F/K$  by Hall's Theorem. Thus we can conjugate $H$ in $F$ so that $HK/K \ge G/K$.  Hence $HK \ge G$ and $G= (G \cap H)K$. In particular, $G/K= (G \cap H)K/K\cong (G\cap H)/(G\cap H \cap K)=(G\cap H)/(H\cap  K)$ and, as $H \cap K$ contains both a Sylow $r$-subgroup and Sylow $s$-subgroup of $K$, we conclude that $G\cap H$ contains both a Sylow $r$-subgroup and a Sylow $s$-subgroup of $G$. Hence there are $f_1,f_2\in G$ such that $K\le G=\gen{\ell^{f_1}, m^{f_2}}\le G\cap H\le H$, which is a contradiction. Hence $G$ is not  $(r^{|G|},s^{|G|})$-invariably generated.
\end{proof}

\begin{lemma}\label{lem:lastlem}
Suppose that $s$ is a prime  and $t$ is a natural number, $G$ is almost simple with socle $K$ and    $x,y\in G$ with $x$ of order $s$ and $y$ of order $t$. Assume   $1\ne D < K$ and $\Aut(K) =N_{\Aut(K)}(D) K$.  Assume that $H= N_G(D)$  contains both a Sylow $s$-subgroup of $G$ and an $\Aut(K)$-conjugate of $y$.  Then $(x,y)$ is not an invariable $(s,t)$-sequence for $G$.
\end{lemma}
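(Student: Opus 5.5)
The approach is to show that we can conjugate $x$ and $y$ by elements of $G$ so that both conjugates lie in $H=N_G(D)$, and then to check that $H$ is a proper subgroup of $G$. Together these contradict the definition of an invariable $(s,t)$-sequence in \cref{def1}.

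\textbf{Conjugating $x$ into $H$.} Let $S\le H$ be a Sylow $s$-subgroup of $G$. Since $x$ has order $s$, Sylow's theorem gives $g\in G$ with $x^g\in S\le H$.

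\textbf{Conjugating $y$ into $H$.} By hypothesis there is $\alpha\in \Aut(K)$ with $y^\alpha\in H$. Since $\Aut(K)=N_{\Aut(K)}(D)K=KN_{\Aut(K)}(D)$, we write $\alpha=kn$ with $k\in K$ and $n\in N_{\Aut(K)}(D)$. The order of the factors matters here. We have $y^{kn}\in N_G(D)$, so
$$y^k\in N_G(D)^{n^{-1}}\le N_{\Aut(K)}(D^{n^{-1}})=N_{\Aut(K)}(D).$$
Since $k\in K\le G$, also $y^k\in G$, and therefore $y^k\in N_G(D)=H$. This is the only step that needs care: with the factorisation $\alpha=nk$ one would only get that $y$ normalises some $K$-conjugate of $D$, which is awkward to use. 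Choosing $\alpha=kn$ makes the argument work.

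\textbf{$H$ is proper.} Because $K$ is non-abelian simple and $1\ne D<K$, the subgroup $D$ is not normal in $K$. Hence $K\not\le N_G(D)$, and so $H<G$.

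\textbf{Conclusion.} We obtain $\gen{x^g,y^k}\le H<G$ with $g,k\in G$. Therefore $(x,y)$ is not an invariable $(s,t)$-sequence for $G$.
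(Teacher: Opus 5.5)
Your proof is correct, but it uses the factorisation $\Aut(K)=N_{\Aut(K)}(D)K$ differently from the paper.

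\textbf{The paper's route.} The paper keeps $\alpha$ as given and conjugates the whole configuration by it. Since $K\le G^\alpha$, Dedekind's law gives $G^\alpha=N_{G^\alpha}(D)K$. Hence $|G^\alpha:N_{G^\alpha}(D)|=|K:N_K(D)|$ is prime to $s$, and $N_{G^\alpha}(D)$ contains a Sylow $s$-subgroup of $G^\alpha$. The paper then conjugates $x^\alpha$ into that subgroup. The contradiction $\langle x^g,y\rangle^\alpha\le N_{G^\alpha}(D)<G^\alpha$ is reached inside $G^\alpha$, which need not coincide with $G$.

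\textbf{Your route.} You factor the element itself, $\alpha=kn$, and so replace the $\Aut(K)$-conjugation by conjugation by $k\in K\le G$. After that everything happens inside $G$. The Sylow hypothesis on $H$ is used exactly as stated, with nothing to transport to another group.

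\textbf{Comparison.} Your argument is a little shorter. It also isolates a useful observation: under the factorisation hypothesis, ``$H$ contains an $\Aut(K)$-conjugate of $y$'' is equivalent to ``$H$ contains a $K$-conjugate of $y$''. The paper's argument instead makes explicit that the Sylow condition is really a condition on $N_K(D)$. Consequently it holds for $N_X(D)$ in every $X$ with $K\le X\le \Aut(K)$.
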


\begin{proof} Suppose that $(x,y)$ is an invariable $(s,t)$-sequence for $G$. Assume that $\alpha \in \Aut(K)$ with $y^\alpha \in H$.
  We have that $$G^\alpha = G^\alpha\cap \Aut(K)= G^\alpha\cap N_{\Aut(K)}(D) K=  N_{G^\alpha}(D)K$$ and so, as $N_{G^\alpha}(D)/N_K(D)\cong G^\alpha/K$ and $N_K(D)$ contains a Sylow $s$-subgroup of $K$, $N_{G^\alpha}(D)$ contains a Sylow $s$-subgroup of $G^\alpha$.
Now $y^\alpha$ normalizes $D$ and $x^\alpha$ is $G^\alpha$-conjugate, by $g^\alpha$ say, into a Sylow $s$-subgroup of $N_{G^\alpha}(D)$.
Hence $$\gen{(x^g)^\alpha,y^\alpha} \le N_{G^\alpha}(D) < G^\alpha,$$ a contradiction as $\gen{(x^g)^\alpha,y^\alpha}=  \gen{x^g ,y}^\alpha=G^\alpha$.
\end{proof}
\section{Broad   and $p$-broad subgroups}\label{sec:3}

In \cite{GuralnickRobinson1}, Guralnick and Robinson define an elementary abelian $2$-subgroup $E$ of a finite group $G$ to be \emph{broad} provided $z^G\cap E \not=\emptyset$ for all involutions $z\in G$. We generalize this concept.

 \begin{definition}\label{def:pbroad} For a prime $p$ and group $G$, we define a \emph{$p$-broad subgroup} to be an elementary abelian $p$-subgroup $E$ of $G$ such that $e^G \cap E\ne \emptyset$ for all $e \in G$ of order $p$.
\end{definition}

\begin{lemma}\label{lem:2 class broad}
Suppose that $p$ is a prime, $G$ is a group and $G$ has at most two conjugacy classes of elements of order $p$. Then $G$ has a $p$-broad subgroup.
\end{lemma}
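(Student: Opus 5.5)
The plan is to argue directly from the number of conjugacy classes of elements of order $p$, using a central element of order $p$ in a Sylow $p$-subgroup to build the required elementary abelian subgroup.

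If $G$ has no elements of order $p$, then $E=1$ is a $p$-broad subgroup, since the condition in \cref{def:pbroad} is vacuous. If $G$ has exactly one conjugacy class of elements of order $p$, with representative $x$, then $E=\gen{x}$ is elementary abelian of order $p$. It meets that class, so it is $p$-broad.

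Now suppose that $G$ has exactly two conjugacy classes of elements of order $p$, say $x^G$ and $y^G$. Take $S\in\Syl_p(G)$. As $S\ne 1$, $Z(S)$ is a non-trivial $p$-group, so we can choose $z\in Z(S)$ of order $p$. Then $z$ lies in one of the two classes; interchanging the names of $x$ and $y$ if necessary, we may assume $z\in x^G$.
\begin{enumerate}
\item By Sylow's theorem there is $g\in G$ with $y^g\in S$.
\item Since $z$ is central in $S$, $z$ and $y^g$ commute.
\item Both $z$ and $y^g$ have order $p$, so $E=\gen{z,y^g}$ is elementary abelian of order $p$ or $p^2$.
\end{enumerate}
Moreover $z\in x^G\cap E$ and $y^g\in y^G\cap E$. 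Hence $E$ meets every conjugacy class of elements of order $p$, and $E$ is $p$-broad.

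There is no real obstacle here. The only point needing care is choosing the central element $z\in Z(S)$ first, so that an arbitrary conjugate of the other representative lying in $S$ automatically commutes with it. With that choice the two-class case reduces to Sylow's theorem and the non-triviality of the centre of a $p$-group.
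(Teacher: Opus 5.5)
Your proof is correct and follows the paper's argument. You take an element of order $p$ in the centre of a Sylow $p$-subgroup $S$, conjugate a representative of the other class into $S$, and note that the two elements generate an elementary abelian subgroup meeting both classes; your handling of the zero- and one-class cases and your explicit choice of $z\in Z(S)$ just spell out details the paper leaves implicit.
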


\begin{proof} We may assume that $G$ has two conjugacy classes of elements of order $p$ with representatives $x$ and $y$.  Let $T \in \Syl_p(G)$   and assume that $x\in Z(T)$. Then there exists $g \in G$ such that $y^g \in T$. Thus $\gen{x,y^g}$ is $p$-broad.
\end{proof}

With this definition, a $2$-broad subgroup is a broad subgroup.

\begin{theorem}[Guralnick-Robinson]\label{GR} Suppose that $G$ is a quasisimple group. Then $G$ has a broad subgroup.
\end{theorem}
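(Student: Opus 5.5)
We will not reprove this theorem from scratch. It is the main result of Guralnick and Robinson \cite{GuralnickRobinson1}, and in the paper we would simply cite it. For completeness, the argument we would reconstruct runs through the classification of the finite simple groups, as follows.

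\emph{Reductions.} If $G$ has no involutions, then $E=1$ is broad, so we may assume $|G|$ is even. By \cref{lem:2 class broad}, we may also assume $G$ has at least three conjugacy classes of involutions. Write $Z=Z(G)$ and $\ov G=G/Z$. An involution $z\in G$ maps to an involution of $\ov G$, unless $z\in Z$. Conversely, an involution of $\ov G$ lifts to an element of order $2$ or $4$. So the natural plan is to work with the simple group $\ov G$ together with the set of classes of $\ov G$ that lift to involutions. We then want an elementary abelian subgroup of $\ov G$ meeting those classes, whose full preimage contains an elementary abelian subgroup of $G$ that still meets every involution class (and contains $Z\cap\Omega_1(Z(O_2(G)))$ if needed). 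Since Schur multipliers are known, this lifting is a finite check apart from the generic Lie type families.

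\emph{Alternating and sporadic groups.} For $\Alt(n)$, every involution is a product of an even number of disjoint transpositions. Take
$$E=\gen{(1\,2)(3\,4),\,(3\,4)(5\,6),\,\dots},$$
the even part of $\gen{(2i-1\ 2i)}$. Then $E$ contains every cycle type $2^{2j}$. No involution class of $\Alt(n)$ splits, since each such involution commutes with a transposition. The double covers need a separate check, using the sign behaviour of lifts of products of transpositions. For the sporadic groups and exceptional covers, we would verify the claim from the ATLAS: pick an elementary abelian $2$-subgroup, typically in $Z(T)$ for $T$ a Sylow $2$-subgroup together with a few extra involutions, and check class fusion.

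\emph{Lie type.} This is the main obstacle. In odd characteristic, we would use that every involution is semisimple and hence lies in a maximal torus. Inside the $2$-torsion of a split maximal torus of the ambient algebraic group (in the appropriate isogeny version) one finds an elementary abelian group meeting all classes of involutions. The delicate point is the descent to the finite group $G$: some $\mathbf G$-classes split or are realised only in non-split tori, so one must compare $G$-classes with $\mathbf G$-classes via Lang's theorem. In characteristic $2$, involutions are unipotent. Using the Aschbacher--Seitz classification of involution classes, we would realise the class representatives as products of commuting root elements lying in the unipotent radical $Q$ of a suitable parabolic subgroup (for instance the abelian part of $Z(Q)$ for the maximal parabolic with abelian or special radical) and take $E$ inside this subgroup. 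The case-by-case verification for the exceptional groups is where we expect the real work.
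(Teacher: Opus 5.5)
Your proposal is correct and takes the same route as the paper: the paper's proof consists only of citing \cite[Theorem 1]{GuralnickRobinson1}. Your reconstruction sketch is not needed and is only heuristic in places; for example, an involution of $G/Z(G)$ need not lift to an element of order $2$ or $4$, since for $q\equiv 1\pmod 8$ and $\zeta\in\GF(q)$ a primitive $8$th root of unity, the image of $\diag(\zeta,\zeta,\zeta,-\zeta)\in\SL_4(q)$ is an involution of $\PSL_4(q)$ all of whose preimages have order $8$.
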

\begin{proof}  This is \cite[Theorem 1]{GuralnickRobinson1}.\end{proof}

We extend this result modestly as follows:
\begin{lemma}\label{lem:Broad}
Suppose that $G$ is an almost simple group with socle $K$ either a non-abelian simple alternating group or a sporadic simple group. Then $G$ has a broad subgroup or $G \cong \Aut(\Alt(6))$ and in this case $G$ does not have a broad subgroup.
\end{lemma}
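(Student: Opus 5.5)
The plan is to treat the two families separately and, within each, split according to whether $G=K$ or $G>K$. If $G=K$, then $K$ is quasisimple and \cref{GR} gives a broad subgroup at once. So assume $G>K$. For sporadic $K$ this means $|G:K|=2$. For alternating $K=\Alt(n)$ it means $G=\Sym(n)$, except when $n=6$, where $G$ is one of $\Sym(6)$, $\PGL_2(9)$, $M_{10}$ or $\Aut(\Alt(6))$.

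\emph{Symmetric groups.} For $G=\Sym(n)$ the involution classes are determined by the number $k$ of transpositions, $1\le k\le \lfloor n/2\rfloor$. Take $E=\gen{(1,2),(3,4),\dots,(2m-1,2m)}$ with $m=\lfloor n/2\rfloor$. This is elementary abelian and contains a product of exactly $k$ disjoint transpositions for every $k$, so it meets every involution class. This covers $\Sym(6)$ as well.

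\emph{Sporadic groups with $|G:K|=2$.} The involutions of $G$ are those of $K$ together with the outer involutions $G\setminus K$, and there are only a few outer classes, usually one or two. Since $K$ has a broad subgroup $E_0$ by \cref{GR}, the aim is an outer involution $u$ centralizing a suitable conjugate of $E_0$, or alternatively an elementary abelian subgroup of $G$ that is broad outright. When $G$ has at most two involution classes, \cref{lem:2 class broad} settles the case at once. Otherwise, inspect the ATLAS class lists and centralizer structures group by group, taking $E$ to be an elementary abelian subgroup of $C_G(u)$ or of a Sylow $2$-subgroup that contains representatives of all classes (for example via the $2$-local structure).

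\emph{The groups with socle $\Alt(6)$.}
\begin{enumerate}
\item $\PGL_2(9)$ has two involution classes, so \cref{lem:2 class broad} applies.
\item $M_{10}$ has a single involution class, lying in $K$, so \cref{lem:2 class broad} again applies.
\item For $\Aut(\Alt(6))$ we must show there is no broad subgroup. There are three involution classes: the class in $K$, the transpositions of $\Sym(6)$, and the outer involutions in $\PGL_2(9)\setminus K$; the coset $M_{10}\setminus K$ contains no involutions. A broad $E$ would contain an element of each of the two outer classes, lying in the distinct cosets $\Sym(6)\setminus K$ and $\PGL_2(9)\setminus K$. Their product would then be an element of $E$ in the coset $M_{10}\setminus K$, and it is an involution since $E$ is elementary abelian. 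This contradicts the absence of involutions in $M_{10}\setminus K$.
\end{enumerate}

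The main obstacle is the sporadic case with $|G:K|=2$, which is a case-by-case verification using ATLAS data on involution classes and centralizers. The cleanest route is to find, for each such $G$, a commuting set of representatives, for instance inside $C_G(u)$ for an outer involution $u$ whose centralizer structure is explicitly known.
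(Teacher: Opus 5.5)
Your framework is the paper's. You use \cref{GR} when $G=K$ and disjoint transpositions for $\Sym(n)$. For $\Aut(\Alt(6))$ you give the paper's own coset argument, and you make explicit the step the paper leaves implicit: the product of the two outer representatives would be an involution of $E$ lying in $\mathrm{Mat}(10)\setminus K$. Your checks of $\Sym(6)$, $\PGL_2(9)$ and $\mathrm{Mat}(10)$ are more complete than the paper, which discusses only $\Aut(\Alt(6))$.

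The gap is the sporadic case with $|G:K|=2$. This is the only part of the lemma with content beyond \cref{GR}, and your proposal does not prove it. ``Inspect the ATLAS class lists \dots\ group by group'' describes a search, not its outcome.

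\cref{lem:2 class broad} only disposes of groups with a single inner and a single outer class of involutions, such as $\mathrm{McL}.2$ or $\mathrm{J}_3.2$. Most of the twelve groups $K.2$ have three or more involution classes; $\Fi_{22}.2$ has six. For these you must exhibit a commuting set of class representatives. You cannot assume the search will succeed, since the lemma itself records a failure, $\Aut(\Alt(6))$.

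The paper settles this with a {\sc magma} computation for all groups except $\Aut(\HN)$ and $\Fi_{24}=\Fi_{24}'.2$. It treats those two by hand, in the spirit you suggest: if $C_G(u)$ meets every $G$-class of involutions, then any broad subgroup of $C_G(u)$ is broad in $G$.
\begin{itemize}
\item In $\Aut(\HN)$, take $u$ with $C_G(u)\cong 2\times\Sym(10)$; this centralizer has a broad subgroup of order $2^6$.
\item In $\Fi_{24}$, the centre of a Sylow $2$-subgroup of $C_G(u)\cong 2\times\Fi_{23}$ is broad.
\end{itemize}
To complete your argument you need such a verification, by computer or by explicit subgroups, for each non-perfect sporadic almost simple group.
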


\begin{proof}The result for groups with $K$ an alternating group other than $\Alt(6)$ is obvious (see \cite[Lemma 2.1]{GuralnickRobinson1}). For $K$ a sporadic simple group, using \cref{GR} we may assume that $G$ is not perfect. The   {\sc magma} \cite{Magma} code in the supplemental materials finds a broad subgroup in all cases except $\Aut(\mathrm{HN})$ and $\mathrm{Fi}_{24}$ (where we were not patient enough).

For $G=\mathrm{Fi}_{24}$, fortunately the centre of a Sylow $2$-subgroup of $2\times \mathrm{Fi}_{23}$ is a broad subgroup of $G$. For $\Aut(\mathrm{HN})$, we find an involution with centralizer $2\times \Sym(10)$ and quickly  check that this group contains a representative of every conjugacy class of involutions in $G$. As $2\times \Sym(10)$   has a broad subgroup of order $2^6$, we are done. Finally, we remark that for $G=\Aut(\Alt(6))$, there are three subgroups of index $2$ and the subgroup $\mathrm{Mat}(10)$ has no involutions in its outer half. Since both $\Sym(6) $ and $\PGL_2(9)$ do, it is impossible for $G$ to have a broad subgroup.
\end{proof}

We now start to explore $p$-broad subgroups, with $p$ odd.
The first result is visibly true.
\begin{lemma}\label{lem:pbroadalt} Suppose that $G$ is almost simple and $K=\Soc(G)$ is an alternating group of degree at least $5$. Then, for $p$ an odd prime, $G$ has a $p$-broad subgroup.
\end{lemma}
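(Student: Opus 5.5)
The plan is to write down an explicit elementary abelian $p$-subgroup built from disjoint $p$-cycles and check the conjugacy condition directly. Let $K=\Alt(n)$ with $n\ge 5$, and first set aside $n=6$. Then $G$ is $\Alt(n)$ or $\Sym(n)$, acting naturally on $\Omega=\{1,\dots,n\}$. Put $m=\lfloor n/p\rfloor$. Split $m$ disjoint $p$-subsets $\Omega_1,\dots,\Omega_m$ off $\Omega$, and let $c_i$ be a $p$-cycle on $\Omega_i$. Take
$$E=\gen{c_1,\dots,c_m}\cong C_p^m.$$
Since $p$ is odd, each $c_i$ is an even permutation, so $E\le \Alt(n)\le G$. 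If $m=0$, then $G$ has no elements of order $p$ and the trivial subgroup works. So I assume $m\ge1$.

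For the conjugacy condition, let $e\in G$ have order $p$. Then $e$ is a product of $k$ disjoint $p$-cycles for some $1\le k\le m$, so $e$ is $\Sym(n)$-conjugate to $c_1c_2\cdots c_k\in E$. This settles $G=\Sym(n)$. For $G=\Alt(n)$ the $\Sym(n)$-class of $e$ might split into two $\Alt(n)$-classes. I handle this in one of two ways:
\begin{enumerate}
\item If $e$ has a fixed point, or $k\ge 2$, then some odd permutation centralizes $e$: a transposition of two fixed points, or the permutation swapping two of the $p$-cycles of $e$ pointwise (a product of $p$ transpositions, hence odd since $p$ is odd). In that case the class does not split.
\item Otherwise $n=p$ and $e$ is a single $p$-cycle. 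Then every element of order $p$ generates a Sylow $p$-subgroup $\gen{e}$, and $E$ is a Sylow $p$-subgroup. By Sylow's theorem $\gen{e}$ is $\Alt(n)$-conjugate to $E$, so $e^G\cap E\ne\emptyset$.
\end{enumerate}

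The main obstacle is $n=6$, where $\Aut(\Alt(6))$ gives extra almost simple groups, $\PGL_2(9)$, $\mathrm{Mat}(10)$ and $\Aut(\Alt(6))$, with exotic classes. The only odd prime is $p=3$, since $p=5$ gives cyclic Sylow subgroups of order $5$ and Sylow's theorem suffices. For $p=3$, a Sylow $3$-subgroup $P\cong C_3\times C_3$ of $\Alt(6)$ is elementary abelian and is also a Sylow $3$-subgroup of $G$, because $|\Out(\Alt(6))|=4$. Every element of order $3$ in $G$ then lies in a $G$-conjugate of $P$, so $P$ is $3$-broad. This argument also covers $\Sym(6)$ and $\Alt(6)$ directly.
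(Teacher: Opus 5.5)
\textbf{There is a gap in your case split for $G=\Alt(n)$, though it is easily repaired.} Your construction is the paper's: its proof is the one-line observation that the subgroup generated by $\lfloor n/p\rfloor$ disjoint $p$-cycles meets every class of elements of order $p$. Your treatment of class splitting in $\Alt(n)$ and of the exotic overgroups of $\Alt(6)$ goes beyond what the paper supplies.

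In (i) you say that if $e$ has a fixed point, then a transposition of two fixed points centralizes $e$. This needs two fixed points. Suppose $k=1$ and $n=p+1$, for instance a $7$-cycle in $\Alt(8)$ or an $11$-cycle in $\Alt(12)$. Then $e$ has exactly one fixed point and $C_{\Sym(n)}(e)=\gen{e}\le\Alt(n)$. The $\Sym(n)$-class of $e$ therefore really does split into two $\Alt(n)$-classes, since the cycle type $(p,1)$ has distinct odd parts. Case (ii) does not cover this either, because there you assert that the only remaining possibility is $n=p$.

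To repair it, replace the condition in (ii) by $m=1$, that is, $n<2p$. Then $|\Sym(n)|_p=p$, so $E$ and $\gen{e}$ are both Sylow $p$-subgroups of $\Alt(n)$, and your Sylow argument finishes exactly as written. If $m\ge 2$ and $k=1$, then $e$ has $n-p\ge p\ge 3$ fixed points, so your transposition argument in (i) is valid. If $k\ge 2$, your odd pointwise swap of two $p$-cycles works as stated.

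With this amendment the proof is complete, given also the trivial remark that for $n=6$ and $p\ge 7$ there are no elements of order $p$ in $\Aut(\Alt(6))$. Your $n=6$ argument for $p=3,5$ is correct as written.
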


\begin{proof} The subgroup generated by $\lfloor \frac n p\rfloor $ $p$-cycles with pairwise different support contains a member of every conjugacy class of elements of order $p$.
\end{proof}

\begin{lemma}\label{lem:pbroadspor} Suppose that $G$ is almost simple with $K=\Soc(G)$  is a sporadic simple group. Then, for $p$ an odd prime, $G$ has a $p$-broad subgroup unless $(K,p) \in \{(\HS,5), (\He,7)\}$.
\end{lemma}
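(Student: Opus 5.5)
The proof is a finite check over the $26$ sporadic groups $K$, their automorphism groups $\Aut(K)$ (which differ from $K$ only in $12$ cases, with $|\Aut(K):K|=2$), and the odd primes $p$ dividing $|G|$. For primes $p$ with cyclic Sylow $p$-subgroup of order $p$ there is nothing to do: a Sylow $p$-subgroup $P$ of $G$ has order $p$, so every element of order $p$ is conjugate into $P$, and $P$ is $p$-broad. This settles most large primes immediately. Since $|G:K|\le 2$ and $p$ is odd, the elements of order $p$ of $G$ lie in $K$, and a Sylow $p$-subgroup of $K$ is one of $G$. Hence the only question is whether the $G$-classes of elements of order $p$ (which are unions of $K$-classes fused by the outer automorphism) can be simultaneously represented in a single elementary abelian subgroup.

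Next I would apply \cref{lem:2 class broad}. From the ATLAS class lists, for the great majority of pairs $(K,p)$ there are at most two $G$-classes of elements of order $p$, and that lemma gives a $p$-broad subgroup. The remaining pairs, such as $p=3$ in $\Co_1$, $\Fi_{22}$, $\Fi_{23}$, $\Fi_{24}'$, $\M$, or $p=5$ in $\Co_1$, $\HN$, $\M$, and $p=7$ in $\He$, $\M$, $\Fi_{24}'$, need individual treatment. For these I would use the same strategy as for $\Aut(\HN)$ in \cref{lem:Broad}. Take $x$ of order $p$ in the centre of a Sylow $p$-subgroup $P$, and pick a large elementary abelian subgroup of $P$, for instance $\Omega_1(Z(P))$ or a maximal elementary abelian normal subgroup. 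Then check, using power maps and class fusion from a suitable $p$-local subgroup, for example $N_G(E)$ with known fusion data, or a centralizer such as $3\times$ something, that it meets every class. Where hand arguments are awkward I would do this in {\sc magma} or GAP with the character table library, as was done in \cref{lem:Broad}, by computing fusion of an elementary abelian subgroup's classes into $G$.

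Finally, for the two exceptions I would prove non-existence. In $\HS$, and I would check also $\Aut(\HS)$, there are classes $5A$, $5B$, $5C$, with Sylow $5$-subgroup extraspecial of order $5^3$. I expect every elementary abelian subgroup of order $25$ to miss one class, presumably $5A$ together with one of the others not occurring together. A similar argument applies to $\He$ with classes $7A$–$7E$, where the outer automorphism fuses some of them. Here the statement as phrased covers all $G$ with socle $K$. So either both $K$ and $\Aut(K)$ fail, or the exclusion is only meant loosely. I would verify both by computer, enumerating the elementary abelian subgroups of a Sylow $p$-subgroup up to conjugacy and their class distributions. The main obstacle is the large groups, especially the Monster at $p=3,5,7$, where direct computation is infeasible. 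There I would rely on published $p$-local structure, e.g.\ the $3$-central subgroup $3^{1+12}.2\Suz.2$, and known class fusion to exhibit an explicit elementary abelian subgroup meeting every class.
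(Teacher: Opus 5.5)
Your plan is essentially the paper's proof: dispose of the easy primes, settle the remaining pairs by machine, and pass from $K$ to $G$ using $|G:K|\le 2$. The paper's easy-prime filters are slightly stronger than yours, namely abelian Sylow $p$-subgroups and at most two classes of \emph{subgroups} of order $p$. Also, the Monster has only two classes of elements of order $5$ and of order $7$, so only $(\M,3)$ needs the literature; there the paper cites Richardson's list of elementary abelian $3$-subgroups, in which $B_6$ meets all three classes.

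For $(\HS,5)$ and $(\He,7)$ the paper needs no enumeration. Every elementary abelian subgroup of order $p^2$ of the extraspecial Sylow subgroup $S$ of order $p^3$ contains $Z(S)$, and its other $p$ subgroups of order $p$ are $S$-conjugate, so it meets at most two of the three classes of subgroups of order $p$, which remain distinct in $\Aut(K)$. In particular it is the two non-central classes, not $5A$ together with another, that never occur together.
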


\begin{proof} Suppose that $K$ is a sporadic group and $p$ is an odd prime. If the Sylow $p$-subgroups are abelian, or if $K$ has at most $2$ conjugacy classes of subgroups of order $p$, then the result is true. Working through \cite[Table 5.3]{GLS3}, we need to consider the pairs $(K,p)$ one of
$ (\Co_3,3),(\Co_1,3),(\Co_1,5),(\HS,5),(\Suz,3),(\He,7),
(\Fi_{22},3),(\Fi_{23},3),\\(\Fi_{24},3),(\HN,5)$ or $(\M,3)$.

Let us consider  the largest case: $(K,p)=(\M,3)$. For this we resort to the determination of elementary abelian $3$-subgroups in $\M$ \cite[Table VI]{Richardson}. From there we see that the subgroup $B_6$ contains each conjugacy class of elements of order $3$. Hence $\M$ has a $3$-broad subgroup.
Consider next, the special cases of $K=\HS$ with $p=5$ or $K=\He$ with $p=7$. In these cases, by \cite[Tables 5.3m and 5.3p]{GLS3},  there are three conjugacy classes of cyclic subgroup of order $p$ and $K$ has extraspecial Sylow $p$-subgroups of order $p^3$ with representative $S$ say. Thus the maximal elementary abelian subgroups of $S$ have order $p^2$ and contain $p$ cyclic   subgroups which are not equal to the centre of $S$ all of which are conjugate in $S$. This proves the claim that $\HS$ has no $5$-broad subgroups and  $\He$ has no $7$-broad subgroups.

The remaining cases are shown to have a $p$-broad subgroup using a {\sc magma} computation using the code in the supplemental materials. Now $G$ inherits its $p$-broad subgroups from $K$ and in the cases where $K$ has no broad subgroups, the explanation of this for $K$ also works for $G$ as the conjugacy classes of cyclic subgroups of order $p$ remain distinct in $G$ \cite[Tables 5.3m and 5.3p]{GLS3}.
\end{proof}

We now start to investigate $p$-broad of simple groups of Lie type defined in characteristic $r$. If $p=r$ is odd, then, unlike for $p=2$, we expect that mostly there are no $p$-broad subgroups.

\begin{example}
The group $\PSL_4(p^a)$ has no $p$-broad subgroup for $p \ge 5$.
\end{example}

\begin{proof}
It suffices to calculate in $G=\SL_4(p^a)$. Let $V$ be the standard $4$-dimensional module for $G$,  $x= \left(\begin{smallmatrix} 1&0&0&0\\1&1&0&0\\0&1&1&0\\0&0&0&1\end{smallmatrix}\right)$
and
$y= \left(\begin{smallmatrix} 1&0&0&0\\1&1&0&0\\0&1&1&0\\0&0&1&1\end{smallmatrix}\right)$.
As $p \ge 5$, both $x$ and $y$ have order $p$. We claim that no conjugate of $x$ commutes with $y$. To see this assume that $x_0 \in x^G\cap C_G(y)$.  Then $[V,x_0]$ has dimension $2$ and $C_V(x_0)$ has dimension $2$. Since $y$ stabilizes  exactly one subspace of dimension $2$, we must have $C_V(x_0)=[V,x_0]$, a contradiction as $(x_0-1)^2 \ne 0$. Since no conjugate of $x$ commutes with $y$, $G$ cannot have a  $p$-broad subgroup as such a subgroup would contain a conjugate of $x$ and $y$.
\end{proof}

We have also checked, using {\sc magma} that $\PSU_4(3)$ has no $3$-broad subgroups.

Recall that for   a    group of Lie type a prime $p$ is   \emph{good}   provided   that   the coefficients in the decomposition of the highest root into a sum of fundamental roots only has coefficients strictly less than $p$ \cite[Theorem 4.10.3 (e)]{GLS3}.

\begin{lemma}\label{lem:p-good}
Suppose that $L$ is a universal group of Lie type defined in characteristic $r$ and assume that $p\ne r$ is an odd prime which is good for $L$. Assume that $|Z(L)|$ is coprime to $p$ and that $L\not\cong \SL_3^\pm(r^a)$. Then $L$ has a $p$-broad subgroup.
\end{lemma}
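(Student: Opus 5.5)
The plan is to put every element of order $p$ of $L$ into a single maximal torus, up to conjugacy, and take $E=\Omega_1$ of the Sylow $p$-subgroup of that torus. Since $p\ne r$, every element $x\in L$ of order $p$ is semisimple. As $L$ is universal, Steinberg's theorem gives that $C_{\bar L}(x)$ is connected in the ambient simply connected algebraic group $\bar L$ with Frobenius $F$. So $x$ lies in an $F$-stable maximal torus, and $C_L(x)$ is the group of $F$-fixed points of a connected reductive group. The conjugacy class of $x$ is then determined, via Lang--Steinberg, by the $\bar L$-class of $x$. In particular, $x$ is $L$-conjugate into every $F$-stable maximal torus $T$ of $\bar L$ that meets $x^{\bar L}$ in an $F$-fixed element of $T^F$.

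\textbf{Choice of torus.} Let $d$ be the order of $q=r^a$ (twisted appropriately) modulo $p$. Take $T$ to be a Sylow $\Phi_d$-torus centralizer, i.e.\ the maximal torus attached to a regular element of order $d$ in the Weyl group. By the theory of Broué--Malle--Michel (or Springer--Steinberg, \cite[4.10]{GLS3}), a Sylow $p$-subgroup of $L$ has an abelian normal subgroup $P_T=O_p(T^F)$. Moreover, when $p$ is good and $p\nmid |Z(L)|$, every $p$-element of $L$ is conjugate into $T^F$. This is the standard result that, for good $p$ not dividing the fundamental group order, the centralizer of a $p$-element is a Levi-like subgroup containing a Sylow $\Phi_d$-torus. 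Then $E=\Omega_1(O_p(T^F))$ is elementary abelian. Every $x$ of order $p$ satisfies $x\in C_L(x)$, which contains a conjugate of the Sylow $\Phi_d$-torus, hence a conjugate of $T^F$'s $p$-part, by Sylow's theorem inside the reductive group $C_{\bar L}(x)^F$. To see this, note that $x$ is central in $C_{\bar L}(x)^\circ$. So $x$ lies in every maximal torus of $C_{\bar L}(x)$, in particular in a Sylow $\Phi_d$-torus of $C_{\bar L}(x)$. The key claim is that this is also a Sylow $\Phi_d$-torus of $\bar L$, and that $x$ lies in its $p$-part of order divisible by the full $p$-part. Then $x^L\cap E\neq\emptyset$.

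\textbf{Main obstacle.} The hardest step is justifying that a Sylow $\Phi_d$-torus $S$ of $C_{\bar L}(x)$ is Sylow in $\bar L$, and that $x\in \Omega_1(O_p(C_{\bar L}(S)^F))$ without loss of conjugacy. This is where goodness and $p\nmid|Z(L)|$ enter, via Malle's result that $p$-elements lie in the Sylow $d$-torus normalizer when $p$ is good, with the exceptional small-rank cases being exactly those where $p=3$ divides $q\mp1$ in type $A_2$, i.e.\ $\SL_3^\pm$. I would try first to cite Malle \cite{} / \cite[Theorem 4.10.2]{GLS3}, which states that $P$ has an abelian normal subgroup $P_T$ with $P/P_T$ isomorphic to a Sylow subgroup of the Weyl group factor, and that $P_T$ is the unique maximal abelian normal subgroup except in listed cases. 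Combining this with the Weyl group acting on $P_T$ controlling fusion gives the result. If citation fails, I would fall back to an argument via Lemma~\ref{lem:2 class broad}-style direct inspection for the exceptional rank-two configurations.
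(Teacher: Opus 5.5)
\textbf{There is a genuine gap.} Your choice $E=\Omega_1(O_p(T^F))$ is the right candidate; up to conjugacy it is the maximal-rank elementary abelian subgroup $A$ that the paper uses. But the proposal never proves the only nontrivial point: that every $x\in L$ of order $p$ is $L$-conjugate into $E$.

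You reduce this to the assertion that $C_{\bar L}(x)$ contains a Sylow $\Phi_d$-torus of $\bar L$, and then leave that assertion as the ``main obstacle''. Connectedness of $C_{\bar L}(x)$ and Lang--Steinberg only tell you that $L$-classes of such $x$ are detected by $\bar L$-classes. They do not tell you that each such class meets $T^F$.

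The results you propose to cite do not supply this either. The theorem that a Sylow $p$-subgroup lies in $N(S)^F$ only puts $x$ into $N(S)^F$, not into $C(S)^F$. \cite[Theorem 4.10.2]{GLS3} only describes $P$ as an extension of the abelian normal subgroup $P_T$ by a Sylow subgroup of a Weyl-type group. Control of fusion in $P_T$ by $N(T)$ only decides when two elements \emph{already in} $P_T$ are conjugate. None of these says anything about elements of order $p$ in $P\setminus P_T$, and those are exactly the ones at issue. For instance, in the abstract group $C_p\wr C_p$ the top generator lies in no elementary abelian subgroup of maximal rank. Whether it is conjugate into the base is therefore a question about fusion in the ambient group, not about the structure of $P$. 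For the same reason, uniqueness up to conjugacy of the maximal-rank elementary abelian subgroups does not suffice on its own.

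The missing ingredient is \cite[Theorem 4.10.3(e)]{GLS3}. Under the stated restrictions on $p$ and $L$, it gives that $\gen{x}$ is conjugate into the maximal-rank elementary abelian subgroup $A$, which is unique up to conjugacy by \cite[Theorem 4.10.3(c)]{GLS3}. The paper's proof consists of exactly these two citations.

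Several intermediate claims in your sketch are also wrong as written:
\begin{enumerate}
\item It is false that every $p$-element of $L$ is conjugate into $T^F$. In $\SL_{p+1}(q)$ with $q\equiv 1\pmod p$ and $q\not\equiv 1\pmod{p^2}$, take the permutation matrix of a $p$-cycle times $\diag(\lambda,1,\dots,1,\lambda^{-1})$, with $\lambda$ of order $p$. This element has order $p^2$ and eigenvalues outside $\GF(q)$, so it is not conjugate into the split torus.
\item The centralizer of a Sylow $\Phi_d$-torus is in general only a Levi subgroup. It is a maximal torus only when $d$ is a regular number.
\item $x$ lies in the maximal tori of $C_{\bar L}(x)$ that contain a Sylow $\Phi_d$-torus, not in the $\Phi_d$-torus itself.
\end{enumerate}
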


\begin{proof}Let $e \in L$ have order $p$. By \cite[Theorem 4.10.3(c)]{GLS3}, with the restrictions on $p$ as given, $L$ has a unique conjugacy class of   elementary abelian $p$-subgroups of maximal $p$-rank. Let  $A$ be such a subgroup.
Then     by \cite[Theorem 4.10.3(e)]{GLS3}, $\gen{e}$ is conjugate to a subgroup of $A$. Hence $e^L \cap A$ is non-empty and we are done.
\end{proof}

\begin{lemma}\label{lem:3good} For $r\ne 3$, the following groups have a $3$-broad subgroup:
\begin{enumerate}
\item $\PSL_n^\epsilon(r^a)$ when $n> 3$ and $(r^a-\eps,n)$ is coprime to $3$;
\item $\PSp_{2n}(r^a)$, $n \ge 1$, $\POmega_{2n-1}(r^a)$ and $\POmega_{2n}^\pm(r^a)$, $n \ge 4$.
\end{enumerate}
\end{lemma}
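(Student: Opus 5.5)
Since $r\ne 3$, the natural tool is \cref{lem:p-good} with $p=3$. So the plan is to check, for each family, that $3$ is good and that the centre of the relevant universal group has order coprime to $3$. Then we pass from the universal group $L$ to the simple quotient $\bar L=L/Z(L)$.

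For the passage to $\bar L$, let $A$ be a $3$-broad subgroup of $L$ and let $\bar e\in\bar L$ have order $3$. Since $|Z(L)|$ is coprime to $3$, $\bar e$ lifts to an element $e\in L$ of order $3$, namely the $3$-part of any preimage. Some $L$-conjugate of $e$ lies in $A$, so some $\bar L$-conjugate of $\bar e$ lies in $\bar A$. Also $\bar A\cong A$ is elementary abelian. Hence $\bar A$ is $3$-broad in $\bar L$.

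For (i), take $L=\SL_n^\epsilon(r^a)$. This is type $A_{n-1}$, where every prime is good. Its centre has order $(n,r^a-\epsilon)$, which is coprime to $3$ by hypothesis. The exclusion $n>3$ avoids $\SL_3^\pm$. So \cref{lem:p-good} applies directly.

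For (ii), the universal groups are $\Sp_{2n}(r^a)$ (type $C_n$), $\mathrm{Spin}_{2n-1}(r^a)$ (type $B_{n-1}$) and $\mathrm{Spin}^\pm_{2n}(r^a)$ (type $D_n$). For these types the only bad prime is $2$, since the highest-root coefficients are at most $2$, so $3$ is good. The centres are $2$-groups, of order $\gcd(2,r^a-1)$, $\gcd(2,r^a-1)$, and $\gcd(4,r^{an}\mp1)$ respectively, hence coprime to $3$. Moreover none of these groups is $\SL_3^\pm$, so \cref{lem:p-good} again gives a $3$-broad subgroup, and the descent argument above transfers it to the simple group. The small cases $\PSp_2(r^a)\cong\PSL_2(r^a)$ are covered the same way, via $\SL_2$, where again every prime is good and the centre has order at most $2$.

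The main point needing care is whether \cref{lem:p-good} genuinely covers the classical types with the stated hypotheses. The underlying citation \cite[Theorem 4.10.3]{GLS3} concerns the uniqueness of maximal-rank elementary abelian $p$-subgroups and the conjugacy of $p$-elements into them. Beyond that, only the bookkeeping of centre orders and goodness needs verifying. If the rank bounds in (ii) are needed only to avoid isomorphisms with other families, the argument is unaffected.
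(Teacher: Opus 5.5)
Your proposal is correct and follows the paper's own route: check that $3$ is good and that the centre of the universal group has order prime to $3$, apply \cref{lem:p-good}, and push the $3$-broad subgroup down to the simple quotient. You add detail the paper leaves implicit, namely the explicit centre orders and the lifting of elements of order $3$ through a central extension of order prime to $3$, but the argument is the same.
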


\begin{proof} For the groups in question,   $3$ is a good prime by \cite[Table 1.8]{GLS3}. Furthermore, in case (i) we have eliminated the possibility that $3$ divides the centre of the universal  groups $\SL_n(r^a)$ and $\SU_n(r^a)$. Hence in cases (i) and (ii),  \cref{lem:p-good} implies that the universal version $L$ of the groups in question have a $p$-broad subgroup $E$.  Since $Z(L)$ has order coprime to $p$, it follows that the image of $E$ in the quotient group is also $p$-broad.
\end{proof}

The next example demonstrates that the condition in \cref{lem:3good} (i) is required.

\begin{example}
The group $\PSL_9(4)$ has no $3$-broad subgroups.
\end{example}

\begin{proof} We sketch an explanation.
Let $X\cong \GL_9(4)$, $Y= \SL_9(4)$, $\omega \in \GF(4)$ have order $3$ and set  $$x=\diag(\left(\begin{smallmatrix} 0&1&0\\0&0&1\\\omega&0&0\end{smallmatrix}\right), \left(\begin{smallmatrix} 0&1&0\\0&0&1\\\omega&0&0\end{smallmatrix}\right),\left(\begin{smallmatrix} 0&1&0\\0&0&1\\\omega&0&0\end{smallmatrix}\right)).$$
Then $x\in Y$  has order $9$ and $\gen{x^3}= Z(Y)$.
We have $N_X(\gen{x}) \cong \GL_3(64)\gen{\phi}$ where $\phi$ is defined from the field automorphism of $\GF(64)$ fixing $\GF(4)$. Let $$y=\diag(\omega, \omega^{-1},1,1,1,1,1,1,1).$$ Then $y\in Y$ has order $3$ and is not conjugate into $C_X(x)$ as the dimension of the fixed space of $y$ is not a multiple of $3$. Suppose that $y$ is conjugate to an element $y_0$ of $N_X(\gen x)\setminus C_X(x)$. Then $y_0$ induces by conjugation a field automorphism on $\SL_3(64)$ and therefore commutes with a subgroup $M$ isomorphic to $\SL_3(4)$. The group $M$ decomposes the natural module for $Y$ as a direct sum of three irreducible $3$-dimensional modules. But then $y_0$ cannot commute with $M$, a contradiction. Hence $y$ is not conjugate to an element of $N_X(\gen{x})$. It follows that the image  of $x$ in $\PSL_9(4)$ does not commute with the image of any conjugate of $y$ (as such a conjugate would have preimage in $N_X (\gen{x})$). Hence $\PSL_9(4)$ has no $3$-broad subgroups.
\end{proof}

\begin{lemma}\label{3-broad} The groups ${}^2\F_4(2^a)'$, $\G_2(r^a)$, ${}^3\mathrm{D_4}(r^a)$, $\F_4(r^a)$, $\E_7(r^a)$ and $\E_8(r^a)$, $r \ne 3$, have $3$-broad subgroups.
\end{lemma}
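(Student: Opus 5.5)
The plan is to avoid case-by-case class lists and use a uniform torus argument. Since $3$ is a bad prime for $\G_2$, $\F_4$, $\E_7$, $\E_8$ and ${}^2\F_4$, \cref{lem:p-good} is not available. The one exception is ${}^3\D_4(r^a)$: $3$ is good for type $\D_4$ and the universal group has trivial centre, so \cref{lem:p-good} applies directly and we treat that group first.

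For the untwisted groups, let $\bar G$ be the simply connected simple algebraic group over $\overline{\GF(r)}$ with Frobenius map $F$ and $L=\bar G^F$. For $\E_7$ the centre has order $2$, so, as in \cref{lem:3good}, a $3$-broad subgroup of $L$ maps to one of the simple quotient. The steps are as follows.

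\begin{enumerate}
\item Every element $e\in L$ of order $3$ is semisimple, since $r\ne 3$. Hence $e$ lies in a maximal torus $\bar T$ of $\bar G$, and so $e^{\bar G}$ meets $\bar T[3]\cong 3^{\ell}$, where $\ell$ is the rank.
\item By Steinberg's theorem, $C_{\bar G}(e)$ is connected because $\bar G$ is simply connected. Lang--Steinberg then shows that each $F$-stable $\bar G$-class of semisimple elements contains $F$-fixed points, and that these form a single $L$-class.
\item It therefore suffices to find an $F$-stable maximal torus $\bar T_w$ with $\bar T[3]\le \bar T_w^F$. Then $E=\bar T_w[3]$ is elementary abelian of rank $\ell$ inside $L$. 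For any $e\in L$ of order $3$, the $\bar G$-class of $e$ meets $E\subseteq L$, and by step 2 it meets $E$ already in $e^L$.
\item Choose $w$ as follows. On the twisted torus $\bar T_w$, $F$ acts on $\bar T$ as $t\mapsto (t^{q})^{w}$, where $q=r^a$. For types $\G_2$, $\F_4$, $\E_7$, $\E_8$ the longest element $w_0$ of the Weyl group acts as $-1$. If $q\equiv 1 \pmod 3$, take $w=1$. If $q\equiv -1\pmod 3$, take $w=w_0$; then $F$ acts on $\bar T[3]$ as $t\mapsto t^{-q}=t$. In both cases $F$ fixes $\bar T[3]$ pointwise, as required.
\end{enumerate}

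The main obstacle is ${}^2\F_4(2^a)'$. Here $F^2$ is the standard Frobenius $q^2$-map with $q^2=2^{2n+1}\equiv -1\pmod 3$, and $F$ acts on the cocharacter lattice as $q\cdot\tau$, where $\tau$ is the length-swapping isometry. I would first try to find $w\in W(\F_4)$ with $w\tau\equiv \pm 1$ modulo $3$ on $X(\bar T)/3X(\bar T)$, so that the same argument produces a torus $\bar T_w$ of ${}^2\F_4(q^2)$ containing all of $\bar T[3]$. Failing this, I would fall back on the known list of classes of elements of order $3$ in ${}^2\F_4(q^2)$; I expect there are two, with centralizers of type $\SU_3(q^2)$ and a torus-like group. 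With two classes, \cref{lem:2 class broad} gives a $3$-broad subgroup at once. The Tits group ${}^2\F_4(2)'$ would then be checked directly, for example by {\sc magma}, as is done elsewhere in the paper.

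The only place where additional care is needed is passing from $L$ to the simple group. A $3$-broad subgroup $E$ of $L$ maps to a $3$-broad subgroup of $L/Z(L)$, because $|Z(L)|$ is coprime to $3$. Consequently every element of order $3$ in the quotient lifts to an element of order $3$ in $L$.
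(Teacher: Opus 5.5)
\textbf{Verdict.} Your argument for $\G_2$, $\F_4$, $\E_7$ and $\E_8$ is correct, but the ${}^3\D_4(r^a)$ case has a genuine gap: \cref{lem:p-good} does not apply there.

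\textbf{The untwisted groups.} You use the same $w_0$-twisted torus as the paper. Where the paper compares class numbers (GLS3 Tables 4.7.1 and 4.7.3A), you use Steinberg's connectedness theorem together with Lang--Steinberg. This is self-contained and cleaner. The paper handles $\G_2$ by the two-class argument instead.

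\textbf{The gap at ${}^3\D_4(r^a)$.} The proof of \cref{lem:p-good} needs a unique class of elementary abelian $p$-subgroups of maximal rank. For ${}^3\D_4(q)$ with $p=3$ this is false. The prime $3$ is good for the untwisted system $\D_4$. However, the triality twist makes the relevant Weyl group $W(\D_4)^F\cong W(\G_2)$, whose order is divisible by $3$, so $3$ behaves as a bad prime. The paper accordingly keeps ${}^3\D_4$ out of \cref{lem:3good}. Concretely, let $q\equiv\epsilon\pmod 3$.
\begin{enumerate}
\item ${}^3\D_4(q)$ has $3$-rank $2$, and the tori of orders $(q-\epsilon)(q^3-\epsilon)$ and $(q^2+\epsilon q+1)^2$ both have $\Omega_1\cong 3^2$.
\item Every element of order $3$ in the second torus has a centralizer containing a torus of order $(q^2+\epsilon q+1)^2$. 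A centralizer of type $\mathrm{A}_1^3\mathrm{T}_1$ only has tori of orders $(q^3\pm 1)(q-\epsilon)$, so all these elements have type $\mathrm{A}_2\mathrm{T}_2$.
\item The first torus does contain elements of type $\mathrm{A}_1^3\mathrm{T}_1$.
\end{enumerate}
So the two $3^2$ subgroups are not conjugate, and the proof of \cref{lem:p-good} breaks down. Your step~3 cannot replace it as written either: a group of $3$-rank $2$ contains no subgroup $\bar T[3]\cong 3^4$.

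\textbf{How to repair it.} You have two options.
\begin{enumerate}
\item Argue as the paper does. For $r\ne 3$, ${}^3\D_4(q)$ has exactly two classes of elements of order $3$. Of the five classes in $\mathrm{Spin}_8(\overline{\GF(r)})$, those of types $\mathrm{A}_1^3\mathrm{T}_1$ and $\mathrm{A}_2\mathrm{T}_2$ are $F$-stable, while the three of type $\mathrm{A}_3\mathrm{T}_1$ are permuted by triality. Then apply \cref{lem:2 class broad}.
\item Weaken step~3 so that it only requires $\Omega_1(\bar T_w^F)$ to meet every $F$-stable class. Choose $w\in\{1,w_0\}$ according to $q \bmod 3$. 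Then $\Omega_1(\bar T_w^F)$ is identified with $\bar T[3]^{\tau}\cong 3^2$, where $\tau$ is the triality. Let $\zeta$ be a primitive cube root of unity and $\alpha_2$ the central node. This subgroup contains $\alpha_2^\vee(\zeta)$, of type $\mathrm{A}_1^3\mathrm{T}_1$, and $(\alpha_1^\vee+\alpha_3^\vee+\alpha_4^\vee)(\zeta)$, of type $\mathrm{A}_2\mathrm{T}_2$.
\end{enumerate}

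\textbf{The case ${}^2\F_4(q^2)$.} Your first plan cannot work for the same reason: the Sylow $3$-subgroups lie in $\SU_3(q^2)$ and have rank $2$. Your fallback via \cref{lem:2 class broad} is exactly the paper's argument. In fact there is only one class of elements of order $3$. The exceptional isogeny interchanges the classes of types $\mathrm{B}_3\mathrm{T}_1$ and $\mathrm{C}_3\mathrm{T}_1$, leaving only the class of type $\mathrm{A}_2\tilde{\mathrm{A}}_2$.
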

\begin{proof}
If $K\cong {}^3\D_4(r^a), {}^2\F_4(2^a)'$ or $\G_2(r^a)$, using
\cite[Table 4.7.3A]{GLS3} with \cref{lem:2 class broad} shows that $K$ has a $3$-broad subgroup.

Let $K$ be one of the   groups  $\F_4(r^a)$, $\E_7(r^a)$ and $\E_8(r^a)$, with $r \ne 3$ and represent the Lie rank of $K$ by $\ell$. Assume that $\ov K$ is the algebraic group with $\sigma$-setup giving us $K$. Let $\ov T\le \ov B$ be a $\sigma$-stable torus in a $\sigma$-stable Borel subgroup.  Then $C_{\ov T}(\sigma)$ has order $\Phi_1(r^a)^\ell$.  Let $w_0 \in Z(N_{\ov K}(\ov T)/\ov T)$ have order $2$ (this exists because of the choice of $K$
in the first sentence). Let $g\in \ov K$ be such that $g\sigma(g^{-1}) \in w_0\ov T$. Then \cite[Propositions 25.1 and 25.3]{MalleTesterman} show that $ \ov T^g$ is $\sigma$-stable
and, letting $X$ be the character group of $\ov T$, $|C_{\ov T^g}(\sigma)|= |\det_{X\otimes \mathbb R}(w_0\sigma-1)|=\Phi_2(r^a)^\ell$ as $w_0$ inverts $X$ and $\sigma$ sends every element of $X$ to its $r^a$th power.

Since $3$ divides either $\Phi_1(r^a)$ or $\Phi_2(r^a)$, we have that $K$ has an elementary abelian subgroup $E$ of order $3^\ell$. We know that every $\ov K$-conjugacy class of elements of order $3$ is represented in $\ov T$. We also know the number of these classes by \cite[Table 4.7.1]{GLS3}. Thus $E$ witnesses at least this many $K$-conjugacy classes. On the other hand, \cite[Table 4.7.3A]{GLS3} says that there are exactly the same number of classes in the finite group. Thus $x^K \cap E \not=\emptyset$ for all $K$-conjugacy classes of elements of order $3$.
\end{proof}

\begin{lemma}\label{lem:E6 good cases} Suppose that $r\ne 3$ is a prime.
   If $(r^a-\eps,3)=1$, then $\E_6^\eps(r^a)$ has a $3$-broad  subgroup.
\end{lemma}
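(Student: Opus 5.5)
The plan is to run the argument already used for $\F_4$, $\E_7$, $\E_8$ in \cref{3-broad}, or more simply to invoke \cref{lem:p-good}. For $\E_6^\eps$ the highest root is $\alpha_1+2\alpha_2+2\alpha_3+3\alpha_4+2\alpha_5+\alpha_6$, so $3$ is \emph{not} good for $\E_6^\eps$. Hence \cref{lem:p-good} does not apply directly and I would adapt the torus argument of \cref{3-broad}.

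\textbf{Step 1: the centre.} Let $L$ be the universal group $\E_6^\eps(r^a)$. Then $|Z(L)|=(3,r^a-\eps)=1$ by hypothesis, so $L=K$ is simple and adjoint, and the algebraic group may be taken simply connected. Its centre still has order $3$, but $\sigma$ acts on it in such a way that the fixed points are trivial.

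\textbf{Step 2: a maximal-rank elementary abelian $3$-subgroup.} Let $\ell=6$ and choose a $\sigma$-stable maximal torus $\ov T^g$ with $|C_{\ov T^g}(\sigma)|=\Phi_1(r^a)^6$ or $\Phi_2(r^a)^6$.
\begin{itemize}
\item If $\eps=+$, then $3\nmid r^a-1$ forces $3\mid r^a+1=\Phi_2(r^a)$. Since $w_0$ does not invert the character group $X$ in type $\E_6$, I would use $-w_0$, the graph-twisted longest element, which acts as $-1$ on $X$. The twisting construction of \cite[Propositions 25.1 and 25.3]{MalleTesterman} with $g\sigma(g^{-1})\in w\ov T$ for suitable $w$ then gives $|C_{\ov T^g}(\sigma)|=\det(w\sigma-1)=(r^a+1)^6$.
\item If $\eps=-$, then $\sigma$ already involves the graph automorphism, and the twisting $\gamma w_0$ acts as $-1$. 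The same construction gives the torus of order $\Phi_2(-r^a)^6=(r^a-1)^6$, and $3\mid r^a-1$ in this case.
\end{itemize}
In either case $K$ contains a homocyclic torus $C_{\ov T^g}(\sigma)\cong (\mathbb Z_{m})^6$ with $3\mid m$. Its $\Omega_1$ is an elementary abelian subgroup $E$ of order $3^6$, and $E=\Omega_1(\ov T^g)$ is the full $3$-torsion of the algebraic torus.

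\textbf{Step 3: counting classes.} Every semisimple element of order $3$ of $\ov K$ is conjugate into $\ov T^g$, hence into $E$. So $E$ meets every $\ov K$-class of elements of order $3$ that has representatives in $K$. To conclude, exactly as in \cref{3-broad}, I would compare \cite[Table 4.7.1]{GLS3} with \cite[Table 4.7.3A]{GLS3}. Since $(3,r^a-\eps)=1$, the component group $C_{\ov K}(x)/C_{\ov K}(x)^\circ$ has no fixed-point-free contribution, and each $\ov K$-class of order $3$ meeting $K$ is a single $K$-class. Then $E$ meets every $K$-class, and $E$ is $3$-broad.

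\textbf{Main obstacle.} Step 3 is the one I expect to be hardest. In $\E_6$ there is a class of elements of order $3$ with disconnected centralizer (centralizer type $A_2^3$, component group of order $3$ coming from the centre). I must check from the tables that under the coprimality hypothesis this class either splits into no more $K$-classes or is absent from $K$. I would verify this by reading off the class numbers in \cite[Table 4.7.3A]{GLS3} for $\E_6^\eps$ with $3\nmid q-\eps$.
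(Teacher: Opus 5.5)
\textbf{Step 2 is false in both cases, and Step 3 depends on it entirely.}

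The $\sigma$-stable maximal tori of $K=\E_6^\eps(q)$, $q=r^a$, arise only by twisting with elements $w$ of $W=N_{\ov K}(\ov T)/\ov T$.

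For $\eps=+$, a torus of order $(q+1)^6$ needs some $w\in W$ acting as $-1$ on $X$, and $-1\notin W(\E_6)$. The map you propose ($-w_0=\gamma$ on $X$, equivalently $\gamma w_0=-1$) is not an element of $W$. Twisting by it replaces $\sigma$ by $\gamma\sigma$, so it builds the torus of order $(q+1)^6$ in ${}^2\E_6(q)$, not in $\E_6(q)$.

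For $\eps=-$ you have it backwards. In ${}^2\E_6(q)$, twisting by $w_0$ gives $w_0\gamma=-1$, hence a torus of order $(q+1)^6$, whose $3$-part is trivial when $q\equiv 1\pmod 3$. A torus of order $(q-1)^6$ would need $w\gamma=1$ on $X$, so $\gamma$ would act on $X$ as an element of $W$, which it does not.

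In both cases the rank-$6$ torus you want lives in $\E_6^{-\eps}(q)$. Under your hypothesis the universal version of that group has centre of order $(3,q+\eps)=3$, which is exactly the excluded situation.

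Your own Step 1 shows that no cleverer choice of $w$ can fix this. $Z(\ov K)\cong\mathbb Z_3$ lies in every maximal torus, so $Z(\ov K)\le \Omega_1(\ov T^g)$. But $C_{Z(\ov K)}(\sigma)=1$. Hence $\Omega_1(\ov T^g)\not\le C_{\ov T^g}(\sigma)\le K$ for every $\sigma$-stable $\ov T^g$. In fact every maximal torus of $K$ has $3$-rank at most $4$, and $|K|_3=9\,(q+\eps)_3^4$. So the subgroup $E\cong 3^6$ on which Step 3 rests does not exist in $K$.

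\textbf{The missing idea is to pass to $\F_4$, where $w_0=-1$ is available.} Since $r\ne 3$ and $q\equiv-\eps\pmod 3$, a subgroup $F\cong\F_4(q)$ of $K$ has index
$$|K:F|=q^{12}(q^9-\eps)(q^5-\eps),$$
which is prime to $3$ because $q^9-\eps\equiv q^5-\eps\equiv -2\eps\pmod 3$. So $F$ contains a Sylow $3$-subgroup of $K$. Every element of order $3$ in $K$ is therefore $K$-conjugate into $F$. It is then $F$-conjugate into the $3$-broad subgroup of $F$ given by \cref{3-broad}, and that subgroup is consequently $3$-broad in $K$.

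This is the paper's proof. It needs neither a new torus nor the class comparison you flagged as the main obstacle in Step 3.
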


\begin{proof} Let $K\cong \E_6^{-\eps}(r^a)$ where $\eps\equiv r^a \pmod 3$. Then the subgroup $F\cong \F_4(r^a)$ of $K$ contains a Sylow $3$-subgroup of $K$ by \cite[Table 2.2]{GLS3}. Since $F$ has a $3$-broad subgroup by \cref{3-broad}, so does $K$.
\end{proof}

We close this section with a result which shows how $p$-broad subgroups will be used in our arguments.

\begin{lemma}\label{lem: broad subgroups}
Suppose that $s$ and $t$ are distinct primes, $G$ is  a group and  $(\sigma,\tau)$ is an invariable $(s,t)$-sequence for $G$. Assume $|Z(G)|$ is coprime to $s$.  If $G$ has an $s$-broad subgroup, then $|C_G(\tau)|$ is coprime to $s$.
\end{lemma}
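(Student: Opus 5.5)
Suppose, aiming for a contradiction, that $s$ divides $|C_G(\tau)|$. Then $C_G(\tau)$ contains an element $e$ of order $s$. Let $E$ be an $s$-broad subgroup of $G$. By \cref{def:pbroad} there is $g\in G$ with $e^g\in E$. Conjugating the whole situation by $g$ and replacing $\tau$ by $\tau^g$ (allowed, since $(\sigma,\tau^g)$ is still an invariable sequence), we may assume $e\in E$ and $\tau\in C_G(e)$.

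Now apply the broadness of $E$ to $\sigma$ as well. The element $\sigma$ has order dividing $s$, and since $G$ is not generated by a single $t$-element, $\sigma\ne 1$. (If $\sigma=1$, then $G=\gen{\tau^h}$ for all $h$, so $G$ is cyclic of order $t$ and $C_G(\tau)=G$ has order $t$, which is coprime to $s$; this trivial case can be disposed of first.) So $\sigma$ has order $s$, and there is $h\in G$ with $\sigma^h\in E$. Since $E$ is abelian and $e\in E$, we get $\sigma^h\in C_G(e)$.

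Hence $\gen{\sigma^h,\tau}\le C_G(e)$, and invariable generation gives $G=\gen{\sigma^h,\tau}\le C_G(e)$. Thus $e\in Z(G)$, and since $e$ has order $s$, $s$ divides $|Z(G)|$. This contradicts the hypothesis, so $|C_G(\tau)|$ is coprime to $s$.

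There is no real obstacle here. The only points needing care are remembering that we may conjugate $\tau$ independently of $\sigma$, and handling the degenerate case $\sigma=1$, which is allowed because orders only need to divide $s$.
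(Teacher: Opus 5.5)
Your proof is correct and is essentially the paper's argument: move the element of order $s$ centralizing $\tau$ into the $s$-broad subgroup (the paper conjugates the subgroup, you conjugate $e$ and $\tau$), put a conjugate of $\sigma$ there too, and conclude that the element is central, contradicting the hypothesis on $|Z(G)|$. Your separate treatment of $\sigma=1$ is harmless but unnecessary, since then $\sigma^h=1\in E$ trivially.
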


\begin{proof}  For a contradiction, assume $w\in C_G(\tau)$ is an   element of order $s$. Then by hypothesis there exists an $s$-broad subgroup $A$ of $G$ which contains $w$. Since $\sigma ^G\cap A \not=\emptyset$, $C_G(w)$ contains $\tau$ and a conjugate of $\sigma$. As  $(\sigma,\tau)$ is an invariable $(s,t)$-sequence, we have $w \in Z(G)$ which contradicts $|Z(G)|$ being coprime to $s$.
\end{proof}

\section{Peripheral  results about groups of Lie type}\label{sec:4}

In this section we provide some facts about groups of Lie type that will be used in the rest of the paper.

\begin{proposition}\label{GLfact} Suppose that $q$ is odd, $\eps=\pm 1$  and $G= \GL_n^\epsilon(q)$. Set $$H= \GL_1^\epsilon(q) \wr \Sym(n).$$  Then the following hold.
\begin{enumerate}
\item If $q\equiv \epsilon \pmod 4$, then $H$ contains a Sylow $2$-subgroup of $G$.
\item If $q\equiv \epsilon \pmod 3$, then $H$ contains a Sylow $3$-subgroup of $G$.
\item The quotient $H/Z(G)$ contains a representative of every conjugacy class of involutions in $G/Z(G)$.
\end{enumerate}

\end{proposition}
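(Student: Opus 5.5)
For (i) and (ii) I will compare orders. Write $\GL_n^\eps(q)$ with $\eps=-1$ meaning $\GU_n(q)$. Then
$$|G| = q^{\binom n2}\prod_{i=1}^n (q^i-\eps^i),\qquad |H| = (q-\eps)^n\, n!.$$
For a prime $p\in\{2,3\}$ with $q\equiv \eps \pmod{p}$, and with $q\equiv\eps\pmod 4$ when $p=2$, I will compute $(q^i-\eps^i)_p$ using the lifting-the-exponent lemma. Replacing $q$ by $\eps q$ where convenient, so that $q^i-\eps^i = \eps^i((\eps q)^i-1)$, with $p \mid \eps q-1$ and, for $p=2$, $4\mid \eps q-1$, LTE gives
$$\bigl((\eps q)^i-1\bigr)_p=(\eps q-1)_p\, i_p.$$
For $p=2$ this is exactly where the hypothesis $4\mid q-\eps$ is needed.

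Hence
$$|G|_p=(q-\eps)_p^n\prod_{i=1}^n i_p=(q-\eps)_p^n\,(n!)_p=|H|_p.$$
Since $H\le G$, this shows that $H$ contains a Sylow $p$-subgroup of $G$. Here $H$ is the stabiliser of an (orthonormal, in the unitary case) decomposition of the natural module into $n$ one-dimensional subspaces, so it is a genuine subgroup of $G$. This proves (i) and (ii).

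For (iii), let $x\in G$ with $x^2\in Z(G)$, so $x^2=\lambda I$ with $\lambda\in \GL_1^\eps(q)$, and I want a conjugate of $x$ in $H$. I split on whether $\lambda$ is a square in the cyclic group $Z=\GL_1^\eps(q)$ of order $q-\eps$.

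If $\lambda=\mu^2$ with $\mu\in Z$, then $y=\mu^{-1}x$ satisfies $y^2=1$. Since $q$ is odd, $y$ is diagonalisable over $\GF(q)$ with eigenvalues $\pm1$. In the unitary case it is diagonalisable with respect to an orthonormal basis, because the $\pm1$ eigenspaces are orthogonal and nondegenerate. Thus $y$, and hence $x$, is conjugate into the diagonal torus $Z^n\le H$.

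If $\lambda$ is not a square in $Z$, then $t^2-\lambda$ is irreducible over the relevant field. Its roots are $\pm\nu$ with $\nu^2=\lambda$ and $\nu\notin Z$. Hence the minimal polynomial of $x$ forces $n$ even, and the natural module becomes a free module over the quadratic extension generated by $x$, of rank $n/2$. Therefore all such $x$ with a fixed $\lambda$ are conjugate in $G$, in the unitary case by Witt's theorem applied to the corresponding hermitian structure. So it suffices to exhibit one such element in $H$. I will take $x$ to be block diagonal, with $n/2$ blocks each equal to $\bigl(\begin{smallmatrix}0&1\\ \lambda&0\end{smallmatrix}\bigr)$. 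This lies in $Z\wr\Sym(2)\le H$ in each block, and it is monomial with entries in $Z$, so it preserves the standard orthonormal basis in the unitary case.

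The main obstacle is this second case, particularly confirming that the conjugacy class is unique in the unitary group. The cleanest route is to note that $C_{\GL_n(q^2)}(x)$ considerations, or a direct Lang–Steinberg/Witt argument, give a single $G$-class for each non-square $\lambda$. An alternative avoids the issue altogether: the class of $x$ in $\GL_n^\eps(\ov{\GF}_q)$ is determined by $\lambda$ and $n$, and Lang's theorem with connected centraliser $\GL_{n/2}$ gives a single $G$-class, since $\GL_{n/2}$ is connected. I will use this Lang argument. Passing to $G/Z(G)$, every involution $\bar x$ has a preimage of the above form, which finishes (iii).
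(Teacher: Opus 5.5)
Parts (i) and (ii) are correct and are exactly the order comparison the paper has in mind. The square case of (iii) is also fine. The gap is in the non-square case of (iii) when $\eps=-1$. There the natural module is over $\GF(q^2)$, and $Z=\GL_1^{-}(q)=\langle g^{q-1}\rangle$ for a generator $g$ of $\GF(q^2)^*$. Since $q-1$ is even, \emph{every} $\lambda\in Z$ is a square in $\GF(q^2)^*$. So $t^2-\lambda=(t-\nu)(t+\nu)$ splits over $\GF(q^2)$, and $\GF(q^2)[x]\cong\GF(q^2)\times\GF(q^2)$ is not a field. Thus $x$ is diagonalisable with $V=V_\nu\oplus V_{-\nu}$, and nothing you wrote forces $n$ to be even or $\dim V_\nu=\dim V_{-\nu}$. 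Your ``free module of rank $n/2$'' step and the Witt step therefore have no content in the unitary case. The Lang version has the same hole. The class of $x$ in the algebraic group $\GL_n$ is determined by $\lambda$, $n$ \emph{and} the multiplicities of $\pm\nu$, so you must know these are equal before speaking of a single class. (The algebraic centraliser is then $\GL_{n/2}\times\GL_{n/2}$, not $\GL_{n/2}$; it is connected, so that part survives.)

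The missing input is the hermitian form. From $\nu^{2(q+1)}=\lambda^{q+1}=1$ and $\nu\notin Z$ you get $\nu^{q+1}=-1$. Hence for $v,w\in V_\nu$ one has $(v,w)=(xv,xw)=\nu^{q+1}(v,w)=-(v,w)$, so $V_\nu$, and likewise $V_{-\nu}$, is totally isotropic. Nondegeneracy of $V=V_\nu\oplus V_{-\nu}$ then forces $\dim V_{\pm\nu}=n/2$, with $V_\nu$ and $V_{-\nu}$ in duality. Taking a basis of $V_\nu$ with the dual basis of $V_{-\nu}$ shows that all such $x$ are conjugate in $\mathrm{GU}_n(q)$. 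Equivalently, the unitary Frobenius acts on eigenvalues by $\mu\mapsto\mu^{-q}$, which sends $\nu$ to $-\nu$; so $F$-stability forces equal multiplicities, after which your Lang argument applies.

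With this repair your proof is complete. It differs from the paper only in (iii). The paper quotes the list of involution classes $t_i$, $t'_{n/2}$ from \cite[Table 4.5.1]{GLS3} rather than deriving it, and represents $t'_{n/2}$ by a monomial $x$ with $x^2=-I$. That representative works only when $-1$ is a non-square in $Z$, i.e.\ $q\equiv-\eps\pmod 4$, which is the case used later; for $q\equiv\eps\pmod 4$, (iii) follows from (i). Your block matrix with $x^2=\lambda I$ for an arbitrary non-square $\lambda\in Z$ handles all odd $q$ uniformly.
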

\begin{proof} Parts (i), and (ii)  are elementary deductions from the   expressions for $|\GL_n^\eps(q)|$.

To prove (iii),  we use \cite[Table 4.5.1]{GLS3} for our notation for involutions in $\PGL_n(q)$ and let $\wt G$ denote $G/Z(G)$.  The involutions $t_i$ in $\wt G$, $1\le i \le \lfloor n/2\rfloor  $ have preimages  $s_i=\diag(-1,\dots,-1,1,\dots 1)$ with $i$ instances of $-1$. These involutions are all in $H$ and so $\wt H$ contains all the involutions $t_i$, $1 \le i \le \lfloor n/2\rfloor$.
When $n$ is even, there is an additional class of involutions  $t'_{n/2}$ in $\wt G$. These are not involutions in $G$.
Let $s\in H$ be the permutation matrix which corresponds to the permutation $(1,n/2+1) \dots (n/2,n)\in \Sym(n)$.
  Then $x=   t_{n/2}s\in H$ has order $4$ and squares to $t_n$. Hence $\wt x$  is conjugate to $t_{n/2}'$ and this proves (iii).
\end{proof}

\begin{lemma}\label{lem:E6 big 2-part} Suppose that $r \ge 5$, $K \cong \E_6^\eps(r^a)$ where $\eps=\pm 1$. Assume that $3$ divides $r^a-\eps$. Let $\gamma^*$ be a graph automorphism of  $K$ of order $2$. Then $|K:C_K(\gamma^*)|_2= (q-\eps)^2_2$.  In addition, for
 $x\in \Inndiag(K)$ of order $3$, $|C_K(x)|_2> (q-\eps)^2_2$.
\end{lemma}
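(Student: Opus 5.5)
Write $q=r^a$. The plan is to compute orders directly from the known structure of the centralizers in $K$ of $\gamma^*$ and of elements of order $3$ in $\Inndiag(K)$, taking the $2$-parts of the standard order formulas.

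For the first assertion, $r$ is odd, so by \cite[Table 4.5.1]{GLS3} $C_K(\gamma^*)$ is, up to index coprime to $2$, either $\F_4(q)$ or $\mathrm{C}_4(q)$ (of shape $\PSp_8(q)$ or $\Sp_8(q)/\langle -1\rangle$). Take $\gamma^*$ to be the class with centralizer $\F_4(q)$; other graph involutions give the same conclusion after the identical computation. Then
\[|\E_6^\eps(q)|=\frac{1}{(3,q-\eps)}q^{36}(q^2-1)(q^5-\eps)(q^6-1)(q^8-1)(q^9-\eps)(q^{12}-1)\]
and
\[|\F_4(q)|=q^{24}(q^2-1)(q^6-1)(q^8-1)(q^{12}-1).\]
The quotient of the two orders is $(q^5-\eps)(q^9-\eps)/(3,q-\eps)$, whose $2$-part is $(q-\eps)_2^2$ because $(q^k-\eps)/(q-\eps)$ is odd for $k$ odd. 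For the $\mathrm{C}_4$ class I would check similarly with $|\Sp_8(q)|$, using that $(q^{2k}-1)_2=(q^2-1)_2(k)_2$. I need to verify that the ratio again has $2$-part $(q-\eps)_2^2$; if it does not, the statement must refer to the $\F_4$ class specifically and I restrict to that one.

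For the second assertion, the hypothesis $3\mid q-\eps$ means that the elements of order $3$ in $\Inndiag(K)$ are semisimple. By \cite[Table 4.7.3A]{GLS3} their centralizers in $K$ (or in $\Inndiag(K)$, which differs from $K$ by index $3$, harmless for $2$-parts) have connected part a reductive subgroup of maximal rank. The candidates are, with the twisted analogues for $\eps=-1$:
\[(q-\eps)\cdot \D_5^\eps(q),\quad \SL_2(q)\circ \SL_6^\eps(q),\quad \SL_3^\eps(q)^3,\quad \D_4(q)\cdot T_2,\quad (q-\eps)\cdot \SL_2(q)\circ\SL_5^\eps(q),\]
together with the outer-diagonal classes such as $\SL_3(q^3)$. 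For each, I compute the $2$-part of the order and compare it with $(q-\eps)_2^2$.

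The key observation is that each of these centralizers contains a torus of rank at least $2$ with factors $q-\eps$, or an $\SL_2(q)\times\SL_2(q)$. Either one already yields $2$-part at least $(q-\eps)_2^2$, and a Weyl-group $2$-element or an extra factor such as $(q+\eps)$ or $(q^2-1)$ gives a strict inequality. Concretely, $|\SL_3^\eps(q)|_2=(q-\eps)_2(q^2-1)_2>(q-\eps)_2^2$ already. For $\SL_3(q^3)$ the $2$-part is $(q^3-1)_2(q^6-1)_2=(q-1)_2(q^2-1)_2$, which exceeds $(q-1)_2^2$.

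The main obstacle is bookkeeping. I must get the complete list of classes of order $3$ in $\Inndiag(K)$ correct, including those outside $K$, whose centralizers in $K$ are of type $\SL_3(q^3)$ or ${}^3\D_4$-like tori. I also need to track the index-$3$ differences between centralizers in $\Inndiag(K)$ and in $K$. The smallest case is the one to check carefully, and I expect it to be ${}^3\D_4(q)$-type or $\SL_3(q^3)$-type. For ${}^3\D_4(q)$ the $2$-part is $(q^2-1)_2^2$, still larger than $(q-\eps)_2^2$. So strict inequality holds throughout, and $r\ge5$ plays no role in the $2$-parts beyond ensuring that $r$ is odd.
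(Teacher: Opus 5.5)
Your plan follows the paper's proof: take $C_K(\gamma^*)\in\{\F_4(q),\Inndiag(\PSp_8(q))\}$ from \cite[Table 4.5.1]{GLS3}, compare orders using \cite[Table 2.2]{GLS3}, and bound the $2$-parts of centralizers of elements of order $3$ using \cite[Table 4.7.3A]{GLS3}. Two points need fixing. First, the $\mathrm{C}_4$-type centralizer is $\Inndiag(\PSp_8(q))$, of order $|\Sp_8(q)|$, not $\PSp_8(q)=\Sp_8(q)/\langle -1\rangle$; this factor $2$ matters, and with the correct order the check succeeds because $(q^{12}-1)_2=(q^4-1)_2$, so you need no fallback to the $\F_4$ class (which would in any case not prove the lemma for every graph involution). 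Second, $\SL_2(q)\circ\SL_6^\eps(q)$ is an involution centralizer; the order-$3$ class of that family has type $T_1\mathrm{A}_5^\eps(q)$, whose $2$-part still far exceeds $(q-\eps)_2^2$.
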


\begin{proof} Let $K=\E_6(r^a)$. By \cite[Table 4.5.1]{GLS3}, $C_K(\gamma^*) \cong \F_4(r^a)$ or $\Inndiag(\PSp_8(r^a))$. We now check that $|K:C_K(\gamma^*)|_2= (q-\eps)^2_2$ using \cite[Table 2.2]{GLS3}. Then we  use \cite[Table 4.7.3A]{GLS3} to estimate the $2$-part of the centralizers $C_K(x)$. This gives the result.
\end{proof}

\begin{lemma}\label{lem:normalise a parabolic}
Suppose that $K$ is a simple group of Lie type defined in characteristic $p$. If $\alpha \in \Aut(K)$ normalizes a non-trivial $p$-subgroup of $K$, then $\alpha$ normalizes a parabolic subgroup of $K$.
\end{lemma}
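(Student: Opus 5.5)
The plan is to use the Borel--Tits theorem together with the canonical construction of an unipotent radical chain, arranged so that everything is invariant under $\alpha$. Let $U\ne 1$ be a $p$-subgroup of $K$ normalized by $\alpha$. Form the ``Alperin--Borel--Tits'' chain
$$U_0=U,\qquad N_1=N_K(U_0),\qquad U_1=U_0\,O_p(N_1),\qquad N_{i+1}=N_K(U_i),\qquad U_{i+1}=O_p(N_{i+1}).$$
More precisely, I would set $U_{i+1}=O_p(N_K(U_i))$ and note that $U_i\le U_{i+1}$ since $U_i\le O_p(N_K(U_i))$ when $U_i$ is a normal $p$-subgroup of its normalizer. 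Each $U_i$ is a $p$-subgroup of $K$, and the chain increases. Since $K$ is finite, it stabilizes at some $V=U_m$ with $V=O_p(N_K(V))$. Thus $V$ is a non-trivial $p$-radical subgroup of $K$.

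The key invariance step comes next. Since $\alpha$ is an automorphism of $K$, we have $N_K(X)^\alpha=N_K(X^\alpha)$ and $O_p(Y)^\alpha=O_p(Y^\alpha)$. An induction starting from $U^\alpha=U$ therefore gives $U_i^\alpha=U_i$ for all $i$, and in particular $\alpha$ normalizes $V$ and $P:=N_K(V)$.

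Now invoke the Borel--Tits theorem (as in \cite[Theorem 3.1.3]{GLS3}): in a finite group of Lie type in characteristic $p$, if $V$ is a non-trivial $p$-subgroup with $V=O_p(N_K(V))$, then $N_K(V)$ is a proper parabolic subgroup of $K$ and $V$ is its unipotent radical. The alternative form says that $N_K(V)$ is contained in a parabolic $P$ with $V\le O_p(P)$, and iterating that yields the radical statement. Hence $P=N_K(V)$ is a parabolic subgroup normalized by $\alpha$, as required.

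The main obstacle is bookkeeping about which version of Borel--Tits applies to $K$ rather than to the universal or adjoint version, and for the Tits group ${}^2\F_4(2)'$. For $K$ simple, I would pass to the preimage in the universal group. Since $|Z|$ is coprime to $p$ there, $p$-subgroups and their normalizers correspond, and the images of parabolics are the parabolics of $K$. The Tits group would need a separate remark, either treating it directly via its known $2$-local structure or noting that the paper's convention includes it among groups of Lie type only when the statement still holds (it does, by checking maximal $2$-locals). Apart from this, the argument is formal: everything is canonical, and so it is respected by $\alpha$.
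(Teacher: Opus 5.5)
Your argument is correct, but it takes a different route from the paper, which gives no argument of its own and simply cites \cite[Lemma 4.8]{ParkerSaunders]}.

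You prove the statement directly. The chain $U_{i+1}=O_p(N_K(U_i))$ is built canonically from $U$, so it is $\alpha$-invariant, and it stabilises at a non-trivial $p$-radical subgroup $V$. The radical form of the Borel--Tits theorem then identifies $N_K(V)$ as a proper parabolic subgroup, which $\alpha$ normalizes. The citation buys brevity and leaves convention issues, such as which groups count as ``of Lie type'', to the source. Your version makes the mechanism visible and is self-contained modulo Borel--Tits.

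Two points need tightening.
\begin{enumerate}
\item Passing from the usual form of Borel--Tits ($N_K(V)\le P$ and $V\le O_p(P)$) to the radical form is not an iteration but a one-line argument. The group $O_p(P)\cap N_K(V)=N_{O_p(P)}(V)$ is a normal $p$-subgroup of $N_K(V)$, so it lies in $V=O_p(N_K(V))$. Since normalizers grow in $p$-groups, this forces $V=O_p(P)$, whence $P\le N_K(V)\le P$.
\item For ${}^2\F_4(2)'$, ``checking maximal $2$-locals'' is too vague, since a $2$-local subgroup could a priori lie only in non-local maximal subgroups. The clean fix is to run the same chain inside ${}^2\F_4(2)\cong\Aut(K)$. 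There $\alpha$ is conjugation by an element normalizing $U$, so you obtain an $\alpha$-invariant parabolic $P$ of ${}^2\F_4(2)$, and then take $P\cap K$.
\end{enumerate}

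Finally, your reduction to the universal version is harmless but unnecessary, because the standard statement of Borel--Tits already covers the simple group $K$.
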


\begin{proof} See \cite[Lemma  4.8]{ParkerSaunders}. \end{proof}
 
In the next lemma we adopt the notation from \cite[Definition 2.5.10, Theorem 2.5.12]{GLS3}. We assume that $\overline B$, $\overline T$, $\overline U$ come from a $\sigma$-setup for $K$ with corresponding subgroups $B$, $T$ and $U$ of $K$.

 \begin{lemma}\label{lem:factors}
Suppose that $K$ is a simple group of Lie type and $K=\Inn(K) \le G \le \Aut(K)$ is such that $G \le \Inndiag(K)\Phi_K$. Let $L$ be a parabolic subgroup of $K$, then $G= N_G(L)K$.
\end{lemma}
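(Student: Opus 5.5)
The plan is a Frattini-type argument: show that every element of $G$ can be adjusted by an element of $K$ so that it normalizes $L$, i.e.\ that the $G$-orbit of $L$ is contained in its $K$-orbit. First reduce to a standard parabolic. Since $L$ is a parabolic subgroup of $K$, it is $K$-conjugate to a standard parabolic $P_J=\gen{B, n_j\mid j\in J}$ for some subset $J$ of the fundamental reflections, with $B=UT$ the Borel subgroup from the $\sigma$-setup. If $G=N_G(P_J)K$ holds, then conjugating by an element of $K$ gives the same factorization for $L$. So we may assume $L=P_J\ge B$.

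The key step is to show that $\Inndiag(K)$ and $\Phi_K$ both normalize $P_J$ modulo $K$, that is, $\Inndiag(K)\Phi_K\le N(P_J)K$. For $\Inndiag(K)$, use \cite[Theorem 2.5.12]{GLS3}: $\Inndiag(K)=K\wh T$, where $\wh T$ is the image of $C_{\ov T}(\sigma)$ (the diagonal automorphisms). Elements of $\wh T$ normalize every root subgroup $U_\alpha$ and normalize $T$ and $N=N_K(T)$. Hence they normalize $B$ and each rank one subgroup $\gen{U_{\alpha_j},U_{-\alpha_j}}$, and therefore normalize $P_J=\gen{B,U_{-\alpha_j}\mid j\in J}$. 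For $\Phi_K$, a field automorphism $\phi$ maps each root element $x_\alpha(c)$ to $x_\alpha(c^{r})$ (with the $\sigma$-setup chosen so that $\phi$ commutes with it). Thus $\phi$ normalizes every root subgroup and $T$, and hence normalizes $P_J$. It follows that $\wh T\Phi_K\le N_{\Aut(K)}(P_J)$.

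Now take $g\in G\le \Inndiag(K)\Phi_K=K\wh T\Phi_K$ and write $g=k h$ with $k\in K$ and $h\in\wh T\Phi_K$. By the previous step $h$ normalizes $P_J$. Then $k^{-1}g=h\in G\cap N(P_J)=N_G(P_J)$, since $K\le G$. Hence $g\in KN_G(P_J)=N_G(P_J)K$, which gives the claimed factorization.

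The main obstacle is verifying carefully, in the twisted cases, that $\Phi_K$ as defined in \cite[Definition 2.5.13]{GLS3} (which, for twisted groups, consists of restrictions of field automorphisms of the ambient algebraic group) really normalizes the standard root subgroups of $K$, so that $B$ and $T$ are preserved. I would rely on \cite[Theorem 2.5.12]{GLS3}, which states that $\Inndiag(K)$ and $\Phi_K$ normalize $U$ and $T$ (and $N$). From this, normalizing each $P_J$ follows because the standard parabolic subgroups are exactly the overgroups of $B$, and these are permuted by anything normalizing $B$ according to its action on the Dynkin diagram. Field and diagonal automorphisms act trivially on the Dynkin diagram, so each $P_J$ is fixed. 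Graph automorphisms are excluded by the hypothesis on $G$, and that exclusion is exactly what makes this argument work.
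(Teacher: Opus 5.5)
Your proposal is correct and follows essentially the same route as the paper. Both arguments conjugate $L$ to a parabolic containing $B$, use $\Inndiag(K)=C_{\ov T}(\sigma)K$ together with the fact that diagonal and standard field automorphisms normalize the parabolic subgroups containing $B$, and then intersect with $G$; your elementwise decomposition $g=kh$ is just the paper's Dedekind-law step $G=G\cap N_{G_0}(L)K=N_G(L)K$ with $G_0=\Inndiag(K)\Phi_K$.

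One small wording fix: it is the diagonal part $\wh T$ and $\Phi_K$ that normalize $U$ and $T$. All of $\Inndiag(K)$ does not, since it contains $K$.
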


\begin{proof}  Set $G_0= \Inndiag(K)\Phi_K$.
We know that $\Phi_K$ normalizes all the parabolic subgroups of $K$ which contain $B$ and,   from \cite[Remark 2.5.11(d)]{GLS3} $\Inndiag(K)= C_{\overline T}(\sigma) K$. Thus, for $L$ a parabolic subgroup of $K$, we can conjugate $L$ to contain $B$ and then observe that $N_G(L) \ge C_{\overline T}(\sigma) \Phi_K$. Hence $$G_0\ge N_{G_0}(L)K \ge \Phi_KC_{\ov T}(\sigma)K= \Phi_K\Inndiag(K)= G_0$$ and so we have equality.

Now $G= G \cap G_0= G\cap N_{G_0}(L)K = (G \cap N_{G_0}(L))K= N_G(L)K$ as claimed.
\end{proof}

In the next lemma we study $K=\mathrm{P}\Omega_8^+(r^a)$. Let  $\Sigma$ be a root system for $K$ and label the Dynkin diagram so that the fundamental roots are $\{\alpha_1,\alpha_2,\alpha_3,\alpha_4\}$ with $\alpha_2$ being the middle node. Then the highest root is $\alpha^*=\alpha_1+2\alpha_2+\alpha_3+\alpha_4$.

\begin{lemma}\label{lem:NGD struct}
 Suppose that $r\ge 3$ is a prime, $q=r^a$,    $K\cong \mathrm P \Omega_{8}^+(q)$, and $K=\Inn(K)\le G\le \Aut(K)$.
   For $\alpha \in \Sigma$, define  $M_\alpha= \gen{X_{\alpha},X_{-\alpha}}$ and set $$D= M_{\alpha_1}M_{\alpha_3}M_{\alpha_4}M_{\alpha_*}.$$
 Then $G= N_G(D)K$, $N_G(D)$ contains a Sylow $2$-subgroup of $G$ as well as representatives of every $\Aut(K)$-conjugacy class of elements of order $3$ in $G \setminus K$.
\end{lemma}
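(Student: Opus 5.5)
We will work in the spin group picture: $D$ is the commuting product of four root $\SL_2(q)$'s (or $\PSL_2$ images) corresponding to the mutually orthogonal roots $\alpha_1,\alpha_3,\alpha_4,\alpha^*$. These are the four pairwise orthogonal roots forming an $A_1^4$ subsystem of $D_4$. Hence $N_K(D)$ is the image of $(\SL_2(q)^{4}).(\text{diagonal torus part}).W$-piece, and $N_W(\{\pm\alpha_1,\pm\alpha_3,\pm\alpha_4,\pm\alpha^*\})$ acts on the four factors as $2^2$ (in fact the stabilizer of the $A_1^4$ subsystem in $W(D_4)$ is $2^{1+4}$-ish, inducing a Klein four-group on the factors).

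The plan is as follows.

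\textbf{Step 1: $G=N_G(D)K$.} The $A_1^4$ subsystem is exactly the set of roots orthogonal to... more precisely it is invariant under the full graph automorphism group $\Sym(3)$ of the Dynkin diagram, since $\alpha^*$ is fixed and $\{\alpha_1,\alpha_3,\alpha_4\}$ is permuted. Field automorphisms (standard Frobenius) normalize each $X_{\pm\alpha}$. Diagonal automorphisms come from $C_{\ov T}(\sigma)$, which normalizes each root subgroup. Since $\Aut(K)=\Inndiag(K)\Phi_K\Gamma_K$ with all three generating pieces normalizing $D$, we get $\Aut(K)=N_{\Aut(K)}(D)K$, and intersecting with $G$ as in \cref{lem:factors} gives $G=N_G(D)K$.

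\textbf{Step 2: Sylow $2$-subgroup.} By Step 1, it suffices to show $|K:N_K(D)|$ is odd, for then $N_G(D)$ has odd index in $G$. In $\Omega_8^+(q)$, $D$ corresponds to $\SL_2(q)^{4}$, sitting as the image of $\Omega_4^+(q)\times\Omega_4^+(q)$, and $N_K(D)$ contains the image of $(\mathrm{O}_4^+(q)\times \mathrm{O}_4^+(q))\cap\Omega_8^+$ extended by the swap. Comparing orders, $|\Omega_8^+(q)|_2=|\mathrm{O}_4^+(q)\wr 2\cap \Omega|_2$. This is standard: the $2$-part of $q^{12}(q^2-1)(q^4-1)^2(q^6-1)$ against $(q^2-1)^4\cdot 2^{k}$, using that $(q^4-1)_2=2(q^2-1)_2$ and $(q^6-1)_2=(q^2-1)_2$ for odd $q$. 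Alternatively, one can just check that the Weyl-group stabilizer of the $A_1^4$ has $2$-power index ($|W(D_4)|=192$ versus stabilizer of order $2^4\cdot 4\cdot$...). I expect this order bookkeeping to be the fiddliest routine part.

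\textbf{Step 3: elements of order $3$ outside $K$.} This is the main obstacle. Order-$3$ elements of $\Aut(K)\setminus K$ are, up to $\Aut(K)$-conjugacy, either field automorphisms of order $3$ (when $3\mid a$), triality graph automorphisms $\tau$ (with centralizer $\G_2(q)$ or $[q^5]\SL_2(q)$-type when $r=3$, or $\PGL_3^\eps(q)$ types when $r\neq3$), or products of such with inner elements. I would invoke the classification in \cite[Proposition 4.9.1 and Table 4.7.3A]{GLS3}. By Lang's theorem, order-$3$ elements in a coset $K\phi$ with $\phi$ a field automorphism are $\Inndiag(K)$-conjugate to $\phi$ itself, and this is in $N(D)$. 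For the triality coset, the classes are $\tau$ (the standard graph automorphism, normalizing $D$ since it permutes $\alpha_1,\alpha_3,\alpha_4$ and fixes $\alpha^*$) together with a few others, e.g.\ $\tau'$ with centralizer $\PGL_3^\eps(q)$ or, when $r=3$, the nonstandard class. For each such class, I would exhibit a representative of the form $\tau d$ with $d\in N_K(D)$, for instance $d\in D$ a suitable element normalized by $\tau$, such as in the diagonal $\SL_2(q)$ of the three permuted factors times the $\alpha^*$ factor, with $(\tau d)^3=1$. The $\G_2(q)$-type centralizer then meets $D$ appropriately. Counting classes by centralizer orders then confirms that all classes are hit. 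The mixed cosets $K\tau\phi$ reduce to the previous ones by Lang's theorem.
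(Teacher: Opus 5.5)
Your Steps 1 and 2 are fine, and a little more hands-on than the paper. The paper quotes \cite[Theorem 4.10.6(a),(b)]{GLS3}: $N_K(D)$ contains a Sylow $2$-subgroup of $K$, and $D$ is unique up to $K$-conjugacy, so the Frattini argument applies. You instead check directly that $C_{\ov T}(\sigma)\Phi_K\Gamma_K$ normalizes $D$, and you do the $2$-part bookkeeping, which does work out. Your treatment of field and graph-field automorphisms via \cite[Proposition 4.9.1]{GLS3} matches the paper.

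The genuine gap is the triality coset $\tau K$, which is the real content of the lemma. There you only describe what you hope to find, and you never fix $d$. ``Counting classes by centralizer orders'' cannot be carried out: you only see $C_{N_K(D)}(\tau d)$, not $C_K(\tau d)$, and what is visible inside $N_K(D)$ is compatible with both $\G_2(q)$ and $\PGL_3^\eps(q)$.

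The most natural reading of your suggestion actually fails. Suppose $\tau$ acts on $M_{\alpha_1}M_{\alpha_3}M_{\alpha_4}$ by $(x,y,w)\mapsto (w,x,y)$ (suitable identifications), and $d=(d_1,d_1,d_1)$ lies in the diagonal $\SL_2(q)$. Then $\tau d=\tau^{g}$ with $g=(d_1,d_1^2,1)$, so you just recover the class of $\tau$.

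For $r\ge 5$, two things are needed.
\begin{enumerate}
\item Take $d\in M_{\alpha^*}$ of order $3$. Since $\tau$ centralizes $M_{\alpha^*}$, $\tau d$ has order $3$ and normalizes $D$.
\item Find an invariant separating $\tau d$ from $\tau$. The paper's invariant is this. $\tau d$ fixes the unique involution $z\in Z(D)$. Since $C_K(z)/D$ is a $2$-group, the $\SL_2(q)$-components of $C_K(z)\cap C_K(\tau d)$ are visible in $D$. There is only one, the diagonal one, because $d$ acts nontrivially on $M_{\alpha^*}$. By contrast, involution centralizers in $\G_2(q)$ have two $\SL_2(q)$-components.
\end{enumerate}
Combined with \cite[Table 4.7.3A]{GLS3}, which gives exactly two $\Inndiag(K)$-classes of elements of order $3$ in $\tau K$, this finishes the case $r\ge5$.

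For $r=3$ your ``nonstandard class'' remark is likewise empty. The paper uses \cite[Proposition 4.9.2(g)]{GLS3}: every element of order $3$ in $\tau K$ is $\Inndiag(K)$-conjugate into $\tau X_{-\alpha^*}\subseteq \tau M_{\alpha^*}\subseteq N_G(D)$.
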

\begin{proof} Notice that $ D$ is a commuting product of four fundamental $\SL_2(q)$-subgroups by \cite[Theorem 4.10.6]{GLS3}.
By \cite[Theorem 4.10.6(a),(b)]{GLS3}, $N_K(D)$ contains a Sylow
$2$-subgroup of $K$ and $D$ is determined up to $K$-conjugacy. The
Frattini argument yields $G=N_G(D)K$, and hence $N_G(D)$ contains a
Sylow $2$-subgroup of $G$.

Let $\tau$ be the standard triality automorphism defined with respect to $\Sigma$ as
in \cite[Theorem 2.5.1(d)]{GLS3}. Then $\tau$ permutes
$\{M_{\alpha_1},M_{\alpha_3},M_{\alpha_4}\}$ transitively and leaves
$M_{\alpha^*}$ invariant; moreover $x_{\alpha^*}(t)^\tau
=x_{\alpha^*}(\pm t)$, and $\tau^3=1$ forces
  $\tau$ to centralise $M_{\alpha^*}$. Since the standard
field automorphisms also normalize $D$, we have $\gen{\tau,\Phi_K}\le
N_G(D)$, and so $N_G(D)$ contains representatives of the classes of
field and of graph-field automorphisms of order $3$
\cite[Proposition 4.9.1]{GLS3}.

If $r=3$, then
\cite[Proposition 4.9.2(g)]{GLS3} shows that every element of order $3$ in the coset $\tau K$ is $\Inndiag(K)$-conjugate to an element of $\tau
X_{-\alpha^*}\subseteq \tau M_{\alpha^*} \subseteq N_G(D)$. So we may assume $r \ge 5$.

By \cite[Table 4.7.3A]{GLS3}, the coset $\tau K$ contains exactly two
$\Inndiag(K)$-classes of elements of order $3$, one represented by
$\tau$; let $\tau^*$ represent the other.    Choose $d\in M_{\alpha^*}$ of order $3$ and set
$\tau^\circ=\tau d$. Then $\tau^\circ$ has order $3$, lies in $\tau K$
and normalizes $D$. We claim $\tau^\circ$ is not conjugate to $\tau$,
so that $\tau^\circ$ is conjugate to $\tau^*$, which completes the
proof. Let $z$ be the unique involution in $Z(D)$ (note $\gen{\diag(-I_4,I_4),\diag(I_4,-I_4)}=Z(
\Omega_4^+(q)\times \Omega_4^+(q))$, so $Z(D)$ is the image in $K$ of a fours group
containing $-I_8$). It follows that $z$ is conjugate to $t_2$ in $K$.  Since $C_K(z)/D$ is a $2$-group
\cite[Table 4.5.1]{GLS3}, the components of $C_K(z)\cap
C_K(\tau^\circ)$ are visible in $D$. Indeed, As $\tau^\circ$ permutes
$M_{\alpha_1},M_{\alpha_3},M_{\alpha_4}$ cyclically and induces $d$ on
$M_{\alpha^*}$, there is exactly one, a diagonal $\SL_2(q)$. But if
$\tau^\circ$ were conjugate to $\tau$, then $C_K(\tau^\circ)\cong
\G_2(q)$ would contain the involution $z$, and involution centralizers
in $\G_2(q)$ have two $\SL_2(q)$-components \cite[Table 4.5.1]{GLS3},
a contradiction.
\end{proof}

 \begin{proposition}\label{lem:cop3}
 Suppose that $K$ is a non-abelian simple group  and $\alpha\in \Aut(K)$ has order $2$. Then either $|C_K(\alpha)|$ is divisible by $3$ or  $K$ and $\alpha$ are as follows:
 \begin{enumerate}
\item $K\cong \PSL_2(r^a)$, $r^a$ odd and either
 \begin{enumerate} \item $r=3$ and   $\alpha \in \Inndiag(K)$; or
 \item $r > 3$, $\alpha \in \Inndiag(K)$ with  $\alpha \in K$ if and only if $r^a\equiv \pm 5 \pmod {12}$.
 \end{enumerate}
 \item $\PSL_2(2^a)$, $\alpha \in K$.
 \item ${}^2\B_2(2^{2a+1})$, $\alpha \in K$.
 \item $\PSL_3^\eps(2^a)$, $\eps=\pm 1$  and $2^a-\eps$   not divisible by $9$, $\alpha\in K$.
\item $\PSp_4(2 ^a)'$,   $\alpha$ is a  graph-field automorphism or $\alpha \in K$ is in class $c_2$ (in the notation from \cite[(7.6)]{AschbacherSeitz}).
\item ${}^2\F_4(2^{2a+1})'$, $\alpha \in K$ a $2$-central element.
\item $\M_{22}$,   $\alpha$ is in class $2C$.
\item $\mathrm{Ru}$,  $\alpha$ is in class $2B$.
 \end{enumerate}
 \end{proposition}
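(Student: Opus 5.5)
We go through the classification of finite simple groups and, for each family, run through the involutions $\alpha\in\Aut(K)$ up to $\Aut(K)$-conjugacy. The basic observation is that $3$ divides $|C_K(\alpha)|$ as soon as $C_K(\alpha)$ has a subgroup, or a section, of order divisible by $3$. So we only need one visible subgroup of $C_K(\alpha)$ of order divisible by $3$ in each case. The exceptions are the cases where no such subgroup exists.

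For alternating groups $K=\Alt(n)$ with $n\ge 5$, an involution of $\Sym(n)$ moves at most $n$ points in $2$-cycles. If it fixes at least $3$ points, the centralizer contains a $3$-cycle. Otherwise it has at least two $2$-cycles, and a product of two $2$-cycles together with an element of order $3$ in $C_{\Sym(4)}$ gives order $3$: indeed $C_{\Sym(6)}((12)(34)(56))\cong 2\wr\Sym(3)$ contains $(135)(246)\in\Alt(6)$. This also covers the exceptional automorphisms of $\Alt(6)\cong\PSL_2(9)$, which fall under the Lie type case. For sporadic groups we read off involution centralizers from the ATLAS and \cite[Table 5.3]{GLS3}. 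Only $\M_{22}$ class $2C$ (an outer involution, whose centralizer in $K$ is a $2$-group of order $2^7\cdot 5$?) and $\mathrm{Ru}$ class $2B$ have centralizer orders coprime to $3$. We verify this by checking $|C_G(\alpha)|$ in the tables.

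For groups of Lie type in odd characteristic $r$ we use \cite[Table 4.5.1]{GLS3}. The centralizer of an inner-diagonal involution contains a commuting product of groups of Lie type. Once the rank is at least $2$, such a product includes an $\SL_2(q)$ or a torus of order $q^2-1$ or $q\pm1$, and $3$ divides $q(q^2-1)$. Field, graph and graph-field involutions centralize subfield or twisted subgroups such as $K(q^{1/2})$, $\PSp$, $\Omega$ or $\F_4(q)$, and these have order divisible by $3$. This leaves $\PSL_2(q)$. Here $C_{\PGL_2(q)}(\alpha)$ is dihedral of order $2(q\pm1)$. The intersection with $K$ has order $q\mp1$ or $(q\mp1)$, adjusted according to whether $\alpha\in K$, and whether $3$ divides it depends on $q \pmod{12}$. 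For $r=3$ an inner-diagonal involution has centralizer a dihedral $3'$-group, which gives (i)(a). Field automorphisms centralize $\PSL_2(q^{1/2})$ or $\PGL_2(q^{1/2})$, whose order is divisible by $3$. This gives (i)(b).

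In characteristic $2$ we use \cite{AschbacherSeitz}. An inner involution centralizes $U$ times a Levi-type subgroup. The Levi factor contains a torus of order divisible by $3$ except in rank $1$ or $2$ over the small configurations: $\PSL_2(2^a)$, where the centralizer is a Sylow $2$-subgroup; ${}^2\B_2$; $\PSL_3^\eps(2^a)$, where the Levi torus has order $(q-\eps)/(3,q-\eps)$; $\PSp_4(2^a)$ class $c_2$; and the $2$-central class of ${}^2\F_4$. Field and graph automorphisms centralize subfield groups, which have order divisible by $3$, except for the graph-field automorphism of $\PSp_4(2^a)$, which centralizes ${}^2\B_2(2^a)$, a $3'$-group.

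The main obstacle is this bookkeeping: making sure every class of involutions, including the outer ones, is covered, and getting the exact divisibility conditions right. The condition $9\nmid 2^a-\eps$ for $\PSL_3^\eps$ and the congruence $q\equiv\pm5 \pmod{12}$ for $\PSL_2$ need direct order computations.
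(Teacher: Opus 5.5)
Your route is the paper's: a case check over the classification, read off from the standard centralizer data. That means the ATLAS and \cite[Tables 5.3]{GLS3} for sporadics, \cite[Table 4.5.1, Proposition 4.9.1]{GLS3} in odd characteristic, and \cite{AschbacherSeitz} in characteristic $2$. Your final list of exceptions is correct.

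\textbf{The alternating case contains a false step.} This is the one argument you carry out in full. From ``at most two fixed points'' you get ``at least two $2$-cycles'' and then claim an element of order $3$ in the centralizer. That works only with at least three $2$-cycles, where $(a_1a_2a_3)(b_1b_2b_3)$ permutes the transpositions $(a_ib_i)$; this is exactly your $\Sym(6)$ example. With exactly two $2$-cycles and at most two fixed points you are forced to $n\in\{5,6\}$ and $\alpha=(12)(34)$. Then $C_{\Alt(5)}(\alpha)$ is elementary abelian of order $4$ and $C_{\Alt(6)}(\alpha)\cong\Dih(8)$, both $3'$-groups. Indeed $C_{\Sym(4)}((12)(34))\cong\Dih(8)$ contains no element of order $3$. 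As written, your argument asserts that $\Alt(5)$ and $\Alt(6)$ give no exceptions. That contradicts your own $\PSL_2$ analysis, where they reappear as $q=5$ in (i)(b) and $q=9$ in (i)(a).

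The repair is what the paper does. For $n\ge 7$, every involution fixes at least three points or has at least three $2$-cycles, and $\Aut(\Alt(n))=\Sym(n)$. The groups $\Alt(5)\cong\PSL_2(5)$ and $\Alt(6)\cong\PSL_2(9)$ are then handled entirely in the Lie type case, including their $\Sym(n)$-involutions.

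\textbf{The Lie type heuristics do not work as stated.} All the content must therefore come from the cited centralizer lists.
\begin{enumerate}
\item In odd characteristic, a torus of order $q\pm1$ or $q^2-1$ is a $3'$-group when $r=3$. What \cite[Table 4.5.1]{GLS3} actually supplies is a subgroup such as $\SL_2(q)$ or $\PSL_2(q)$, whose order is divisible by $q(q^2-1)/2$ and hence by $3$.
\item Your ``rank at least $2$'' step skips the rank one groups $\PSU_3(q)$ and ${}^2\G_2(q)$ before you say ``this leaves $\PSL_2(q)$''. These must also be checked.
\item In characteristic $2$, the centralizer of an inner involution is not $U$ times a full Levi factor. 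Split tori can also be trivial (all of them are when $q=2$). So ``the Levi factor contains a torus of order divisible by $3$'' does not explain which classes are exceptional. You need the explicit centralizers in \cite{AschbacherSeitz}, exactly as the paper uses them.
\item Graph involutions in characteristic $2$ do not centralize subfield groups. The paper handles them with \cite[(19.8), (19.9)]{AschbacherSeitz}, and for $\Omega_{2n}^\pm(2^a)$ treats the $b_\ell$ involutions via \cite[(8.7)]{AschbacherSeitz}.
\end{enumerate}

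A minor point: your $\M_{22}$ parenthetical is self-contradictory, since a group of order $2^7\cdot 5$ is not a $2$-group. That number is the centralizer order of a $2C$ element in $\Aut(\M_{22})$, so its centralizer in $K$ has order $2^6\cdot 5$. Only coprimality to $3$ matters here.
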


 \begin{proof} Throughout this proof we use the fact that the only non-abelian quasisimple groups which have order not divisible by $3$ are covers of Suzuki groups ${}^2\B_2(2^{2a+1})$. We assume that $\alpha$ has order $2$ and $|C_K(\alpha)|$ is coprime to $3$.

  Suppose first that $K$ is a sporadic group. Inspecting \cite[Tables 5.3]{GLS3}, we obtain that $(K,\alpha)$ is  $(\M_{22}, 2C)$ or  $(\mathrm{Ru}, 2B)$.
  If $K$ is $\Alt(n)$ with $n\geq 5$, then  $K\cong \Alt(5)$ or $\Alt(6)$ and these cases are covered under the isomorphisms with $\PSL_2(5)$ and $\PSL_2(9)$ respectively.

Finally,  let  $K$ be a group of Lie type in characteristic $r$. Assume first that $r$ is odd.  Using \cite[Proposition 4.9.1]{GLS3} to determine centralizers of field and graph-field automorphisms, we have  $\alpha\in\Inndiag(K)\Gamma_K$, and now   \cite[Table 4.5.1]{GLS3} implies that $K\cong \PSL_2(r^a)$ and gives the further information in (i).

Suppose now that $r=2$. If $\alpha$ is either a field or a graph-field automorphism,    \cite[Proposition 4.9.1]{GLS3} eliminates all cases other than $K\cong \PSp_4(2^{2a+1})$ with $a\geq 1$ and $\alpha$ a graph-field automorphism. If $\alpha $ is a graph automorphism, \cite[(19.8) (i) and (19.9)]{AschbacherSeitz} show that $3$ divides $|C_K(\alpha)|$ unless perhaps
 $K\cong \Omega_{2n}^\eps(2^a)$. In this case, \cite[(7.6) and (8.10)]{AschbacherSeitz} says $\alpha$ acts as a $b_\ell$ type involution (with their notation) and \cite[(8.7)]{AschbacherSeitz} shows that such involutions have centralizer of order divisible by $3$.

 Thus, as $r=2$, $\alpha \in K$. Now  \cite[(4.3) and (6.2)]{AschbacherSeitz} shows that for $K\cong \PSL_n^\eps(2^a)$ we have $n \le 3$. Furthermore, if $n=2$ there are no restrictions.  If $n=3$, then for $|C_K(\alpha)|$ is coprime to $3$ if and only if $9$ does not divide $2^a-\epsilon$.

For $K$ a symplectic group, we use \cite[(7.6),(7.9),(7.10) and (7.11)]{AschbacherSeitz} to deduce that $K \cong \Sp_4(2^a)$ and $\alpha$ is of type $c_2$ (in the notation of \cite{AschbacherSeitz}).

For  $K$ an orthogonal group in dimension at least $8$, we need to consider involutions of type $a_\ell$ and $c_\ell$. For these cases we use \cite[(8.6) and (8.8)]{AschbacherSeitz} to see that no examples occur.

Suppose now that $K$ is an exceptional group. Here the centralizers are given in \cite[(13.3),(14.3),(15.6),
(16.20),(17.15),(18.4),(18.5),(18.6)]{AschbacherSeitz} and these yield only the examples   ${}^2\F_4(2^{2a+1})'$ with $\alpha$ $2$-central.\end{proof}

\section{Almost simple groups: alternating and sporadic simple groups}\label{sec:5}

We start with a general result about invariable $(s,t)$-generation of alternating groups.

 \begin{proposition}\label{lem:alt st}
 Suppose that $s$ and $t$ are primes with $s<t$ and that $n \ge st+1$.  Then $X=\Sym(n)$ and $Y=\Alt(n)$ are not invariably $(s,t)$-generated.
 \end{proposition}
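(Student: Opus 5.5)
We argue by contradiction. Suppose $G\in\{X,Y\}$ has an invariable $(s,t)$-sequence $(\sigma,\tau)$; we will conjugate $\sigma$ and $\tau$ into a common intransitive subgroup of $\Sym(n)$. Then $\gen{\sigma^g,\tau^h}$ is intransitive on $\{1,\dots,n\}$, while $G$ is transitive, a contradiction.

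Write $\sigma$ as a product of $a$ disjoint $s$-cycles and $\tau$ as a product of $b$ disjoint $t$-cycles, where $as\le n$ and $bt\le n$. The plan is to find a proper non-empty subset $\Omega\subset\{1,\dots,n\}$ such that, after conjugation, both $\sigma$ and $\tau$ leave $\Omega$ invariant. Equivalently, we need a partition of $n=m+(n-m)$ with $0<m<n$ such that the cycle structures of $\sigma$ and $\tau$ both split compatibly across the two parts.

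For $\sigma$, we need $m=a_1s+f_1$, where $0\le a_1\le a$ and $0\le f_1\le n-as$ is a count of fixed points. For $\tau$, we need $m=b_1t+e_1$, where $0\le b_1\le b$ and $0\le e_1\le n-bt$. The set of values $m$ achievable by $\sigma$ includes every integer in $[0,n]$ except possibly those lying in gaps caused by $s$-cycles: when $n-as<s-1$, only residues near multiples of $s$ occur. The same holds for $\tau$. A clean choice is to take $\Omega$ as a union of $t$ of the $s$-cycles of $\sigma$ together with fixed points, or of $s$ of the $t$-cycles of $\tau$. Concretely, $m=st$ is achievable by $\tau$ if $b\ge s$, and by $\sigma$ if $a\ge t$. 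Since $n\ge st+1$, we have $m=st<n$, so $\Omega$ is proper.

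The case analysis is therefore on the sizes of $a$ and $b$.

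\begin{enumerate}
\item If $a\ge t$ and $b\ge s$, take $m=st$.
\item If $a<t$, then $\sigma$ moves fewer than $st$ points and so fixes at least $n-as\ge n-(t-1)s\ge s+1$ points. With that many fixed points available, the achievable set $\{a_1s+f_1\}$ contains every integer in $[0,n]$. Hence any $m$ achievable for $\tau$ works, for instance $m=t$ (if $b\ge1$) or $m=1$ (if $b=0$, using a fixed point of $\tau$).
\item The case $b<s$ is symmetric: $\tau$ fixes at least $n-(s-1)t\ge t+1$ points. Then every $m\in[0,n]$ is achievable for $\tau$, and we take $m=s$ or $m=1$.
\end{enumerate}

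In each case both permutations are conjugate in $\Sym(n)$ into $\Sym(\Omega)\times\Sym(\Omega^c)$.

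The main obstacle is passing from $\Sym(n)$-conjugacy to conjugacy within $G$ when $G=Y=\Alt(n)$. The $\Sym(n)$-class of an element of odd order splits in $\Alt(n)$ only when its cycle type consists of distinct odd lengths. For $n\ge st+1\ge7$ and elements of prime order, this would require a single cycle of length $n$ or $n-1$ with at most one fixed point. If it does split, we instead use that $\Alt(n)\cap(\Sym(\Omega)\times\Sym(\Omega^c))$ contains a Sylow $s$- and Sylow $t$-subgroup of a suitable Young subgroup. More simply, we conjugate by an element of $\Sym(\Omega)\times\Sym(\Omega^c)$ and correct by a transposition inside $\Omega$ or $\Omega^c$ that centralises nothing relevant; this is possible whenever one part has at least $2$ points. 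When $s=2$, $\sigma$ is an involution and its class never splits, so the issue concerns only $\tau$ in the case $bt\ge n-1$. In that case case (i) or (ii) still produces $\Omega$ with at least two points outside the support of one cycle or containing a whole $t$-cycle, and we multiply the conjugating element by a transposition from the stabiliser of $\Omega$ to fix the parity. Once this is done, $\gen{\sigma^g,\tau^h}\le G\cap(\Sym(\Omega)\times\Sym(\Omega^c))<G$, contradicting the choice of $(\sigma,\tau)$.
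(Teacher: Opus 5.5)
Your proof is correct and follows the paper's idea: conjugate $\sigma$ and $\tau$ within $G$ so that both leave the same proper non-empty subset invariant, making $\gen{\sigma^g,\tau^h}$ intransitive. The paper is shorter because $m=st$ works in every case (when $a<t$ or $b<s$, pad the cycles with fixed points, which exist since $n\ge st$), and because $G$, including $\Alt(n)$, is transitive on $st$-subsets; your transposition correction proves exactly that transitivity, so the case split and the class-splitting discussion are unnecessary (splitting cannot occur here anyway, as $t\le (n-1)/2$).
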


\begin{proof}    We consider $Y \le G\le X$ acting naturally on $\Omega$ of size $n$. Let $(\sigma,\tau)$ be an invariable $(s,t)$-sequence for $G$.

First assume that $n>st$. Notice that both $\sigma$ and $\tau$    stabilize a subset   of $\Omega$ of size  $st$.  Since     $G$  acts transitively on subsets of size $st$, we may conjugate $\tau$ so that $\sigma$ and $\tau$ stabilize the same set of size $st$. Thus $\gen{\sigma,\tau}< G$.  Hence $n \le st$.
\end{proof}

\begin{remark}
The bound in \cref{lem:alt st} can be improved for $n \ge 5$ to $n> \max\{s(t-2),(s-1)t\}$.
\end{remark}

The objective of the next three sections is to prove the following theorem.

 \begin{theorem}\label{thm:Lietype23as} Suppose that $K$ is a non-abelian simple group  and $K=\Inn(K)\le G \le \Aut(K)$.
 Then $G$ is invariably $(2,3)$-generated if and only if
   $G  \cong \PGL_2(3^{2^b})$ for some $b\ge 1$.
 \end{theorem}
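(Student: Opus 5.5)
We split the argument according to the classification of $K=\Soc(G)$, and we prove the two directions separately. The standing tool is \cref{lem:structure}. Together with \cref{lem:Sylow/Hall}, it forces $3\in\pi(|K|)$ and $G/K$ to be a $\{2,3\}$-group. Whenever a proper subgroup $H<G$ contains a Sylow $2$-subgroup of $G$ and a conjugate of the order-$3$ element $d$, we get a contradiction. This is packaged in \cref{lem:contain r and s subgroups} and \cref{lem:lastlem}.

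\textbf{Alternating and sporadic socles.} For $K\cong\Alt(n)$, \cref{lem:alt st} gives $n\le 6$. The groups with socle $\Alt(5)$ or $\Alt(6)$ (other than those isomorphic to $\PGL_2(9)$) are treated by a direct check of maximal subgroups. For instance, $\Alt(5)$ and $\Sym(5)$ each have a subgroup ($\Alt(4)$, respectively $\Sym(4)$) containing a Sylow $2$-subgroup and an element of each class of elements of order $3$. Likewise $\Sym(6)$, $\mathrm{Mat}(10)$ and $\Aut(\Alt(6))$ are eliminated by exhibiting maximal subgroups covering a Sylow $2$-subgroup and all order-$3$ classes. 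For the sporadic groups, \cref{lem:Broad}, \cref{lem:pbroadspor} and \cref{lem: broad subgroups} give $|C_G(d)|$ odd and $|C_G(z)|$ coprime to $3$. Then \cref{lem:cop3} forces $K\in\{\M_{22},\mathrm{Ru}\}$ with $z$ in a specific class. These remaining cases are killed by taking a maximal subgroup (for example $\M_{21}{:}2$ or ${}^2\F_4(2)$) containing a Sylow $2$-subgroup and a representative of each order-$3$ class.

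\textbf{Lie type, rank one.} For $K\cong\PSL_2(r^a)$ we first note that $z\notin K$ is needed: otherwise $z$ and $d$ both lie in a Borel subgroup or a dihedral/$\Alt(4)$ subgroup, after conjugation. By \cref{lem:cop3}, $C_K(z)$ must then be a $3'$-group, so $r=3$ and $z\in\Inndiag(K)\setminus K$. Hence $G=\PGL_2(3^a)$, possibly extended by field automorphisms of $3$-power order, and since $G/K$ is a $\{2,3\}$-group we get $G=\PGL_2(3^a)$. Next, $d$ is unipotent, so it normalizes a Borel subgroup. We must show that $z$ lies in no Borel subgroup and in no proper subfield subgroup $\PGL_2(3^{a/p})$ containing a conjugate of $d$. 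Subfield subgroups of odd prime index $p$ contain all unipotent classes and the outer involutions of $\PGL_2$. This forces $a=2^b$; and $a=1$ gives the soluble group $\Sym(4)$, so $b\ge1$. Conversely, take $z$ an outer involution with $C(z)$ dihedral of order $2(q+1)$, and $d$ a transvection. By Dickson's list the only maximal subgroups containing a unipotent element are Borel subgroups and subfield subgroups $\PGL_2(3^{2^{b-1}})$ (where the outer involution of $\PGL_2(q)$ is not of this type, or is of the other $\PGL_2$/$\PSL_2$ type of the subfield; this needs care). One then checks that no conjugate of $z$ lies in either kind of subgroup. This gives sufficiency.

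\textbf{Lie type, higher rank.} This is the main obstacle. The plan is to show that $z\notin K$ using broad subgroups (\cref{GR}, \cref{lem: broad subgroups}), so that $C_K(d)$ has odd order. When $K$ has a $3$-broad subgroup (\cref{lem:3good}, \cref{3-broad}, \cref{lem:E6 good cases}), we additionally obtain $d\notin K$ and $|C_K(z)|$ prime to $3$, contradicting \cref{lem:cop3} except in small listed cases. The residual cases are characteristic $3$, $\PSL_n^\eps$ with $3\mid(q-\eps,n)$, $\E_6^\eps$, and $\POmega_8^+$ with triality. For these we use \cref{lem:lastlem}, with $D$ a parabolic (\cref{lem:normalise a parabolic}, \cref{lem:factors}), the subgroup $D$ from \cref{lem:NGD struct}, or the monomial subgroup of \cref{GLfact}, together with the $2$-part estimate of \cref{lem:E6 big 2-part}. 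Small-rank exceptions would be checked individually.
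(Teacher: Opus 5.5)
Your outline follows the paper's overall structure, but several steps fail as written. The most important failures are in the case $K\cong\PSL_2(r^a)$, which is where the examples live.

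\textbf{The case $K\cong \PSL_2(r^a)$.} You never show $d\in K$.

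The remark that $G/K$ is a $\{2,3\}$-group does not exclude field automorphisms of order $3$. For $G=\mathrm{P\Gamma L}_2(27)$ we have $G/K\cong C_6$, which is invariably $(2,3)$-generated.

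Your claim that $C_K(z)$ is a $3'$-group also presupposes $d\in K$. Once $d\in K$ is known, it follows from cyclic Sylow $3$-subgroups when $r\ne3$, and from the $3$-broad subgroup $U\in\Syl_3(K)$ when $r=3$. \cref{lem:cop3} only lists when $C_K(z)$ is a $3'$-group; it does not force it.

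The missing argument is the paper's first claim:
a field automorphism of order $3$ centralizes $\PSL_2(r^{a/3})$, hence an involution of $K$; every involution of $G$ centralizes an involution of $K$; and $K$ has a single class of involutions. So conjugates of $z$ and $d$ lie in a common proper subgroup $C_G(u)$.

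Moreover, even granting $z\notin K$ and $|C_K(z)|$ prime to $3$, \cref{lem:cop3}(i)(b) does \emph{not} give $r=3$. For $r\ge5$ and $r^a\equiv\pm1\pmod{12}$, an outer diagonal involution has centralizer of order prime to $3$; for example, in $\PGL_2(13)$ one has $C_K(z)\cong\Dih(14)$. Nothing in your argument removes $\PGL_2(r^a)$ with $r\ge5$.

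The paper handles this with the dihedral subgroup $\Dih(2(r^a-\delta))$, where $3\mid r^a-\delta$. It contains a conjugate of $d$, since Sylow $3$-subgroups are cyclic. Being generated by involutions but not contained in $K$, it also contains an outer involution. That involution is conjugate to $z$, since $\PGL_2(r^a)$ has one class of outer involutions.

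\textbf{Higher rank.} Broad subgroups only give $z\in K\Rightarrow|C_K(d)|$ odd; your sentence reverses this implication. To obtain $z\notin K$ you need to know that $|C_K(d)|$ is even. That requires a classification of automorphisms of order $3$ with odd-order centralizer. The paper takes this from Gerhardt, and the survivors $\PSL_4(3^a)$, $\PSU_4(3^a)$, $\G_2(3^a)$ are then removed by parabolics.

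The same input drives characteristic $2$. There $d$ normalizes a non-trivial $2$-subgroup, hence a parabolic $L$ by \cref{lem:normalise a parabolic}. If $z\in\Inndiag(K)\Phi_K$, then \cref{lem:factors} puts a conjugate of $z$ into $N_G(L)$. This is what forces $z$ to be a graph or graph-field automorphism, and what removes, for example, $\PSU_n(2^a)$ with $3\mid(n,2^a+1)$.

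The tools you list for the residual cases need $q$ odd (\cref{GLfact}, \cref{lem:NGD struct}) or $r\ge5$ (\cref{lem:E6 big 2-part}). So $\PSL_n(2^a)$ with $3\mid(n,2^a-1)$, $\E_6(2^a)$, and $\POmega_8^+(2^a)$ with triality are left without an argument. The paper uses, respectively:
a Levi complement containing $\GL_3(2^a)\times\dots\times\GL_3(2^a)$ inside a parabolic stable under $\gamma$ and $\Phi_K$; the normalizer of $E(C_K(t_3))$; and the $\gamma$-stable parabolics over a $z$-stable Borel subgroup.

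\textbf{Smaller points.} Several of your specific subgroup choices are wrong, though the conclusions survive.
$\M_{21}{:}2$ (index $22$ in $\M_{22}{:}2$) and ${}^2\F_4(2)$ (index $4060$ in $\mathrm{Ru}$) contain no Sylow $2$-subgroup. No proper subgroup of $\Aut(\Alt(6))$ contains both a Sylow $2$-subgroup and an element of order $3$.
These are harmless. You already have $|C_G(d)|$ odd, whereas elements of order $3$ in $\M_{22}$ and $\mathrm{Ru}$ have centralizers of even order. Also $\Aut(\Alt(6))/\Alt(6)\cong2^2$ is not $(2,3)$-generated.

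Your sufficiency argument is sound once tidied. The subfield subgroups $\PGL_2(3^{2^{b-1}})$ lie inside $\PSL_2(q)$. So the only maximal subgroups of $\PGL_2(q)$ containing elements of order $3$ are $\PSL_2(q)$ and the Borel normalizer $N_G(U)$. Since $q\equiv1\pmod4$, neither contains an outer involution. This is a valid alternative to the paper's analysis of $\langle d,d^z\rangle$ via Dickson's theorem.
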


Until the proof of \cref{thm:Lietype23as} is complete, we assume that $K$ is a non-abelian simple group and $G$ is such that
 $K=\Inn(K)\le G \le \Aut(K)$  is invariably $(2,3)$-generated. Let $(z,d)$ be an invariable $(2,3)$-sequence for $G$.

In this section short section, we show that $K$ in \cref{thm:Lietype23as} cannot be a simple alternating group of degree $n \ne 6$ or a sporadic simple group.

\begin{lemma}\label{lem:alt gone}  If $K \cong \Alt(n)$ with $n \ge 5$, then $n=6$.
\end{lemma}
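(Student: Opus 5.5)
The plan is to reduce to the single case $n=5$ and then exclude it by a fixed-point argument of the same kind as in \cref{lem:alt st}.

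\textbf{Reduction to $n\le 6$.} Suppose $K\cong\Alt(n)$ with $n\ge 5$. If $n\ge 7$, then $\Aut(K)\cong\Sym(n)$, so $\Alt(n)\le G\le\Sym(n)$. Apply \cref{lem:alt st} with $(s,t)=(2,3)$; since $st+1=7$, it excludes every $n\ge 7$. (The proof of \cref{lem:alt st} treats every $G$ with $Y\le G\le X$, so it covers $G=\Alt(n)$ and $G=\Sym(n)$.) Hence $n\in\{5,6\}$, and it remains to rule out $n=5$.

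\textbf{Excluding $n=5$.} Here $\Aut(\Alt(5))\cong\Sym(5)$, so $G\in\{\Alt(5),\Sym(5)\}$, acting naturally on $\Omega=\{1,\dots,5\}$. Let $(z,d)$ be an invariable $(2,3)$-sequence for $G$. Every element of $\Sym(5)$ of order $2$ is a transposition or a double transposition, so it fixes at least one point of $\Omega$. Every element of order $3$ is a $3$-cycle and fixes two points. Since $G$ is transitive on $\Omega$, we can choose $g,h\in G$ with both $z^g$ and $d^h$ fixing the point $5$. Then
$$\gen{z^g,d^h}\le G_5<G,$$
which contradicts invariable generation. Therefore $n=6$.

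I expect no real obstacle. The only points to check are that $\Aut(\Alt(n))=\Sym(n)$ for $n\ne 6$, and that both possible choices of $G$ are covered. Equivalently, the $n=5$ case is the argument of \cref{lem:alt st} with stabilized subsets of size $1$ in place of size $st$.
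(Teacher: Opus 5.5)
Your proof is correct and follows the paper's argument. The reduction to $n\le 6$ via \cref{lem:alt st} with $st+1=7$ is exactly the paper's step. For $n=5$ the paper only says that one checks $\Alt(5)$ and $\Sym(5)$ directly; your observation supplies that check explicitly: every element of $\Sym(5)$ of order dividing $2$ or $3$ fixes a point, so both elements can be conjugated into the same point stabilizer.
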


\begin{proof} By \cref{lem:alt st}, we have $n\le 6$. For $n=5$, we check that $\Sym(5)$ and $\Alt(5)$ are not invariably $(2,3)$-generated.
\end{proof}

\begin{lemma}\label{lem:sporadic gone} We have  $K$ is not a sporadic simple group.
\end{lemma}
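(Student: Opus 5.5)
We argue by contradiction: suppose $K$ is sporadic and $(z,d)$ is an invariable $(2,3)$-sequence for $G$, where $K\le G\le \Aut(K)$ and $|G:K|\le 2$. The tools are the broad subgroups of \cref{lem:Broad} and the $3$-broad subgroups of \cref{lem:pbroadspor}, fed into \cref{lem: broad subgroups}.

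\emph{Step 1.} Since $Z(G)=1$, \cref{lem:Broad} gives $G$ a broad subgroup ($G\not\cong\Aut(\Alt(6))$). Applying \cref{lem: broad subgroups} with $(s,t)=(2,3)$ and $(\sigma,\tau)=(z,d)$ shows that $|C_G(d)|$ is odd.

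\emph{Step 2.} \cref{lem:pbroadspor} gives a $3$-broad subgroup for every sporadic $K$, because the exceptions there involve only the primes $5$ and $7$. Applying \cref{lem: broad subgroups} with the roles reversed, $(s,t)=(3,2)$ and $(\sigma,\tau)=(d,z)$, shows that $|C_G(z)|$ is coprime to $3$. In particular $|C_K(z)|$ is coprime to $3$ when $z$ acts as an automorphism of order $2$ of $K$. If $z\in G\setminus K$, the same conclusion follows from \cref{lem:cop3}, since every outer involution of a sporadic group has centralizer of order divisible by $3$. By \cref{lem:cop3}, the only possibilities are $K\cong \M_{22}$ with $z$ in class $2C$, or $K\cong\mathrm{Ru}$ with $z$ in class $2B$. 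The class $2C$ of $\M_{22}$ is outer, so it lies in $\Aut(\M_{22})$, and then $G=\Aut(\M_{22})$.

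\emph{Step 3.} We now rule out these two cases by Step 1, which requires an element $d$ of order $3$ whose centralizer in $G$ has odd order. From the $\mathbb{ATLAS}$ / \cite[Table 5.3]{GLS3}:
\begin{itemize}
\item In $\M_{22}$ there is a single class $3A$, with $|C_{\M_{22}}(3A)|=36$, which is even.
\item In $\mathrm{Ru}$ the classes $3A$ and $3B$ have centralizers of orders $3\cdot|\Alt(6)|$ and $162$ respectively, both even.
\end{itemize}
In either case no element of order $3$ has centralizer of odd order, which contradicts Step 1.

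The main obstacle is making Steps 1–2 airtight using only the earlier lemmas. One point to check is that \cref{lem:Broad} and \cref{lem:pbroadspor} cover all of $G$ and not just $K$; both lemmas are stated for almost simple $G$, so they do. The other point is the centralizer orders for the few surviving classes, which come straight from the tables. If some entry proves awkward, a fallback is to use \cref{lem:lastlem} with a suitable local subgroup, or a direct {\sc magma} check.
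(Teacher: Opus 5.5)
Your proof is the paper's own argument. Broad and $3$-broad subgroups fed into \cref{lem: broad subgroups} give $|C_G(d)|$ odd and $|C_G(z)|$ prime to $3$. Then \cref{lem:cop3} leaves only $(\M_{22},2C)$ and $(\mathrm{Ru},2B)$, and in both groups every element of order $3$ has a centralizer of even order.

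Two side remarks are wrong, though neither affects the conclusion:
\begin{itemize}
\item The claim that every outer involution of a sporadic group has centralizer order divisible by $3$ is false. The outer class $2C$ of $\M_{22}{:}2$ is precisely the exception in \cref{lem:cop3}, as you note in the next sentence. That sentence is also unnecessary, since $z$ is an automorphism of order $2$ of $K$ whether inner or outer.
\item $\mathrm{Ru}$ has a single class of elements of order $3$. Its centralizer has index $2$ in the normalizer $3^{\cdot}\Alt(6).2^2$, so it has order $2160$, not $1080$. There is no class $3B$, and $162=2\cdot 3^4$ cannot be a centralizer order since $|\mathrm{Ru}|_3=27$.
\end{itemize}
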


 \begin{proof}   By \cref{lem:Broad} and \cref{lem:pbroadspor}, $G$ has a broad subgroup and a $3$-broad subgroup. By \cref{lem: broad subgroups}, $|C_G(z)|$ is coprime to $3$ and $|C_G(d)|$ is odd.
Using \cref{lem:cop3} combined with \cite[Tables 5.3]{GLS3}, we conclude that $K$  is not a sporadic group as in $\M_{22}$ and $\mathrm{Ru}$ no element of order $3$ has odd order centralizer.
 \end{proof}

\section{The groups $\mathrm{PSL}_2(r^a)$}\label{sec:6}

We continue towards the proof of \cref{thm:Lietype23as}, picking up the notation from the previous section.
Our goal here is to prove the result for $G$ with socle $K\cong \PSL_2(r^a)$. Remember that $(z,d)$ is an invariable $(2,3)$-sequence for $G$.

\begin{proposition}\label{lem:psl2}
 Suppose that $r$ is a prime, $K \cong \PSL_2(r^a)$. Then $G$ is invariably $(2,3)$-generated if and only if $G \cong \PGL_2(3^{2^b})$ for some $b\ge 1$.  \end{proposition}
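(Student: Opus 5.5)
We prove the two directions separately, using Dickson's list of subgroups of $\PSL_2(q)$, $q=r^a$, throughout. Let $(z,d)$ be an invariable $(2,3)$-sequence for $G$, where $K\le G\le \Aut(K)=\PGL_2(q)\rtimes\Gal$.

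For necessity, we first eliminate $r=2$ and $r\ge 5$, and then restrict $r=3$. Since $G/\Inndiag(K)$ is cyclic of field automorphisms, $G/K$ is soluble, so by \cref{lem:Sylow/Hall} it is a $\{2,3\}$-group. Hence the field-automorphism part of $G$ has order $2^i3^j$.

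\begin{enumerate}
\item \emph{Case $r=2$.} Here $B=N_G(U)$ is the normalizer of a Sylow $2$-subgroup $U$ of $K$, and it contains a Sylow $2$-subgroup of $G$. The subgroup $N_G(D)$, with $D$ a dihedral group of order $2(q-\eps)$ where $3\mid q-\eps$, contains a Sylow $3$-subgroup of $G$. We want to place conjugates of $z$ and $d$ in a common proper subgroup. The strategy is to use \cref{lem:lastlem} with $D$ either a Sylow $2$-subgroup or a Sylow $3$-subgroup of $K$, arguing case by case on whether $d\in K$. If $d\in K$, its centralizer is a torus, and \cref{lem:cop3} (ii) forces $z\in K$. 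Then $z$ and a conjugate of $d$ both lie in a dihedral subgroup $D_{2(q-\eps)}$, which is proper.
\item \emph{Case $r\ge 5$.} The natural route is \cref{lem:cop3}(i) together with \cref{lem: broad subgroups}. Since $K$ has a broad subgroup (\cref{GR}, extended to $G$ when $G\le\PGL_2(q)$), $|C_G(d)|$ is odd and $|C_G(z)|$ is coprime to $3$. This pins down $q\equiv\pm5\pmod{12}$, $z\in K$, and $d$ either in $K$ or a field automorphism of order $3$. In each case we show that $z$ and $d$ can be conjugated into a common dihedral subgroup $N_G(T)$, with $T$ a torus normalized by $d$, or into $\PSL_2(r^{a/3})\times\langle d\rangle$. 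We use that every involution of $K$ lies in every dihedral group of order $q\pm1$ that is divisible by $4$.
\item \emph{Case $r=3$.} A field automorphism of order $3$ centralizes a subgroup whose order is divisible by $3$. By a Lang-type argument, every element of the coset $\phi K$ of such an automorphism is conjugate to $\phi$, and $\phi$ normalizes a Borel subgroup. That Borel subgroup contains a Sylow $2$-subgroup only if $q$ is small, so instead we apply \cref{lem:contain r and s subgroups} with $H=N_G(\PSL_2(3^{a/3}))$-type subfield subgroups. This forces $d\in\Inndiag(K)$, and hence $d$ is unipotent. If $a$ has an odd prime divisor $p$, the subfield subgroup $\PGL_2(3^{a/p})\gen{\text{field}}$ contains a Sylow $3$-subgroup of $G$, and it contains a conjugate of the involution $z$ when $z\in \PGL_2$, because the involution classes of $\PGL_2(q)$ meet $\PGL_2(3^{a/p})$. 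This gives $a=2^b$. If $G$ contains a field automorphism of order $2$, then $z$ or some conjugate lies in $\PGL_2(\sqrt q)\gen{\phi}$, which contains a Sylow $3$-subgroup; we handle this by a case check on the cosets. Finally, if $G=K$ then $z\in K$ and $d$ lies in a Borel subgroup $B=U\rtimes C_{(q-1)/2}$, and $|B|$ is even since $4\mid q-1$, which gives a contradiction. So $G=\PGL_2(3^{2^b})$; the case $b=0$, that is $\PGL_2(3)\cong\Sym(4)$, is soluble and excluded.
\end{enumerate}

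For sufficiency, set $q=3^{2^b}$ with $b\ge1$ and $G=\PGL_2(q)$. We take $z\in G\setminus K$ an outer involution, whose centralizer is dihedral of order $2(q+1)$ since $q\equiv1\pmod 4$, and $d$ a transvection. Suppose $M$ is a maximal subgroup of $G$ containing conjugates of both. By Dickson, $M$ is one of the following: a Borel subgroup $E_q\rtimes C_{q-1}$; a dihedral group $D_{2(q\pm1)}$, which has order coprime to $3$ and so is excluded; $\Sym(4)$ or $\PGL_2(\sqrt q)$, which are subfield subgroups; or $\Alt(5)$, which is not maximal here. We then rule out the remaining candidates:
\begin{enumerate}
\item In the Borel subgroup, the outer involutions act on $E_q$, so $C_{q-1}$ has index $2$ over the part lying in $K$, which has order $(q-1)/2$. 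The outer involutions of the Borel subgroup lie outside $K$, and each one is an element of a split torus. We must show that such an involution is not conjugate to $z$. The point is that $z$ is non-split, i.e. its centralizer has order $2(q+1)$, while an outer involution in the split torus has centralizer $D_{2(q-1)}$. This is the key choice of class.
\item The subfield subgroup $\PGL_2(\sqrt q)$ meets $G\setminus K$ in involutions of $\PGL_2(\sqrt q)\setminus\PSL_2(\sqrt q)$. When $\sqrt q\equiv 1\pmod 4$ these are split or non-split according to whether $-1$ is a non-square in $\GF(\sqrt q)$. The obstacle is the subfield embedding: when $\sqrt q$ is an even power of $3$, $\PGL_2(\sqrt q)\le\PSL_2(q)$, so it contains no outer involution at all.
\item $\Sym(4)$ similarly lies in $K$ when $q\equiv\pm1\pmod 8$, which holds for $q=3^{2^b}$.
\end{enumerate}
Hence no proper subgroup of $G$ contains conjugates of both $z$ and $d$, so $(z,d)$ is an invariable $(2,3)$-sequence for $G$.

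The main obstacle will be the necessity direction for $r\ge5$ and $r=2$, where the field automorphisms make the case analysis delicate.
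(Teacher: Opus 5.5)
\textbf{There is a genuine gap in the necessity direction.} It sits in the case $r=3$ and in your treatment of field automorphisms.

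When $r=3$ you try to force $d\in\Inndiag(K)$ by applying \cref{lem:contain r and s subgroups} with $H$ the normaliser of a subfield subgroup $\PGL_2(3^{a/3})$. That lemma requires $H$ to contain a Sylow $3$-subgroup of $\Aut(K)$, and it does not. Indeed $|N_{\Aut(K)}(\PGL_2(3^{a/3}))|_3=3^{a/3}a_3$, where $a_3$ is the $3$-part of $a$, whereas $|\Aut(K)|_3=3^a a_3$.

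The same false Sylow containment is what you invoke for $\PGL_2(3^{a/p})\langle\text{field}\rangle$ and for $\PGL_2(\sqrt q)\langle\phi\rangle$. The case where $z$ is a field automorphism is left to an unspecified ``case check on the cosets''. For $r\ge5$ your broad-subgroup deductions are only justified when $G\le\PGL_2(q)$, yet you conclude $z\in K$ for all $G$. So a field automorphism $z$ is never excluded there either.

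\textbf{The missing idea is elementary and uniform in $r$.} First, $d\in K$ for every $r$. The reasons are:
\begin{enumerate}
\item $K$ has one class of involutions;
\item every involution of $\Aut(K)$ centralises an involution of $K$;
\item an element $d$ of order $3$ outside $K$ is a field automorphism, so $C_K(d)\ge\PSL_2(q^{1/3})$ has even order.
\end{enumerate}
Next, $z$ cannot be a field automorphism. If $r\ne3$, then $C_K(z)\ge\PSL_2(r)$ contains a $K$-conjugate of $d$, since all elements of order $3$ in $K$ are conjugate. If $r=3$, then $z$ normalises a Borel subgroup, and this contains a Sylow $3$-subgroup of $K$.

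So once $z\notin K$ is established, $G=\PGL_2(q)$ with $z$ an outer diagonal involution. The remaining cases then close as follows.
\begin{enumerate}
\item $r\ge5$ is excluded. Take $D_{2(q-\delta)}$ with $3\mid q-\delta$. It is generated by involutions and is not contained in $K$, so it contains an outer involution; and all outer involutions of $\PGL_2(q)$ are conjugate.
\item For $r=3$, write $a=2^bm$ with $m>1$ odd. The subfield subgroup $\PGL_2(3^{2^b})$ is not contained in $K$, and it contains conjugates of both $z$ and $d$. The correct justification is that $\PGL_2(3^a)$ has a single class of outer involutions and a single class of elements of order $3$, not any Sylow containment.
\end{enumerate}

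\textbf{Sufficiency.} Your argument via the maximal subgroups $K$, $E_q{:}C_{q-1}$ and $D_{2(q\pm1)}$ of $\PGL_2(3^{2^b})$ is a valid and shorter alternative to the paper's Dickson analysis of $\langle d,d^z\rangle\le K$. However, the Borel case should be stated correctly. Since $q\equiv1\pmod 4$, a Borel subgroup has cyclic Sylow $2$-subgroups lying in a split torus whose involution is inner. So the Borel subgroup contains no outer involutions at all; there are no ``outer involutions in the split torus''. Also list $K$ itself among the maximal subgroups; it is excluded trivially because $z\notin K$.
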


\begin{proof}    Notice that $K$ has   exactly one conjugacy class of involutions.

\begin{claim}\label{PSL2clm2}  $d \in  K$.
\end{claim}

Suppose that $d \not \in K$. Then $d$ acts as a field automorphism on $K$ by \cite[Proposition 4.9.1]{GLS3}. Hence $d$ centralizes an involution in $K$. Since $z$ also centralizes an involution in $K$ and $K$ has a unique conjugacy class of involutions, this cannot happen. Hence \ref{PSL2clm2} holds.

\begin{claim} \label{PSL2clm1} $z\not \in K$.
\end{claim}

Suppose false.  Then $G=K$. If $r\ne 3$, then the Sylow $3$-subgroup of $K$ is cyclic and inverted by an involution. Hence $ z$ and $d$ can be conjugated independently into the normalizer of a Sylow $3$-subgroup, a contradiction. So suppose that $r=3$.  If $a$ is odd, then $K$ has a   unique conjugacy class of cyclic subgroups of order $3$ and contains a subgroup $L$ isomorphic to $\Alt(4)$.  But then $z$ and $d$ are conjugate into $L$, a contradiction. Hence $a$ is even and the Borel subgroup of $K$ contains a conjugate of $z$ and conjugate  of $d$, again a contradiction. This proves \ref{PSL2clm1}.

\medskip

Suppose first that $z$ is a field automorphism. Then $C_K(z) $ contains a subgroup isomorphic to $\PSL_2(r)$. If $r \ne 3$, then $d$ is conjugate into $C_K(z)$, a contradiction. So $r=3$. In which case, $z$ normalizes a Borel subgroup of $K$ and this contains a conjugate of $d$. This proves that $z$ is not a field automorphism. Hence $z\in \Inndiag(K)=\PGL_2(r^a)$ and $G \cong \PGL_2(r^a)$ with $r$ is odd. Furthermore, $z$ is uniquely determined up to $G$-conjugacy by \cite[Table 4.5.1]{GLS3}.

Suppose that $r \ge 5$.  Then $G$ has cyclic Sylow $3$-subgroups and $3$ divides $r^a -\delta $ for some $\delta\in\{\pm 1\}$. We know $G$ contains a dihedral subgroup $D\cong \Dih(2(r^a-\delta))$ and we may suppose that $d \in D$. Since $D$ is generated by involutions and $D \not \le K$, there is a conjugate of $z$ in  $D$. As this is impossible in $G$, we deduce that $r=3$. Since $G \cong \PGL_2(3^a)$, all the $3$-elements of $G$ are conjugate. Assume that $a=2^bm$ where $m>1$ is odd.  From the list of maximal subgroups in $K$, we see that $K$ does not contain   subgroups isomorphic to $\PGL_2(3^{2^b})$ and so, as this   group is a subgroup of $G$, and contains involutions not in $K$, this   cannot happen. Hence $a= 2^b$. That is $G \cong \PGL_2(3^{2^b})$.

We now show that $G\cong \PGL_2(3^{2^b})$ has an invariable $(2,3)$-sequence $(z,d)$.  Let $z \in G\setminus K$ and $d \in K$ be elements of order $2$ and $3$ respectively.  We consider $(z,d)$ and note that it suffices to prove that $G=\gen{z,d}$. Pick $U \in \Syl_3(K)$ with $d \in U$, and define $N= N_G(U)$. Then $N$ acts transitively on the non-trivial elements of $U$, $|N/U|=3^{2^b}-1$ and so $G$ has one conjugacy class of elements of order $3$ and $K$ has two.
As $|N\cap K| $ is even, $d$ and $d^{-1}$ are conjugate in $K$ and so
 $\gen{d}$ and $\gen{d^z}$ are not conjugate in $K$.  Set $X= \gen{d, d^z}$. Then $X$ has Sylow $3$-subgroups which are elementary abelian of order at least $9$. Using Dickson's theorem \cite[Theorem 6.5.1]{GLS3}, we have that $X=\gen{d,d^z}$ is either a subfield subgroup or a subgroup of a Borel subgroup   of $K$. In the latter case, as the intersection of distinct Borel subgroups has order coprime to $3$, $\gen{d,d^z} \le U$. But then $\gen{d,d^z} \le U \cap U^z$ implies $U=U^z$ and  we now have   $z \in N$. However, $N/U$ is cyclic of order divisible  by $4$ and so the unique involution of $N/U$ is in $(N\cap K)/U$, a contradiction as $z \not \in K$.
We conclude that  $X$ is a subfield subgroup realized over a field of order $3^{2^c}$. If $c< b$, then $N_K(X) \cong \PGL_2(3^{2^c})$ and so $d$ and $d^z$ are conjugate in $K$, a contradiction. Hence $b=c$ and $G=\gen{z,d}$. This shows that $G$ is invariably $(2,3)$-generated.
\end{proof}

 \section{ Groups of Lie type}\label{sec:7}

 In this section, we carry on  the proof of \cref{thm:Lietype23as} until its conclusion. Suppose that $G$ is a counterexample to  \cref{thm:Lietype23as}. Because of \cref{lem:alt gone,lem:sporadic gone,lem:psl2} we may assume that $K$ is a Lie type group defined in characteristic $r$ (including ${}^2\F_4(2)'$) and that $K \not \cong \PSL_2(r^a)$.

We use the descriptions of automorphisms of $K$ as given  in \cite[Section 2.5]{GLS3}. In particular, $\Phi_K$ is the subgroup of standard field automorphisms and $\Gamma_K$ is the subgroup of standard graph automorphisms. Note that typically for $\gamma \in \Gamma_K$ the coset $\gamma K$ contains more than one $\Aut(G)$-conjugacy class of elements of order the same as $\gamma$.

 \begin{lemma}\label{not both field or graph field} If $z,d \in G\setminus \Inndiag (K)$. Then at least one of $z$ or $d$ is a graph  automorphism.
 \end{lemma}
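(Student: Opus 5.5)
Assume, for a contradiction, that $z,d\in G\setminus\Inndiag(K)$ and that neither $z$ nor $d$ is a graph automorphism. We use the description of automorphisms of prime order outside $\Inndiag(K)$ from \cite[Proposition 4.9.1]{GLS3}. Since $z$ has order $2$ and $d$ has order $3$, each of them is $\Inndiag(K)$-conjugate (in fact $\Aut(K)$-conjugate) to a standard field automorphism or a standard graph-field automorphism. The aim is to find a proper subgroup $D$ of $K$, normalized by a standard field automorphism of order $2$ and one of order $3$ simultaneously, that lets us apply \cref{lem:lastlem} or the parabolic argument.

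The key observation is that the standard field and graph-field automorphisms (those in $\Phi_K\Gamma_K$) all normalize a fixed Borel subgroup $B$ of $K$, and hence its unipotent radical $U\in\Syl_r(K)$. Moreover, by \cite[Proposition 4.9.1]{GLS3}, every field or graph-field automorphism of prime order $p$ is $\Inndiag(K)$-conjugate to the standard one in $\Phi_K\Gamma_K$ with the same image in $\Out(K)$. So we plan the following steps.

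First, choose conjugates $\alpha=z^g$ and $\beta=d^h$ with $g,h\in\Inndiag(K)$ lying in $\Phi_K\Gamma_K$. Both then normalize $B$. Second, we need $g,h\in G$. For this we use \cref{lem:factors}, or the fact that $\Inndiag(K)=C_{\ov T}(\sigma)K$ with $C_{\ov T}(\sigma)$ normalizing $B$. Writing $g=tk$ with $t\in C_{\ov T}(\sigma)$ and $k\in K$, we replace the conjugation by $g$ with conjugation by $k\in K\le G$. The conjugate $z^k$ then normalizes $B^{t^{-1}}=B$, and likewise for $d$. Hence $\gen{z^k,d^{k'}}\le N_G(B)<G$, which contradicts invariable generation.

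The expected obstacle is the bookkeeping for the exceptional cases. For $K={}^2\B_2$, ${}^2\G_2$ or ${}^2\F_4$ (where graph-field means field), and for $\D_4$ with triality graph-field automorphisms of order $3$, we must confirm that every such element of order $3$ is conjugate into $\Phi_K\Gamma_K$. By \cite[Proposition 4.9.1]{GLS3} this holds as long as the element is not a graph automorphism, which is exactly the hypothesis. If $r$ divides the order of the element, the conjugacy statement still holds for field and graph-field automorphisms, since \cite[Proposition 4.9.1(d)]{GLS3} covers them without any coprimality requirement. Alternatively, \cref{lem:normalise a parabolic} shows that each automorphism normalizes a parabolic subgroup, after which \cref{lem:factors}-type conjugation gives a common one.
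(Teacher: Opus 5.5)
Your argument is correct and is essentially the paper's proof: by \cite[Proposition 4.9.1]{GLS3} both $z$ and $d$ are conjugate to standard field or graph-field automorphisms, which normalize a fixed Borel subgroup $B$, and the conjugating elements can be taken in $K$, giving $\gen{z^k,d^{k'}}\le N_G(B)<G$. The paper gets the conjugation inside $K$ from the $K$-conjugacy of all Borel subgroups, while you use $\Inndiag(K)=KC_{\ov T}(\sigma)$; write $g=kt$ rather than $g=tk$, so that $z^k$ really normalizes $B^{t^{-1}}=B$, and drop the closing ``alternative'' via \cref{lem:normalise a parabolic}, since two automorphisms that each normalize some parabolic subgroup need not normalize a common one.
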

 \begin{proof} If neither $z$ nor $d$ is a graph automorphism, then both $z$ and $d$ must be $\Aut(K)$-conjugate to field or graph-field automorphisms by \cite[Propositions 4.9.1]{GLS3}.
 Since field and graph-field automorphisms normalize Borel subgroups of $K$ by \cite[Theorem 2.5.1 c]{GLS3}, and all Borel subgroups of $K$ are $K$-conjugate, we may assume that  $G=\gen{z,d}\le N_G(B)<G$, a contradiction.
 \end{proof}
\subsection{Small rank groups}

\begin{lemma}\label{lem:notpsl3} We have $K\not \cong \PSL_3^\epsilon(r^a)$ where $\epsilon =\pm 1$.
\end{lemma}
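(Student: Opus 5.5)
We argue by contradiction: suppose $K\cong \PSL_3^\epsilon(r^a)$ with $q=r^a$ and $(z,d)$ an invariable $(2,3)$-sequence for $G$. The plan is to exhibit, in each case, a proper subgroup $H<G$ with $G=N_G(D)K$ (or $G=HK$) that contains a Sylow $2$-subgroup of $G$ and a $G$-conjugate of $d$. We then conjugate $z$ and $d$ independently into $H$ and reach a contradiction. The tools are \cref{lem:lastlem}, \cref{lem:factors}, \cref{lem:normalise a parabolic} and \cref{lem:contain r and s subgroups}. The case split follows the characteristic $r$ and where $d$ lies.

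\textbf{Case $r=2$.} By \cref{lem:Broad} (via \cref{GR}) $K$ has broad subgroups, so we can use \cref{lem: broad subgroups} to control $C_G(d)$. The main route, though, is parabolic. The element $d$ has order $3$. If $d$ is a field or graph-field automorphism, or lies in $\Inndiag(K)$ and normalizes a nontrivial $2$-subgroup, then \cref{lem:normalise a parabolic} says $d$ normalizes a parabolic subgroup $P$. We compare this with $z$: an involution always normalizes a nontrivial $2$-subgroup, so $z$ also normalizes a parabolic. For $\epsilon=-1$ there is a unique class of Borel subgroups; for $\epsilon=+1$ a graph automorphism may swap the two maximal parabolic classes, but it still normalizes a Borel subgroup. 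Using \cref{lem:factors} and Borel conjugacy, both elements can be conjugated into $N_G(B)$ as in \cref{not both field or graph field}, a contradiction. The remaining subcase is $d\in\Inndiag(K)$ of order $3$ acting fixed-point-freely on $2$-subgroups. Then $C_K(d)$ is a torus, and $z\notin K$ would be forced by \cref{lem:cop3}(iv). Here we use a torus normalizer $N_G(T)$ with $T$ of type $(q-\epsilon)^2$ or $q^2+\epsilon q+1$, which contains $d$ and meets every coset of $K$.

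\textbf{Case $r\neq 2,3$.} Take $H$ to be the image of $N_{\GL_3^\epsilon(q)}(\GL_1^\epsilon(q)\wr\Sym(3))$ together with field and graph automorphisms normalizing it. When $q\equiv\epsilon\pmod 4$, \cref{GLfact}(i) gives that $H$ contains a Sylow $2$-subgroup. Otherwise we use $\GL_2^\epsilon(q)\times\GL_1^\epsilon(q)$, which contains a Sylow $2$-subgroup for $n=3$ odd. If $q\equiv\epsilon\pmod 3$, \cref{GLfact}(ii) shows $H$ contains a Sylow $3$-subgroup, so \cref{lem:contain r and s subgroups} applies directly with $F=\Aut(K)$. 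If $q\equiv-\epsilon\pmod 3$, the Sylow $3$-subgroups of $\PSL_3^\epsilon(q)$ lie in $\GL_2^\epsilon(q)$, that is in the Levi of type $\GL_2^\epsilon(q)\times\GL_1^\epsilon(q)$. That Levi also contains a Sylow $2$-subgroup and is normalized by field automorphisms and by a graph automorphism (the stabilizer of a non-degenerate point/hyperplane pair). Again \cref{lem:contain r and s subgroups} rules out the case unless $d$ is an outer automorphism of order $3$. In that event $d$ is a field automorphism, since graph automorphisms have order $2$. By \cref{not both field or graph field}, $z$ is then a graph automorphism, and we conjugate both into the normalizer of the Levi or torus.

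\textbf{Case $r=3$.} This is the step I expect to be the main obstacle, since $3$-elements are unipotent and $3$-broad subgroups fail. If $d\in K$ is unipotent, it normalizes a nontrivial $3$-subgroup. The element $z$ is semisimple or outer, and its centralizer contains a Sylow... In practice I would show that $z$ centralizes an $\SL_2^{\pm}(q)$ or $\mathrm{SO}_3(q)$ containing unipotent elements from every class; $\PSL_3^\epsilon(3^a)$ has only the transvection and regular unipotent classes, and $\SO_3(q)\cong\PGL_2(q)$ contains regular unipotents. That gives a conjugate of $d$ in $C_G(z)$, a contradiction. Outer $d$ is a field automorphism and is handled by the Borel argument above.
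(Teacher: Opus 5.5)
Your strategy of trapping $z$ and $d$ in the normalizer of an $\Aut(K)$-stable subgroup is reasonable, but the case analysis has genuine gaps. The idea that closes them is missing: $K\cong\PSL_3^\epsilon(q)$ has a \emph{single} class of involutions. Since $z$ centralizes an involution of $K$, this forces $|C_K(d)|$ to be odd, so $d\in\Inndiag(K)$. It also means an inner $z$ is conjugate into \emph{any} subgroup of $K$ of even order, so you never need a Sylow $2$-subgroup.

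Without this, your $r\ge5$ case fails when $q\equiv\epsilon\pmod 3$ and $q\equiv-\epsilon\pmod 4$ (e.g.\ $\PSL_3(7)$, $\PSU_3(5)$). The monomial group contains a Sylow $3$-subgroup but not a Sylow $2$-subgroup: its $2$-part is $2^3$ against $2^5$ in $\PGL_3(7)$. The group $\GL_2^\epsilon(q)\times\GL_1^\epsilon(q)$ contains a Sylow $2$-subgroup but loses a factor $3$, because $(q^3-\epsilon)_3=3(q-\epsilon)_3$. So \cref{lem:contain r and s subgroups} has no subgroup to apply to. The paper instead takes $M$ to be the monomial image and notes that $M\cap K$ has even order. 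An inner $z$ is then conjugate into $M\cap K$. An outer $z$ is $\Aut(K)$-conjugate to a standard graph or field automorphism normalizing $M$; there are no outer-diagonal involutions, since $|\Inndiag(K):K|$ divides $3$. One then argues as in \cref{lem:lastlem} with $s=3$.

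For $r=2$, \cref{lem:normalise a parabolic} only gives $d$ \emph{some} parabolic, and \cref{lem:factors} excludes graph automorphisms. Take $K=\PSL_3(2^a)$ with $a$ odd, and $d\in K$ of order $3$ acting irreducibly on a plane $W$ and fixing a complementary point. Then $d$ normalizes the stabilizers of that point and of $W$, but no Borel subgroup. If $z$ is outer it is a graph automorphism, which normalizes no maximal parabolic. So "conjugate both into $N_G(B)$" fails.

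Your torus fallback does not rescue this:
\begin{enumerate}
\item this $d$ lies in a torus of order $q^2-1$, not one of order $(q-\epsilon)^2$ or $q^2+\epsilon q+1$;
\item \cref{lem:cop3}(iv) does not force $z\notin K$;
\item meeting every coset of $K$ does not produce a $G$-conjugate of $z$.
\end{enumerate}
The paper handles $3\mid q+\epsilon$ as follows. The Sylow $3$-subgroups are cyclic and lie in an $\SL_2(q)$. Odd $|C_K(d)|$ then forces $q$ even and $z\notin K$, so $3\mid |C_K(z)|$ by \cref{lem:cop3}. Cyclicity makes every subgroup of order $3$ in $K$ conjugate to $\langle d\rangle$, hence $z$ centralizes a conjugate of $d$.

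For $r=3$, your claim that $C_K(z)$ meets every unipotent class is false. For inner $z$, $C_K(z)\cong\GL_2^\epsilon(q)$, whose nontrivial unipotent elements are all transvections. For a graph automorphism $z$, $C_K(z)\cong\mathrm{SO}_3(q)$, whose nontrivial unipotent elements are all regular. Use the Borel argument instead: every involution of $G$ normalizes a Borel subgroup of $K$, and the unipotent $d$ lies in one.
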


\begin{proof} Set $q=r^a$.
 Then $K$ has one conjugacy class of involutions by \cite[Table 4.5.1]{GLS3} and \cite[(4.1) and (6.1)]{AschbacherSeitz} and if $z$ is an outer automorphism,  \cite[Table 4.5.1 and Proposition 4.9.2]{GLS3} imply that $z$ is $\Aut(K)$-conjugate to a   field, graph-field or graph automorphism.

 We use the subgroup structure of $K$ as described in \cite[Theorem 6.5.3]{GLS3}.

Since $K$ has one conjugacy class of involutions, $z $ centralizes an involution in $K$ and so $|C_K(d)|$ is odd.  In particular, $d$ is not a field automorphism and so $d \in \Inndiag(K)$.

Since every involution in $G$ normalizes a Borel subgroup of $K$, $d$ does not normalize a Borel subgroup of $K$. In particular, $q\ne 3^a$.

If $3$ divides $q-\epsilon$, then the image $M$ in $\PGL_3^\eps(q)$ of the subgroup  $(q-\eps)^3:\Sym(3)$ of monomial matrices contains a Sylow $3$-subgroup of $\PGL_3^\eps(q)$. This subgroup is normalized by the inverse transpose and field automorphisms. Therefore, as $M \cap K$ has even order, we have $z^G\cap N_G(M\cap K)\not= \emptyset \not= d^G\cap N_G(M\cap K)$, a contradiction.

Therefore $3$ divides $q+\eps$. In this case the Sylow $3$-subgroups of $G$ are cyclic and are conjugate into a subgroup isomorphic to $\SL_2(q)$. As $|C_K(d)|$ is odd, $q$ is a power of $2$ and so $z$ is an outer automorphism.  Using \cref{lem:cop3} we have $|C_K(z)|$ is divisible by $3$, a contradiction as $K$ has cyclic Sylow $3$-subgroups. We conclude that  $G$ is not invariably $(2,3)$-generated.
\end{proof}

\begin{lemma}\label{lem:smallRee}
We have that $K \not \cong {}^2\G_2(3^a)$, $a \ge 3$.
\end{lemma}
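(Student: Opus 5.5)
The plan is to show that every pair consisting of an involution and an element of order $3$ in $G$ can be conjugated, independently, into one proper subgroup: the normalizer of a Sylow $3$-subgroup of $K$. This contradicts the standing assumption that $(z,d)$ is an invariable $(2,3)$-sequence for $G$. Throughout, $K\cong{}^2\G_2(q)$ with $q=3^a$, $a\ge 3$ odd, and I will use the standard facts from \cite{GLS3}.
\begin{enumerate}
\item $\Out(K)\cong \Phi_K$ is cyclic of order $a$, so $|G/K|$ is odd.
\item $K$ has a unique conjugacy class of involutions, with centralizer $2\times\PSL_2(q)$.
\item The Borel subgroup $B=N_K(U)$, for $U\in\Syl_3(K)$, has the form $U{:}C$ with $C$ cyclic of order $q-1$.
\end{enumerate}

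\emph{Step 1: $z\in K$.} Since $|G/K|$ is odd, the involution $z$ lies in $K$. Now $q-1$ is even, so $B$ contains an involution. By fact (ii), $z$ is $K$-conjugate into $B$, and more generally into $N_K(U)$ for every $U\in\Syl_3(K)$.

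\emph{Step 2: handle $d$ uniformly.} Let $S\in\Syl_3(G)$ with $d\in S$. Then $S$ normalizes $S\cap K$, which is a Sylow $3$-subgroup of $K$ because $K\unlhd G$. Put $U=S\cap K$, so $d\in N_G(U)$. This argument works whether $d$ is inner (a unipotent element) or an outer field automorphism, so no case distinction on $d$ is needed. By Sylow's theorem in $K$, every Sylow $3$-subgroup of $K$ is $K$-conjugate to $U$. Hence by Step 1 there is $g\in G$ with $z^g\in N_K(U)\le N_G(U)$.

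\emph{Step 3: conclude.} Steps 1 and 2 give $\langle z^g,d\rangle\le N_G(U)$. This is a proper subgroup of $G$, because $U\ne 1$ and $K$ is simple, so $U$ is not normal in $K$. This contradicts the assumption that $(z,d)$ is an invariable sequence. Hence $K\not\cong{}^2\G_2(3^a)$.

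The only point needing care is checking facts (i)--(iii) for the small Ree groups with $a\ge 3$ in \cite{GLS3}: that the outer automorphism group has odd order, that there is a single class of involutions, and that $|N_K(U)|$ is even. Once these are confirmed, the argument is purely formal and is the same as the Borel-subgroup argument used in \cref{not both field or graph field} and \cref{lem:psl2}.
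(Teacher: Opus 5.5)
Your proposal is correct and follows essentially the same route as the paper. Since $|G/K|$ divides $a$ and is odd, $z\in K$; the single class of involutions then puts a conjugate of $z$ into the even-order Borel subgroup $N_K(U)$, while $d$ normalizes some Sylow $3$-subgroup of $K$, so $\langle z^g,d\rangle\le N_G(U)<G$. Your $S\cap K$ argument for $d$ is just a uniform way of writing out the paper's remark ``whether $d$ is in $K$ or $d$ is a field automorphism, we may conjugate $d$ to normalize $B$.''
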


\begin{proof}   In this case $a$ is odd and $|G/K|$ divides $a$. Hence $G$ has one conjugacy class of involutions and so $z \in K$.  Now the Borel subgroup  $B$ of $G$ has even order. In particular, we may assume that  $z\in B$. As $d$ has order $3$, whether $d$ is in $K$ or $d$ is a field automorphism, we may conjugate $d$ to normalize $B$, a contradiction.
\end{proof}

\subsection{General lemmas}\label{subsec:gen lemmas}

\begin{lemma}\label{lem:Ckd even} We have $|C_K(d)|$ is even.
\end{lemma}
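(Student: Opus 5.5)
We argue by contradiction: suppose $|C_K(d)|$ is odd. The aim is to show that $d$ then lies in a proper subgroup that also contains a conjugate of $z$. The tools are \cref{lem:cop3}, the structure of centralizers of elements of order $3$ from \cite[Table 4.7.3A]{GLS3} and \cite[Proposition 4.9.1]{GLS3}, and the parabolic/Borel arguments already used in \cref{not both field or graph field} and \cref{lem:notpsl3}.

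\emph{Step 1: $d\notin K$ is impossible.} If $d$ is a field or graph-field automorphism, then $C_K(d)$ contains a subfield group of Lie type, which has even order. If $d$ is a graph automorphism (triality on $\mathrm P\Omega_8^+$), its centralizer contains $\G_2(q)$ or $\mathrm{PGL}_3^\pm(q)$, again of even order. If $d\in\Inndiag(K)\setminus K$, then $d$ is a diagonal automorphism of order $3$. Here \cite[Table 4.7.3A]{GLS3} shows that $C_K(d)$ contains a Levi-type subgroup with non-trivial semisimple part, and hence has even order. The exceptions are very small rank cases such as $\PSL_3^\eps$, and these have already been excluded by \cref{lem:notpsl3}.

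\emph{Step 2: $d\in K$.} Consider two cases according to the characteristic.
\begin{enumerate}
\item If $r\neq 3$, then $d$ is semisimple, so $C_K(d)$ contains a maximal torus of $K$. Inspecting \cite[Table 4.7.3A]{GLS3}, centralizers of elements of order $3$ in groups of Lie type of rank at least $2$ always contain a non-solvable or even-order reductive part. For odd $q$, the torus itself has even order. For $q$ even, an element of order $3$ centralizes a subgroup of type $\GL_1^\pm(q)\times(\text{rank }\ell-1\text{ part})$, and this has even order once the rank is at least $2$. So only rank $1$ groups or $\PSL_3^\eps(2^a)$ could survive, and these are excluded by \cref{lem:notpsl3} and the standing assumption $K\not\cong\PSL_2(r^a)$. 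The remaining rank $1$ families are ${}^2\B_2$ and $\PSU_3$. For ${}^2\B_2$, note that $3\nmid|K|$, so $d$ would have to be an outer field automorphism, and this was handled in Step 1. The case $\PSU_3$ has been excluded already.
\item If $r=3$, then $d$ is unipotent, so $C_K(d)\ge Z(U)$ for some Sylow $3$-subgroup $U$. In this case I would use the parabolic argument instead of a parity count. By \cref{lem:normalise a parabolic}, $d$ normalizes a parabolic subgroup $P$ of $K$, and in fact $d\in P$. Every involution of $G$ also normalizes a parabolic subgroup: one can check that $z$ centralizes a non-trivial $3$-element of $K$, or simply apply \cref{lem:cop3} to see that $|C_K(z)|$ is divisible by $3$, since none of the exceptions in \cref{lem:cop3} has characteristic $3$ other than $\PSL_2(3^a)$. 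A $3$-element centralized by $z$ generates a non-trivial $3$-subgroup normalized by $z$, so \cref{lem:normalise a parabolic} gives that $z$ normalizes a parabolic subgroup. We can take this to be a conjugate of a Borel subgroup $B$. With $d\in B$ after conjugation, \cref{lem:factors} (or a Frattini argument inside $\Inndiag(K)\Phi_K$, dealing with graph automorphisms separately) lets us conjugate both elements into $N_G(B)<G$, which is a contradiction. This case does not actually need $|C_K(d)|$ odd; it shows directly that $r\neq 3$.
\end{enumerate}

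The main obstacle is the exhaustive verification in Step 2(1) that, in characteristic $r\ne3$, every element of order $3$ of a rank at least $2$ group of Lie type other than $\PSL_3^\eps$ has an even-order centralizer, particularly when $q$ is even. There the tori can be odd, and one must argue that the centralizer contains a subgroup $\SL_2(q)$ or a similar group. I would handle this by showing that $d$ lies in a subsystem subgroup $\SL_3^\pm(q)$ or $\SL_2(q)\times\SL_2(q)$ whose complementary factor centralizes $d$, using \cite[Table 4.7.3A]{GLS3} family by family.
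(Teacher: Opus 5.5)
Your architecture differs from the paper's, and your Step~2(2) is essentially the paper's entire endgame. The paper does not analyse centralizers itself. It cites \cite[Theorem 3]{Gerhardt}, the classification of pairs $(K,d)$ with $|C_K(d)|$ odd, which after the earlier reductions leaves only $\PSL_4(3^a)$ ($a$ odd), $\PSU_4(3^a)$ and $\G_2(3^a)$, with $d\in K$. It then argues that $3\mid|C_K(z)|$ by \cref{lem:cop3}, so $z$ normalizes a parabolic subgroup $P$ by \cref{lem:normalise a parabolic}, and $P$ contains a $K$-conjugate of the unipotent element $d$.

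Two corrections to your version of this step:
\begin{itemize}
\item You cannot in general take $P$ to be a Borel subgroup. You do not need to: every parabolic subgroup contains a Sylow $3$-subgroup of $K$, so \cref{lem:factors} is superfluous.
\item The argument shows $d\notin K$ when $r=3$, not that $r\neq 3$.
\end{itemize}
Step~1 is fine in substance.

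The genuine gap is Step~2(1). This is exactly the part of Gerhardt's theorem you are replacing, and the justification you give for odd $q$ is false. For odd $q$, a maximal torus containing $d$ need not have even order. In $K=\PSL_4(5)$, the Coxeter torus (the image of the norm-one subgroup of $\GF(625)^\times$) has order $156/4=39$. It contains the image $d$ of multiplication by an element $\omega\in\GF(25)$ of order $3$. Here $|C_K(d)|$ is even only because $d$ acts as a scalar on $\GF(625)\cong\GF(25)^2$, so $C_K(d)$ contains an image of $\SL_2(25)$; the torus plays no role. In characteristic $2$ your description is likewise unjustified: parity needs $C_K(d)$ to contain a non-trivial group of Lie type rather than merely a torus, and that is precisely the point at issue.

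What actually makes the claim true is non-regularity. View $d\in\Inndiag(K)$ inside the adjoint algebraic group $\ov K$. Kac coordinates show that an element of order $m$ can be regular semisimple only if $m$ is at least the Coxeter number. For $m=3$ this happens only in types $A_1$ and $A_2$, that is, for $\PSL_2$ and $\PSL_3^{\pm}$; for classical groups one can also see this from eigenvalue multiplicities. Hence $C_{\ov K}(d)^\circ$ contains root subgroups. The subgroup of its $\sigma$-fixed points generated by $r$-elements is then a non-trivial group of Lie type. It lies in $C_K(d)$ because $\Inndiag(K)/K$ is an $r'$-group. It has even order: trivially if $r=2$, and if $r\ge 5$ because it contains a rank one group $\SL_2(q^k)$, $\PSL_2(q^k)$, $\SU_3(q^k)$ or $\PSU_3(q^k)$.

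The same argument also handles the diagonal automorphisms in Step~1, where ``Levi-type'' is not accurate: for $n=3m$, a preimage of order $9$ has centralizer of type $\GL_m(q^3)$. With this argument, plus a direct check of ${}^2\F_4(2)'$, your route becomes a self-contained substitute for the citation of Gerhardt. As written, though, Step~2(1) is asserted rather than proved.
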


\begin{proof} Assume the contrary. Then $|C_K(d)|$ is odd. Now  \cite[Theorem 3]{Gerhardt} provides us with the list of possible pairs $(K,d)$. Using \cref{lem:psl2,lem:notpsl3,lem:smallRee,lem:alt gone,lem:sporadic gone}, we are left with
$K$ being one of the following groups: $\PSL_4(3^a)$, $a$ odd, $\PSU_4(3^a)$ and  $\G_2(3^a)$ with $a \ge 1$ and in each case $d \in K$. In particular, $d$ is contained in a conjugate of every parabolic subgroup of $K$. By \cref{lem:cop3},   $z$ centralizes an element of order $3$ and thus normalizes a parabolic of $K$ by \cref{lem:normalise a parabolic},  a contradiction.
\end{proof}

\begin{lemma}\label{lem:z not in K} We have that  $z\not \in K$.
\end{lemma}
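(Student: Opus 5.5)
Suppose, for a contradiction, that $z\in K$. The plan is to use broad subgroups to show that $d$ centralizes no involution, which contradicts \cref{lem:Ckd even}.

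The first step is to produce an involution-capturing subgroup. By \cref{GR}, the simple group $K$ (which is quasisimple) has a broad subgroup $E$. We want $E$ to meet every $G$-class of involutions that is relevant. Since $z\in K$, we do not need to control the involutions in $G\setminus K$; we only need a conjugate of $z$ in $E$, and this holds because $z$ is an involution of $K$.

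Next, let $w\in C_K(d)$ be an involution, which exists by \cref{lem:Ckd even}. The argument is that of \cref{lem: broad subgroups}, applied inside $K$. Conjugating in $K$, we may assume $w\in E$ after replacing $d$ by a $K$-conjugate, which is harmless since we are allowed to conjugate $d$ independently. As $E$ is broad in $K$ and $z\in K$, some $K$-conjugate $z^g$ lies in $E$. Because $E$ is abelian, $z^g\in E\le C_G(w)$. Also $d\in C_G(w)$. Hence $G=\gen{z^g,d}\le C_G(w)$, so $w\in Z(G)$. This is impossible, since $G$ is almost simple and $Z(G)=1$.

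The only point needing care is whether $E$ must meet the $G$-class of $w$ as well as that of $z$. Both $w$ and $z$ lie in $K$, and a broad subgroup of $K$ meets every $K$-class of involutions of $K$, so this is fine. Thus the argument is short, and the main potential obstacle, involutions outside $K$ (as for $\Aut(\Alt(6))$ in \cref{lem:Broad}), does not arise. The conclusion is $z\notin K$.
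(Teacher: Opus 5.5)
Your argument is correct and is essentially the paper's proof. You take an involution in $C_K(d)$ (from \cref{lem:Ckd even}), place it in a broad subgroup of $K$ (from \cref{GR}), conjugate $z\in K$ into that same abelian subgroup, and conclude that $\gen{z^g,d}$ lies in a proper centralizer; the only cosmetic difference is that the paper chooses the broad subgroup to contain the involution, whereas you conjugate $d$.
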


\begin{proof} Suppose that $z \in K$. Then $G= K\gen{d}$. By \cref{lem:Ckd even}, $|C_K(d)|$ is even. Let $u \in C_K(d)$ be an involution. Then by \cref{GR}, there exists a broad subgroup $A$ with $u \in A \cap C_K(d)$. As $A$ is broad, we may replace  $z$  by a conjugate so that  $z \in A$. But then $\gen{z,d} \le C_G(u)<G$, a contradiction.
\end{proof}

\begin{lemma}\label{lem no 3-broad}
If $K$ has a $3$-broad subgroup and $K \not \cong \Sp_4(2^a)$, then $d\not \in K$.
\end{lemma}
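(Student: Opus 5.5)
We argue by contradiction, mirroring the proof of \cref{lem:z not in K} with the roles of $2$ and $3$ interchanged. Suppose that $d \in K$. By \cref{lem:z not in K}, $z \not\in K$, so $z$ is an outer automorphism of $K$ of order $2$. The aim is to find an element $w \in K$ of order $3$ commuting with $z$. Once we have such a $w$, the $3$-broad hypothesis finishes the argument. Choose a $3$-broad subgroup $E$ of $K$; since $w$ has order $3$, some $K$-conjugate of $w$ lies in $E$, and after conjugating $z$ by an element of $K$ we may assume $w \in E$. Since $d \in K$ has order $3$ and $E$ is $3$-broad, a $K$-conjugate $d^g$ lies in $E$. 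As $E$ is abelian, $\gen{z, d^g} \le C_G(w) < G$, because $w \ne 1$ and $Z(G)=1$. This contradicts $(z,d)$ being an invariable sequence.

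Equivalently, we may apply \cref{lem: broad subgroups} with $s=3$, $t=2$ to the subgroup $K\gen{z}$, which equals $G$ when $G=K\gen{z}$. Strictly, the $3$-broad subgroup lives in $K$, so the argument above, which only conjugates inside $K$ and only needs $E$ to meet the $K$-classes of elements of order $3$ in $K$, is the cleaner route. Note also that $G=K\gen{z}$ when $d\in K$, since $G=\gen{z,d}$.

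The key step is therefore to show that $|C_K(z)|$ is divisible by $3$. For this we invoke \cref{lem:cop3}: either $3$ divides $|C_K(z)|$, or $(K,z)$ appears in the exceptional list there. We then eliminate that list using $z \notin K$ and the standing assumptions of this section:
\begin{itemize}
\item Case (i) has $K\cong\PSL_2(r^a)$, which is excluded by \cref{lem:psl2}.
\item Cases (ii), (iii), (iv), (vi), (vii) and (viii) all require $z\in K$, or are of type $\PSL_2$ or $\PSL_3^\eps$ (excluded by \cref{lem:notpsl3}) or sporadic (excluded by \cref{lem:sporadic gone}).
\item The only remaining possibility is (v): $K\cong \PSp_4(2^a)'$ with $z$ a graph-field automorphism. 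This is exactly the case removed by the hypothesis $K\not\cong \Sp_4(2^a)$.
\end{itemize}
For $a=1$, $\Sp_4(2)'\cong\Alt(6)$, which is handled by \cref{lem:psl2} via $\PSL_2(9)$.

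The main obstacle is this case check against \cref{lem:cop3}. In particular, one must make sure that each of the listed exceptions with $z$ outer is genuinely excluded, which leaves precisely the graph-field automorphism of $\Sp_4(2^a)$, matching the exclusion in the statement. The conjugation argument in the first paragraph is routine once an order-$3$ element of $C_K(z)$ is available.
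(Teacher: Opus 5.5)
Your proof is correct and follows the paper's argument. You use \cref{lem:z not in K} to get $z\notin K$, then \cref{lem:cop3} to obtain an element of order $3$ in $C_K(z)$. Finally you conjugate that element and $d$ into a common $3$-broad subgroup, so that $\gen{z^g,d}$ lies in a proper centralizer. Your explicit check of the exceptional list in \cref{lem:cop3}, which leaves only the graph-field automorphism of $\Sp_4(2^a)$, simply spells out a step the paper leaves implicit.
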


\begin{proof} For a contradiction suppose that $d \in K$. We have that $z \not \in K$ by \cref{lem:z not in K}. Using \cref{lem:cop3} we have $C_K(z)$ has order divisible by $3$. Let $E$ be a $3$-broad subgroup containing $d$. Then there exists $e \in C_K(z)$ of order $3$ and $g\in G$  such that $e^g \in E$. But then $G=\gen{z^g,d} \le C_G(e^g)$, a contradiction. Hence $d \not \in K$.
\end{proof}

\begin{lemma}\label{lem:coprime r}
We have  $d$ is not a field automorphism and, if   $r=3$, $d\not \in K$.
\end{lemma}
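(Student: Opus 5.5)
Both parts follow the pattern of \cref{lem:normalise a parabolic}: we exhibit a proper subgroup of $G$ which, after independent conjugation, contains both $z$ and $d$. Throughout, $K$ is of Lie type in characteristic $r$ with $K\not\cong\PSL_2(r^a)$. By \cref{lem:z not in K}, $z\notin K$.

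\textbf{$d$ is not a field automorphism.} Suppose $d$ is ($\Aut(K)$-conjugate to) a field automorphism of order $3$. By \cite[Proposition 4.9.1]{GLS3}, $C_K(d)$ is $K(r^{a/3})$, the group of the same type over the subfield. The aim is to show that a conjugate of $z$ normalizes a parabolic subgroup $P$ of $K$ which is also normalized by a conjugate of $d$. The standard field automorphisms normalize every parabolic containing $B$, so $d$ normalizes a conjugate of every parabolic.
\begin{itemize}
\item If $z$ is a field or graph-field automorphism, \cref{not both field or graph field} already finishes. More generally, if $z$ normalizes a Borel subgroup we are done.
\item If $z\in\Inndiag(K)\setminus K$, or $z$ is a graph automorphism, I want a non-trivial $r$-subgroup normalized by $z$; then \cref{lem:normalise a parabolic} gives a $z$-invariant parabolic $P$. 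Such a subgroup exists when $r=2$, since then $z$ is an involution of $G$ and $\langle z\rangle$ times a suitable $2$-subgroup of $K$ is nilpotent, i.e.\ $z$ centralizes some involution of $K$. For $r$ odd, $C_K(z)$ is a reductive-type centralizer, and for $K\not\cong \PSL_2$ it has order divisible by $r$ (\cite[Table 4.5.1]{GLS3}, Proposition 4.9.2 for graph automorphisms). So $z$ centralizes an $r$-element and again normalizes a parabolic.
\end{itemize}
To align the conjugates, use \cref{lem:factors} where $G\le\Inndiag(K)\Phi_K$. In general, $d$ normalizes every $K$-class of parabolics containing $B$, since field automorphisms fix all nodes of the Dynkin diagram. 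Hence the $K$-class of $P$ contains a $d$-invariant member. Conjugating $z$ by an element of $K$ gives $\langle z^g,d\rangle\le N_G(P)<G$, a contradiction.

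\textbf{If $r=3$ then $d\notin K$.} Suppose $d\in K$. Then $d$ is a $3$-element of $K$, so $\langle d\rangle$ is a non-trivial $r$-subgroup. By \cref{lem:normalise a parabolic} (or Borel–Tits), $d$ lies in a Borel subgroup of $K$, hence in a conjugate of every parabolic. Now $z$ has order $2$ and $z\notin K$. By \cref{lem:cop3}, since $K\not\cong\PSL_2(3^a)$, $|C_K(z)|$ is divisible by $3$. So $z$ centralizes a non-trivial $3$-subgroup of $K$ and normalizes a parabolic $P$ by \cref{lem:normalise a parabolic}. Conjugating $d$ by an element of $K$ into $P$ gives $\langle z,d^g\rangle\le N_G(P)<G$, a contradiction. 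This is the same argument as in \cref{lem:Ckd even}.

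The main obstacle is the first part: producing a $z$-invariant parabolic whose $K$-class is also stable under $d$, for instance when $z$ is a graph automorphism swapping parabolic classes. I would first try to choose $P$ to be $\Gamma_K$-stable, which is possible by taking $P=B$ or a parabolic of a graph-symmetric type. If that fails, I would instead use \cref{lem:cop3} directly: $C_K(z)$ contains an element of order $3$, conjugate into $C_K(d)\cong K(r^{a/3})$, whose $3$-elements include representatives of all classes of elements of order $3$ meeting a maximal torus.
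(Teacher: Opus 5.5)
Your argument is correct and is essentially the paper's. In both cases $d$ normalizes a member of every $K$-class of parabolic subgroups, while $r$ divides $|C_K(z)|$ (via \cite[Table 4.5.1, Proposition 4.9.1]{GLS3}, or \cref{lem:cop3} when $r=3$), so \cref{lem:normalise a parabolic} places $z$ and a conjugate of $d$ in some $N_G(P)<G$. The case split on the type of $z$ and the worry in your last paragraph are unnecessary: $z$ normalizes $P$ itself, and only $d$ needs to preserve the $K$-class of $P$, which it does.
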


\begin{proof} Suppose that contrary. Then
$d$ normalizes a representative of each class of parabolic subgroups. By \cite[Theorem 4.5.1 and Proposition 4.9.1]{GLS3}, $r$ divides  $|C_K(z)|$. Now \cref{lem:normalise a parabolic} shows that $z$ normalizes a parabolic subgroup of $G$, and this is impossible.
\end{proof}

\begin{proposition}\label{prop:not char 3}
 We have $r \ne 3$.
\end{proposition}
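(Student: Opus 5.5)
Suppose $r=3$. By \cref{lem:coprime r}, $d\notin K$ and $d$ is not a field automorphism. By \cref{lem:z not in K}, $z\notin K$. The plan is to pin down what $d$ and $z$ can be, and then force both into the normalizer of a single proper subgroup.

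\textbf{Identifying $d$ and $z$.} Since $r=3$, $\Inndiag(K)/K$ is a $3'$-group when it is non-trivial; that is, diagonal automorphisms of order $3$ do not occur in characteristic $3$. By \cite[Proposition 4.9.1]{GLS3}, an element of order $3$ in $\Aut(K)\setminus\Inndiag(K)\Phi_K$ outside the field classes is a graph or graph-field automorphism. Graph-field automorphisms of order $3$ normalize a Borel subgroup, just as field automorphisms do. We therefore expect $d$ either to normalize a Borel subgroup or to be a genuine graph automorphism of order $3$. The latter forces $K\cong \POmega_8^+(3^a)$ or ${}^3\D_4$-type configurations; since ${}^3\D_4(3^a)$ has no graph automorphisms, this means $K\cong\POmega_8^+(3^a)$ with $d$ a triality automorphism.

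\textbf{The Borel case.} If $d$ normalizes a Borel subgroup $B$, then $z$ must not normalize any Borel subgroup. By \cref{lem:normalise a parabolic}, $z$ then centralizes no non-trivial $3$-element of $K$, so $|C_K(z)|$ is coprime to $3$. \cref{lem:cop3} with $r=3$ gives $K\cong\PSL_2(3^a)$, which is excluded. The refined point to check is that $z$ normalizing a parabolic containing $B$-type conjugates suffices. Since $d$ normalizes every standard parabolic containing $B$ (\cref{lem:factors}), we may conjugate $d$ into $N_G(P)$ for the parabolic $P$ that $z$ normalizes. This gives $\gen{z^g,d^h}\le N_G(P)<G$, a contradiction.

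\textbf{The triality case.} Here $K\cong\POmega_8^+(3^a)$ and $d\in\tau K\cup\tau^{-1}K$. I will apply \cref{lem:lastlem} with $s=2$, $t=3$ and $D=M_{\alpha_1}M_{\alpha_3}M_{\alpha_4}M_{\alpha^*}$. By \cref{lem:NGD struct}, $\Aut(K)=N_{\Aut(K)}(D)K$, and $N_G(D)$ contains a Sylow $2$-subgroup of $G$ together with a representative of every $\Aut(K)$-class of elements of order $3$ in $G\setminus K$. In particular it contains an $\Aut(K)$-conjugate of $d$, so $(z,d)$ is not an invariable sequence.

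\textbf{Main obstacle.} I expect the hardest step to be the careful case analysis of which outer elements of order $3$ exist in characteristic $3$. The delicate points are graph-field automorphisms for $\POmega_8^+$ and $\E_6^\eps$, and making sure that every non-triality class normalizes a Borel subgroup; I would handle these via \cite[Proposition 4.9.1]{GLS3}. The second sticking point is ensuring that $z$ does centralize a $3$-element when $z$ is a graph involution in characteristic $3$, for which \cref{lem:cop3} suffices.
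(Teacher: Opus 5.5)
Your triality case matches the paper's argument, but your Borel case has a genuine gap, and graph-field automorphisms depend on that case. Since $\mathrm{Outdiag}(K)$ is a $3'$-group and $d$ is not a field automorphism, any $d$ not covered by your triality case is a graph-field automorphism of order $3$. These exist only for $K\cong\POmega_8^+(3^a)$ with $3\mid a$, and such a $d$ is conjugate to $\tau^{\pm1}\phi$ with $\phi\in\Phi_K$. This automorphism sends the standard parabolic $P_J=\gen{B,X_{-\alpha_j}\mid j\in J}$ to $P_{\tau^{\pm1}(J)}$. So $d$ normalizes a parabolic of type $J$ only if $J$ is triality-invariant, i.e.\ $J\in\{\emptyset,\{2\},\{1,3,4\},\{1,2,3,4\}\}$.

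Hence your assertion that $d$ normalizes every standard parabolic containing $B$ is false. \cref{lem:factors} cannot supply it: that lemma assumes $G\le\Inndiag(K)\Phi_K$ and only says $G=N_G(L)K$. What you wrote is really the argument of \cref{lem:coprime r}, which works only because field automorphisms normalize every standard parabolic.

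On the other side, \cref{lem:normalise a parabolic} only tells you that $z$ normalizes \emph{some} parabolic subgroup, not a Borel subgroup. So your deduction that $|C_K(z)|$ is prime to $3$ does not follow. Nothing in your argument controls the type of the parabolic that $z$ normalizes. If it is, say, the stabilizer of a singular point (type $J$ containing some but not all of $1,3,4$), no conjugate of $d$ normalizes it. You therefore never produce a proper subgroup containing conjugates of both $z$ and $d$.

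The fix is not to split off this case at all. \cref{lem:NGD struct} already covers graph-field automorphisms, because both $\tau$ and $\Phi_K$ normalize $D=M_{\alpha_1}M_{\alpha_3}M_{\alpha_4}M_{\alpha^*}$. Consequently $N_G(D)$ contains an $\Aut(K)$-conjugate of every element of order $3$ in $G\setminus K$, whether graph or graph-field. Once \cref{lem:coprime r} and the fact that $\mathrm{Outdiag}(K)$ is a $3'$-group force $K\cong\POmega_8^+(3^a)$ with $d$ a graph or graph-field automorphism, a single application of \cref{lem:lastlem} with $H=N_G(D)$ finishes the proof. This is exactly the paper's argument.
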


\begin{proof} Assume that $r=3$.   Then \cref{lem:coprime r} implies that $K \cong \mathrm P\Omega_8^+(3^a)$ and $d$ is  a graph or graph-field automorphism. Now \cref{lem:NGD struct,lem:lastlem} provide  a contradiction.
\end{proof}

\subsection{Classical groups in characteristic $2$}

In this subsection, we complete the investigation of classical groups defined in characteristic $2$.

We begin by eliminating a potentially troublesome case.

\begin{lemma}\label{lem:not PSp4}  We have $K \not \cong \PSp_4(2^a)$ with $a \ge 2$.
\end{lemma}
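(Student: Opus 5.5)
We argue by contradiction: suppose $K\cong \PSp_4(2^a)$ with $a\ge 2$ and $(z,d)$ is an invariable $(2,3)$-sequence for $G$. By \cref{lem:z not in K}, $z\notin K$. Since $\Out(K)$ is cyclic, generated by a graph-field automorphism $\rho$ with $\rho^2$ a generator of $\Phi_K$, the outer involution $z$ is either a field automorphism (when $a$ is even) or a graph-field automorphism (when $a$ is odd). A field involution centralizes $\Sp_4(2^{a/2})$, whose order is divisible by $3$, and it normalizes a Borel subgroup. So in the case $a$ even we would proceed as in \cref{lem:coprime r}. By \cref{lem:Ckd even}, $|C_K(d)|$ is even, and hence $C_K(d)$ contains a non-trivial $2$-element. \cref{lem:normalise a parabolic} then shows that $d$ normalizes a parabolic subgroup $P$ of $K$. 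Since $z$ normalizes $B$, and $\Phi_K$ normalizes every standard parabolic, conjugating $z$ into $N_G(P)$ gives $\langle z^g,d\rangle\le N_G(P)<G$, a contradiction.

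For $a$ odd, $z$ is a graph-field involution with $C_K(z)\cong {}^2\B_2(2^a)$. By \cref{lem:cop3}(v), this is exactly the troublesome case. Here $G=K\langle z\rangle$ and $d\in K$, because $|G:K|=2$. The graph-field automorphism $z$ normalizes the Borel subgroup $B$ but interchanges the two maximal parabolics $P_1$ and $P_2$. So the plan is to find a different $z$-invariant proper subgroup of $K$ that contains a conjugate of $d$.

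Step one is to list the classes of elements of order $3$ in $K=\Sp_4(2^a)$. With $q=2^a$ and $a$ odd we have $3\mid q+1$, which gives three classes: two classes of ``reflection-type'' elements lying in $\Sp_2(q)\times 1$ and in the dual $\Omega_4^-$-type subgroup, interchanged by $\rho$, and one class with centralizer a torus, namely elements diagonal in $\Sp_2(q)\times\Sp_2(q)$. Step two is to pick a $\rho$-invariant subgroup meeting all three classes. The natural candidate is $D=\Sp_2(q)\times\Sp_2(q)\cong \Omega_4^+(q)$. It is not $\rho$-stable, but it is interchanged with the $\Sp_2(q^2)$-type subgroup. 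A better candidate is $N_K(T)$ for the torus $T\cong (q+1)^2$, that is $(q+1)^2{:}\Dih(8)$ (the $\mathrm{O}_2^-\wr\Sym(2)$ subgroup). This contains a Sylow $3$-subgroup of $K$, hence a conjugate of $d$. Because $\rho$ maps the short-root $(q+1)$-torus to the long-root one, the normalizer of the maximal torus $T$ of type $(q+1)\times(q+1)$ can be chosen $\rho$-invariant; after an $\Aut(K)$-conjugation, $\rho$ normalizes $N_K(T)$.

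Step three is to apply \cref{lem:lastlem} with $s=2$, $x=z$, $y=d$ and $D=T$ (or $N_K(T)$), but with the roles adjusted. \cref{lem:lastlem} requires $H=N_G(D)$ to contain a Sylow $2$-subgroup of $G$ together with an $\Aut(K)$-conjugate of $d$. The Sylow $2$-condition is not satisfied by $N_G(T)$. So instead we argue directly. We have $G=N_G(T)K$ by the Frattini argument, since all such tori are $K$-conjugate. Hence $N_G(T)$ contains an outer involution, and all outer involutions are $K$-conjugate because they are graph-field involutions of ${}^2\B_2$ type, unique up to conjugacy by Lang's theorem. Therefore $z$ and $d$ are independently conjugate into $N_G(T)<G$, a contradiction. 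The main obstacle is verifying that $\rho$ can be taken to normalize a maximal torus of order $(q+1)^2$ and that its normalizer contains elements of all three classes of order $3$. If that fails, the fallback is to compute $C_K(z)\cong{}^2\B_2(q)$ (of order prime to $3$) together with explicit fixed-point counts, and to show via character-theoretic or structure-constant arguments that some conjugate pair lies in a maximal subgroup such as $\Sp_2(q^2){:}2$ extended by $z$.
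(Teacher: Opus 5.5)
\textbf{Verdict.} Your argument for $a$ even is fine. It shows that a conjugate of the field involution $z$ and $d$ both normalize a parabolic, via \cref{lem:Ckd even} and \cref{lem:normalise a parabolic}. The case $a$ odd, which is the only genuinely troublesome one, has a real gap: no conjugate of $z$ lies in $N_G(T)$, so Step three cannot work.

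\textbf{Where Step three fails.} Put $q=2^a$ and $T\cong C_{q+1}\times C_{q+1}$. The Frattini argument gives $G=N_G(T)K$, so some element of $N_G(T)$ lies in $zK$. It does not give an involution there, and in fact no involution of $G\setminus K$ normalizes $T$:
\begin{enumerate}
\item Such an involution $z'$ has $C_K(z')\cong {}^2\B_2(q)$, of order $q^2(q-1)(q^2+1)$, which is coprime to $q+1$. Hence $C_T(z')=1$ and $z'$ inverts $T$.
\item Take $n\in N_K(T)$ inverting $T$, for instance an involution of $\Dih(2(q+1))\times\Dih(2(q+1))$. Then the outer element $z'n$ centralizes $T$.
\item So $(z'n)^2\in C_K(T)=T$, which has odd order. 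An odd power of $z'n$ is therefore an outer involution centralizing $T$, which is impossible by the coprimality in (i).
\end{enumerate}
Your claim that $\rho$ can be made to normalize $N_K(T)$ holds only for some element of the coset $\rho K$. The $a$th power of that element lies in $zK$ and is never an involution.

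\textbf{Other errors.}
\begin{itemize}
\item $\Sp_4(2^a)$ has exactly two classes of elements of order $3$, represented by $(y,1)$ and $(y,y)$ in $\Sp_2(q)\times\Sp_2(q)$, and the graph-field automorphism interchanges them. You listed three classes.
\item $\Sp_2(q)\wr 2$ is interchanged with $\mathrm{O}_4^+(q)$, and $\Sp_2(q^2){:}2$ is interchanged with $\mathrm{O}_4^-(q)$. So your fallback subgroup is not $z$-invariant either.
\item $d\in K$ has to come from \cref{lem:coprime r}. A priori $G/K$ could be cyclic of order $6$ when $3\mid a$, so ``$|G:K|=2$'' is not yet available.
\end{itemize}

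\textbf{The missing idea: a subfield subgroup.} All involutions in $zK$ are $K$-conjugate to a standard field or graph-field automorphism, so some conjugate of $z$ normalizes $H\cong\Sp_4(2)$. A Sylow $3$-subgroup $S$ of $K$ is homocyclic of rank $2$. A Sylow $3$-subgroup of $H$ is elementary abelian of order $9$, so it equals $\Omega_1(S)$ for a suitable $S$. Hence $d$ is conjugate into $H$, and $\langle z^g,d^h\rangle\le N_G(H)<G$ for suitable $g,h$. This is the paper's proof, and it handles both parities of $a$ at once.
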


\begin{proof}   We know that $z \not \in K$ by \cref{lem:z not in K}. Hence $z$ is a field or a graph-field automorphism as described in \cite[Theorem 2.5.1]{GLS3}. Since $K=\Inndiag(K)$, \cref{lem:coprime r} gives $d \in K$. Fix $H \le K$, with $H\cong  \Sp_4(2)$ such that $H$ is normalized by $z$. Let $W$ be a Sylow $3$-subgroup of $H$, then $W= \Omega_1(S)$ for some Sylow some  Sylow $3$-subgroup of $K$. Hence $d$ is conjugate to an element of $H$ and we have a contradiction.
\end{proof}

The next lemma goes beyond the remit of this section.

\begin{lemma}\label{lem:lem char 2}
Assume that $r=2$ and that $K \not \cong {}^2\F_4(2)'$.
 Then  $K\cong \PSL_n(2^a)$, $\POmega_{2n}^+(2^a)$, $\F_4(2^a)$ or $\E_6(2^a)$ and $z$ is a graph or  a graph-field automorphism.
\end{lemma}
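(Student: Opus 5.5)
Since $r=2$, we have $z\notin K$ by \cref{lem:z not in K}, and $z\in\Aut(K)\setminus K$ has order $2$. The plan is to show that $z$ is neither a field automorphism nor (in characteristic $2$, where $\Inndiag(K)=K$ apart from $\PSL_n^\eps$ and $\E_6^\eps$ with diagonal automorphisms of odd order) an inner-diagonal element. It then follows that $z$ is a graph or graph-field automorphism. The existence of such an involution forces $K$ to be of type $A_n$, $D_n$, $E_6$, $\F_4$ or $\Sp_4$. The remaining task is to discard the twisted types and $\B_2=\Sp_4$, leaving $\PSL_n(2^a)$, $\POmega_{2n}^+(2^a)$, $\F_4(2^a)$ and $\E_6(2^a)$.

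\emph{Step 1 (the main step).} Assume $z$ is $\Aut(K)$-conjugate to a field automorphism; by \cite[Proposition 4.9.1]{GLS3} these are the only order-$2$ outer automorphisms besides graph and graph-field ones. Then $z$ normalizes a Borel subgroup and every parabolic subgroup containing it. By \cref{lem:coprime r}, $d$ is not a field automorphism. Hence $d\in\Inndiag(K)$ or $d$ is a graph or graph-field automorphism of order $3$, which can only happen for $\POmega_8^+(2^a)$.

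If $d\in\Inndiag(K)$, I would use that $C_K(z)\cong K(2^{a/2})$ is a subfield subgroup. In characteristic $2$ its Sylow $3$-subgroups frequently meet every class of elements of order $3$ of $\Inndiag(K)$. More robustly, $|C_K(d)|$ is even by \cref{lem:Ckd even}, so $d$ centralizes a non-trivial $2$-element. Then \cref{lem:normalise a parabolic} shows that $d$ normalizes a parabolic subgroup $P$. By \cref{lem:factors} (applicable because $z$ is a field automorphism and $d$ lies in $\Inndiag(K)$), $G=N_G(P)K$. So after conjugating $d$ we may assume $d\in N_G(P)$, while $z$ normalizes a $K$-conjugate of $P$. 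Thus $z$ and $d$ are independently conjugate into $N_G(P)<G$, a contradiction.

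If $d$ is a triality-type automorphism of $\POmega_8^+(2^a)$, I would instead apply \cref{lem:lastlem} with a suitable $D$, as in \cref{lem:NGD struct}. Alternatively, $d$ centralizes $\G_2(2^a)$ or $\PSU_3$-type subgroups containing $2$-elements, and \cref{lem:normalise a parabolic} again gives a common parabolic.

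\emph{Step 2.} With $z$ a graph or graph-field automorphism of order $2$, \cite[Theorem 2.5.1]{GLS3} restricts $K$ to untwisted types $A_n$, $D_n$, $E_6$, or $B_2$ and $F_4$ (in characteristic $2$, where the graph automorphisms are the special ones). It also excludes the twisted groups ${}^2A_n$, ${}^2D_n$, ${}^3D_4$, ${}^2E_6$, ${}^2B_2$ and ${}^2F_4$, since their involutory outer automorphisms are all field automorphisms. Type $B_2$ is removed by \cref{lem:not PSp4}, and $\Sp_4(2)'\cong\Alt(6)$ has already been treated.

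I expect Step 1 to be the main obstacle, specifically verifying that $d$, or a conjugate of it, normalizes a parabolic subgroup. The plan is to get this from $|C_K(d)|$ even together with \cref{lem:normalise a parabolic}, and to make sure \cref{lem:factors} applies to $G=K\langle z,d\rangle$ so that the two conjugations can be carried out independently.
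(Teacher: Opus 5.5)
Your proposal is correct and is essentially the paper's argument: $d$ normalizes a non-trivial $2$-subgroup of $K$, hence a parabolic subgroup $P$ by \cref{lem:normalise a parabolic}, and an involutory field automorphism $z$ is then $K$-conjugate into $N_G(P)$ (you get the $2$-subgroup from \cref{lem:Ckd even}, the paper from Gerhardt's Theorem~1). Your case split on $d$ is unnecessary: the paper applies \cref{lem:factors} to $K\langle z\rangle$ rather than to $G$ (equivalently, $z$ normalizes a $K$-conjugate of every parabolic), which covers triality-type $d$ too, so your ``alternatively'' route is the right one there, whereas \cref{lem:NGD struct} is stated for $r\ge 3$ and does not apply in characteristic $2$.
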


\begin{proof}   By  \cref{lem:notpsl3,lem:not PSp4}, $K\not\cong  \PSL_3^\epsilon(2^a)$, or $\PSp_4(2^a)$ (or $\G_2(2)'$ or $\PSp_4(2)'$).

 By \cite[Theorem 1]{Gerhardt}, $d$ normalizes a non-trivial $2$-subgroup $R$ of $K\gen{d}$.
  Therefore  \cref{lem:normalise a parabolic} implies that  $d$ normalizes a parabolic subgroup $L$ of $G$.  If $z \in \Inndiag(K)\Phi_K$,  then $K\gen{z}=N_{K\gen{z}}(L)K$ by \cref {lem:factors}.
  Therefore $N_{K\gen{z}}(L)$ contains a Sylow $2$-subgroup of $K\gen{z}$ and so contains a conjugate of $z$.
  But $N_G(L)$ also contains $d$, this contradicts $(z,d)$ being $(2,3)$-sequence for $G$.
  Thus $z \not \in \Inndiag(K)\Phi_K$. This proves the claim.
\end{proof}

\begin{lemma}\label{lem:not psln}
We have $K \not \cong \PSL_n(2^a)$ with $n \ge 4$.
\end{lemma}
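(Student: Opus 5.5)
We argue by contradiction, assuming $K\cong \PSL_n(2^a)$ with $n\ge 4$ and that $(z,d)$ is an invariable $(2,3)$-sequence for $G$. By \cref{lem:lem char 2}, $z$ is a graph or graph-field automorphism of $K$. By \cref{lem:coprime r}, $d$ is not a field automorphism. Since $n\ge 4$, the only non-trivial graph automorphism has order $2$, so $d\in \Inndiag(K)=\PGL_n(2^a)$.

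The plan is to find a single proper subgroup $D<K$ with three properties:
\begin{enumerate}
\item $\Aut(K)=N_{\Aut(K)}(D)K$;
\item $N_G(D)$ contains a Sylow $2$-subgroup of $G$;
\item $N_G(D)$ contains an $\Aut(K)$-conjugate of $d$.
\end{enumerate}
Then \cref{lem:lastlem} yields the contradiction.

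The natural candidate comes from a decomposition of the natural module $V=\GF(2^a)^n$ that is stable under the inverse-transpose map. If $3\mid 2^a-1$, we take the monomial subgroup, that is, the image of $(2^a-1)^n:\Sym(n)$. By \cref{GLfact}(ii) (the analogue of whose proof works in characteristic $2$), it contains a Sylow $3$-subgroup of $\PGL_n(2^a)$, so $d$ is conjugate into it. If $3\mid 2^a+1$, we instead use the image of $\GL_2(2^a)\wr\Sym(\lfloor n/2\rfloor)$ (times $\GL_1$ when $n$ is odd), which contains a Sylow $3$-subgroup because $3$ divides $|\GL_2(2^a)|$ but not $2^a-1$.

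Let $D$ be the derived or normal subgroup of $K$ generated by the diagonal torus or the $\SL_2$-blocks. The standard graph automorphism $\gamma$ (inverse transpose composed with the longest Weyl element) and $\Phi_K$ normalize the torus and these blocks. Hence $\Inndiag(K)\Phi_K\Gamma_K$ normalizes some $K$-conjugate of $D$, and (i) follows by the Frattini argument.

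The difficulty is (ii). Here $K$ has characteristic $2$, so the monomial group does not contain a Sylow $2$-subgroup of $K$. Applying \cref{lem:lastlem} directly therefore fails, and a cleverer choice is needed. I would instead exploit the specific form of $z$. A graph automorphism involution $z$ of $\PSL_n(2^a)$ has centralizer $\Sp_n(2^a)$ ($n$ even), or a centralizer of $\Sp_{n-1}$-type, or $\PSU_n(2^{a/2})$-type for graph-field automorphisms. By \cref{lem:cop3} each of these has order divisible by $3$.

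Step one is to show that $C_K(z)$ contains an element $e$ of order $3$ which lies in a $3$-broad subgroup $E$ of $K$. One can take $E$ to be a maximal elementary abelian $3$-subgroup of the torus or block subgroup above. Even where $K$ has no $3$-broad subgroup (the case $3\mid(2^a-1,n)$, as in the $\PSL_9(4)$ example), I expect the elements of order $3$ in $\Inndiag(K)$ whose fixed-space dimensions are realisable inside $\Sp_n(2^a)$ to be sufficient. Step two is to conjugate $e$ into $C_G(d)$: since $d\in\Inndiag(K)$ is diagonalizable over $\GF(4^a)$ with eigenspaces of known dimensions, one should be able to choose a conjugate of $z$ whose centralizer $\Sp_n$ or $\SU_n$ contains an element commuting with $d$. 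This would give $\gen{z^g,d}\le C_G(e)$ or $N_G(\gen e)$, which is a proper subgroup.

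I expect the main obstacle to be this eigenvalue-matching. Concretely, the task is to show that for every semisimple $3$-element $d$ of $\PGL_n(2^a)$ there is a conjugate of $z$ normalizing a torus or an $\Sp_2$/$\SU_2$-block decomposition compatible with $d$. Elements with eigenvalue $\omega$ of multiplicity $m$ and $\omega^{-1}$ of multiplicity $m'\ne m$ are not in symplectic groups, so they must be handled via $N$ rather than $C$. My first attempt would be to show directly that $d$ and some conjugate of $z$ both normalize a common subgroup $\SL_2(2^a)\times\SL_{n-2}(2^a)$ or a stabilizer of a $\gamma$-stable flag pair $\{U,U^\perp\}$, inducting on $n$ and handling $n=4,5$ by hand.
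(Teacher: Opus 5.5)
There is a genuine gap: the proposal never completes either case. Steps one and two are stated only as expectations. The case you single out as the main obstacle is left open: $3\mid(n,2^a-1)$, where $d$ may lie in $\Inndiag(K)\setminus K$ and $K$ need not have a $3$-broad subgroup (the $\PSL_9(4)$ phenomenon). For it you offer only an eigenvalue-matching programme against $C_K(z)$, which depends on the class of $z$ and is not carried out.

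The missing idea is that you abandoned the \cref{lem:lastlem} framework too early. When the monomial and $\GL_2(2^a)\wr\Sym(\lfloor n/2\rfloor)$ subgroups fail your condition (ii), the fix is to take $D$ to be a parabolic subgroup. Let $P\ge B$ be a parabolic subgroup of $K$ invariant under the standard graph automorphism $\gamma$ and under $\Phi_K$. Then $N_{\Aut(K)}(P)$ contains $P\gen{\gamma,\Phi_K}$ and the image $\widehat T$ of the diagonal subgroup of $\GL_n(2^a)$. Since $\Inndiag(K)=\widehat T K$, this gives $\Aut(K)=N_{\Aut(K)}(P)K$. In characteristic $2$, $P$ contains a Sylow $2$-subgroup of $K$, so $|G:N_G(P)|=|K:P|$ is odd. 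Hence some $G$-conjugate of $z$ normalizes $P$, whatever its class, and you need no information about $C_K(z)$ at all. It only remains to put a conjugate of $d$ into $N_G(P)$.

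That is done by the case split in the paper's proof.

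\emph{Case $3\nmid(n,2^a-1)$.} Since $(n,2^a-1)=|\Inndiag(K):K|$, we get $d\in K$. Then $K$ has a $3$-broad subgroup by \cref{lem:3good}(i), and \cref{lem no 3-broad} finishes at once. This is exactly your Steps one and two, but it requires $d\in K$, which you never observe. It also makes your separate treatment of $3\mid 2^a+1$ unnecessary.

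\emph{Case $3\mid(n,2^a-1)$.} Here $\GF(2^a)$ contains the cube roots of unity. Replace a preimage of $d$ in $\GL_n(2^a)$ by its $3$-part and adjust by a scalar. Two subcases arise.
\begin{enumerate}
\item $d$ has a preimage $\widehat d$ of order $3$. Then $\widehat d$ is diagonalizable over $\GF(2^a)$, so $d$ is $K$-conjugate into a Borel subgroup and you take $P=B$. Your monomial idea was close here: the split torus lies in $B$, and $B$, unlike the monomial group, has normalizer containing a Sylow $2$-subgroup.
\item $\widehat d^{\,3}=\mu I$ with $\mu$ a non-cube. Then $x^3-\mu$ is irreducible, and the natural module is a direct sum of $n/3$ isomorphic $3$-dimensional $\gen{\widehat d}$-modules. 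So $\widehat d$ is conjugate into $\GL_3(2^a)\times\dots\times\GL_3(2^a)$. This is a Levi complement of a proper parabolic subgroup (proper since $n\ge 6$ here) whose defining set of nodes is stable under the diagram symmetry, so the parabolic is $\gamma$- and $\Phi_K$-invariant.
\end{enumerate}
In every case $\gen{z^g,d^h}\le N_G(P)<G$ for suitable $g,h\in G$, which is the required contradiction.
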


\begin{proof} By \cref{lem:lem char 2}, $z$ is a graph or  graph-field automorphism and \cref{lem:coprime r} implies $d\in \Inndiag(K)$.

If $3$ does not divide  $(n,2^a-1)$, then $d \in K$ and $K$ has a $3$-broad subgroup by \cref{lem:3good} which is impossible by \cref{lem no 3-broad}.

Hence $3$  divides $(n,2^a-1)$. If the preimage $\wh d$  of $d$ in $\GL_n(2^a)$ has order $3$, then $\wh d$ diagonalizes and so $d$ is contained in a Borel subgroup of $G$. But then $d$, and $z$ normalize a Borel  subgroups of $G$, a contradiction.  Hence $\wh d$ has order at least $9$ and $\wh {d}^3 \in Z(\GL_n(2^a))$.  It follows that $\wh d$ is conjugate to an element of  $L$ where $L$ is the intersection of $\GL_3(2^a) \times \dots\times \GL_3(2^a)$ with $ \SL_n(2^a)$. Now, as $n \ge 4$, the image of  $L$ in $\PGL_n(2^a)$, is contained in a Levi complement of a proper parabolic subgroup $P$  which is invariant under the standard graph automorphism
$\gamma$ and field automorphism $\phi$. Hence $P\gen{\phi,\gamma}$ contains a Sylow $2$-subgroup of $\Aut(K)$ and hence contains a conjugate of $z$.
  This shows that $G$ is not invariably $(2,3)$-generated.
\end{proof}

\begin{lemma}\label{lem:not Omega char 2}
We have $K \not \cong \mathrm P\Omega_{2n}^+(2^a)$ with $n \ge 4$.
\end{lemma}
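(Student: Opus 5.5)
Suppose, for a contradiction, that $K\cong \mathrm P\Omega_{2n}^+(2^a)$ with $n\ge 4$ and that $(z,d)$ is an invariable $(2,3)$-sequence for $G$. By \cref{lem:lem char 2}, $z$ is a graph or graph-field automorphism. By \cref{lem:coprime r}, $d$ is not a field automorphism.

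Since $r=2$, the centre of the universal group $\Omega_{2n}^+(2^a)$ is trivial and $\Inndiag(K)=K$. Thus, if $d\in \Inndiag(K)$, then $d\in K$. By \cref{lem:3good}(ii), $K$ has a $3$-broad subgroup ($3$ is good and $r\neq 3$). Since $K\not\cong \Sp_4(2^a)$, \cref{lem no 3-broad} forces $d\notin K$. So $d$ is an outer automorphism of order $3$. It is not a field automorphism, so it must be a graph or graph-field automorphism. This requires $n=4$ and triality. We are therefore reduced to $K\cong \mathrm P\Omega_8^+(2^a)$ with $d$ $\Aut(K)$-conjugate to a triality-type automorphism, and $z$ a graph or graph-field involution.

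For this case I would imitate \cref{prop:not char 3} and apply \cref{lem:lastlem} with $s=2$. We need a subgroup $1\ne D<K$ such that $\Aut(K)=N_{\Aut(K)}(D)K$, $N_G(D)$ contains a Sylow $2$-subgroup of $G$, and $N_G(D)$ contains an $\Aut(K)$-conjugate of $d$. The subgroup of \cref{lem:NGD struct} is $D=M_{\alpha_1}M_{\alpha_3}M_{\alpha_4}M_{\alpha^*}$, and it is normalized by the standard triality $\tau$ and by $\Phi_K$. In characteristic $2$ it is a commuting product of four $\SL_2(2^a)$'s, since the argument there built it from root subgroups.

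The steps I would carry out are as follows.
\begin{enumerate}
\item Show that $N_K(D)$ contains a Sylow $2$-subgroup of $K$ and that $D$ is unique up to $K$-conjugacy and $\Aut(K)$-invariant up to conjugacy. This gives $\Aut(K)=N_{\Aut(K)}(D)K$ by a Frattini argument.
\item Show that every element of order $3$ in $\tau K$ is $\Aut(K)$-conjugate into $N_G(D)$.
\end{enumerate}
Alternatively, step 1 can be replaced by a parabolic argument. Since $z$ is a graph automorphism, it centralizes a long root element, so $z$ normalizes a parabolic subgroup by \cref{lem:normalise a parabolic}.

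The main obstacle is step 2 in characteristic $2$. By \cite[Proposition 4.9.2]{GLS3}, the coset $\tau K$ contains two classes of elements of order $3$. One is represented by $\tau$, with centralizer $\G_2(q)$. The other is represented by $\tau x$, where $x$ is of order $3$ in the centralizing $M_{\alpha^*}$ or is a unipotent-type element, with centralizer $\PGL_3^{\pm}(q)$ or of the form $[q^5]\SL_2(q)$.

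First I would try, exactly as in \cref{lem:NGD struct}, to take $\tau^\circ=\tau d_0$ with $d_0\in M_{\alpha^*}$ of order $3$, or $\tau u$ with $u$ a long root element in $X_{-\alpha^*}$ if the other class is the "unipotent" one. I would then distinguish $\tau^\circ$ from $\tau$ by comparing centralizers, counting $\SL_2(q)$-components that arise from $D$. Note that in characteristic $2$ there is no central involution $z$ in $D$ to exploit, so the argument for non-conjugacy must instead use $|C_K(\tau^\circ)|$. If this becomes messy, a fallback is to note that both classes normalize a Borel or parabolic subgroup invariant under $\tau$ and the graph automorphism. Together with \cref{lem:factors} and \cref{lem:normalise a parabolic}, this would give the contradiction that $z$ and $d$ normalize a common parabolic subgroup.
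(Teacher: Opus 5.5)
\textbf{Verdict: genuine gap.} Your reduction to $K\cong \mathrm P\Omega_8^+(2^a)$, with $d$ of triality type and $z$ a graph or graph-field involution, is correct and is exactly how the paper starts. Your main route after that fails.

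\cref{lem:lastlem} needs $N_G(D)$ to contain a Sylow $2$-subgroup of $G$. In \cref{lem:NGD struct} this comes from \cite[Theorem 4.10.6]{GLS3}, which is a statement about odd $q$; that is why the lemma assumes $r\ge 3$. For $q=2^a$ the statement is false:
\begin{itemize}
\item $D=\Omega_4^+(q)\times\Omega_4^+(q)$ is the stabilizer of an orthogonal decomposition into two $4$-dimensional $+$-type spaces.
\item $C_K(D)=1$ and $N_K(D)=D.2^2$, so $|N_K(D)|_2=4q^4$.
\item But $|K|_2=q^{12}$. In characteristic $2$ a Sylow $2$-subgroup is the unipotent radical of a Borel subgroup, and no such reductive subgroup contains it.
\end{itemize}
So your step 1 cannot be proved, and no work on step 2 rescues the \cref{lem:lastlem} approach with this $D$.

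Your fallback aims at the right objects but asserts the only non-trivial point without proof. It also cites \cref{lem:factors}, which is unavailable here: $G$ contains graph automorphisms, so $G\not\le \Inndiag(K)\Phi_K$. Moreover, ``$z$ normalizes some parabolic'' is too weak.

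Here is what the paper does instead.
\begin{itemize}
\item Since $z$ is a $2$-element, it normalizes a Sylow $2$-subgroup of $K$, hence a Borel subgroup $B$.
\item Let $\gamma$ be the standard triality for $B$. Graph-field $d$, or $d$ conjugate to $\gamma$, can be conjugated into $N_G(B)$, which is impossible.
\item So $d\in\gamma K$. Then $G/K\cong\Sym(3)$, $N_G(B)=B\gen{\gamma,z}$ and $N_G(B)/B\cong \Sym(3)$.
\item Now $d$ is in the other class ($\gamma_2$ in \cite[Table 4.7.3A]{GLS3}). It centralizes a non-trivial $2$-subgroup, so by \cref{lem:normalise a parabolic} it normalizes a parabolic $P$. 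After $K$-conjugation, $P\ge B$.
\item The Frattini argument gives $N_G(P)=N_{N_G(P)}(B)P$. Since $d\in N_G(P)\setminus K$, $3$ divides $|N_{N_G(P)}(B):B|$. The only subgroup of order $3$ in $N_G(B)/B\cong\Sym(3)$ is $\gen{\gamma}B/B$, so $\gamma\in N_G(P)$ and $P$ is $\gamma$-invariant.
\item The $\gamma$-invariant parabolics containing $B$ correspond to the node sets $\emptyset$, $\{2\}$ and $\{1,3,4\}$. All of these are invariant under the full $\Sym(3)$ of diagram symmetries, so they are normalized by $N_G(B)\ni z$.
\item Hence $\gen{z,d}\le N_G(P)<G$, a contradiction.
\end{itemize}
The step your proposal lacks is this: the Frattini argument forces $\gamma$-invariance of $P$, and $\gamma$-invariance then gives $z$-invariance.
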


\begin{proof} We know that $z$ is a graph or graph-field automorphism  by \cref{lem:lem char 2}  and, by \cref{lem:coprime r}, $d$ is not a field automorphism. Notice that $z$ normalizes a Sylow $2$-subgroup of $K$ and so $z$ normalizes a Borel subgroup $B$ of $K$. By \cref{lem:3good} and \cref{lem no 3-broad}, we know $d \not \in K$. As $\mathrm{Outdiag}(K)=1$,  $K \cong \mathrm{P}\Omega_8^+(2^a)$. Let $\gamma$ be the standard triality automorphism with respect to $B$. Then $N_G(B) \ge \gen{z,\gamma}$. If $d$ is a graph-field  automorphism, then $d$ normalizes some Borel subgroup of $K$ and so we can conjugate $d$ into $N_G(B)$, a contradiction. Thus we may assume that $d$ is in the coset  $\gamma K$. In particular, $\gamma\in G$. Since $\Out(K) \cong \Phi_K\times \Gamma_K$, $z$ is a graph or a graph-field automorphism and $G/K$ is invariably $(2,3)$-generated, we have $$\Sym(3)\cong G/K = N_G(B)K/K\cong N_G(B)/B$$ independently of whether $z$ acts as a graph or a graph-field automorphism.

If $d$ is conjugate to $\gamma$  then we can conjugate $d$ into $N_G(B)$, a contradiction.

Therefore,   $d$ is $K$-conjugate to $\gamma_2$
 as in \cite[Table 4.7.3A]{GLS3}. We see that $d$ centralizes a non-trivial $2$-subgroup of $K$ and therefore $d \in N_G(P)$ for some parabolic subgroup $P$ of $K$ by \cref{lem:normalise a parabolic}.  We  may conjugate $d$ and $P$ so that $P \ge  B$. So now $d\in N_G(P)$ and $P\ge B$.  Then $N_G(P)=N_{N_G(P)}(B)P$ as $P=N_K(P)$. As $d \in N_G(P)$, $3$ divides $|N_{N_G(P)}(B):N_K(B)|=|N_{N_G(P)}(B):B|$ by the Frattini argument.
 As $N_G(B)=B\langle \gamma,z\rangle$, and $N_G(B)/B \cong \Sym(3)$, we have $\gamma \in N_{N_G(P)}(B)$. Therefore $P$ is a $\gamma$-invariant parabolic subgroup of $K$. The $\gamma$-invariant parabolic subgroups containing $B$ are $B$, the minimal parabolic subgroup corresponding to the middle node of the Dynkin diagram, and the maximal parabolic subgroup corresponding to the end nodes of the $\mathrm D_4$-Dynkin diagram. These parabolic subgroups are all normalized by $N_G(B)$. Thus $z\in N_G(B) \le N_G(P)$ and so $G=\gen{z,d} \le N_G(P)<G$, a contradiction.
\end{proof}

We summarise the conclusions of this subsection as follows.

\begin{proposition}\label{prop:not classical 2} Suppose that $G$ satisfies the hypothesis of \cref{thm:Lietype23as}. If $K$ is a simple classical group defined in characteristic $2$, then $K \cong \PSp_4(2)'\cong \PSL_2(9)\cong \Alt(6)$.
\end{proposition}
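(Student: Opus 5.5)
This is a bookkeeping argument: I will run through the list of simple classical groups defined in characteristic $2$ and eliminate each one using the lemmas already proved in this subsection. Let $K$ be simple classical in characteristic $2$ and suppose $K$ is not isomorphic to $\Alt(6)$.

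\textbf{Small cases.} First I will note the exceptional isomorphisms that move small groups into families already treated.
\begin{enumerate}
\item $\PSL_2(2^a)$ is excluded by \cref{lem:psl2}, since $\PGL_2(3^{2^b})$ has socle of odd characteristic.
\item $\PSL_3^\eps(2^a)$ is excluded by \cref{lem:notpsl3}.
\item $\PSp_4(2^a)$ with $a\ge 2$ is excluded by \cref{lem:not PSp4}.
\item $\PSp_4(2)'\cong\Alt(6)$ is the permitted exception.
\item $\PSU_4(2)\cong\PSp_4(3)$ and $\Omega_5$, $\Omega_6^\pm$ in characteristic $2$ coincide with $\PSp_4$, $\PSL_4$ or $\PSU_4$, so they will be absorbed into the other cases.
\item The remaining coincidences, such as $\PSL_4(2)\cong\Alt(8)$ and $\PSU_4(2)$, are also covered by \cref{lem:alt gone}.
\end{enumerate}

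\textbf{Excluding the remaining families via \cref{lem:lem char 2}.} Next I will apply \cref{lem:lem char 2}. It says that if $r=2$, then $K$ is one of $\PSL_n(2^a)$, $\POmega_{2n}^+(2^a)$, $\F_4(2^a)$ or $\E_6(2^a)$, and $z$ is a graph or graph-field automorphism. Its proof relies only on \cref{lem:notpsl3,lem:not PSp4}, which we already have. This immediately rules out:
\begin{enumerate}
\item $\PSU_n(2^a)$ for $n\ge 4$;
\item $\PSp_{2n}(2^a)$ for $n\ge 3$, and hence also $\Omega_{2n+1}(2^a)$;
\item $\POmega_{2n}^-(2^a)$.
\end{enumerate}
The mechanism is that in each of these groups every involution lies in $\Inndiag(K)\Phi_K$. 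The surviving classical families are $\PSL_n(2^a)$ with $n\ge4$, removed by \cref{lem:not psln}, and $\POmega_{2n}^+(2^a)$ with $n\ge4$, removed by \cref{lem:not Omega char 2}. The low-dimensional orthogonal groups $\POmega_{2n}^+$ with $n\le3$ are isomorphic to $\PSL_2\times\PSL_2$ (not simple) or to $\PSL_4$, so they are already handled.

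\textbf{Main obstacle.} The only delicate point is making sure the list of exceptional isomorphisms is complete, so that no small group escapes the case split. In particular:
\begin{enumerate}
\item $\Sp_4(2)$ is not simple, and $\PSp_4(2)'\cong\PSL_2(9)\cong\Alt(6)$;
\item $\PSU_3(2)$ is soluble, so it does not arise;
\item $\PSL_2(4)\cong\PSL_2(5)$ is already covered by \cref{lem:psl2}.
\end{enumerate}
With these checked, the only possibility left is $K\cong\PSp_4(2)'$, which is the conclusion of the proposition.
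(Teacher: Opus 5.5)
Your proposal is correct and is essentially the paper's proof, which consists of combining \cref{lem:lem char 2,lem:psl2,lem:notpsl3,lem:not PSp4,lem:not psln,lem:not Omega char 2}; your write-up just makes the family-by-family case split and the small isomorphisms explicit. One harmless slip: $\PSU_4(2)\cong\PSp_4(3)$ is not an alternating group, so \cref{lem:alt gone} does not cover it, but \cref{lem:lem char 2} already excludes it.
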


\begin{proof} This follows by combining  \cref{lem:lem char 2,lem:psl2,lem:notpsl3,lem:not PSp4,lem:not psln,lem:not Omega char 2}.
\end{proof}

\subsection{Exceptional groups}

In this subsection we prove the following proposition.

\begin{proposition}\label{prop:not excep}  Suppose that $G$ satisfies the hypothesis of \cref{thm:Lietype23as}. Then $K$ is not an exceptional simple group of Lie type.
\end{proposition}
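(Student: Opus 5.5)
We argue by contradiction: let $K$ be an exceptional group of Lie type in characteristic $r$, with $(z,d)$ an invariable $(2,3)$-sequence for $G$. \cref{prop:not char 3} gives $r\ne 3$, and \cref{lem:smallRee} excludes ${}^2\G_2(3^a)$. By \cref{lem:z not in K}, $z\notin K$. By \cref{lem:coprime r}, $d$ is not a field automorphism. The remaining candidates are ${}^2\B_2(2^{2a+1})$, ${}^2\F_4(2^a)'$, $\G_2(r^a)$, ${}^3\D_4(r^a)$, $\F_4(r^a)$, $\E_6^\eps(r^a)$, $\E_7(r^a)$ and $\E_8(r^a)$, and we treat them by cases.

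\emph{Suzuki groups.} For ${}^2\B_2(2^{2a+1})$, $\Out(K)$ has odd order, so $z\in K$, contradicting \cref{lem:z not in K}.

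\emph{Groups with a $3$-broad subgroup.} For ${}^2\F_4(2^a)'$, $\G_2$, ${}^3\D_4$, $\F_4$, $\E_7$, $\E_8$, and for $\E_6^\eps$ with $3\nmid r^a-\eps$, the socle $K$ has a $3$-broad subgroup by \cref{3-broad} and \cref{lem:E6 good cases}. Hence $d\notin K$ by \cref{lem no 3-broad}.
\begin{enumerate}
\item In $\F_4$, $\E_7$, $\E_8$, ${}^2\F_4(2^a)'$ and the twisted groups, $\Out(K)$ is $\Outdiag(K)\Phi_K$ (with $\Outdiag$ a $2$-group for $\E_7$), up to the exceptional graph automorphisms of $\G_2(3^a)$ and $\F_4(2^a)$. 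In most cases this makes $d$ a field automorphism, which is excluded.
\item For ${}^3\D_4(r^a)$, $d$ may be a graph automorphism. By \cite[Proposition 4.9.1]{GLS3} it is then conjugate to one normalizing a Borel subgroup $B$. Also $z\in\Inndiag(K)\Phi_K$, so $z$ normalizes a conjugate of $B$ by \cref{lem:factors}, and $\gen{z,d}\le N_G(B)$, a contradiction.
\item For $\F_4(2^a)$, $z$ may be a graph-field automorphism. Since $d$ is inner-diagonal-field, mimic \cref{lem:lem char 2}: $d$ normalizes a nontrivial $2$-subgroup (by \cite[Theorem 1]{Gerhardt}) and hence a parabolic, and then \cref{lem:factors} shows $z$ normalizes the same parabolic, unless $z$ is graph-type.
\item In that graph-type residual case, with $K=\F_4(2^a)$, $d\notin K$ forces $d$ to be a field automorphism, which is already excluded. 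So only the $\Inndiag$ argument is needed.
\end{enumerate}

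\emph{The case $K\cong\E_6^\eps(r^a)$ with $3\mid r^a-\eps$.} This is the hardest case. Now $d$ may lie in $\Inndiag(K)\setminus K$, or be a graph automorphism of order $3$ composed with a field automorphism.
\begin{enumerate}
\item If $z\in\Inndiag(K)\Phi_K$, then as in \cref{lem:lem char 2} it suffices to show $d$ normalizes a parabolic. By \cref{lem:Ckd even}, $|C_K(d)|$ is even; when $r$ is odd, $C_K(d)$ contains $r$-elements by \cite[Table 4.7.3A]{GLS3}. So $d$ normalizes a nontrivial $r$-subgroup and hence a parabolic by \cref{lem:normalise a parabolic}. \cref{lem:factors} then puts a conjugate of $z$ in the same parabolic normalizer, a contradiction.
\item Hence $z$ is a graph automorphism $\gamma^*$, possibly times a field automorphism, and $r\ge5$ or $r=2$. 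Use \cref{lem:E6 big 2-part}. For $d\in\Inndiag(K)$ of order $3$, the $2$-part of $|C_K(d)|$ exceeds $|K:C_K(\gamma^*)|_2$. Therefore a Sylow $2$-subgroup of $C_K(d)$ cannot meet every conjugate of $C_K(\gamma^*)$ trivially, and some involution of $C_K(d)$ is centralized by a conjugate of $z$.
\item We expect this counting step to be the main obstacle. The cleanest form is a double-coset argument: $C_K(d)$ and $C_K(z)^g$ intersect in a group of even order for a suitable $g$. Choosing the involution $u$ so that $\gen{z^g,d}\le C_G(u)$ gives a contradiction, exactly as in \cref{lem:z not in K}. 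To guarantee $u$ is $z^g$-invariant, we may need to work with a $\gen{z^g}$-invariant Sylow $2$-subgroup, using the Frattini argument in $C_G(d)$.
\item For $r=2$, handle the case by \cref{lem:cop3} together with the same parabolic argument.
\end{enumerate}
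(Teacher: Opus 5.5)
There is a genuine gap in the case $K\cong \mathrm{E}_6(2^a)$ with $3\mid 2^a-1$ and $z$ a graph or graph-field automorphism. Here ``\cref{lem:cop3} together with the same parabolic argument'' is not an argument.
\begin{itemize}
\item \cref{lem:factors} needs $z\in\Inndiag(K)\Phi_K$, which fails for this $z$.
\item A graph-type involution does normalize a Borel subgroup. However, it permutes the parabolics above that Borel via the diagram symmetry, so it normalizes only parabolics of $\gamma$-stable type. Nothing in your argument controls the type of the parabolic normalized by $d$.
\item The fact $3\mid |C_K(z)|$ is exploited elsewhere (\cref{lem no 3-broad}) only through a $3$-broad subgroup, and none is available when $3\mid q-\eps$.
\end{itemize}
The missing idea is the paper's $3$-local one. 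Take a $3$-central $t$ of type $t_3$ and put $L=E(C_K(t))\cong \SL_3(2^a)\circ\SL_3(2^a)\circ\SL_3(2^a)$. Then $L$ is normalized by $\gamma$, by $\Phi_K$ and by every element of $\gamma X_{-\alpha_*}$. Hence $N_G(L)$ contains a conjugate of $z$ as well as a Sylow $3$-subgroup of $G$.

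Your item 1 in the $\mathrm{E}_6$ case has a related flaw for $r$ odd. \cref{lem:factors} only gives $G=N_G(P)K$. For odd $r$ a parabolic need not contain a Sylow $2$-subgroup of $K$ (e.g.\ $q+1$ divides $|K:B|$), so this does not put a conjugate of $z$ into $N_G(P)$; the step in \cref{lem:lem char 2} works only because $r=2$. For untwisted $\mathrm{E}_6(q)$ you can repair this with \cite[Proposition 4.9.1]{GLS3}: an outer involution in $\Inndiag(K)\Phi_K$ is conjugate to a standard field automorphism, which normalizes every standard parabolic. In ${}^2\mathrm{E}_6(q)$, however, every outer involution is a graph automorphism, so those cases must go through your item 2. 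The paper applies the $2$-part count of \cref{lem:E6 big 2-part} for all $r\ge 5$ and both signs of $\eps$. That count is correct as you state it, and the worry in your item 3 is moot, since an involution $u\in C_K(d)\cap C_K(z^g)$ is centralized by $z^g$ by definition.

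Your ${}^3\mathrm{D}_4(q)$ step is false as stated. \cite[Proposition 4.9.1]{GLS3} concerns field and graph-field automorphisms, but the outer automorphisms of order $3$ of ${}^3\mathrm{D}_4(q)$ are graph automorphisms. For $r\ne 3$ they form two classes, with centralizers $\mathrm{G}_2(q)$ and $\mathrm{PGL}^\eps_3(q)$, where $q\equiv\eps\pmod 3$. Suppose $q\equiv-1\pmod 3$. Then $|B|=q^{12}(q^3-1)(q-1)$ is prime to $3$, so every element of order $3$ in $N_{\Aut(K)}(B)\setminus K=B\Phi_K\setminus B$ is $B$-conjugate into $\Phi_K$, and hence has centralizer $\mathrm{G}_2(q)$. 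So the $\mathrm{PGU}_3(q)$-class normalizes no Borel subgroup.

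The repair is easy. Since $r$ divides $|C_K(d)|$, $d$ normalizes some parabolic by \cref{lem:normalise a parabolic}. Also $z$ is conjugate to a standard field automorphism (by \cite[Proposition 4.9.1]{GLS3}, not by \cref{lem:factors}), which normalizes every standard parabolic. The paper avoids the issue differently: \cref{lem:lem char 2} removes $r=2$, and for $r$ odd $K$ has one class of involutions. So $z$ and, because $|C_K(d)|$ is even, a conjugate of $d$ centralize a common involution.

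A minor point: $\mathrm{E}_6$ has no graph automorphism of order $3$, so $d\in\Inndiag(K)$ follows directly from \cref{lem:coprime r}.
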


By \cref{prop:not char 3},   $K$ is not an exceptional group characteristic $3$.

\begin{lemma}\label{lem:suzuki or large ree}
We have $K \not\cong {}^2\B_2(2^a)$, $a \ge 3$ odd or ${}^2\F_4(2^a)'$ with $a\ge 1$ odd.
\end{lemma}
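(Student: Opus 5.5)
Both families are defined in characteristic $2$ over a field $\GF(2^a)$ with $a$ odd. For them $\Inndiag(K)=K$ and $\Out(K)$ is cyclic of odd order $a$, consisting of field automorphisms. (For ${}^2\F_4(2)'$ the outer automorphism group has order $2$, so we treat that case separately.) The plan is to locate $z$ and $d$ using the general lemmas of Subsection~\ref{subsec:gen lemmas}, and then show that the constraints cannot all hold.

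First, for $K\cong{}^2\B_2(2^a)$, or $K\cong{}^2\F_4(2^a)'$ with $a\ge 3$, the order $|G/K|$ divides $a$, which is odd. Hence $G$ has no involutions outside $K$, so $z\in K$. This contradicts \cref{lem:z not in K}. We could also argue directly: for the Suzuki groups, $3$ does not divide $|K|$, so $d$ is a field automorphism, and this is excluded by \cref{lem:coprime r}. Either way these cases are eliminated.

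The remaining case is $K\cong{}^2\F_4(2)'$, the Tits group, where $\Aut(K)={}^2\F_4(2)$ and $|\Aut(K):K|=2$. By \cref{lem:z not in K} we get $z\notin K$, so $G={}^2\F_4(2)$ and $z$ is an involution in $G\setminus K$. Since $\Out(K)$ has order $2$, we have $d\in K$. We then want a proper subgroup of $G$ that contains a Sylow $2$-subgroup of $G$ and meets every $G$-class of elements of order $3$. A natural candidate is a maximal parabolic subgroup of ${}^2\F_4(2)$. Such a subgroup contains a Sylow $2$-subgroup, and hence a conjugate of $z$. By \cite[Table 4.7.3A]{GLS3}, ${}^2\F_4(2)$ has a single class of elements of order $3$, and the Levi factor of a parabolic (for example one involving $\PSL_2(2)$ or ${}^2\B_2(2)$) has order divisible by $3$. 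So $d$ also has a conjugate in the parabolic, which is a contradiction. Alternatively, \cref{lem no 3-broad} together with \cref{3-broad} already gives $d\notin K$, since the Tits group has a $3$-broad subgroup by \cref{3-broad}. This contradicts $d\in K$, and it avoids the parabolic argument altogether.

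The main point to verify is this last step. We need that $d\in K$ is forced in the Tits case, and that \cref{3-broad} really does apply to ${}^2\F_4(2)'$. The paper lists ${}^2\F_4(2^a)'$ there, so that is available. We also need that the hypothesis of \cref{lem no 3-broad} holds, namely $K\not\cong\Sp_4(2^a)$, which is immediate. If there were any doubt, I would fall back on the direct {\sc magma}-checkable observation that ${}^2\F_4(2)$ has one class of elements of order $3$ and one of outer involutions, both represented in a maximal parabolic subgroup.
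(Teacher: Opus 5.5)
Your proof is correct, but in the Tits case it takes a longer route than the paper. The paper notes that ${}^2\F_4(2)$ has no involutions outside ${}^2\F_4(2)'$, so the conclusion $z\notin K$ of \cref{lem:z not in K} is already a contradiction. You instead accept $z\notin K$, deduce $d\in K$ from $|G:K|=2$, and then rule out $d\in K$.

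Both of your ways of ruling out $d\in K$ work. The maximal parabolic of ${}^2\F_4(2)$ with Levi factor $\GL_2(2)\cong\Sym(3)$ contains a Sylow $2$-subgroup of $G$. Since all elements of order $3$ are conjugate, it also contains a conjugate of $d$. Alternatively, \cref{3-broad} and \cref{lem no 3-broad} give $d\notin K$ directly.

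There are two caveats. First, your parenthetical example ${}^2\B_2(2)$ is wrong: that group has order $20$. So the other maximal parabolic has order prime to $3$, and only the one with Levi factor $\GL_2(2)$ works. Second, the $3$-broad route applies \cref{lem:cop3} to an outer involution of the Tits group, and no such involution exists. The deduction is still formally valid, but noticing that nonexistence is exactly the paper's one-line proof.

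For $a\ge 3$ you argue that $|G/K|$ is odd, which forces $z\in K$ and contradicts \cref{lem:z not in K}; this is the argument of \cref{lem:smallRee}. The paper instead quotes \cref{lem:lem char 2}, which forces $z$ to be a graph or graph-field automorphism, and these twisted groups have none. Either argument is fine.

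Your route does not need the special fact about the outer coset of ${}^2\F_4(2)$. The paper's route is much shorter.
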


\begin{proof}  If $a \ge 3$, this is included in \cref{lem:lem char 2}. If $K \cong {}^2\F_4(2)'$,   then $z \in G\setminus K$ by \cref{lem:z not in K}. But $G$ has no such elements of order $2$ \cite[Theorem 3.3.2 (d)]{GLS3}.
\end{proof}

\begin{lemma}\label{lem:not G2 or 3D4}  We have  $K \not \cong {}^3\D_4(r^a)$ or $\G_2(r^a)'$.
\end{lemma}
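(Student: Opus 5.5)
We argue by contradiction, working inside the standing hypothesis that $(z,d)$ is an invariable $(2,3)$-sequence for $G$ with $K\cong {}^3\D_4(r^a)$ or $\G_2(r^a)'$. By \cref{prop:not char 3} we have $r\ne 3$. The small case $\G_2(2)'\cong\PSU_3(3)$ is already excluded by \cref{lem:notpsl3}, so we may assume $K=\G_2(r^a)$ with $r^a>2$, or $K={}^3\D_4(r^a)$.

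\textbf{Step 1: $d$ is not inner.} By \cref{3-broad}, $K$ has a $3$-broad subgroup, and $K\not\cong\Sp_4(2^a)$. Hence \cref{lem no 3-broad} gives $d\notin K$. Since $\Inndiag(K)=K$ for both families, $d$ is an outer automorphism of order $3$. By \cref{lem:coprime r}, $d$ is not a field automorphism. The outer automorphism groups are as follows.
\begin{itemize}
\item For $\G_2(r^a)$ the group $\Out(K)$ consists of field automorphisms, together with a graph automorphism of order $2$ when $r=3$, which is excluded. So no $d$ exists, and this family is finished.
\item For ${}^3\D_4(r^a)$ the group $\Out(K)$ is cyclic, generated by field automorphisms of order $3a$. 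Every outer element of order $3$ is therefore $\Aut(K)$-conjugate to a field-type automorphism in the sense of \cite[Proposition 4.9.1]{GLS3}.
\end{itemize}

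\textbf{Step 2: ruling out ${}^3\D_4$.} The main point is to check that the order $3$ outer automorphisms of ${}^3\D_4(r^a)$ are indeed covered by \cref{lem:coprime r}. These are the classes in $\Phi_K$, possibly including the "triality-twisted" field automorphism of order $3$ coming from the $q^3$ structure. If some class is not literally a standard field automorphism, I would argue directly instead. By \cite[Proposition 4.9.1]{GLS3}, every element of order $3$ in $G\setminus K$ is $\Aut(K)$-conjugate into $\Phi_K$. Elements of $\Phi_K$ normalize a Borel subgroup $B$, and hence a representative of each class of parabolic subgroups.

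Next I would show that $z$ also normalizes a parabolic subgroup. By \cref{lem:z not in K}, $z\notin K$. But $|\Out(K)|=3a$, so $z$ is a field automorphism of order $2$ and $a$ is even. Then $C_K(z)\cong{}^3\D_4(r^{a/2})$ has order divisible by $r$. By \cref{lem:normalise a parabolic}, $z$ normalizes a parabolic subgroup $L$. By \cref{lem:factors} we have $G=N_G(L)K$, so $N_G(L)$ contains conjugates of both $z$ and $d$, which is a contradiction.

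In fact, it is simpler to apply \cref{lem:factors} with $L=B$ and note that both $z$ and $d$ are $G$-conjugate into $N_G(B)$, as in \cref{not both field or graph field}. Indeed, \cref{not both field or graph field} applies directly: $z,d\in G\setminus\Inndiag(K)$, and neither is a graph automorphism. This is the cleanest route.

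The main obstacle is justifying that no graph automorphisms occur for ${}^3\D_4$ and $\G_2$ when $r\ne3$. This follows from \cite[Theorem 2.5.12]{GLS3}.
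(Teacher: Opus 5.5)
Your proof is correct, but it takes a different route from the paper's.

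\textbf{The paper's route.} The paper splits by characteristic. For $r=2$ it excludes these groups using \cref{lem:lem char 2}, apart from $\G_2(2)'$, which \cref{lem:notpsl3} handles. For $r$ odd it uses that $K$ has a single class of involutions. Hence $z$ centralizes an involution $u\in K$, while $|C_K(d)|$ is even by \cref{lem:Ckd even}. So $d$ can be conjugated into $C_G(u)<G$.

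\textbf{Your route.} You work entirely at the prime $3$. First $r\neq 3$ by \cref{prop:not char 3}. Then \cref{3-broad} and \cref{lem no 3-broad} force $d\notin K=\Inndiag(K)$. Since $\Gamma_K=1$ here, $\Aut(K)=K\Phi_K$, so $d$ is a field automorphism, contrary to \cref{lem:coprime r}.

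\textbf{What each buys.} Your argument is uniform in $r$ and needs neither \cref{lem:lem char 2} nor any information about involution classes. The paper's argument is more elementary in odd characteristic. It also avoids the $3$-broad machinery and the class counts from \cite[Table 4.7.3A]{GLS3} on which \cref{3-broad} rests.

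\textbf{Step 2 is superfluous.} Passing from ``$d\in K\phi$ with $\phi\in\Phi_K$ of order $3$'' to ``$d$ is a field automorphism'' is \cite[Proposition 4.9.1]{GLS3}. This applies verbatim to ${}^3\D_4(r^a)$. In the GLS setup $\Phi_K$ is cyclic of order $3a$, and its element of order $3$ is a field automorphism, even though its centralizer is $\G_2(r^a)$ rather than a subfield group. That element normalizes $B$ and every standard parabolic subgroup, which is all \cref{lem:coprime r} needs about $d$. So your $\G_2$ argument already disposes of ${}^3\D_4$, and the worry about a ``triality-twisted'' class is unfounded.

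\textbf{One loose inference.} Inside Step 2, ``$G=N_G(L)K$, so $N_G(L)$ contains conjugates of both $z$ and $d$'' does not follow from \cref{lem:factors} as far as $d$ is concerned. What actually gives it is that $d$, being a field automorphism, normalizes a $K$-conjugate of every parabolic subgroup.

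Your closing appeal to \cref{not both field or graph field}, with $z\notin K$ from \cref{lem:z not in K}, is a valid alternative finish.
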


\begin{proof}  If $K \cong \G_2(2)'\cong \PSU_3(3)$,  \cref{lem:notpsl3} gives a contradiction.

Otherwise, by \cref{lem:lem char 2},  $r$ is odd. Thus $K$ has a unique conjugacy class of involutions by \cite[Table 4.5.1]{GLS3}. Then  $z$ commutes with an involution, $u\in K$  and, by \cref{lem:Ckd even}, $|C_K(d)|$ is even. Hence $d$ is conjugate to an element of $C_G(u)$, a contradiction.
\end{proof}

\begin{lemma} \label{lem:not E7 E8 or F4} We have $K \not \cong \F_4(r^a)$, $\E_7(r^a)$ or $\E_8(r^a)$.
\end{lemma}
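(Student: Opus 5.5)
We aim to show that neither $z$ nor $d$ can lie in the places forced on them, using the broad and $3$-broad machinery of \cref{subsec:gen lemmas}. Let $K$ be one of $\F_4(r^a)$, $\E_7(r^a)$, $\E_8(r^a)$. By \cref{prop:not char 3} we have $r\ne 3$. Hence \cref{3-broad} shows that $K$ has a $3$-broad subgroup. Since $K$ is not $\Sp_4(2^a)$, \cref{lem no 3-broad} gives $d\notin K$, and \cref{lem:z not in K} gives $z\notin K$.

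First we pin down the outer automorphisms. For $\E_8(r^a)$ we have $\Out(K)=\Phi_K$. For $\E_7(r^a)$ we have $\Out(K)=\mathrm{Outdiag}(K)\Phi_K$ with $|\mathrm{Outdiag}(K)|=(2,r^a-1)$. For $\F_4(r^a)$ with $r$ odd we have $\Out(K)=\Phi_K$. Thus, apart from $\F_4(2^a)$, there are no graph automorphisms. Since $d$ has order $3$ and $d\notin K$, and $\mathrm{Outdiag}(K)$ has order coprime to $3$ in all three families, $d$ is an $\Aut(K)$-conjugate of a field automorphism by \cite[Proposition 4.9.1]{GLS3}. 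This contradicts \cref{lem:coprime r}. In the case $\F_4(2^a)$, an element of order $3$ outside $K$ can only be a field automorphism, because the graph automorphism has order $2$ modulo $\Phi_K$ and graph-field automorphisms of order $3$ are impossible. So \cref{lem:coprime r} again gives a contradiction.

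We should double check \cref{lem:coprime r} applies as stated: it says $d$ is not a field automorphism, which is exactly what we contradict, and its proof only used that $z$ has centralizer of order divisible by $r$ together with \cref{lem:normalise a parabolic}. Alternatively, the contradiction can be made self-contained. A field automorphism $d$ normalizes a Borel subgroup, and every involution of $G$ normalizes a nontrivial $r$-subgroup, hence a parabolic subgroup by \cref{lem:normalise a parabolic}. Then \cref{lem:factors} (which holds since $G\le \Inndiag(K)\Phi_K$ here, except for $\F_4(2^a)$ with graph automorphism) lets us conjugate both $z$ and $d$ into $N_G(L)$ for a parabolic subgroup $L$, contradicting $G=\gen{z,d}$.

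The main obstacle is correctly identifying the possible classes of $d$ modulo $K$, in particular making sure no $\Inndiag$ element of order $3$ outside $K$ exists. This holds because $|\mathrm{Outdiag}(K)|\in\{1,2\}$ for these types.
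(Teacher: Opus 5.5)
Your proposal is correct and is essentially the paper's argument: $z\notin K$ by \cref{lem:z not in K}, $d\notin K$ by \cref{3-broad} and \cref{lem no 3-broad} (using $r\ne 3$ from \cref{prop:not char 3}), so $d$ must be a field automorphism, which contradicts \cref{lem:coprime r}. Your bookkeeping on $\Out(K)$ ($\mathrm{Outdiag}(K)$ has order at most $2$, and $\F_4(2^a)$ has no graph-field automorphisms of order $3$) just spells out a step the paper leaves implicit. In your optional self-contained variant, \cref{lem:factors} is not what moves a conjugate of $d$ into $N_G(L)$; that comes from the fact that a standard field automorphism normalizes every standard parabolic and every parabolic is $K$-conjugate to a standard one, and this argument covers the $\F_4(2^a)$ graph case too.
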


\begin{proof}   We have $z \not \in K$ by \cref{lem:z not in K}, and \cref{3-broad,lem no 3-broad} give $d \not \in K$. It follows that $d$ is a field automorphism, contradicting \cref{lem:coprime r}.
\end{proof}

  \begin{lemma}\label{lem:not E6}  We have  $K \not \cong \E_6(r^a)$ or ${}^2\E_6(r^a)$.
  \end{lemma}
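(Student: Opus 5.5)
We argue as for the other exceptional groups, writing $K\cong \E_6^\eps(r^a)$ and $q=r^a$. By \cref{prop:not char 3} we have $r\ne 3$. By \cref{lem:z not in K}, $z\notin K$, and by \cref{lem:coprime r}, $d$ is not a field automorphism. If $r=2$, then \cref{lem:lem char 2} shows that $\eps=+1$ and that $z$ is a graph or graph-field automorphism. We split according to whether $3$ divides $q-\eps$.

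\emph{Case $3\nmid q-\eps$.} Here \cref{lem:E6 good cases} supplies a $3$-broad subgroup of $K$, so \cref{lem no 3-broad} gives $d\notin K$. Since $\mathrm{Outdiag}(K)$ has order $(3,q-\eps)=1$, the element $d$ acts as a field or graph-field automorphism of order $3$. A graph-field automorphism of order $3$ does not exist for $\E_6^\eps$: graph automorphisms have order $2$, and for ${}^2\E_6$ an element of order $3$ in $\Out(K)$ is a field automorphism. Hence $d$ is a field automorphism, contradicting \cref{lem:coprime r}.

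\emph{Case $3\mid q-\eps$.} Then $r\ge 5$ or $r=2$. First suppose $r\ge 5$, so \cref{lem:E6 big 2-part} applies. Since $d$ is not a field automorphism and $\Out(K)$ has $3$-part coming only from $\mathrm{Outdiag}$ and field automorphisms, we may take $d\in\Inndiag(K)$, perhaps after adjusting by a field part; I would handle this by arguing that an element of order $3$ in a coset of $\Inndiag(K)\Phi_K$ outside $\Inndiag(K)$ is $\Aut(K)$-conjugate to a field automorphism \cite[Proposition 4.9.1]{GLS3}. Now $z\notin K$. If $z\in \Inndiag(K)\Phi_K$, then $z$ is a field automorphism, which normalizes a Borel subgroup. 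Moreover $d$ normalizes a parabolic subgroup, because $|C_K(d)|$ is even by \cref{lem:Ckd even}; we would like a nontrivial $r$-subgroup here to apply \cref{lem:normalise a parabolic}, or else use $C_K(\text{field aut})$ containing a subfield $\E_6^\eps(r)$ together with a conjugacy-class comparison. So the main case is when $z$ is a graph-type involution $\gamma^*$. Then \cref{lem:E6 big 2-part} gives $|K:C_K(z)|_2=(q-\eps)_2^2<|C_K(d)|_2$. Choose $S\in\Syl_2(C_K(d))$ and $T\in\Syl_2(K)$ with $S\le T$. By the order count, $|S|\cdot|C_K(z)|_2>|T|$. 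We expect this to force a conjugate of $C_K(z)$ to meet $C_K(d)$ in a nontrivial $2$-element, so that some involution $u$ is centralized by both $d$ and a conjugate of $z$; making the meeting conjugation precise is the delicate point. Conjugating $z$ so that it normalizes $S$ would do, since $z$ centralizes $O_2$ of the $2$-local subgroup. Alternatively, and more cleanly, note that $z$ normalizes a Sylow $2$-subgroup $T$ of $K$, and $C_T(z)$ has index $(q-\eps)_2^2$, strictly smaller than $|C_K(d)|_2$. Hence $C_T(z)\cap S^g\ne 1$ for a suitable conjugate $S^g\le T$. Taking $u$ an involution in that intersection gives $\gen{z,d^g}\le C_G(u)$, a contradiction.

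For $r=2$ with $3\mid 2^a-1$, the element $z$ is a graph or graph-field automorphism, and we can argue as in \cref{lem:not psln}. If a preimage of $d$ in the simply connected group has order $3$, then $d$ lies in a maximal torus of a $\gamma$-stable parabolic subgroup, which also contains a conjugate of $z$. Otherwise $d$ lies in a Levi subgroup of type $\mathrm{A}_2^3$ inside a $\gamma,\phi$-invariant parabolic subgroup, again giving a contradiction.

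I expect the main obstacle to be the counting step in the case $r\ge 5$: turning the inequality of $2$-parts from \cref{lem:E6 big 2-part} into an actual common involution centralized by $d$ and a conjugate of $z$.
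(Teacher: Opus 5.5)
\textbf{The case $r=2$ does not work.} When $3\mid 2^a-1$ you argue ``as in \cref{lem:not psln}'', but that analogy fails: $\E_6$ has no Levi subgroup of type $\mathrm{A}_2^3$. This subsystem is obtained by deleting the central node of the \emph{extended} Dynkin diagram and has full rank $6$. Hence $\SL_3(q)\circ\SL_3(q)\circ\SL_3(q)$ lies in no proper parabolic subgroup, let alone a $\gamma,\phi$-invariant one.

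So your ``otherwise'' branch is false as stated. The split according to the order of a preimage of $d$ also does not mirror the $\GL_n$ situation: the $3$-central elements whose centralizer has this $\mathrm{A}_2^3$ type already have preimages of order $3$. Rescuing a parabolic argument would require a genuine class-by-class analysis of elements of order $3$ in $\Inndiag(K)$, including those outside $K$, and you do not provide one.

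The paper avoids parabolics here entirely. It takes a $3$-central element $t$ of type $t_3$ in \cite[Table 4.7.3A]{GLS3} and sets $L=E(C_K(t))=K_1\circ K_2\circ K_3$ with $K_i\cong\SL_3(2^a)$ generated by root subgroups. Then $N_G(L)$ contains a Sylow $3$-subgroup of $G$, hence a conjugate of $d$ whatever its class. Also $L$ is normalized by $\gamma$, by $\Phi_K$ and by every element of $\gamma X_{-\alpha_*}$, so $N_G(L)$ also contains a conjugate of $z$.

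\textbf{The rest, and an open subcase for $r\ge 5$.} Your case $3\nmid q-\eps$ and the case $r\ge 5$ with $z$ a graph involution are the paper's argument. The counting step you call the main obstacle is immediate, and is exactly how the paper finishes: subgroups $A,B$ of a $2$-group $T$ with $|A||B|>|T|$ satisfy $|A\cap B|=|A||B|/|AB|>1$. In your version, choose the $z$-invariant $T$ so that $C_T(z)\in\Syl_2(C_K(z))$.

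However, for $r\ge 5$ you leave open the subcase where $z$ is a field involution, and you do not mention graph-field involutions. Your sketch for that subcase does not work: evenness of $|C_K(d)|$ gives no $r$-subgroup to feed into \cref{lem:normalise a parabolic}.

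Your concern is legitimate. \cref{lem:E6 big 2-part} concerns graph automorphisms only, and the paper applies it to $z$ without comment. This is harmless for ${}^2\E_6(r^a)$ and for $\E_6(r^a)$ with $a$ odd, where every involution outside $K$ is a graph automorphism. When $a$ is even, however, the paper's proof does not treat field or graph-field $z$ either.

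Since $L$ above is generated by root subgroups, it is normalized by $\Phi_K$, by $\gamma$ and by the diagonal automorphisms. Adapting the $N_G(L)$ argument therefore looks like the natural way to close this subcase too.
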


\begin{proof} We write $K=\E_6^\eps(r^a)$ where $\eps =\pm 1 $ and handle both cases simultaneously. We have $z \not \in K$ by \cref{lem:z not in K} and $d \in \Inndiag(K)$ by \cref{lem:coprime r}.

Suppose that $3$ does not divide $r^a-\eps$.
Then $K$ has a $3$-broad subgroup  by \cref{lem:E6 good cases}.   Hence \cref{lem no 3-broad} says that $d \not \in K=\Inndiag(K)$, a contradiction. Therefore $3$  divides $r^a-\eps$.

Assume that $r$ is odd. Then $r \ge 5$ and we apply \cref{lem:E6 big 2-part} to see that $|K:C_K(z)|_2= (q-\eps)^2_2$  and $|C_K(d)|_2> (q-\eps)^2_2$.  Let $S \in \Syl_2(K)$ and  replace  $z$ and $d$ by $K$-conjugates if necessary  so that $S_d=S \cap C_K(d) \in\Syl_2(C_K(d))$ and $S_z=S \cap C_K(z) \in \syl_2(C_K(z))$. Then  \cref{lem:E6 big 2-part} implies $|S:S_z|=  (q-\eps)^2_2$  and $|S_d|> (q-\eps)^2_2$. Hence $S_z\cap S_d \ne 1$ and $\gen{z,d}\le C_G(S_z\cap S_d)$, a contradiction. Thus $r=2$.

 By \cref{lem:lem char 2},
 $K\cong \E_6(2^a)$  and $z$ induces a graph or a graph-field automorphism on $K$. (So as $3$ divides $2^a-1$, $\epsilon =1$ coincides with the notation in \cite[Table 4.7.3A]{GLS3}.)
   We know $3$ divides $|C_K(z)|$ from \cref{lem:cop3}.  Take $t\in K$ to be a $3$-element of type $t_3$ as in \cite[Table 4.7.3A]{GLS3}. Then $t$ is a $3$-central element of $K$ as the centre of a Sylow $3$-subgroup of its centraliser has order $3$ (which is also the centre of  $C_K(t)$).
Now the  standard graph automorphism $\gamma$ of $K$ normalizes $L=E(C_K(t))=K_1\circ K_2\circ K_3$ with each $K_i\cong \SL_3(2^a)$ and $K_1=\langle  X_{\pm\alpha}\mid  \alpha\in\{\alpha_4,-\alpha_*\}\rangle$, $K_2=\langle X_{\pm\alpha_i}\mid i=1,2\rangle$ and $K_3=\langle X_{\pm\alpha_i}\mid i=5,6\rangle$. Then $L$ is normalized by $\gamma$, by $\Phi_K$ and by any element of $\gamma X_{-\alpha_*}$. In particular,   a conjugate of $z$ is in $N_G(L)$. Since $N_G(L)$ contains a Sylow $3$-subgroup of $G$, we conclude that $G$ is not invariably $(2,3)$-generated.
\end{proof}

Finally \cref{lem:smallRee,lem:suzuki or large ree,lem:not G2 or 3D4,lem:not E7 E8 or F4,lem:not E6} show that there are no   counter examples  to  \cref{prop:not excep}.

\subsection{The classical groups   for $r\ge 5$}

In this final subsection, we consider the possibility that $K$ is a classical group defined in characteristic $r\ge 5$ and then complete the proof of \cref{thm:Lietype23as}.

\begin{lemma}\label{GLGone} We have  $K \not \cong \PSL_n^\eps(r^a)$, with $\eps \in \{\pm 1\}$, $n \ge 4$ and $r\ge 5$.
\end{lemma}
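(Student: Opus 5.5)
We follow the pattern of the earlier lemmas. By \cref{lem:z not in K}, $z\notin K$, and by \cref{lem:coprime r}, $d$ is not a field automorphism. So $d\in \Inndiag(K)$, or $d$ is a graph-type automorphism of order $3$. The latter is impossible here because $n\ge 4$: the graph automorphisms of $\PSL_n^\eps$ have order $2$. Hence $d\in\Inndiag(K)$, which is the image of $\GL_n^\eps(q)$ with $q=r^a$. We then split according to whether $3$ divides $(q-\eps,n)$.

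\emph{Case 1: $3$ does not divide $(q-\eps,n)$.} Then $\Inndiag(K)/K$ has order coprime to $3$, so $d\in K$. Since $r\ge 5$, \cref{lem:3good}(i) shows that $K$ has a $3$-broad subgroup, and \cref{lem no 3-broad} gives $d\notin K$. This is a contradiction.

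\emph{Case 2: $3$ divides $(q-\eps,n)$.} Let $\wh d\in\GL_n^\eps(q)$ be a preimage of $d$ whose order is a power of $3$. Since $3\mid q-\eps$, the monomial group $H=\GL_1^\eps(q)\wr\Sym(n)$ contains a Sylow $3$-subgroup of $\GL_n^\eps(q)$ by \cref{GLfact}(ii). Moreover, by \cref{GLfact}(i) or its analogue, $H$ essentially controls the $2$-part whenever $q\equiv\eps\pmod 4$.

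We aim to find a proper subgroup $D<K$ with three properties:
\begin{enumerate}
\item $\Aut(K)=N_{\Aut(K)}(D)K$;
\item $N_G(D)$ contains a Sylow $2$-subgroup of $G$;
\item $N_G(D)$ contains an $\Aut(K)$-conjugate of $d$.
\end{enumerate}
Then \cref{lem:lastlem}, applied with $s=2$, gives the contradiction.

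The natural candidate for $D$ is the image of the diagonal torus, or the subgroup of monomial matrices, $M=(H\cap\SL_n^\eps(q))Z/Z$. The standard field automorphisms and the inverse-transpose graph automorphism normalize $M$, and $M$ is determined up to $\Inndiag(K)$-conjugacy, so the Frattini argument gives (i). If $q\equiv\eps\pmod 4$, then \cref{GLfact}(i) gives (ii). If instead $q\equiv-\eps\pmod 4$, we replace $M$ by the image of $\GL_2^\eps(q)\wr\Sym(n/2)$, or by a suitable block-diagonal/wreath subgroup, which contains Sylow $2$- and $3$-subgroups simultaneously. Every $3$-element of $\PGL_n^\eps(q)$ is conjugate into this subgroup, because it is conjugate into the Sylow $3$-subgroup contained there, which gives (iii).

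\emph{Main obstacle.} The hardest step is choosing a single $D$ that contains both a full Sylow $2$-subgroup of $G$ and a Sylow $3$-subgroup when $q\equiv-\eps\pmod 4$, particularly for odd $n$ or for $n$ not divisible by $2$ in the right way. There we would use $\GL_2^\eps(q)\wr\Sym(n/2)$, times $\GL_1^\eps(q)$ if $n$ is odd. We would verify its orders via the standard $2$-part formula for $|\GL_n^\eps(q)|$, and check that the graph and field automorphisms normalize it.

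If this fails in a residual case, we fall back on the alternative from \cref{lem:E6 not}-style counting. There we compare $|K:C_K(z)|_2$, for $z$ a graph automorphism with $C_K(z)$ of type $\PSp_n(q)$ or $\POmega_n(q)$, against $|C_K(d)|_2$. Since $C_K(d)$ contains a large $\GL$-type Levi factor, we expect $|C_K(d)|_2>|K:C_K(z)|_2$. Hence Sylow $2$-subgroups of the two centralizers would intersect nontrivially inside a common Sylow $2$-subgroup, and $\gen{z,d}$ would centralize a nontrivial $2$-group, which is again a contradiction.
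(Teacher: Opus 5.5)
Your Case~1, and the subcase $q\equiv\eps\pmod 4$ of Case~2, are sound and agree with the paper; the paper handles the latter via \cref{lem:contain r and s subgroups} applied to the normalizer of the monomial group. The gap is the subcase $q\equiv-\eps\pmod 4$.

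The subgroup you propose there is the image of $\GL_2^\eps(q)\wr\Sym(\lfloor n/2\rfloor)$, times $\GL_1^\eps(q)$ for $n$ odd. In Case~2 it never contains a Sylow $3$-subgroup. Since $3\mid q-\eps$, we have $|\GL_n^\eps(q)|_3=(q-\eps)_3^n(n!)_3$, while your subgroup has $3$-part $(q-\eps)_3^n(\lfloor n/2\rfloor!)_3$. Moreover $n!/\lfloor n/2\rfloor!$ is divisible by $n$, hence by $3$. So your justification of (iii) is invalid, and for odd $n$ property (iii) is actually false.

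For instance, in $K=\PSL_9(7)$ let $d$ be the image of $\wh d=\diag(C,C,C)$, where $C$ is the companion matrix of $x^3-3$ over $\GF(7)$. Then $d\in K$ has order $3$. But $3$ is not a cube in $\GF(7)$, so $x^3-3$ is irreducible. Hence no scalar multiple of a conjugate of $\wh d$, or of its inverse transpose, has an eigenvector. So no $\Aut(K)$-conjugate of $d$ stabilizes a $1$-space. On the other hand, $N_{\Inndiag(K)}(D)$ stabilizes the $1$-dimensional summand of your decomposition, which is the fixed space of the soluble residual. Your counting fallback is only stated as an expectation. As described, it would also treat only graph automorphisms $z$, not $z\in\Inndiag(K)\setminus K$, which occurs for $n$ even.

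The missing idea is to reverse the roles of the primes in \cref{lem:lastlem}. Take $s=3$, $y=z$, and let $D$ be the intersection with $K$ of the image of the monomial group $\GL_1^\eps(q)\wr\Sym(n)$. Since $3\mid q-\eps$, $D$ contains a Sylow $3$-subgroup of $K$ by \cref{GLfact}(ii). Also $D$ is normalized by $\Phi_K$ and by the inverse-transpose automorphism, so $\Aut(K)=N_{\Aut(K)}(D)K$. What remains is to conjugate $z$ into $N_G(D)$ by an element of $\Aut(K)$. For $z\in\Inndiag(K)$ this is exactly \cref{GLfact}(iii): every involution class of $\PGL_n^\eps(q)$ meets the monomial group. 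For the outer involutions (graph, graph-field and field), it follows from their explicit descriptions. So the involutions are the elements that are easy to place in a fixed subgroup carrying a full Sylow subgroup. The elements of order $3$ are not, as the example above shows for your candidate.
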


\begin{proof} Set $q=r^a$.  \cref{lem:coprime r} implies  $d\in \Inndiag(K)$.

If $(q-\epsilon,3)=1$, then by \cref{lem:3good}, $G$ has a $3$-broad subgroup, and so by \cref{lem no 3-broad}, $d\not\in K=\Inndiag(K)$, a contradiction.
Hence, $q\equiv \eps\pmod 3$.

Define $\gamma$ to be the graph automorphism given by inverse transpose. Let   $H$  be the image of $\GL^{\epsilon}_1(q) \wr \Sym(n)$ in $\Aut(K)$ as in \cref{GLfact}.  Then $H$ is normalized by $\Phi_K$ and $\gamma$. Let $H^*= N_{\Aut(K)}(H)$. Then $\Aut(K)=H^*K$ and $H^*$ contains a Sylow $3$-subgroup of $\Aut(K)$.

If  $q\equiv \epsilon \pmod 4$, then $H$ contains a Sylow $2$-subgroup of $\Inndiag(K)$.
Thus \cref{lem:contain r and s subgroups}  with $ H^*$ in place of $H$ in that lemma, shows that $G$ is not invariably $(2,3)$-generated. So suppose that  $q\equiv -\epsilon \pmod 4$. Then $H^*$ contains a Sylow $3$-subgroup and a representative of every involution class in $\PGL_n^\eps(q)$ by \cref{GLfact} (iii).
Thus if $\epsilon=1$,  $z$  is either a graph   a graph-field  or a field automorphism, and if $\epsilon=-1$, then following \cite{GLS3}, $z$ is a graph automorphism. By   \cite[Table 4.5.1]{GLS3} we may conclude that one of the following holds: $\epsilon=1$ and $z$ is contained in $\gen{\gamma,\Phi_K}$, $\epsilon=-1$ and $z =\gamma$, or that $\epsilon\in\{\pm1\}$, $n$ is even and   $z$ is one of  $\gamma_1$, $\gamma_2$ or $\gamma_2'$. Explicit descriptions of $\gamma_1$, $\gamma_2$ and $\gamma_2'$ are given in \cite[Section 3.2.5 and Proposition 3.2.11]{BurnessGiudici}. From these descriptions we see that we may replace $z$ with a $\Aut(K)$-conjugate in $H^*$.
Now $\Aut(K)= H^*K$ and so applying \cref{lem:lastlem} with $D= H^*\cap K$, shows that $G$ is not invariably $(2,3)$-generated.
 \end{proof}

\begin{lemma}\label{lem:orth gone} Suppose that $r \ge 5$. Then   $K\not \cong \PSp_{2n}(r^a)$ with $n \ge 2$ and
 $K \not \cong \mathrm P \Omega_{2n}^\eps(r^a)$ or $\mathrm P\Omega_{2n-1}(r^a)$ with $n \ge 4$, $\eps\in \{\pm 1\}$.
\end{lemma}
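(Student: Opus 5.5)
The plan follows the pattern already used for the other families: pin down $z$ and $d$ with the general lemmas, then use a $3$-broad subgroup to get a contradiction.

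First, by \cref{lem:z not in K}, $z\notin K$. By \cref{lem:coprime r}, $d$ is not a field automorphism, so $d\in\Inndiag(K)$ or $d$ is a graph (or graph-field) automorphism. The latter is possible only when $K\cong\POmega_8^+(r^a)$ and $d$ involves triality.

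Next, set $q=r^a$ with $r\ge 5$, so that $3\ne r$. By \cref{lem:3good}(ii), each of $\PSp_{2n}(q)$ ($n\ge 2$), $\POmega_{2n-1}(q)$ and $\POmega^\pm_{2n}(q)$ ($n\ge 4$) has a $3$-broad subgroup. Since $K\not\cong\Sp_4(2^a)$, \cref{lem no 3-broad} gives $d\notin K$. For these groups $|\Inndiag(K):K|$ is a power of $2$ (it divides $4$), so an element of order $3$ in $\Inndiag(K)$ already lies in $K$. Hence $d\notin\Inndiag(K)$. Combined with the first step, $d$ is neither inner-diagonal nor a field automorphism. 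A graph-field automorphism of order $3$ would also be a field-type automorphism normalizing a Borel subgroup, which is excluded in the same way as in \cref{lem:coprime r}. For the symplectic and odd-dimensional orthogonal groups, and for $\POmega^\pm_{2n}(q)$ with $n\ge 5$ or with $\eps=-1$, the outer automorphism group has no graph automorphisms of order $3$. So every such case already gives a contradiction.

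The remaining case is $K\cong\POmega_8^+(q)$ with $d$ in a coset $\tau K$ of a triality automorphism, or $d$ a graph-field automorphism of order $3$. Here I would follow the argument of \cref{prop:not char 3}. By \cref{lem:NGD struct}, which holds for every prime $r\ge 3$, the group $D=M_{\alpha_1}M_{\alpha_3}M_{\alpha_4}M_{\alpha^*}$ satisfies $G=N_G(D)K$. It also gives that $N_G(D)$ contains a Sylow $2$-subgroup of $G$ and a representative of every $\Aut(K)$-class of elements of order $3$ in $G\setminus K$. Applying \cref{lem:lastlem} with $s=2$, $t=3$, $x=z$, $y=d$ and this $D$ then shows that $(z,d)$ is not an invariable $(2,3)$-sequence. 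For this I need $\Aut(K)=N_{\Aut(K)}(D)K$, which follows from the Frattini argument together with uniqueness of $D$ up to $K$-conjugacy via \cite[Theorem 4.10.6]{GLS3}, exactly as in the proof of \cref{lem:NGD struct}.

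The main obstacle is the $\POmega_8^+$ triality case, specifically checking that $N_G(D)$ meets both $\Inndiag(K)$-classes of order-$3$ elements in $\tau K$ when $r\ge 5$. \cref{lem:NGD struct} asserts this, so the difficulty is absorbed there. What remains is bookkeeping to confirm that graph-field elements of order $3$ are covered, either by \cref{lem:coprime r}-style Borel normalization or by \cref{lem:NGD struct}.
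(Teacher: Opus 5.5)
Your proposal is correct and follows essentially the paper's route. You get $z\notin K$ from \cref{lem:z not in K} and $d\notin K$ from \cref{lem:3good} with \cref{lem no 3-broad}; then $\mathrm{Outdiag}(K)$ being a $2$-group and \cref{lem:coprime r} force $K\cong\POmega_8^+(r^a)$ with $d$ a graph or graph-field automorphism, and \cref{lem:NGD struct} with \cref{lem:lastlem} finish.

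One caution: your aside that a graph-field automorphism of order $3$ is excluded ``in the same way as in \cref{lem:coprime r}'' does not hold as stated, since such an element only normalizes parabolics of triality-invariant type. It is harmless, because \cref{lem:NGD struct} already covers every $\Aut(K)$-class of order-$3$ elements in $G\setminus K$, including the graph-field ones.
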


\begin{proof} We know from \cref{GR,lem:3good} that $K$ has a broad subgroup and a $3$-broad subgroup. \cref{lem:z not in K,lem no 3-broad} show that $\{z,d\}\subset G\setminus K$. By \cref{lem:coprime r}, $d$ is not a field automorphism. As $\mathrm{Outdiag}(K)$ is a $2$-group, $d$ must be a graph or a graph field automorphism and $K=\mathrm P\Omega_8^+(r^a)$. Now we apply \cref{lem:NGD struct,lem:lastlem} to obtain a contradiction.
\end{proof}

We can now prove  \cref{thm:Lietype23as}.
\begin{proof}[The proof of \cref{thm:Lietype23as}] Let $G$ and $K$ be as in the statement of the theorem.
 If $K \cong \PSL_2(r^a)$, then \cref{lem:psl2} yields that $G\cong \PGL_2(3^{2^b})$ for some $b\ge 1$. Avoiding $\Alt(6) \cong \PGL_2(9)$, \cref{lem:alt gone,lem:sporadic gone} show that $K$
  is not a sporadic simple group or an alternating group of degree at least $7$ or $5$.

  \cref{prop:not char 3} shows that $K$ is not a Lie type group defined in characteristic $3$ other than the cases which arise from $\PSL_2(3^{2^b})$ with $b \ge 1$.  Similarly, \cref{prop:not classical 2} shows that among the classical groups defined in characteristic $2$ only  $\PSp_4(2)' \cong \PSL_2(9)\cong \Alt(6)$ is an exception.

\cref{prop:not excep} shows that $K$ is not an exceptional group. In particular, we may assume that   $K$ is not a Lie type group defined in characteristic $2$.
Therefore $K$ is a classical group defined in characteristic $r \ge 5$. \cref{GLGone,lem:orth gone}  eliminate these final cases. Thus if $G$ is invariably $(2,3)$-generated then $G \cong \PGL_2(3^{2^b})$ for some $b \ge 1$.  \cref{lem:psl2} also shows that these groups are invariably $(2,3)$-generated. This completes the proof.
\end{proof}

\section{The proof of Theorem C}\label{sec:8}

This final short section provides our proof of Theorem C.
We start with the following lemma.

\begin{lemma}\label{lem:no repeated factors}
Suppose that $H \cong \PGL_2(3^{2^a})$ and   $X= H\times H$. Then no subgroup of $X$ containing $F^*(X)$ is invariably $(2,3)$-generated.
\end{lemma}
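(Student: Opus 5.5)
The plan is to list the subgroups $G$ with $F^*(X)=K\times K\le G\le X$, where $K=\PSL_2(q)$ and $q=3^{2^a}$, and to rule each one out. Since $X/(K\times K)\cong 2\times 2$, there are five such $G$: $K\times K$, $K\times H$, $H\times K$, $X$, and the diagonal subgroup
$$D=\{(h_1,h_2)\in X\mid h_1K=K \text{ if and only if } h_2K=K\}.$$
Throughout, suppose $(z,d)$ is an invariable $(2,3)$-sequence for $G$, and write $z=(z_1,z_2)$ and $d=(d_1,d_2)$.

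\emph{The quotient reduction.} By \cref{lem:quotients}, $G/(K\times K)$ is invariably $(2,3)$-generated. It is a $2$-group, so the image of $d$ is trivial. Hence $G/(K\times K)$ is generated by the image of $z$ alone and is cyclic of order at most $2$. This rules out $G=X$.

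\emph{The cases $K\times K$, $K\times H$ and $H\times K$.} Each of these has a coordinate projection onto $K$. By \cref{lem:quotients} that projection would make $K=\PSL_2(q)$ invariably $(2,3)$-generated. This contradicts \cref{lem:psl2}, where \ref{PSL2clm1} shows $z\notin K$ is forced, so no group with $G=K$ can be invariably $(2,3)$-generated.

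\emph{The case $G=D$.} Both projections of $D$ are onto $H$, so by \cref{lem:quotients} each $(z_i,d_i)$ is an invariable $(2,3)$-sequence for $H$. The proof of \cref{lem:psl2} then gives $z_i\in H\setminus K$ and $d_i\in K$ of order $3$. I will use three facts from that proof and \cite[Table 4.5.1]{GLS3}.
\begin{enumerate}
\item The outer involutions of $H$ form a single $H$-class. Since $C_H(z_1)\not\le K$, this class is also a single $K$-class.
\item All elements of order $3$ are conjugate in $H$.
\item The elements of order $3$ split into exactly two $K$-classes, which are interchanged by any $w\in H\setminus K$.
\end{enumerate}
Choose $w=1$ if $d_2$ is $K$-conjugate to $d_1$, and otherwise choose $w\in H\setminus K$, so that $d_2$ is $K$-conjugate to $d_1^w$. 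In either case $z_2$ is $K$-conjugate to $z_1^w$, by fact (i). Since $K\times K\le D$, we can conjugate $z$ and $d$ independently by elements of $K\times K$. This yields $g,h\in D$ with
$$z^g,\ d^h\in \Delta_w=\{(x,x^w)\mid x\in H\}.$$
Now $\Delta_w\cong H$ is a proper subgroup of $D$, so $\gen{z^g,d^h}\le\Delta_w<D$. This contradicts invariable generation.

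The main obstacle is the diagonal case, specifically justifying facts (i) and (iii) about $K$-classes inside $\PGL_2(q)$. Fact (iii) is exactly what is shown in the proof of \cref{lem:psl2}. For fact (i), I need that the centralizer of an outer involution is dihedral and not contained in $K$. I would check this with \cite[Table 4.5.1]{GLS3}, or directly: a non-split torus normalizer $\Dih(2(q+1))$ contains outer involutions that centralize one another.
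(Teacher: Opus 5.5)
Your proof is correct. Its overall shape agrees with the paper's. Both reduce to the diagonal subgroup of index $2$ in $X$. Both finish by trapping the pair in one of the two diagonal copies $\{(h,h)\mid h\in H\}$ or $\{(h,h^w)\mid h\in H\}$ with $w$ outer; these are the paper's $L$ and $M$.

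The routes differ in how the element of order $3$ is handled. The paper proceeds as follows:
\begin{enumerate}
\item it conjugates $\zeta$ to $(z,z)$;
\item it computes $N_G(D)$ for a Sylow $3$-subgroup $D=D_1D_2$ and lists its four classes of elements of order $3$;
\item it disposes of the elements lying in $D_1$ or $D_2$ by a separate argument;
\item it uses Burnside's fusion theorem to show that $G$ has exactly two diagonal classes of elements of order $3$, represented by $(d,d)$ and $(d,d^z)$.
\end{enumerate}

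You instead project $G$ onto each factor, so that each $(z_i,d_i)$ is an invariable $(2,3)$-sequence for $H$. This makes both coordinates of $d$ non-trivial automatically, which absorbs the paper's $D_i$ case. You then conjugate coordinatewise by $K\times K$, using only two facts: the outer involutions form one $K$-class, and the elements of order $3$ form two $K$-classes interchanged by $H\setminus K$. This places $z$ and $d$ in a common $\Delta_w$. Your route avoids the Sylow normalizer and the fusion argument altogether. The paper's computation gives more than the lemma needs, namely a description of the $G$-classes of elements of order $3$.

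Your explicit list of the five overgroups of $K\times K$ is also more transparent than the paper's reduction via the quotients $G/K_i$.

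One simplification: fact (i) needs no dihedral subgroup. Since $z_1\in C_H(z_1)\setminus K$, you already have $C_H(z_1)\not\le K$.
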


\begin{proof} Set $q=3^{2^a}$,   $H_1=\{(h,1)\mid h \in H\}$, $H_2=  \{(1,h)\mid h \in H\}$, $K_i = H_i \cap F^*(X)$ and define $K=K_1K_2=F^*(X)$.
Suppose that $G$ is invariably $(2,3)$-generated with $K \le G \le X$.

 For $1\ne N<K$, a  normal subgroup of $G$, we have $N= K_1$, or $N= K_2$.  Hence $G/N$ is isomorphic to a subgroup of $2 \times \PGL_2(q)$.  By \cref{lem:quotients}, $G/N$ is invariably $(2,3)$-generated as are all its quotients. Since the elementary abelian subgroup of order $2^2$ is not invariably $(2,3)$-generated,  $G \ne X$. By  \cref{lem:psl2}, $\PSL_2(q)$ is not invariably $(2,3)$-generated and so $G/N \not \cong 2 \times \PSL_2(q)$. It follows that $G/N \cong \PGL_2(q)$ and that $|X:G|=2$.

Assume that $(\zeta,\delta)$ is an invariable $(2,3)$-sequence for $G$. Then $\zeta \not \in K$.

   We know $G = K\gen{(z,z)}$ with $z\in H\setminus F^*(H)$ of order $2$. Then $\zeta=(z_1,z_2)$ with $z_1$ and $z_2$ of order $2$ and conjugating by elements of $K_1$ and $K_2$ we see that $\zeta$ is $G$-conjugate to $(z,z)$. Hence we may assume $\zeta=(z,z)$.

      Let $D \in \Syl_3(G)$ contain $\delta$, $D_1= D\cap H_1$ and $D_2=D\cap H_2$. Then $D=D_1D_2$ and $N_G(D)$ has order $q^2(q-1)^2/2$ with $N_G(D)/D$ isomorphic to the direct product of  cyclic groups of orders $q-1$ and $(q-1)/2$. Thus $N_G(D  )$ has four conjugacy classes of elements of order $3$: one class in $D_1$, one class in $D_2$ and two classes on the diagonal of $D_1D_2$ each of length $(q-1)^2/2$.

      If $\delta \in D_i$ with $i=1$ or $2$,  then $\langle D_i,\zeta \rangle \le H_i\gen{\zeta} < G$, a contradiction as $\gen{\zeta,\delta}= G$.   Hence $\delta$ is not in $D_1$ or $D_2$.

      Since $N_G(D)$ controls $3$-fusion in $G$ by Burnside's fusion theorem, $G$ has two conjugacy classes of elements of order $3$  diagonal between $H_1$ and $H_2$. The representatives are $(d,d)$ and $(d,d^z)$ where $d$ in $H$ has order $3$. Indeed, suppose that $g=(az^\ell,bz^\ell) \in G$ with $\ell \in \{0,1\}$ and $a,b \in H'$ is such that $(d,d^z)=(d,d)^g$.
      Then $(d,d^z) = (d^{az^\ell},d^{bz^\ell})$ implies that $d= d^{az^\ell}$ from which we conclude that $z^{\ell}=1$. But then $d^z=d^b$ with $b \in H'$, a contradiction as $d$ and $d^z$ are not conjugate in $H'$.

   Set $L= \{(h,h) \mid h \in H\}< G$ and $M= \{(h,h^z) \mid h \in H\}<G$. Then $L$ and $M$ both contain $\zeta $. Since $(d,d) \in L$ and $(d,d^z) \in M$, we have no candidates for $\delta$. This proves the claim.
\end{proof}

The following example, computed using {\sc magma}, shows that direct products of isomorphic simple groups can in some cases be invariably $(s,t)$-generated for $s$ and $t$ primes. To see it theoretically, just note that we can select an element of order $31$ which is not in any diagonal subgroup isomorphic to $\PSL_5(2)$.

\begin{example} $\PSL_5(2)\times \PSL_5(2)$ is invariably $(2,31)$-generated.
\end{example}

We at last prove Theorem C.

\begin{proof}[Proof of Theorem C] Suppose that $G$ is invariably $(2,3)$-generated. If  $G$ is  soluble, then  $G$ is a $\{2,3\}$-group by \cref{lem:Sylow/Hall}. So assume that $G$ is not soluble.

We first study the almost semisimple quotient $\overline G=G/A(G)$. By Theorem A, $\ov G$ is non-trivial and $\ov G$ is determined by its almost simple quotients by \cref{prop: G/A(G)}. Assume that $J,K \in \mathcal A^*(G)$ with $J\ne K$. We have $G/J\cong \PGL_2(3^{2^b})$ by \cref{thm:Lietype23as}.
If  $\Soc(G/J)\cong \Soc(G/K)$, then, setting $H=G/J$, $G/(J\cap K)$ is isomorphic to a subgroup of $X=H \times H$ containing $F^*(X)$. But then \cref{lem:no repeated factors} shows that $G/(J\cap K)$ is not invariably $(2,3)$-generated and this contradicts \cref{lem:quotients}. Thus all the almost simple quotients of $G$ have pairwise non-isomorphic socles. Let $M$ be the full preimage of  $\Soc(\ov G)$. Then $G/M$ is an elementary abelian $2$-group which is invariably generated by an involution. Hence $|G/M|=2$ and this gives the structure of $\ov G$ described in Theorem C (i) and (ii).

 For the structure of $A(G)$, we apply Theorem A just noting that every divisor of $|\Soc(\ov G)|$ which is coprime to $3$ is   in   $\pi(|\PSL_2(3^{2^{a_n}})|_{3'})=\pi(3^{2^{a_n}}-1)\cup \pi(3^{2^{a_n}}+1)$. Finally, \cref{thm:Lietype23as}   shows that the almost simple groups $\PGL_2(3^{2^b})$ are indeed invariably $(2,3)$-generated.
 Finally,  \cref{lem:dif socles} shows that for $1  <a_1 <\dots< a_n$, there are invariably $(2,3)$-generated groups with socle $\PSL_2(3^{2^{a_1}}) \times \dots\times \PSL_2(3^{2^{a_n}})$.
\end{proof}

\printbibliography

\end{document}